\documentclass[11pt,a4paper]{article}
\usepackage[utf8]{inputenc}
\usepackage[english]{babel}
\usepackage[ruled,section]{algorithm}
\usepackage{makeidx}
\usepackage[font=small]{caption}
\usepackage{amsmath}
\usepackage{amsthm}
\usepackage{amsfonts}
\usepackage{amssymb}
\usepackage{mathrsfs}
\usepackage{graphicx}
\usepackage[margin=25mm]{geometry}
\usepackage{enumerate}
\usepackage[T1]{fontenc}
\usepackage{mathtools}
\usepackage{nicefrac}
\usepackage{braket}
\usepackage{subfig}
\usepackage{csquotes}

\usepackage{simplewick}%
\usepackage{latexsym}%
\usepackage[all]{xy}%

%
\usepackage[plainpages=false,pagebackref=false]{hyperref}   
%

\usepackage{color}
\usepackage{authblk}
\usepackage{hyperref}
\makeindex
\hypersetup{pdfpagelabels,hyperindex,colorlinks=true,breaklinks=true,bookmarks=true,linkcolor=black,citecolor=black,
	urlcolor=black}
\pagenumbering{arabic}

\usepackage{esint}%
\usepackage{perpage}%
\newcommand{\real}{\mathbb{R}}
\newcommand{\n}{\mathbb{N}}
\newcommand{\rn}{ {\mathbb{R}^n} }
\renewcommand{\d}{\; \mathrm{d}}

\newcommand{\intav}[1]{\mathchoice {\mathop{\vrule width 6pt height 3 pt depth  -2.5pt
\kern -8pt \intop}\nolimits_{\kern -6pt#1}} {\mathop{\vrule width
5pt height 3  pt depth -2.6pt \kern -6pt \intop}\nolimits_{#1}}
{\mathop{\vrule width 5pt height 3 pt depth -2.6pt \kern -6pt
\intop}\nolimits_{#1}} {\mathop{\vrule width 5pt height 3 pt depth
-2.6pt \kern -6pt \intop}\nolimits_{#1}}}

\numberwithin{equation}{section}
\newtheorem{teo}{Theorem}[section]
\newtheorem{lem}[teo]{Lemma}
\newtheorem{rmk}[teo]{Remark}
\newtheorem{example}[teo]{Example}

\newtheorem{defin}[teo]{Definition}

\newtheorem{prop}[teo]{Proposition}

\newtheorem{claim}[teo]{Claim}


\newcommand{\defeq}{\mathrel{\mathop:}=}%
\newcommand{\diam}{\mathrm{diam}}
\newcommand{\tr}{\mathrm{Tr}}%

\begin{document}

\title{\bf\Large Regularity estimates for fully nonlinear elliptic PDEs with general Hamiltonian terms and unbounded ingredients}
\author[1]{João Vitor da Silva\footnote{jdasilva@unicamp.br}}
\author[2]{Gabrielle Nornberg\footnote{gabrielle@icmc.usp.br}}
\affil[1]{\small
Departamento de Matem\'atica - IMECC,
Universidade Estadual de Campinas}
\affil[2]{\small Instituto de Ciências Matemáticas e de Computação, Universidade de São Paulo}
\date{}

\maketitle

{\small\noindent{\bf{Abstract.}} We develop an optimal regularity theory for $L^p$-viscosity solutions of fully nonlinear uniformly elliptic equations in nondivergence form whose gradient growth is described through a Hamiltonian function with measurable and possibly unbounded coefficients. Our approach treats both superlinear and sublinear gradient regimes in a unified way.

We show $C^{0,\alpha}$, $C^{0,\textrm{Log-Lip}}$, $C^{1,\alpha}$, $ C^{1,\textrm{Log-Lip}}$ and $C^{2,\alpha}$ regularity estimates, by displaying the growth allowed to the Hamiltonian in order to deal with an unbounded nonlinear gradient coefficient, whose integrability in turn gets worse as we approach the quadratic regime.

Moreover, we find proper compatibility conditions for which our regularity results depend intrinsically on the integrability of the underlying source term.
As a byproduct of our methods, we prove a priori BMO estimates; sharp regularity to associated recession and flat profiles under relaxed convexity assumptions; improved regularity for a class of singular PDEs; and a Perron type result under unbounded ingredients.

\medskip

{\small\noindent{\bf{Keywords.}} {Regularity; fully nonlinear PDEs; unbounded Hamiltonian; $L^p$-viscosity solutions.}
	
\medskip
	
{\small\noindent{\bf{MSC2020.}} 35J15, 35D40, 35B65.}

\section{Introduction}\label{Introduction}

In this paper, we address sharp regularity estimates for $L^p$-viscosity solutions of fully nonlinear second order differential equations in the form
\begin{equation}\label{F=f}
  F(x, Du, D^2 u) =  f(x) \quad \text{in} \quad \Omega,
\end{equation}
where $\Omega\subset\rn$ is a bounded domain, and $F$ is a uniformly elliptic operator satisfying suitable growth conditions with respect to the gradient entry. The source term $f$ has appropriate integrability, by going through Lebesgue spaces $L^p(\Omega)$ for $p\in (p_0,+\infty]$, where $p_0$ is the Escauriaza's exponent. \vspace{0.02cm}

Over the past years, fully nonlinear operators with nonlinear gradient growth have been widely recognized and investigated, in particular due to their importance in applications. For instance, several research interests include free boundary problems of obstacle type \cite{KoTa19}, Phragm\'{e}n-Lindel\"{o}f type results \cite{Tate20}; multiplicity of solutions for nonproper equations with natural growth in the gradient \cite{multiplicidade}; stochastic homogenization of Hamilton-Jacobi models \cite{Ahomo}, H\"{o}lder estimates for degenerate PDEs with coercive Hamiltonians \cite{CDLP10}, and boundary regularity for equations and differential inequalities \cite{BGMW}, among others.

Frequently in these studies, regularity results for generic profiles play a key role. In the matter of superlinear gradient growth, H\"{o}lder regularity was developed in \cite{arma2010} (see also \cite{CDLP10}) while $C^{1,\alpha}$ regularity in \cite{Norn19}, both regarding unbounded coefficients in the $L^p$-viscosity sense. Instead, as far as sublinear gradient growth is concerned, its study usually has to do with degenerate equations which are treated via completely different techniques in a proper $C$-viscosity sense; see \cite{BD15, BDL19-2} and \cite{KoKo17} for $C^{1,\alpha}$ and H\"{o}lder regularity results, respectively.

Here we show that a unified treatment can be employed for both sublinear and superlinear gradient regimes in the $L^p$-viscosity sense. We investigate the optimal regularity achieved under minimal integrability imposed on the source term and on the leading coefficients. In particular, we extend the $C^{1,\alpha}$ results in \cite{BD15} and \cite{BDL19-2}, \cite{Norn19} and \cite{Tei14}. More precisely, in view of \cite{BD15, BDL19-2} we cover sublinear growth Hamiltonians and source term which are possibly unbounded near the boundary; while in light of \cite{Norn19} we are able to deal with superlinear growth Hamiltonians which are subquadratic and unbounded in the gradient.
Furthermore, we obtain $C^{\textrm{Log-Lip}}$ and $C^{1,\textrm{Log-Lip}}$ regularity estimates when the source term attains the borderline and asymptotic scenarios, respectively. This kind of structure has stood out recently \cite{daSilRic19, daST17, PT16, Tei14}, and also appears as the optimal regularity for fully nonlinear nonconvex operators with bounded source terms, see \cite{dosPT16} and \cite{ST}.
Up to our knowledge, these did not seem to be available for general  nonlinear gradient growth structures and unbounded coefficients.
In contrast, when we were finishing writing this paper, we learned about a very recent paper \cite{Wangpointwise2020} in which the authors extend \cite{Norn19}, by proving pointwise regularity for equations with superlinear growth controlled by a bounded quadratic growth. In our work, instead we allow the superlinear gradient coefficient to be unbounded, besides considering also the sublinear growth regime.

\smallskip

Next, we introduce our hypotheses. To this end, we recall the definition of \textit{Pucci's extremal operators}
$$
\displaystyle \mathcal{M}_{\lambda, \Lambda}^-(X)=\inf_{\lambda I \le A\le \Lambda I} \mathrm{tr}(AX) \quad  \text{and}\quad  \mathcal{M}_{\lambda, \Lambda}^+(X)=\sup_{\lambda I \le A\le \Lambda I} \mathrm{tr}(AX), \quad \textrm{ for } X\in \mathbb{S}^n ,
$$
with ellipticity constants $0<\lambda\leq \Lambda< \infty$; see \cite[Ch.2, \S 2]{CafCab} for their properties. Here $I$ is the identity matrix and $\mathbb{S}^n$ as the set of symmetric $n\times n$ matrices with real entrances.
\vspace{0.05cm}

Throughout the text $F: \Omega \times\rn\times\mathbb{S}^n \to \mathbb{R}$ will always be a measurable function, and $H:\Omega\times \rn\times\rn\to \real$ stands for the Hamiltonian term. We impose on them the following structural condition:
\begin{itemize}
\item[(A1)]({{\bf Structure}}) $F(\cdot,0,0)\equiv 0$, and for any $r,s\in \real$, $\xi,\eta\in \rn$, and $X,Y\in \mathbb{S}^n$ we have
\begin{equation}\label{SC}
    \mathcal{M}_{\lambda, \Lambda}^-(X-Y)-H(x,\xi,\eta)
    \leq F(x,\xi,X)-F(x,\eta,Y)\leq\mathcal{M}_{\lambda, \Lambda}^+(X-Y)+H(x,\xi,\eta),
\end{equation}
where $H$ assumes one of the following two forms, for some $q,\varrho\geq p$ with $q,\varrho>n$:
\begin{enumerate}
\item ({\it Sublinear regime}) For $0<m<1$,
\begin{center}
$
H(x,\xi,\eta)= b(x)|\xi-\eta|+\mu (x)|\xi-\eta|^m;
$
\end{center}

\item ({\it Superlinear/Quadratic regime}) For $1< m\leq  2$,
\begin{center}
$
H(x,\xi,\eta) = b(x)|\xi-\eta|+\mu (x)|\xi-\eta|\,(|\xi|^{m-1}+|\eta|^{m-1});
$
\end{center}
\end{enumerate}
where $b\in L^{\varrho}_+ (\Omega)$ and $\mu\in L^q_+ (\Omega)$, for some $q, \varrho\in (n,+\infty)$; with $\mu(x)\equiv \mu \geq 0$ if $m=2$.
\end{itemize}
In the linear regime, i.e. $m=1$, we simply write $H(x,\xi,\eta)= b(x)|\xi-\eta|$, for $b \in L^\varrho_+(\Omega)$, $\varrho>n$.

\bigskip

Simple model cases of $F$ we may have in mind are the extremal operators in the form
\begin{align}\label{def Lm}
\mathcal{L}^\pm_m\,[u]\,\defeq \,\mathcal{M}^{\pm}_{\lambda, \Lambda}(D^2 u) \pm b(x)|Du|\pm \mu(x)|Du|^m , \;\; m\in (0,2], \;b\in L^\varrho_+(\Omega),\,\mu \in L^q_+(\Omega),
\end{align}
which often appear in optimal control problems and game theory.
Note that \eqref{def Lm} satisfies (A1), since for $m\in (0,1)$ one has $|\xi|^m-|\eta|^m\leq |\xi-\eta|^m$; while for $m>1$ the convexity of the function $\xi\mapsto |\xi|^m$ yields $|\xi|^m\ge |\eta|^m+m\langle |\eta|^{m-2}\eta,\xi-\eta\rangle$.
But we actually go beyond, by allowing $F$ to be a Hamilton–Jacobi–Bellman (HJB) or Isaac's operator, which appear as infinitesimal generators of the underlying stochastic processes. For example,
we mention the time dependent HJB equation
\begin{center}
$u_t -\mathrm{tr}(A(x)D^2u)+H(x,Du)=0$ \;\; in $\rn \times (0,+\infty)$,
\end{center}
equipped with the initial boundary condition, where $A$ is the diffusion and $H$ is a convex superlinear Hamiltonian, which are subject to probabilistic formulations, see \cite{Ahomo}.

On the other hand, problems with sublinear gradient growth often relate to a class of singular equations, in the sense that they degenerate when the gradient vanishes. For instance, by \cite[Lemmas 5.2, 5.3]{KoKo17}, a suitable viscosity solution of \vspace{-0.1cm}
\begin{center}
	$\mathcal{M}^+(D^2u)+\mu(x)|Du|^m=0$, \;\; with $0<m<1$ and $\mu\in C(\Omega)\cap L^p(\Omega)$,
\end{center}
is a viscosity solution of
	$|Du|^{-m}\mathcal{M}^+(D^2u)+\mu(x)=0$, see details in Section \ref{section p-Lap}.
Problems of this sort might be seen as a fully nonlinear counterpart of the $p$-Laplace equation, hence their relevance.

In turn, equations whose gradient growth is quadratic -- also called natural in the literature --  have appeared since the late eighties. For instance, we quote regularity problems regarding gradient boundary estimates  (Hopf-Oleinik type results) in nondivergence form studied in \cite{LadUralt}. There, they considered differential linear inequalities in an appropriate weak sense with unbounded ingredients; see also \cite{BGMW}
for some recent related results in the fully nonlinear setting.

In general, fully nonlinear equations with natural gradient growth have been an increasing topic of investigation in the last years, specially impulsed by existence results and a multiplicity phenomenon obtained in \cite{arma2010}. To treat multiplicity as in \cite{multiplicidade}, it demands nontrivial techniques from nonlinear analysis and regularity estimates such as half-Harnack inequalities, and a V\'{a}zquez type strong maximum principle for such kind of equations; see \cite{Sir2018} and \cite{SSvazquez}. As discussed in \cite{multiplicidade}, the latter is the main tool to extend the multiplicity results in \cite{multiplicidade}, as well as those in \cite{NSS2020}, to the unbounded coefficients scenario. Indeed, as pointed out in \cite{SSvazquez}, one applies instead the recent V\'{a}zquez maximum principle from \cite{SSvazquez}. This is possible once we have, as in \cite{multiplicidade}, a Perron type result for unbounded coefficients  -- which we prove here to our general class of equations, in addition to regularity for equations with unbounded explicit zero order term from \cite[Theorem 1, Remark 1.2]{Norn19}; see Section \ref{section perron}.

\smallskip

Now, we are in a position to state our main results, accordingly to $p$ in the integrability of the source term $f\in L^p(\Omega)$.
Recall that $\varrho, q$ indicate the integrability of the coefficients  $b\in L^\varrho_+(\Omega)$ and $\mu\in L^q_+(\Omega)$, while $m$ stands for the growth of the gradient in $|Du|^m$. We assume:
\begin{itemize}
	\item[(A2)]({{\bf Exponents relation}})
Let $p>p_0$ and $\varrho \ge p$, $q\geq p$ with $\varrho, q\in (n, +\infty]$, then:
\begin{enumerate}
	\item ({\it Sublinear regime}) For $0<m<1$, if $p< n$ we suppose $q(mp+(1-m)n)> pn$;
	
	\item ({\it Superlinear subquadratic regime}) For $1< m<  2$ we assume $q(2-m)>n$.

\item ({\it Quadratic regime}) For $m=2$ we set $q=+\infty$.
\end{enumerate}
\end{itemize}
Here,
$p_0=p_0 (n, \lambda, \Lambda)\in [{n}/{2},n)$ is Escauriaza's constant (see \cite[Theorem 1]{Esc93}), which gives a validity's range for the generalized maximum principle (GMP for short), see \cite{CCKS, KSmpite2007}.
In the linear gradient regime $m=1$ no additional assumption is necessary, but the standard drift integrability $\varrho \ge p$, $\varrho>n$.

\begin{rmk}\label{rmk bounds m,q}
When $p_0<p<n$ and $0<m<1$, (A3) asks for $q$ to satisfy $q> \frac{pn}{mp+(1-m)n}$; alternatively, by fixing $q$ one gets a bound from below on $m$ given by $m>\frac{n}{q}\frac{q-p}{n-p}$ if $p_0<p<n$.
Instead, when $m\in (1,2)$, we are asking $q$ to be large enough so that $q>\frac{n}{2-m}$; alternatively by fixing $q$ we obtain the bound from above on $m$ given by $m<2-\frac{n}{q}$.
\end{rmk}

Let us define the function $r\in \real\mapsto W(r)$ which will describe all our local estimates accordingly to the norm of an appropriate source term $f$ norm:
\begin{align}\label{def W geral}
 W=W_{u,\mu,m,\Omega}\,(r)\defeq
\begin{cases}
\,  \| u \|_{L^{\infty} (\Omega)}  + r + \|\mu\|^{\frac{1}{1-m}}_{L^q(\Omega)}  &\textrm{if } m\in (0,1)\\
\, \| u \|_{L^{\infty} (\Omega)}  + r  &\textrm{if } m\in [1,2]\end{cases},
\quad \textrm{$W\le W_0$\, if $m\in (1,2]$.}
\end{align}
Accordingly to each situation we might be emphasizing one or more functions/parameters dependence.

First of all, we mention that H\"{o}lder regularity estimates are available for $L^p$-viscosity solutions of \eqref{F=f} under (A1) and (A2). More generally we only need to have solutions of differential inequalities, as we gather in the next theorem statement.
\begin{teo}[\cite{KoKo17, KSexist2009, arma2010}]\label{Calpha known}
Assume (A2). Let $f\in L^p(\Omega)$ and $u$ be a bounded $L^p$-viscosity solution of the inequalities
	\vspace{-0.1cm}
	$$ \mathcal{L}^-_m[u]\leq f(x) \quad \text{and} \quad \mathcal{L}^+_m[u]\geq -f(x) \quad \text{in} \quad \Omega,$$
where $b\in L^\varrho_+(\Omega)$, $\mu \in L^q_+(\Omega)$ for $q,\varrho\geq p >p_0$ with $q,\varrho>n$; and $q=\infty$ if $m= 2$.	
	Then, there exists $\beta\in (0,1)$ depending on $n,p,q,\varrho,\lambda,\Lambda,\|b\|_{L^\varrho(\Omega)}$ such that $u\in C^{0,\beta}_{\mathrm{loc}} (\Omega)$, and
	\begin{align}\label{estim Calpha local}
\|u\|_{C^{0,\beta}(\overline{\Omega^\prime})} \leq CW_{u,\mu,m,\Omega\,}(\,\|f\|_{L^p(\Omega)}) , \textrm{ \;\; for $W$ as in }\eqref{def W geral},
	\end{align}
	for any subdomain $\Omega^\prime \subset\subset \Omega$,
	where $C$ depends on $n,m,p,\varrho,q,\lambda,\Lambda,\beta, \| b \|_{L^\varrho(\Omega)}, \| \mu \|_{L^{q}(\Omega)}, \mathrm{diam}(\Omega)$, and $\mathrm{dist} (\Omega^\prime,\partial\Omega)$; in addition to a $W_0$ in the case $m\in (1,2]$.
\end{teo}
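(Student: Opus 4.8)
The plan is to reduce, on interior balls, the two differential inequalities satisfied by $u$ to a pair of \emph{Pucci extremal inequalities with unbounded drift} and a modified source that still lies in a Lebesgue class above Escauriaza's exponent, and then to invoke the interior Hölder estimate — equivalently, the weak Harnack inequality — available for such inequalities from \cite{KSexist2009, CCKS} (which rests on Escauriaza's ABP \cite{Esc93} and the Krylov--Safonov measure decay). Since $C^{0,\beta}$ regularity is a local notion and the class of inequalities in (A1) is invariant under affine rescalings of the domain, it suffices to establish an oscillation decay $\mathrm{osc}_{B_\rho(x_0)}\,u\lesssim\rho^\beta$ on a fixed ball $B_{2r}(x_0)\subset\subset\Omega$; the passage to an arbitrary $\Omega'\subset\subset\Omega$ is then a routine covering of $\Omega'$ by balls of radius $\tfrac14\mathrm{dist}(\Omega',\partial\Omega)$, which is what introduces $\mathrm{dist}(\Omega',\partial\Omega)$ and $\mathrm{diam}(\Omega)$ into the constant $C$.

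\textbf{Reducing the Hamiltonian.} On the ball $B_{2r}(x_0)$, I would pass from the inequalities for $u$ to
\begin{equation*}
\mathcal{M}^-_{\lambda,\Lambda}(D^2u)-\mathfrak b(x)|Du|\le g(x),\qquad \mathcal{M}^+_{\lambda,\Lambda}(D^2u)+\mathfrak b(x)|Du|\ge -g(x),
\end{equation*}
in the $L^{\tilde p}$-viscosity sense, with $\mathfrak b\in L^{\tilde\varrho}_+$, $\tilde\varrho>n$, and $g\in L^{\tilde p}$, $\tilde p\in(p_0,p]$. In the \emph{sublinear} regime $0<m<1$ this is immediate from Young's inequality $\mu(x)|\xi-\eta|^m\le|\xi-\eta|+C(m)\,\mu(x)^{\frac1{1-m}}$: then $\mathfrak b=b+1\in L^{\varrho}_+$, so the Hölder exponent inherits a dependence only on $\|b\|_{L^\varrho}$, while $g=|f|+C(m)\mu^{\frac1{1-m}}$ belongs to some $L^{\tilde p}$ with $\tilde p>p_0$ precisely because $p>p_0$ and the compatibility condition (A2)(1) holds, with $\|g\|_{L^{\tilde p}(B_{2r})}\lesssim\|f\|_{L^p(\Omega)}+\|\mu\|_{L^q(\Omega)}^{\frac1{1-m}}$ on the bounded domain — this is exactly what produces the term $\|\mu\|_{L^q(\Omega)}^{\frac1{1-m}}$ in \eqref{def W geral}; alternatively one appeals to \cite{KoKo17} directly, where the sublinear first-order term is kept in the equation and handled at the level of the weak Harnack inequality. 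In the \emph{superlinear/quadratic} regime $1<m\le2$ the term $\mu|Du|^m$ cannot be moved to the right-hand side without an a priori gradient bound, so a genuinely nonlinear device is required: following \cite{arma2010}, in the unbounded-coefficient refinement also used in \cite{Norn19}, one applies an exponential-type change of unknown $v=\Psi(u)$, with $\Psi,\Psi^{-1}$ smooth, increasing and controlled on the range $[-W_0,W_0]$ — this is why the normalization $W\le W_0$ is imposed when $m\in(1,2]$ — designed so that $v$ satisfies inequalities of the above Pucci type; for $1<m<2$ the hypothesis (A2)(2), i.e. $q(2-m)>n$, is exactly the integrability that (together with the $W^{1,\delta}$ bound valid for such $v$) keeps the transformed drift in some $L^{\tilde\varrho}$ with $\tilde\varrho>n$, while for $m=2$, with $\mu$ constant, $\Psi$ is chosen to cancel the quadratic term exactly.

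\textbf{Conclusion and main obstacle.} With the reduction in place, applying the interior Hölder estimate of \cite{KSexist2009, CCKS} yields $\beta=\beta(n,p,q,\varrho,\lambda,\Lambda,\|b\|_{L^\varrho(\Omega)})\in(0,1)$ and $\|u\|_{C^{0,\beta}(B_r(x_0))}\le C\big(\|u\|_{L^\infty(B_{2r}(x_0))}+\|g\|_{L^{\tilde p}(B_{2r}(x_0))}\big)$; in the superlinear case one first estimates $v$ and then transfers the regularity to $u=\Psi^{-1}(v)$ using the Lipschitz bound for $\Psi^{-1}$ on $[-W_0,W_0]$. Combining with the bounds on $\|g\|_{L^{\tilde p}}$ obtained above and the covering argument gives \eqref{estim Calpha local} with $W$ as in \eqref{def W geral} and the stated dependence of $C$. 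The localization and the sublinear reduction are routine; the delicate point is the superlinear/quadratic regime, where one must at once choose the change of variables so that the superlinear gradient term linearizes into an admissible drift, verify through the $W^{1,\delta}$ estimate and the subquadratic bound $q(2-m)>n$ that this drift retains supercritical integrability $\tilde\varrho>n$, and make every constant quantitative in the a priori bound $W_0$ — which is unavoidable since $\Psi$ and $\Psi^{-1}$ are controlled only on bounded ranges.
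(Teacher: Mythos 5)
The paper treats Theorem \ref{Calpha known} as essentially a citation result and, in the paragraph following the statement, explains exactly which pieces of the literature give each regime: the sublinear case $0<m<1$ is obtained by running the Krylov--Safonov machinery of \cite{KSweakharnack} with the sublinear gradient term kept inside the growth lemmas, using the ABP, local maximum principle, and weak Harnack inequality from \cite[Theorems 3.3, 4.5, Corollary 4.6]{KoKo17}; the superlinear subquadratic case $1<m<2-\tfrac{n}{q}$ is obtained directly from \cite[Theorem 6.2]{KSweakharnack} and \cite[Theorem 4.5]{KSexist2009}; and only the quadratic case $m=2$ with constant $\mu$ uses an exponential change of variable as in \cite{arma2010}. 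Your strategy instead tries to \emph{reduce all regimes} to the linear-drift Pucci inequalities, and this is where two of your steps do not survive scrutiny.

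\textbf{The sublinear reduction fails.} You apply Young's inequality $\mu|\xi-\eta|^m\le m|\xi-\eta|+(1-m)\mu^{1/(1-m)}$ and put $g=|f|+(1-m)\mu^{1/(1-m)}$ on the right. For this to feed into the interior H\"older estimate for drift-Pucci inequalities you need $g\in L^{\tilde p}$ with $\tilde p>p_0$. But $\mu^{1/(1-m)}\in L^{(1-m)q}$ only, and you would need $(1-m)q>p_0$, which (A2) does \emph{not} give. In fact (A2)(1) is vacuous under the standing assumptions: since $0<m<1$ and $p<n$, $mp+(1-m)n$ is a convex combination of $p$ and $n$, so $\tfrac{pn}{mp+(1-m)n}\in(p,n)$ and (A2)(1) is automatically implied by $q>n$. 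Taking for instance $m$ close to $1$ and $q$ just above $n$, one has $(1-m)q\ll n/2\le p_0$, so $g$ need not lie in any $L^{\tilde p}$ with $\tilde p>p_0$, and the reduced problem falls below Escauriaza's exponent. You cannot repair this by keeping the $\mu$-term as a drift either: writing $\mu|\xi-\eta|^m\le\mu+\mu|\xi-\eta|$ would make the H\"older exponent $\beta$ depend on $\|\mu\|_{L^q}$, contradicting the statement that $\beta$ depends only on $\|b\|_{L^\varrho}$. The resolution taken in \cite{KoKo17} (and used by the paper) is structurally different: the sublinear term stays inside the barrier/contact-set argument, where $|Du|$ is bounded by the barrier's gradient, so $\mu|Du|^m\lesssim\mu\in L^q$, $q>n$, and the absorption producing $\|\mu\|_{L^q}^{1/(1-m)}$ happens in the final algebraic step of the ABP, not on the equation. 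Your ``alternatively one appeals to [KoKo17]'' clause is the correct route; the Young reduction you lead with is not.

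\textbf{The superlinear subquadratic reduction is not available.} For $m=2$ with constant $\mu$ the exponential change $v=\Psi(u)$ kills the quadratic term because, under $D^2v=\Psi''\,Du\otimes Du+\Psi'\,D^2u$, the $\Psi''$-term is itself quadratic in $Du$ and can be matched to $\mu|Du|^2$ by choosing $\Psi$ exponentially; this is exactly the paper's alternative for $m=2$ and that part of your argument is fine. For $1<m<2$, however, no such cancellation occurs: the term generated by the change of variables is always of order $|Du|^2$, not $|Du|^m$, so ``designing $\Psi$ so that $v$ satisfies a Pucci inequality with linear drift'' is not possible for a generic $|Du|^m$ term. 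The invocation of the $W^{1,\delta}$ estimate to claim the transformed drift lies in some $L^{\tilde\varrho}$ with $\tilde\varrho>n$ is also unsubstantiated and would require $\delta>\tfrac{nq(m-1)}{q-n}$, which fails for the small universal $\delta$ of Caffarelli's estimate when $q\downarrow n$ or $m\uparrow 2$. What the paper actually uses here is the weak Harnack inequality of \cite[Theorem 6.2]{KSweakharnack} (and \cite[Theorem 4.5]{KSexist2009}), which handles $\mu|Du|^m$ with $\mu\in L^q$ directly through an iterated ABP scheme; the condition $q(2-m)>n$ in (A2)(2) is precisely the scaling condition $p(mq-n)>nq(m-1)$ required by \cite{KSweakharnack}, not an integrability hypothesis supporting a drift linearization.

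In short: your localization, covering, and $m=2$ discussion agree with the paper, but the two reductions you propose for $0<m<1$ and $1<m<2$ do not close under (A2), and the paper's argument — citing the sublinear/superlinear weak Harnack inequalities that keep the nonlinear gradient term inside the growth lemmas — is not equivalent to those reductions.
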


Indeed, the quadratic case $m=2$ for $p\geq n$, $\mu(x)\equiv \mu$ was proved in \cite[Theorem 2]{arma2010}. However, the same proof there also works for $p\in (p_0,n)$ if one replaces ABP by GMP (that is, weak ABP for $p_0<p<n$); alternatively one can combine \cite[Theorem 6.2]{KSweakharnack} with $m=1$ and the exponential change in \cite[proof of Theorem 2]{arma2010} since H\"{o}lder regularity is applicable for solutions of differential inequalities.
Moreover, by \cite[Theorem 6.2]{KSweakharnack}, the superlinear regime $m\in (1,2-\frac{n}{q})$ can be treated in \cite[Theorem 4.5]{KSexist2009} for $n<q<\infty$.
As observed in \cite{KSexist2009}, the assumption $m<2-n/q$ implies $ p(mq-n)> nq(m-1)$ for $m>1$, if $p< n$ and $q<+\infty$, cf.\ (3.1)(iii) needed for accessing $C^{0,\beta}$ results in \cite{KSweakharnack}.
In the particular case $m=1$, we also refer to \cite{Kry20} for existence and H\"older regularity results when the drift term is in $L^n$.

\vspace{0.01cm}

Finally, the sublinear case $m\in (0,1)$ was taken into account in \cite{KoKo17}. As a matter of fact, \cite[Theorem 5.4]{KoKo17} was stated for continuous coefficients, since $p$-Laplacian-like operators were considered. However, following the proof there, the argument for H\"{o}lder regularity can be performed as in \cite{KSweakharnack} for unbounded coefficients, by making use of \cite[Theorem 4.5 and Corollary 4.6]{KoKo17} directly.

\medskip

The first result of this work is to exhibit an explicit exponent which comes from scaling and realizes the H\"{o}lder regularity above, in light of \cite{daST17} and \cite{Tei14}. This gives us the precise dependence on the integrability of the source term $f$.

In order to access our main results, we need to require a sort of oscillation control on $F$ in the $x$ entry. In order to measure it around a point $x_0 \in \Omega$, we define, as in \cite{Caf89, Swiech},
\begin{align} \label{def beta}
\displaystyle \beta(x,x_0)=\beta_F(x,x_0)\defeq \sup_{X\in \mathbf{S}^n\setminus \{0\} } \frac{|F(x,0,X)-F(x_0,0,X)|}{\|X\|}.
\end{align}
Notice that $\beta(\cdot, \cdot)$ is a bounded function by \eqref{SC}. We so consider the following hypothesis, as in \cite{Caf89, Winter}:

\smallskip

\begin{itemize}
	\item[(A3)]({{\bf Control of oscillation}})
	Given $\theta>0$, there exists $r_0=r_0\, (\theta)>0$ such that
	\begin{align}\label{Htheta} \tag{$H_{\theta}$}
	\left( \,\intav{B_r(x_0)\cap\Omega} \;\beta (x,x_0)^p \d x \right)^{\frac{1}{p}} \leq \theta \, ,\;\textrm{ for all } r\leq r_0.
	\end{align}
\end{itemize}

We so state a sharp H\"{o}lder regularity estimate for solutions of \eqref{F=f} under suitable integrability of the data, in light of \cite[Theorem 1]{Tei14} and \cite[Theorem 3.2]{daST17}.

\begin{teo}[{\bf H\"{o}lder estimate}]\label{Calpha regularity estimates geral}
Assume (A1)--(A3). 
Let $f \in L^p (\Omega)$ and $u$ be a bounded $L^p$-viscosity solution of \eqref{F=f}, for $p \in (p_0, n)$.
Then there exists $\theta$ depending on $n,m,p,q,\varrho,\lambda,\Lambda,\|b\|_{L^\varrho(\Omega)}$ such that if \eqref{Htheta} holds for all $r\leq \min \{ r_0, \mathrm{dist} (x_0,\partial\Omega)\}$, for some $r_0>0$ and for all $x_0 \in \Omega$, this implies that $u\in C^{0,\beta}_{\mathrm{loc}} (\Omega)$ and the estimates \eqref{estim Calpha local} are true for $\beta=2-\frac{n}{p}$.
\end{teo}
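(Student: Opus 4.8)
The plan is to establish the sharp Hölder exponent $\beta = 2 - \frac{n}{p}$ via a geometric iteration (compactness/perturbation) argument in the spirit of Teixeira and da Silva–Teixeira, exploiting the fact that the baseline Hölder regularity of Theorem \ref{Calpha known} is already available to us. The core mechanism is a discrete ``squeezing'' lemma: at each scale we approximate $u$ by a solution $h$ of a homogeneous limiting equation $F_\infty(D^2 h)=0$ (with frozen coefficients and no Hamiltonian/source), use the $C^{1,\alpha}$ (indeed $C^{2,\alpha}$) interior estimate for $h$ to gain one order of decay of the oscillation, and then pass to the rescaled function $u_k(x)=u(r^k x)/(r^{k})^{\beta}\cdot(\text{normalization})$, checking that it solves an equation of the same structural type with coefficients whose relevant norms do \emph{not} blow up. Iterating the one-step decay estimate yields $\mathrm{osc}_{B_{r^k}}u \lesssim r^{k\beta}$, which is exactly $C^{0,\beta}$ at the point $x_0$; covering $\Omega'$ gives the local estimate with the stated $W$-dependence.

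First I would set up the normalization: assuming without loss of generality that $\|u\|_{L^\infty(\Omega)}\le 1$, $B_1\subset\Omega$, and that the scaling-invariant quantity $\|f\|_{L^p(B_1)} + (\text{coefficient norms})$ is smaller than a universal $\varepsilon$ to be chosen — this reduction is legitimate precisely because the power $\beta = 2-\frac np$ is the scaling exponent: under $u\mapsto \tau^{-1}u(\tau x)$ the source scales like $\tau^{2-n/p}$ in $L^p$, the drift term $b$ scales favorably since $\varrho>n$, and the Hamiltonian term scales with a strictly positive power of $\tau$ in the relevant norms under (A2) (the conditions $q(2-m)>n$ in the superlinear case and $q(mp+(1-m)n)>pn$ in the sublinear case are exactly what make the $\mu$-contribution subcritical for this exponent). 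Then I would prove the key \textbf{approximation lemma}: if $u$ is an $L^p$-viscosity subsolution/supersolution of the inequalities in Theorem \ref{Calpha known} with all data $\le\varepsilon$, and if (A3) holds with $\theta$ small, then there is $h$ solving $\mathcal M^+_{\lambda,\Lambda}(D^2 h)\ge 0 \ge \mathcal M^-_{\lambda,\Lambda}(D^2 h)$ in $B_{3/4}$ with $\|u-h\|_{L^\infty(B_{1/2})}\le \delta$. This is a standard stability/compactness argument for $L^p$-viscosity solutions (Caffarelli–Crandall–Kocan–Święch), using the uniform Hölder bound from Theorem \ref{Calpha known} for compactness and the fact that the Hamiltonian terms vanish in the limit because $|Du|$ stays controlled in $L^\infty_{\mathrm{loc}}$ along the approximating sequence (or, more carefully, because the $L^q$ norm of the coefficients against the gradient growth tends to zero).

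Next comes the \textbf{one-step decay}: since $h$ satisfies Pucci inequalities and is $C^{1,\alpha_0}$ interior with universal estimates, for $\rho$ small (universal) we get $\mathrm{osc}_{B_\rho} h \le \frac12 \rho^\beta \sup|h| + C\rho^{1+\alpha_0}$; choosing $\rho$ then $\delta$ then $\theta,\varepsilon$ in the right order, the triangle inequality transfers this to $u$: $\mathrm{osc}_{B_\rho}(u - \ell_0) \le \rho^\beta$ for a suitable affine (or just constant, since we only want $C^{0,\beta}$) correction. Rescaling $u_1(x) = \rho^{-\beta}(u(\rho x) - c_0)$ and verifying it satisfies the same hypotheses with the same smallness — this is where one must track that $\theta$ only needs to be fixed once (hypothesis (A3) is scale-robust because the modulus $r_0$ is allowed to depend on $\theta$ but the exponent-$p$ average is monotone under shrinking balls) — lets the argument iterate. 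The \textbf{main obstacle} I anticipate is precisely this scale-invariance bookkeeping for the Hamiltonian term: one must verify that after rescaling, the coefficient $\mu$ of $|Du|^m$ does not see its effective $L^q$ norm grow, and this is exactly the role of the inequality in (A2); getting the borderline arithmetic right (and handling $p<n$, where one uses GMP/weak-ABP rather than ABP, as flagged in the discussion after Theorem \ref{Calpha known}) is the delicate point. A secondary technical nuisance is that for $0<m<1$ the equation is degenerate where $Du=0$, so the approximation lemma must be stated for $L^p$-viscosity \emph{inequalities} (which is all Theorem \ref{Calpha known} needs) rather than for the equation itself, and one must carry the extra additive term $\|\mu\|_{L^q}^{1/(1-m)}$ through the normalization, matching the definition of $W$ in \eqref{def W geral}.
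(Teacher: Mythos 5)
Your proposal follows essentially the same route as the paper: normalize to a smallness regime by the scaling in Section \ref{section scaling}, prove a one-step oscillation-decay lemma (the paper's Lemma \ref{lemma3.1}) by comparing $u$ with a solution $h$ of the frozen homogeneous equation via the approximation Lemma \ref{AproxLem}, and then iterate with $v_k(x)=r_k^{-\alpha_0}(u(y+r_kx)-\mathfrak{a}_k)$ while checking that the rescaled coefficients $b_{F_k},\mu_{F_k},f_k$ stay under the smallness threshold — which is exactly where $\alpha_0=2-\frac{n}{p}$ is critical for $f$ and where (A2) (the positivity of $A=1-\frac{n}{q}+(1-\alpha_0)(1-m)$) keeps $\mu$ subcritical. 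One small imprecision: you invoke ``$C^{1,\alpha}$ (indeed $C^{2,\alpha}$) interior estimates'' for $h$, but $C^{2,\alpha}$ is not available for general nonconvex $F$; the paper uses only the Lipschitz bound $|h(x)-h(0)|\le C|x|$ from Proposition \ref{prop F(X)}(i), which is all that is needed since $\alpha_0<1$ supplies the gain $\gamma^{1-\alpha_0}$ in the one-step decay. This is cosmetic and does not affect the argument's validity.
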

It is striking that $\beta$ can be chosen independently of $\varrho,q$; differently from H\"older gradient regularity ahead, for which the integrability of $b,\mu$ plays an important role.

The next step is to establish improved quantitative estimates for the borderline scenario $p=n$.
\begin{defin}
For $\Omega^\prime \subset\subset \Omega$ let us consider the quantity
\begin{align}\label{norm LopLip}
[u]_{C^{0, \text{Log-Lip}}(\overline{\Omega^\prime})} \defeq \sup_{x_0\in \Omega^{\prime} \atop{ 0< r\leq \diam(\Omega) }} \,    \inf_{{\frak{a}} \in \real} \frac{\|u-\frak{a}\|_{L^{\infty}(B_r(x_0)\cap \Omega^{\prime})}}{r|\log r|} .
\end{align}
We say that $u\in C^{0,\text{Log-Lip}} (\overline{\Omega^\prime})$ when $u\in C^0(\overline{\Omega^\prime})$ and $[u]_{C^{0, \text{Log-Lip}}(\overline{\Omega^\prime})}<+\infty$.
\end{defin}

We exploit the approach in \cite[Section 4]{daST17}, \cite[Theorems 1.4]{DKM14} and \cite[Theorem 2]{Tei14}, which does not seem available even for equations without lower order terms when $m< 1$.
\begin{teo}[{\bf Log-Lipschitz estimate}]
\label{Log-Lip estimates}
Assume (A1)--(A3). Let $f \in L^n (\Omega)$ and $u$ be a bounded $L^n$-viscosity solution of \eqref{F=f}. Then, there exists $\theta$, depending on $n,m,p,q,\varrho,\lambda,\Lambda,\|b\|_{L^\varrho(\Omega)}$, such that if \eqref{Htheta} holds for all $r\leq \min \{ r_0, \mathrm{dist} (x_0,\partial\Omega)\}$, for some $r_0>0$ and for all $x_0 \in \Omega$, then $u\in C^{0,\text{Log-Lip}}_{\mathrm{loc}} (\Omega)$, and
\begin{align}\label{W do Log-Lip}
[u]_{C^{0, \text{Log-Lip}}(\overline{\Omega^\prime})} \le C W_{u,\mu,m,\Omega\,}(\,\|f\|_{L^n(\Omega)}) , \textrm{ \;\; for $W$ as in }\eqref{def W geral},
\end{align}
for any subdomain $\Omega^\prime \subset\subset \Omega$,
where $C$ depends only on $\,r_0,n,m,p,q,\varrho,\lambda,\Lambda, \| b \|_{L^\varrho(\Omega)}, \| \mu \|_{L^{q}(\Omega)}, \mathrm{diam}(\Omega)$, and $\mathrm{dist} (\Omega^\prime,\partial\Omega)$; and also on $W_0$ when $m\in (1,2]$.
\end{teo}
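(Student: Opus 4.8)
The plan is to run a Caffarelli--type geometric iteration at the point $x_0$, with the twist that, since the source sits \emph{exactly} at the borderline exponent $p=n$, the affine correctors are forced to grow logarithmically rather than to converge; this logarithmic growth of slopes is precisely what turns the expected $C^{1,\alpha}$/Lipschitz behaviour into the Log-Lipschitz modulus. The pivotal scaling observation is that under a Lipschitz rescaling $u\mapsto\rho^{-k}u(\rho^k\cdot)$ the rescaled right-hand side $\rho^{k}f(\rho^k\cdot)$ has $L^n(B_1)$-norm equal to $\|f\|_{L^n(B_{\rho^k})}$: it stays uniformly bounded but, unlike when $p>n$, it does not decay from scale to scale. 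All the analysis is carried out in the $L^n$-viscosity framework of \cite{CCKS}. As a preliminary \emph{normalization}, put $\kappa:=W_{u,\mu,m,\Omega}(\|f\|_{L^n(\Omega)})$ (so $\kappa\ge\|u\|_{L^\infty(\Omega)}$, $\kappa\ge\|f\|_{L^n(\Omega)}$, and for $m<1$ also $\kappa^{1-m}\ge\|\mu\|_{L^q(\Omega)}$, while $\kappa\le W_0$ when $1<m\le2$) and, for a small radius $\tau\le\min\{r_0,\mathrm{dist}(x_0,\partial\Omega)\}$, consider $v(y):=\kappa^{-1}u(x_0+\tau y)$ on $B_1$. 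Then $\|v\|_{L^\infty(B_1)}\le1$ and $v$ solves in $B_1$ an equation of type \eqref{F=f}--\eqref{SC} whose data satisfy $\|\widehat f\|_{L^n(B_1)}\le\tau$, $\|\widehat b\|_{L^\varrho(B_1)}\le\tau^{1-n/\varrho}\|b\|_{L^\varrho(\Omega)}$ and $\|\widehat\mu\|_{L^q(B_1)}\le\tau^{\,2-m-n/q}$ in the sublinear regime, respectively $\le\tau^{\,2-m-n/q}W_0^{m-1}\|\mu\|_{L^q(\Omega)}$ in the superlinear one, where $2-m-n/q>0$ by (A2) --- in the superlinear regime this is exactly the condition $q(2-m)>n$, and it is here that the summand $\|\mu\|_{L^q(\Omega)}^{1/(1-m)}$ in \eqref{def W geral}, resp.\ the a priori bound $W_0$, is consumed in order to absorb the rescaled nonlinear gradient coefficient. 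Hence, fixing $\theta$ and then $\tau$ small (depending only on the admissible constants), all of $\|\widehat f\|_{L^n(B_1)},\|\widehat b\|_{L^\varrho(B_1)},\|\widehat\mu\|_{L^q(B_1)}$ and the $\beta$-mean in (A3), i.e.\ \eqref{Htheta}, become $\le\delta$, for any prescribed $\delta>0$; when $m=2$ one first removes the quadratic term via the classical exponential (Hopf--Cole) substitution, as in \cite{arma2010}. It then suffices to prove $[v]_{C^{0,\text{Log-Lip}}(\overline{B_{1/2}})}\le C$, the scales $r\in(\tau,\diam\Omega]$ being trivial.

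Next I would prove an \emph{approximation lemma}: for every $\varepsilon'>0$ there is $\delta>0$ such that, if $w$ is an $L^n$-viscosity solution in $B_1$ of an equation obeying (A1) with $\|b\|_{L^\varrho(B_1)},\|\mu\|_{L^q(B_1)},\|g\|_{L^n(B_1)}\le\delta$ and \eqref{Htheta} with $\theta\le\delta$, and $\|w\|_{L^\infty(B_1)}\le1$, then there is a solution $h$ of $\mathfrak F(D^2 h)=0$ in $B_{1/2}$, with $\mathfrak F$ a $(\lambda,\Lambda)$-elliptic operator with $\mathfrak F(0)=0$, such that $\|h\|_{C^{1,\bar\alpha}(\overline{B_{1/4}})}\le C_\ast(n,\lambda,\Lambda)$ and $\|w-h\|_{L^\infty(\overline{B_{1/2}})}\le\varepsilon'$. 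This is a compactness/contradiction argument: a putative sequence of counterexamples is precompact in $C^0_{\mathrm{loc}}(B_1)$ by the interior H\"older estimate for differential inequalities (Theorem \ref{Calpha known}); the Hamiltonian coefficients go to $0$ in $L^\varrho,L^q\subset L^n$ and the sources in $L^n$, whereas by \eqref{Htheta} the frozen operators $F_j(0,0,\cdot)$ converge, along a subsequence, locally uniformly on $\mathbb{S}^n$ to some $(\lambda,\Lambda)$-elliptic $\mathfrak F$ with $\mathfrak F(0)=0$; then stability of $L^n$-viscosity solutions identifies the uniform limit as a viscosity solution of $\mathfrak F(D^2 h)=0$, and Caffarelli's interior $C^{1,\bar\alpha}$ estimate \cite{Caf89} applies.

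For the \emph{iteration}, I claim there are $\rho\in(0,1/e)$, $C_0\ge1$ and affine maps $\ell_k(y)=a_k+\xi_k\cdot y$ with $\ell_0\equiv0$ such that, for all $k\ge0$,
\[
\|v-\ell_k\|_{L^\infty(B_{\rho^k})}\le\rho^k,\qquad |\xi_{k+1}-\xi_k|\le C_0 .
\]
Granting this, $|\xi_k|\le C_0 k$, hence $\mathrm{osc}_{B_{\rho^k}}v\le 2\rho^k+2\sqrt n\,C_0 k\,\rho^k\lesssim(1+k)\rho^k$, and interpolating across $r\in[\rho^{k+1},\rho^k)$ (using monotonicity of $r|\log r|$ near $0$) gives $\inf_{\mathfrak a\in\real}\|v-\mathfrak a\|_{L^\infty(B_r)}\lesssim r(1+|\log r|)\lesssim r|\log r|$, i.e.\ $[v]_{C^{0,\text{Log-Lip}}(\overline{B_{1/2}})}\le C$; undoing the normalization then yields \eqref{W do Log-Lip}. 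The claim is proved by induction: given $\ell_k$, set $T_k:=\rho^{-k}\|v-\ell_k\|_{L^\infty(B_{\rho^k})}\le1$ and $v_k(z):=\rho^{-k}(v-\ell_k)(\rho^k z)$, so $\|v_k\|_{L^\infty(B_1)}=T_k\le1$; since $H$ in \eqref{SC} depends only on $\xi-\eta$, the constant shift $\xi_k$ drops out of the drift/sublinear structure (and in the superlinear case contributes only an extra drift term with $L^q$-coefficient $\lesssim\rho^{k(1-n/q)}|\xi_k|^{m-1}\|\widehat\mu\|_{L^q}$, harmless since $q>n$ and $m-1<1$), so the scaling observation shows $v_k$ solves in $B_1$ an equation of type \eqref{SC} with data $\le\delta$ after one fixed adjustment of $\delta$. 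Applying the approximation lemma gives $h$; on $B_\rho$, with $\ell(z):=h(0)+Dh(0)\cdot z$, one has $\|v_k-\ell\|_{L^\infty(B_\rho)}\le\varepsilon'+C_\ast\rho^{1+\bar\alpha}$. Unscaling defines $\ell_{k+1}$ with $\xi_{k+1}-\xi_k=Dh(0)$ (so $|\xi_{k+1}-\xi_k|\le C_\ast=:C_0$) and $T_{k+1}\le\rho^{-1}(\varepsilon'+C_\ast\rho^{1+\bar\alpha})$. Choosing first $\rho$ with $C_\ast\rho^{\bar\alpha}\le\tfrac18$ and then $\varepsilon'\le\tfrac18\rho$ forces $T_{k+1}\le\tfrac14\le1$, closing the induction --- and it is exactly the fact that $T_k$ stays bounded while $\xi_k$ is only allowed to move by a fixed amount at each step that produces slopes of size $\sim k\sim|\log\rho^k|$, hence the Log-Lipschitz, rather than $C^1$, behaviour.

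The main obstacle is the approximation lemma, and within it the \emph{stability of $L^n$-viscosity solutions} as the Hamiltonian coefficients degenerate to zero while the $x$-dependence of the operators is simultaneously frozen via \eqref{Htheta}; this has to be carried out in the measure-theoretic $L^n$-viscosity setting of \cite{CCKS}, keeping track of the fact that along the iteration the gradient shift $\xi_k$ has norm of order $k$. That last point is invisible for $m\le1$ but genuine for $1<m\le2$: the shift perturbs the superlinear term $\mu(x)|\xi-\eta|(|\xi|^{m-1}+|\eta|^{m-1})$ by a drift-type contribution of coefficient $\sim\mu_k(x)|\xi_k|^{m-1}$, and keeping this (together with the normalized $\widehat\mu$) negligible is what forces the threshold $q(2-m)>n$ together with the bound $W\le W_0$, and, for $m=2$, the preliminary exponential change of unknown.
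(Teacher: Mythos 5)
Your overall architecture is the same as the paper's: the paper proves Theorems \ref{Log-Lip estimates} and \ref{C1,alpha regularity estimates geral} in a single iteration (Section \ref{proof gradient regul}), with the Log-Lipschitz case corresponding precisely to setting $\alpha=0$, whence the slopes $\frak{b}_k$ grow only linearly ($|\frak{b}_k|\le kK_2$) instead of converging, and $\mathrm{osc}_{B_{r_k}}u\lesssim (1+k)r_k$ gives the $r|\log r|$ modulus. Your rescaling computation, the factor $\rho^{k(1-n/q)}$ beating the polynomial growth $|\xi_k|^{m-1}\sim k^{m-1}$, and the inductive closure all match the paper's $(i)_k$--$(ii)_k$ with $\alpha=0$. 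One cosmetic simplification in your version is that, because your approximation lemma does not insist on matching boundary data, you avoid the paper's third inductive invariant $(iii)_k$ (the uniform $C^\beta$ bound on $v_k$), which the paper needs to control $\|\psi\|_{C^\tau(\partial B_1)}$ in Lemma \ref{AproxLem}.

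However, your plan has a genuine gap at the \emph{stability step} of the approximation lemma in the sublinear regime $m\in(0,1)$. You invoke ``stability of $L^n$-viscosity solutions'' to identify the compactness limit as a solution of the frozen equation. Proposition \ref{stability} is stated only for $m\in[1,2]$, and, as the paper explicitly points out just before Lemma \ref{AproxLem}, ``it does not seem to hold in general in the sublinear regime.'' The paper's Lemma \ref{AproxLem} contains a dedicated workaround: for $m<1$, one cannot pass to the limit directly, so the proof instead argues against the $C$-viscosity definition for $v_\infty$, producing auxiliary strong solutions $\varphi_k\in W^{2,p}_{\mathrm{loc}}\cap C(\overline B)$ of the sublinear Dirichlet problem $\mathcal L^+_{m,k}[\varphi_k]=-\Phi_k^+$ via \cite[Theorem 3.1]{KoKo17}, then using the sublinear ABP (Proposition \ref{wABPsublin}) and $\|\mu_k\|_{L^q}\to0$ to show $\|\varphi_k\|_{L^\infty}\to0$, and finally reaching a contradiction with the test function $\varphi+\varphi_k$. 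You flag stability as ``the main obstacle'' but give no indication of this mechanism; as written, your argument is incomplete for $m<1$.

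The second gap is your treatment of $m=2$, for which $2-m-n/q=0$ so the scaling produces no decay of $\widehat\mu$. You propose ``one first removes the quadratic term via the classical exponential (Hopf--Cole) substitution.'' The paper does something different and, in the fully nonlinear $L^n$-viscosity unbounded-coefficient setting, arguably more robust: it uses the \emph{centered} normalization $\widetilde u(x)=\{u(\sigma x)-u(0)\}/N$ with $N=\sigma^\iota W+\sup_{B_2}|u(\sigma x)-u(0)|$, and bounds $N\le(1+2^\beta K_1)W_0\sigma^\beta$ via the interior H\"older estimate (Theorem \ref{Calpha known}), so that $\widetilde\mu=N\mu(\sigma\cdot)$ decays like $\sigma^\beta$ (cf.\ \eqref{N} and the display following it). The Hopf--Cole path is a real detour here: for a general $F$ satisfying only \eqref{SC} the transformed problem acquires an explicit zero-order term and a changed structure, the oscillation hypothesis \eqref{Htheta} transforms nontrivially, and one then needs a Log-Lipschitz theorem for the transformed class before transferring back --- none of which you develop, and which the paper deliberately avoids (this is precisely the reason the paper imports the ``simplified rescaling'' from \cite{Norn19, W2}).
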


This kind of borderline regularity turns out to be effective when dealing with a unified  framework comprising a general Hamiltonian as ours.
However, other kinds of approaches may not be successful in this task, by also depending on the growth of the Hamiltonian.
For instance, we mention the regularity regime connected to integral estimates as in \cite{DKM14, Swiech} when $m=1$, under bounded coefficients.
In particular, the method developed in \cite{Swiech} for $W^{1,n}$ regularity is not directly applicable when $m>1$ -- although it can be easily extended to unbounded coefficients when $m=1$ via the variable changes and approximation lemma in \cite{Norn19}; see also the discussion in \cite[Remark 5.2]{Swiech20}. In Section \ref{section a priori BMO} we show that a similar reasoning applies in the sublinear gradient growth regime $m<1$ as well.

Next, we address to $C^{1, \alpha}$ regularity. We stress that the case $m=2$ was obtained in \cite[Theorem 1.1]{Norn19} for bounded $\mu$; while $m\in (0,1)$ was studied in \cite{BD15, BDL19-2} for continuous and bounded coefficients. Here the novelties are both sublinear and superlinear subquadratic Hamiltonians lying in the unbounded scenario, besides the fact that we present a unified proof for all $m\in (0,2]$.

\begin{teo}[\bf {H\"{o}lder gradient estimate}]
\label{C1,alpha regularity estimates geral}
Assume (A1)--(A3). Let $f  \in L^p (\Omega)$, where $p>n$, and $\Omega\subset\rn$ be a bounded domain. Let $u$ be a bounded $L^p$-viscosity solution of \eqref{F=f}. Then, there exists $\alpha\in (0,1)$ and $\theta=\theta (\alpha)$, depending on $n,m,p,q,\varrho,\lambda,\Lambda,\|b\|_{L^\varrho(\Omega)}$, such that if \eqref{Htheta} holds for all $r\leq \min \{ r_0, \mathrm{dist} (x_0,\partial\Omega)\}$, for some $r_0>0$ and for all $x_0 \in \Omega$, this implies that $u\in C^{1,\alpha}_{\mathrm{loc}} (\Omega)$, and
\begin{align}\label{estim C1,alpha local}
\|u\|_{C^{1,\alpha}(\overline{\Omega^\prime})} \leq CW_{u,\mu,m,\Omega\,}(\,\|f\|_{L^p(\Omega)}) , \textrm{ \;\; for $W$ as in }\eqref{def W geral},
\end{align}
for any subdomain $\Omega^\prime \subset\subset \Omega$,
where $C$ depends only on $\,r_0,n,m,p,q,\varrho,\lambda,\Lambda,\alpha, \| b \|_{L^\varrho(\Omega)}, \| \mu \|_{L^{q}(\Omega)}, \mathrm{diam}(\Omega)$, and $\mathrm{dist} (\Omega^\prime,\partial\Omega)$; in addition to $W_0$ in the case $m\in (1,2]$.
\end{teo}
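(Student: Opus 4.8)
The plan is to run the classical compactness-and-iteration scheme for interior $C^{1,\alpha}$ estimates (Caffarelli; see \cite{CafCab, Caf89}) within the $L^p$-viscosity framework, in the spirit of \cite{Norn19, Tei14, daST17}, the novel point being a unified handling of the unbounded sub- and super-linear Hamiltonians. Fix $x_0\in\Omega$ and, after a translation, assume $x_0=0$ and $B_1\subset\Omega$. One first normalizes: replacing $u$ by $u/W$ with $W=W_{u,\mu,m,\Omega}(\|f\|_{L^p(\Omega)})$ as in \eqref{def W geral} gives $\|u\|_{L^\infty(B_1)}\le1$; this keeps the operator in the class (A1), turns the source into $f/W$, and the superlinear coefficient into $W^{m-1}\mu$, which is bounded by $\max\{1,W_0^{m-1}\}\,\|\mu\|_{L^q}$ when $m\in(1,2]$ (here the cap $W\le W_0$ of \eqref{def W geral} is used) and has small $L^q$-norm when $m\in(0,1)$ precisely because $W\ge\|\mu\|_{L^q}^{1/(1-m)}$ and $m-1<0$. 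Then a pure dilation $u_r(x)=u(rx)$, $r\le1$, yields an equation $G(x,Du_r,D^2u_r)=f_r$ on $B_1$ with $G$ again obeying (A1), whose ingredients satisfy
\[
\|b_r\|_{L^\varrho(B_1)}\le r^{1-\frac n\varrho}\|b\|_{L^\varrho(\Omega)},\qquad \|\mu_r\|_{L^q(B_1)}\le r^{\,2-m-\frac nq}\|\mu\|_{L^q(\Omega)},\qquad \|f_r\|_{L^p(B_1)}\le r^{\,2-\frac np}\|f\|_{L^p(\Omega)},
\]
while $\|\beta_G(\cdot,0)\|_{L^p(B_1)}\le\theta$ by \eqref{Htheta}. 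Since $\varrho>n$, $p>n$, and $q(2-m)>n$ in the regime $m\in(1,2)$ by (A2) (while $m<1<2-\frac nq$ is automatic, and $m=2$ is absorbed by the $W_0$-smallness above), all exponents are positive; hence, choosing $r$ small, it suffices to prove the estimate on $B_1$ assuming $\|u\|_{L^\infty(B_1)}\le1$ and $\|f\|_{L^p(B_1)}+\|b\|_{L^\varrho(B_1)}+\|\mu\|_{L^q(B_1)}+\theta\le\epsilon$, for $\epsilon$ to be fixed.

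The core is an \emph{approximation lemma}: for every $\delta>0$ there is $\epsilon=\epsilon(n,m,p,q,\varrho,\lambda,\Lambda)>0$ such that if $v$ is a bounded $L^p$-viscosity solution in $B_1$ of $G(x,Dv,D^2v)=g$, with $G$ obeying (A1), $\|v\|_{L^\infty(B_1)}\le1$ and $\|g\|_{L^p(B_1)}+\|b\|_{L^\varrho(B_1)}+\|\mu\|_{L^q(B_1)}+\|\beta_G(\cdot,0)\|_{L^p(B_1)}\le\epsilon$, then there is $h$ solving a homogeneous uniformly elliptic equation $\mathfrak{F}(D^2h)=0$ in $B_{3/4}$, with $\|h\|_{C^{1,\bar\alpha}(\overline{B_{1/2}})}\le C_0$ for some universal $\bar\alpha\in(0,1)$, $C_0$ (classical interior estimates, \cite{CafCab}), and $\|v-h\|_{L^\infty(B_{1/2})}\le\delta$. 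I would prove this by contradiction and compactness: a negating sequence $(v_j,G_j,g_j,b_j,\mu_j)$ with data tending to $0$ is equicontinuous on compacta by the Hölder estimate of Theorem \ref{Calpha known}, so $v_j\to v_\infty$ locally uniformly along a subsequence; the frozen operators $G_j(0,0,\cdot)$ subconverge to a uniformly elliptic $\mathfrak{F}$; and stability of $L^p$-viscosity solutions (the Hamiltonian drops out of the limit since $\|b_j\|_{L^\varrho}$, $\|\mu_j\|_{L^q}$, $\|\beta_{G_j}(\cdot,0)\|_{L^p}\to0$, cf.\ \cite{CCKS, KSexist2009}) gives $\mathfrak{F}(D^2v_\infty)=0$ — contradicting that each $v_j$ stayed $\delta$-away from every such $h$.

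Next, fix $\alpha\in(0,\bar\alpha)$ with also $\alpha<1-\frac np$ and, in the sublinear regime, $\alpha\le\frac{1-n/q}{1-m}$; choose $\tau\in(0,1)$ with $2C_0\tau^{\bar\alpha-\alpha}\le1$, then $\delta$, then $\epsilon$ (and the $\theta$ in \eqref{Htheta} as any value $\le\epsilon$). By induction I would produce affine maps $\ell_k(x)=a_k+\langle\xi_k,x\rangle$ with $\sup_k|\xi_k|=:C_\ast<\infty$, $|a_{k+1}-a_k|+\tau^k|\xi_{k+1}-\xi_k|\le C\tau^{k(1+\alpha)}$, and $\|u-\ell_k\|_{L^\infty(B_{\tau^k})}\le\tau^{k(1+\alpha)}$. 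For the step, set $v_k(x)=\tau^{-k(1+\alpha)}(u-\ell_k)(\tau^kx)$ on $B_1$; then $v_k$ solves $G_k(x,Dv_k,D^2v_k)=g_k$ where, after transferring the ($L^p$-small, by (A1)) term $G_k(\cdot,0,0)$ to the right-hand side, $G_k$ again satisfies (A1) — in the superlinear case one uses $|\tau^{k\alpha}\zeta+\xi_k|^{m-1}\lesssim\tau^{k\alpha(m-1)}|\zeta|^{m-1}+C_\ast^{m-1}$, valid since $0\le m-1\le1$, so that the rescaled Hamiltonian picks up an extra drift $\propto\tau^k(b+C_\ast^{m-1}\mu)(\tau^k\cdot)$ (measured in $L^{\min\{q,\varrho\}}$, still an exponent $>n$) and a superlinear part $\propto\tau^{k(1+\alpha(m-1))}\mu(\tau^k\cdot)$ — and, thanks to the constraints on $\alpha$, all ingredients of this rescaled problem have the relevant $B_1$-norms $\le\epsilon$. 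Applying the approximation lemma to $v_k$ and letting $\ell_{k+1}$ be $\ell_k$ plus the appropriately rescaled first-order Taylor polynomial of $h_k$ at the origin closes the induction, using $\bar\alpha>\alpha$ and the choice of $\tau,\delta$. Passing to the limit, $\ell_k\to u(x_0)+\langle\xi_\infty,\cdot-x_0\rangle$ and $\|u-\ell_\infty\|_{L^\infty(B_r(x_0))}\le Cr^{1+\alpha}$, which after interpolating across dyadic radii gives the pointwise $C^{1,\alpha}$ bound at $x_0$, with constant controlled by $W_{u,\mu,m,\Omega}(\|f\|_{L^p(\Omega)})$. Undoing the normalization, running the argument around every $x_0\in\Omega$ with radius $\min\{r_0,\mathrm{dist}(x_0,\partial\Omega)\}$, and a standard covering then yield \eqref{estim C1,alpha local} on any $\Omega^\prime\subset\subset\Omega$, with the claimed dependences of $\alpha$, $\theta$ and $C$.

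The main obstacle is to keep the rescaled problems inside the class (A1) with uniformly small ingredients. Subtracting the affine corrector does not affect the gradient-difference terms of $H$, but in the superlinear regime it pollutes the factor $|\xi|^{m-1}+|\eta|^{m-1}$, creating a new drift of size $\sim C_\ast^{m-1}\mu$; verifying that after the dilations its $L^\varrho$- and $L^q$-norms on $B_1$ stay below $\epsilon$ is exactly where $q(2-m)>n$, i.e.\ $m<2-\frac nq$ of (A2), must be used (with $m=2$ instead forcing the cap $W\le W_0$, and the sublinear case forcing both $\alpha\le\frac{1-n/q}{1-m}$ and the $\|\mu\|_{L^q}^{1/(1-m)}$ correction in \eqref{def W geral}). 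The rest is by now standard, the one delicate point being to carry the $L^p$-viscosity (rather than $C$-viscosity) stability and measure-theoretic arguments through the approximation lemma.
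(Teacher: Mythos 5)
Your overall scheme (scaling to smallness, approximation by compactness against a frozen second-order operator, Caffarelli iteration with affine correctors and the constraint $\alpha<\bar\alpha$) matches the paper's; the one genuine rearrangement is that you absorb the corrector gradient $\xi_k$ into a new drift $b+2C_\ast^{m-1}\mu$ for the shifted operator, rather than carrying a perturbation vector $\mathfrak{B}_k$ inside the approximation lemma as the paper does, and this is a valid (slightly cleaner) variant with milder constraints on $\alpha$. However, there are two concrete gaps. First, your compactness step invokes stability of $L^p$-viscosity solutions citing \cite{CCKS,KSexist2009}, but those references treat $m\geq1$; for $m\in(0,1)$ stability fails in general (as the paper explicitly notes after Proposition~\ref{stability}), and the paper's proof of Lemma~\ref{AproxLem} replaces it with a bespoke argument that builds an auxiliary strong corrector $\varphi_k$ solving a sublinear extremal equation with right side $-\Phi_k^+$, and uses the sublinear ABP (Proposition~\ref{wABPsublin}) together with $\|\mu_k\|_{L^q}\to0$ to push $\|\varphi_k\|_\infty\to 0$ and derive the contradiction. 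Without this substitute, your approximation lemma is unjustified when $m<1$.

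Second, your normalization breaks down at $m=2$. Under the pure dilation $u_r(x)=u(rx)$ the rescaled superlinear coefficient has $L^q$-norm of order $r^{\,2-m-n/q}\|\mu\|$, and with $m=2,\ q=\infty$ the exponent is $0$, so $\|\mu_r\|_{L^\infty(B_1)}$ does \emph{not} shrink. You write that ``$m=2$ is absorbed by the $W_0$-smallness,'' but $W_0$ is an a priori cap on $W$, not a smallness parameter, so it gives boundedness, not the required $\leq\epsilon$. The paper handles this by normalizing with the \emph{local oscillation} rather than $W$: one subtracts $u(0)$, divides by $N=\sigma^\iota W+\sup_{B_2}|u(\sigma x)-u(0)|$, and exploits the interior H\"older estimate of Theorem~\ref{Calpha known} to get $N\lesssim W_0\,\sigma^\beta$ (see \eqref{N}); then $\widetilde{\mu}=N^{m-1}\sigma^{2-m}\mu(\sigma\cdot)$ really does become small as $\sigma\to0$. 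Both of these points are precisely where the paper's argument departs from a routine repetition of the $m=1$ case, and your proposal does not supply replacements for them.
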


\begin{rmk}\label{rmk global}
Our results are local, but the same approach developed in \cite{Norn19, Winter} can be used to obtain boundary regularity estimates as well, in particular the global H\"older gradient estimates furnish
\begin{align*}
\|u\|_{C^{1,\alpha}(\overline{\Omega})} \leq CW_{u,\mu,m,\Omega\,}(\,\|f\|_{L^p(\Omega)}+\|u\|_{C^{1,\tau }(\partial\Omega)}) , \textrm{ \;\; for $W$ as in }\eqref{def W geral},
\end{align*}
whenever $\Omega$ is a $C^{1,1}$ bounded domain, $u\in C^{1,\tau}(\partial\Omega)$ for some $\tau\in (0,1)$, and \eqref{Htheta} holds for all $x_0\in \overline{\Omega}$.
\end{rmk}

Theorems \ref{Calpha known}--\ref{C1,alpha regularity estimates geral} can be understood in the following way: given any $m\in (1,2)$, there exists $q>n$ such that $m<2-\frac{n}{q}$, and so the desired regularity estimates hold for the pair $(m,q)$. Moreover, $q=q(m)\to +\infty$ as $m\to 2^{-}$. In other words, the integrability required on $\mu$ needs to compensate the growth of $|Du|$, becoming worse as far as we approach the quadratic regime.
Thus, $m=2$, $q=\infty$ corresponds to the remaining case $m\in [2-n/q,2]$ for bounded $\mu$.
On the other hand, by fixing an integrability level $q$ on $\mu$, by Remark \ref{rmk bounds m,q} one finds the optimal admissible growth we may impose on the Hamiltonian function for the sake of achieving the desired regularity.

This seems reasonable and perhaps necessary in view of scaling properties to our class of equations.
We highlight that a rescaling also produces restrictions subjected to (A2) when considering the Krylov-Safonov's regularity theory (see \cite{Kry-Saf79}) -- for instance, by adding a superlinear gradient term in the crucial growth lemmas, see \cite[proof of Theorem 2]{arma2010}.

Moreover, in Section \ref{section intrinsic} we find a proper range allowed to the parameters of the problem $p,q,\varrho,m$ and $n$ in (A2) such that our regularity results intrinsically depend on the integrability of the source term $f$ -- such that it does not matter the exponent $p$ in the $L^p$-viscosity formulation of the problem but on the RHS. Besides being independent mathematical interest, this determines the well-posedness of our statements, namely in terms of simply viscosity notion. It also easily implies a Perron type result for our equations, see Section \ref{section perron}.

In a nutshell, our results extend, generalize, and improve the former ones (H\"{o}lder, Borderline, Gradient and Hessian estimates) in \cite{Norn19, Swiech, Tei14}, by using a special machinery designed for fully nonlinear models with general Hamiltonian and unbounded ingredients.
It is worth mentioning that our geometric estimates are based on Caffarelli's iteration scheme addressed in \cite{Caf89} (see also \cite{CafCab}). However, differently from \cite{Caf89}, \cite{Swiech} and \cite{Winter}, we perform a simplified rescaling of variable, in light of \cite{Norn19}, which allows us to carry out the proof without using a twice differentiability property of viscosity solutions whose validity is unknown for unbounded coefficients.

The quadratic case is peculiar since, after scaling, the quadratic growth term becomes as dominating as the second order term.
For Log-Lipschitz regularity we explore a variation of the techniques developed in \cite{Norn19, W2} (see also \cite{CafHua03}).
Under the sublinear regime, a new stability-like result which we also prove is fundamental.

As far as further regularity is concerned, we present finer and improved asymptotic $C^{1,1}$ and $C^{2,\alpha}$ regularity estimates for the following model problem:
\begin{align}\label{MC_intro}
F(x,\xi,X)\defeq G(x,X)+h(x,\xi,0), \textrm{ where $|h|\le H$, for an $H$ as in }(A1).
\end{align}
In this case, more restrictive assumptions on the Hamiltonian are unavoidable; here we ask it to be bounded, see also \cite{Wangpointwise2020}.
Regarding the oscillation in the $x$-entry of our operator $F$, as in \cite{Caf89, daST17, Tei14} it is usual to impose $\bar{\alpha}$--H\"{o}lder continuity for some $\bar{\alpha}>0$, such as:
\begin{itemize}
	\item[($\tilde{A}$3)]({{\bf H\"{o}lder oscillation}})
	Given $\theta>0$, there exists $r_0=r_0\, (\theta)>0$ such that
	\begin{align}\label{Htheta2}  \tag{$\widetilde{H}_{\theta}$}
\displaystyle	\left( \,\intav{B_r(x_0)\cap\Omega}\; \beta (x,x_0)^n \d x \right)^{\frac{1}{n}} \leq \theta r^{\bar{\alpha}} \, ,\;\textrm{ for all } r\leq r_0 ,
	\end{align}
\end{itemize}
in addition of $C^{2,\bar{\alpha}}$ regularity estimates of its associated pure second order operator:

\begin{itemize}
	\item[(A4)]({{\bf F-smoothness}})
	Given a matrix $X\in \mathbb{S}^n$ with $F(x_0,0,X)=0$, $L^p$-viscosity solutions $h$ of $F(x_0,0,D^2h+X)=0$ in $B_1$ are $C^{2,\bar{\alpha}}_{\mathrm{loc}}(B_1)$ for some $\bar{\alpha}>0$ and satisfy, for a universal $\Theta>0$,
	\begin{align}\label{hip C2,epsilon}
	\|h\|_{C^{2,\bar{\alpha}}(\overline{B}_r)}\leq \Theta\, r^{-(2+\bar{\alpha})}\|h\|_{L^\infty(B_1)}\;\; \textrm{ for all $ r\in (0,1)$  and $x_0\in B_1$}.
	\end{align}
\end{itemize}

\begin{rmk}
	As observed in \cite[Remark 2.3]{Winter}, the oscillation condition in (A3) measured in the $L^n$-norm is equivalent to an oscillation measured in the $L^\infty$-norm. Analogously, the H\"{o}lder oscillation in $(\tilde{A}3)$ can also be measured in terms of the $L^\infty$-norm.
\end{rmk}

\begin{defin}
We say that $u\in C^{1,\text{Log-Lip}} (\overline{\Omega^\prime})$ if $u\in C^1(\overline{\Omega^\prime})$ and $[Du]_{C^{0, \text{Log-Lip}}(\overline{\Omega^\prime})}<+\infty$ where 
\begin{align}\label{norm LopLip}
[Du]_{C^{0, \text{Log-Lip}}(\overline{\Omega^\prime})} \defeq \sup_{x_0\in \Omega^{\prime} \atop{ 0< r\leq \diam(\Omega) }} \,
\inf_{\mathfrak{l} \,(\mathrm{affine})} \frac{\|u-\mathfrak{l}\|_{L^{\infty}(B_r(x_0)\cap \Omega^{\prime})}}{r^2|\log r|} .
\end{align}
\end{defin}
In order to make the notation uniform, set $|w|_{0,\Omega}\defeq \|w\|_{L^\infty(\Omega)}$, $|w|_{\alpha,\Omega}\defeq \|w\|_{C^\alpha(\overline{\Omega})}$ for $\alpha>0$, and
\begin{align}\label{def W geral BMO}
\overline{W}=\overline{W}_{u,\mu,m,\alpha,\Omega}\,(r)\defeq
\begin{cases}
\,  \| u \|_{L^{\infty} (\Omega)}  + r + |\mu|^{\frac{1}{1-m}}_{ \alpha,\Omega}  &\textrm{if } m\in (0,1)\\
\, \| u \|_{L^{\infty} (\Omega)}  + r  &\textrm{if } m\in [1,2]\end{cases},
\quad \textrm{$\overline{W}\le \overline{W}_0$\, if $m\in (1,2]$.}
\end{align}
Our main result in this direction is displayed in the sequel (compare with \cite{CafHua03}).

\begin{teo}[\bf Log-Lipschitz gradient estimate]\label{C1,logLip theorem}
Assume (A1), (A2) with $b,\mu\in L^\infty_{\mathrm{loc}}(\Omega)$, in addition to ($\tilde{A}$3), (A4), and $n<mq$. Let $f \in L^\infty (\Omega)$ and $u$ be a bounded $L^n$-viscosity solution of \eqref{F=f}, where $F$ has the form \eqref{MC_intro}. Then there exists $\theta$ depending on $n,m,p,q,\varrho,\lambda,\Lambda,\|b\|_{L^\varrho(\Omega)}$, such that if \eqref{Htheta2} holds for all $r\leq \min \{ r_0, \mathrm{dist} (x_0,\partial\Omega)\}$, for some $r_0>0$ and for all $x_0 \in \Omega$, then $u\in C^{1,\text{Log-Lip}}_{\mathrm{loc}} (\Omega)$, and
\begin{align}\label{est C1,LogLip}
[Du]_{C^{0, \text{Log-Lip}}(\overline{\Omega^\prime})} \le C\, \overline{W}_{u,\mu,m,\Omega^{\prime\prime}\,}(\,\|f\|_{L^\infty(\Omega^{\prime\prime})}) , \textrm{ \;\; for $\overline{W}$ as in }\eqref{def W geral BMO},
\end{align}
for any subdomains $\Omega^\prime \subset\subset \Omega^{\prime\prime}\subset\subset\Omega$,
where $C$ depends only on $\,r_0,n,m,p,q,\varrho,\lambda,\Lambda, \| b \|_{L^\infty(\Omega^{\prime\prime})}$, $\| \mu\|_{L^\infty(\Omega^{\prime\prime})}$, $\mathrm{diam}(\Omega)$, and $\mathrm{dist} (\Omega^\prime,\partial\Omega^{\prime\prime})$; and further on $\overline{W}_0$ if $m\in (1,2]$.
\end{teo}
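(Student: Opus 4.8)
\emph{Proof strategy.} The plan is to run Caffarelli's tangential‑approximation iteration in the simplified rescaled form of \cite{Norn19}, producing at each dyadic scale a quadratic polynomial whose Hessian is \emph{allowed to grow linearly with the scale index}; the affine part of this polynomial then realizes the modulus $r^2|\log r|$ entering the definition of $[Du]_{C^{0,\text{Log-Lip}}}$. Fix $x_0\in\Omega'$ and, after translating, take $x_0=0$, $B_1\subset\Omega''$. Since \eqref{Htheta2} implies \eqref{Htheta} and $f\in L^\infty\subset L^p$ for $p$ slightly above $n$, Theorem~\ref{C1,alpha regularity estimates geral} already provides $u\in C^{1,\alpha}_{\mathrm{loc}}(\Omega)$ for some $\alpha>0$ together with a gradient bound $\|Du\|_{L^\infty(\Omega'')}\le M$, with $M$ controlled by $\overline{W}_{u,\mu,m,\Omega''}(\|f\|_{L^\infty(\Omega'')})$ (for $m\in(1,2]$ this step uses the a priori assumption $\overline{W}\le\overline{W}_0$, so $M$, and later the constants, depend on $\overline{W}_0$). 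Dividing $u$ by a constant $K$ comparable — up to data‑dependent factors — to $\overline{W}_{u,\mu,m,\Omega''}(\|f\|_{L^\infty(\Omega'')})/\delta$ for a small $\delta>0$, and using that $\mathcal{M}^{\pm}_{\lambda,\Lambda}$ are positively $1$‑homogeneous, we reduce to $\|u\|_{L^\infty(B_1)}\le1$, $\|f\|_{L^\infty(B_1)}\le\delta$: in the sublinear regime the rescaled coefficient $\mu$ picks up the factor $K^{-(1-m)}$, which is exactly why the summand $|\mu|^{1/(1-m)}_{\alpha,\Omega''}$ enters $\overline{W}$, and all coefficient norms as well as the oscillation $\theta$ of \eqref{Htheta2} can be made $\le\delta$. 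For $m\in[1,2]$ the term $\mu|Du|^m$ is not tamed by this rescaling — after scaling, the quadratic growth is as dominant as the second‑order term — so one instead uses $M$: $h(\cdot,Du,0)$ is moved to the right‑hand side, producing a purely second‑order equation $G(x,D^2u)=\hat f$ with $\hat f\in L^\infty$ and $\|\hat f\|_{L^\infty(B_1)}\le\delta$ after renormalization. Both (A4) and ($\tilde{A}$3) are invariant under these reductions.

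\emph{Approximation lemma.} The next step is: for every $\varepsilon>0$ there is $\delta>0$ such that, whenever the normalized data are $\le\delta$ and $X\in\mathbb{S}^n$ is fixed with $G(0,X)=0$, any normalized $L^n$‑viscosity solution $v$ of the equation of our class with Hessian base point $X$ admits $h\in C^{2,\bar{\alpha}}(\overline B_{1/2})$ with $G(0,X+D^2h)=0$ in $B_{1/2}$ and $\|v-h\|_{L^\infty(B_{1/2})}\le\varepsilon$. This is proved by compactness and contradiction: a violating sequence is equicontinuous on compacts by Theorem~\ref{Calpha regularity estimates geral} (or Theorem~\ref{Calpha known}) and converges locally uniformly; the $L^p$‑viscosity stability theory passes to the limit because the Hamiltonian, tested against a fixed smooth $\varphi$, obeys $|h(x,D\varphi,0)|\le b(x)|D\varphi|+\mu(x)|D\varphi|^m\to0$ in $L^n_{\mathrm{loc}}$, and \eqref{Htheta2} forces $\beta_F(\cdot,0)\to0$ in $L^n_{\mathrm{loc}}$, so the limit solves the frozen homogeneous equation $G(0,X+D^2h)=0$; then (A4)–\eqref{hip C2,epsilon} furnishes the $C^{2,\bar{\alpha}}$ bound (uniform in $X$, by the universality of $\Theta$), a contradiction for large index. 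It is here, and in the iteration below, that $n<mq$ is used: it is the $p=\infty$ endpoint of the subcriticality conditions of (A2), and it forces the rescaled sublinear coefficient — of size $\rho^{k(m-n/q)}$ at scale $\rho^k$ — to stay uniformly small.

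\emph{Iteration with logarithmic loss.} I would then construct $\rho\in(0,\tfrac14)$, a universal constant $C$, and quadratics $P_k(x)=a_k+b_k\cdot x+\tfrac12 x^\top X_k x$ with $G(0,X_k)=0$ and $P_0\equiv0$ such that $\|u-P_k\|_{L^\infty(B_{\rho^k})}\le\rho^{2k}$ and $|a_{k+1}-a_k|+\rho^k|b_{k+1}-b_k|+\rho^{2k}\|X_{k+1}-X_k\|\le C\rho^{2k}$ for all $k\ge0$. For the step $k\to k+1$, set $v_k(y):=\rho^{-2k}(u-P_k)(\rho^k y)$ on $B_1$, so $\|v_k\|_{L^\infty(B_1)}\le1$; then $v_k$ solves an equation of our class with base point $X_k$ and all data $\le\delta$ (using $n<mq$ and \eqref{Htheta2}). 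The approximation lemma gives $h$ with $G(0,X_k+D^2h)=0$ and $\|v_k-h\|_{L^\infty(B_{1/2})}\le\varepsilon$, whose second‑order Taylor polynomial $Q$ at the origin satisfies $\|h-Q\|_{L^\infty(B_\rho)}\le\Theta_\star\rho^{2+\bar{\alpha}}$ by \eqref{hip C2,epsilon}; since $h$ is classical, $G(0,X_k+D^2h(0))=0$, so $X_{k+1}:=X_k+D^2h(0)$ again lies on $\{G(0,\cdot)=0\}$, and $P_{k+1}(x):=P_k(x)+\rho^{2k}Q(\rho^{-k}x)$ obeys $\|u-P_{k+1}\|_{L^\infty(B_{\rho^{k+1}})}=\rho^{2k}\|v_k-Q\|_{L^\infty(B_\rho)}\le\rho^{2k}(\varepsilon+\Theta_\star\rho^{2+\bar{\alpha}})\le\rho^{2(k+1)}$, once $\rho$ is fixed with $\Theta_\star\rho^{\bar{\alpha}}\le\tfrac12$ and then $\varepsilon\le\tfrac12\rho^2$ (hence $\delta$). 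The increment bounds are read off from the $C^{2,\bar{\alpha}}$ estimate for $h$. Crucially, $\|D^2h(0)\|\le C$ is universal but \emph{not} small, whence $\|X_k\|\le Ck$: this linear growth of the Hessians is precisely the mechanism producing the logarithm, while $\|u-P_k\|$ still decays like $\rho^{2k}$.

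\emph{Conclusion and main difficulty.} For $\rho^{k+1}<r\le\rho^k$ with $k\ge1$, the affine part $\mathfrak{l}_k$ of $P_k$ gives $\inf_{\mathfrak{l}\text{ affine}}\|u-\mathfrak{l}\|_{L^\infty(B_r(x_0))}\le\|u-P_k\|_{L^\infty(B_{\rho^k}(x_0))}+\tfrac12\|X_k\|\rho^{2k}\le(1+Ck)\rho^{2k}\le C'r^2|\log r|$; the range of $r$ close to $\diam(\Omega)$ is absorbed by the $C^{1,\alpha}$ norm already available. Taking the supremum over $x_0\in\Omega'$ and undoing the normalization (multiplying back by $K\asymp\overline{W}$) yields \eqref{est C1,LogLip} with the asserted dependence of the constants, including on $\overline{W}_0$ when $m\in(1,2]$. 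The hard part will be the approximation/stability step in the presence of a genuine general Hamiltonian — in particular the superlinear and quadratic regimes, where $\mu|Du|^m$ cannot be scaled away and must be handled through the $C^{1,\alpha}$ gradient bound — together with the bookkeeping that keeps the final constants in the precise form governed by $\overline{W}$ and $\overline{W}_0$, which is exactly where the endpoint integrability $n<mq$ enters.
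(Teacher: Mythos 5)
Your proposal is correct in its logic, but it takes a genuinely different route from the paper. The paper's proof of Theorem~\ref{C1,logLip theorem} is deliberately short: after noting that $L^n$- and $L^p$-viscosity notions are equivalent here (Proposition~\ref{prop Lp iff Lupsilon}), it proves Lemma~\ref{RHS lemma} showing that $u$ is an $L^p$-viscosity solution of the \emph{pure second-order} equation $G(x,D^2u)=g(x)$ with $|g|\le |f|+b|Du|+\mu|Du|^m \in L^\infty_{\mathrm{loc}}$; it then invokes the known $C^{1,\text{Log-Lip}}$ estimate for such operators from \cite{ST} (under ($\tilde{A}$3) and (A4)), and the whole remaining effort is the bookkeeping that turns $\|g\|_{L^\infty}$ — controlled by the $C^{1,\alpha}$ norm of $u$ from Theorem~\ref{C1,alpha regularity estimates geral} — into the quantity $\overline{W}$, with the two-case analysis $\omega\lessgtr|\mu|_\alpha^{1/(1-m)}$ in the sublinear regime. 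You instead re-derive the \cite{ST} result from scratch, via the compactness approximation lemma and the Caffarelli iteration with quadratic polynomials whose Hessians grow linearly in the scale index, producing the logarithm. The key inputs are the same in both treatments: the prior $C^{1,\alpha}$ control of $Du$, (A4) as the source of the $C^{2,\bar\alpha}$ approximation with universal $\Theta$, and the device of transferring the Hamiltonian to the right-hand side (which you invoke for $m\ge1$; for $m<1$ you rely on the smallness of the rescaled $\mu$, which amounts to the same absorption — the paper's Lemma~\ref{RHS lemma} covers all $m$ uniformly). Your route is more self-contained but longer; the paper's reduction is cleaner and reuses existing machinery. One small caveat: you claim $n<mq$ is what keeps the rescaled sublinear coefficient small at scale $\rho^k$, but once the lower-order terms are pushed into the right-hand side $g$ (as both proofs ultimately do) this never arises — in the paper the hypothesis $n<mq$ plays no visible role in the argument of Section~\ref{proof Schauder} and seems carried over for uniformity of hypotheses across the Hessian-type results; so your attribution of its necessity to the iteration is not accurate, though it does not create a gap.
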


\begin{rmk}\label{rmk hip mq>n}
Note that we always have $m>\frac{n}{q}$ if $m\geq 1$, since $q>n$. Thus, the condition $n<mq$ in Theorem \ref{C1,logLip theorem} is only an additional assumption when $0<m<1$.
\end{rmk}

\begin{teo}[\bf Schauder estimate]\label{Schauder theorem}
Assume (A1), (A2), ($\tilde{A}$3), (A4), and $n<mq$. Let $u$ be a bounded $L^n$-viscosity solution of \eqref{F=f}, where $F$ has the form \eqref{MC_intro}. Then, there exists $\theta$ and $\alpha \in (0,\bar{\alpha})$ depending on $n,m,p,q,\varrho,\lambda,\Lambda,\|b\|_{L^\varrho(\Omega)}$,
such that if \eqref{Htheta2} holds for all $r\leq \min \{ r_0, \mathrm{dist} (x_0,\partial\Omega)\}$, for some $r_0>0$ and for all $x_0 \in \Omega$, and $b,\mu,f\in C^\alpha_{\mathrm{loc}} (\Omega)$, then $u\in C^{2,\alpha}_{\mathrm{loc}} (\Omega)$, and
\begin{align}\label{est Schauder local}
\|u\|_{C^{2,\alpha}(\overline{\Omega^\prime})} \le C\overline{W}_{u,\mu,m,\Omega^{\prime\prime}\,}
(\,\|f\|_{C^\alpha({\Omega^{\prime\prime}} \,) }
) , \textrm{ \;\; for $\overline{W}$ as in }\eqref{def W geral BMO},
\end{align}
for any subdomains $\Omega^\prime \subset\subset \Omega^{\prime\prime}\subset\subset\Omega$,
	where $C$ depends only on $\,r_0,n,m,p,q,\varrho,\lambda,\Lambda,\alpha, \|b\|_{C^\alpha(\overline{\Omega^{\prime\prime}})}$, $\|\mu\|_{C^\alpha (\overline{\Omega^{\prime\prime}})}$, $\mathrm{diam}(\Omega)$,  and $\mathrm{dist} (\Omega^\prime,\partial\Omega^{\prime\prime})$; in addition to $\overline{W}_0$ when $m\in (1,2]$.
\end{teo}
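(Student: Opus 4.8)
The plan is to reduce \eqref{F=f}, which under \eqref{MC_intro} reads $G(x,D^2u)+h(x,Du,0)=f(x)$, to a \emph{pure second-order} equation with H\"older right-hand side, and then run Caffarelli's Schauder iteration \cite{Caf89}, parallel to the proof of Theorem \ref{C1,logLip theorem} but with the sharper Hölder oscillation $(\widetilde{H}_\theta)$ and a Hölder (rather than merely $L^\infty$) source. The first step is an initial gradient bound: since $f\in C^\alpha_{\mathrm{loc}}(\Omega)\subset L^\infty_{\mathrm{loc}}(\Omega)\subset L^p_{\mathrm{loc}}(\Omega)$ for every $p>n$, and $(\widetilde{H}_\theta)$ forces $(H_{\theta^\prime})$ with $\theta^\prime$ as small as we like once $\theta$ is ($\beta_F$ being bounded), Theorem \ref{C1,alpha regularity estimates geral} applies on interior subdomains and gives $u\in C^{1,\alpha_0}_{\mathrm{loc}}(\Omega)$ for some $\alpha_0\in(0,1)$ depending only on $n,m,p,q,\varrho,\lambda,\Lambda,\|b\|_{L^\varrho(\Omega)}$, with $\|u\|_{C^{1,\alpha_0}(\overline{\Omega^{\prime\prime}})}\le C\,\overline{W}_{u,\mu,m,\Omega^{\prime\prime}}(\|f\|_{C^\alpha(\Omega^{\prime\prime})})$; here the relevant $L^p$ and $L^q$ norms of $f$ and $\mu$ are dominated by their $C^\alpha$ norms, and the $|\mu|_{\alpha,\Omega^{\prime\prime}}^{1/(1-m)}$ term in $\overline{W}$ arises, when $0<m<1$, from the normalization that renders the rescaled equation scale invariant. (Theorem \ref{C1,logLip theorem} could be used to upgrade this to $C^{1,\textrm{Log-Lip}}$, but is not needed.)

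Next I would freeze the gradient: set $g(x)\defeq h(x,Du(x),0)$ and $\tilde f\defeq f-g$. By \eqref{MC_intro} one has $|g|\le b(x)|Du|+\mu(x)|Du|^m$ on $\overline{\Omega^{\prime\prime}}$; more to the point, using that $Du\in C^{0,\alpha_0}_{\mathrm{loc}}$ is bounded, that $b,\mu\in C^\alpha_{\mathrm{loc}}$, and the elementary bounds $\big||a|^m-|b|^m\big|\le|a-b|^m$ for $0<m<1$ (resp.\ the local Lipschitz character of $t\mapsto|t|^m$ on bounded sets for $m\ge1$), one checks that $g$, hence $\tilde f$, lies in $C^{\alpha_1}_{\mathrm{loc}}(\Omega)$ with $\alpha_1=\alpha_0$ for $m\ge1$ and $\alpha_1=m\alpha_0$ for $0<m<1$. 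One then \emph{fixes} the exponent $\alpha\in(0,\bar{\alpha})$ small enough, depending on this $\alpha_0$, that $\alpha\le\alpha_1$, so that in fact $\tilde f\in C^\alpha_{\mathrm{loc}}(\Omega)$ with $\|\tilde f\|_{C^\alpha(\overline{\Omega^{\prime\prime}})}\le C\,\overline{W}_{u,\mu,m,\Omega^{\prime\prime}}(\|f\|_{C^\alpha(\Omega^{\prime\prime})})$. Since $u\in C^1$, a standard argument for $L^n$-viscosity solutions with continuous gradient shows that $u$ is an $L^n$-viscosity — equivalently, all ingredients being continuous, a $C$-viscosity — solution of $G(x,D^2u)=\tilde f(x)$ in $\Omega$. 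Because $F(\cdot,0,\cdot)=G(\cdot,\cdot)$ one has $\beta_F=\beta_G$, so $G$ inherits the Hölder oscillation $(\widetilde{H}_\theta)$; and (A4), i.e.\ \eqref{hip C2,epsilon}, is exactly a $C^{2,\bar{\alpha}}$ estimate for the frozen operator $X\mapsto G(x_0,X)$.

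It then remains to prove interior $C^{2,\alpha}$ regularity for $G(x,D^2u)=\tilde f$ under $(\widetilde{H}_\theta)$, (A4) and $\tilde f\in C^\alpha_{\mathrm{loc}}$. This is Caffarelli's iteration scheme: around each $x_0\in\Omega^{\prime\prime}$ construct inductively quadratic polynomials $P_k(x)=a_k+\langle\ell_k,x-x_0\rangle+\tfrac12\langle M_k(x-x_0),x-x_0\rangle$ with $G(x_0,M_k)=\tilde f(x_0)$ and radii $\rho^k$ ($\rho$ small, fixed), such that $\|u-P_k\|_{L^\infty(B_{\rho^k}(x_0))}\le\rho^{k(2+\alpha)}$ and $|M_{k+1}-M_k|+\rho^{-k}|\ell_{k+1}-\ell_k|\le C\rho^{k\alpha}$. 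The inductive step rescales $w_k(y)\defeq\rho^{-k(2+\alpha)}(u-P_k)(x_0+\rho^k y)$ and compares it on $B_1$ with the solution $v$ of $G(x_0,D^2v+M_k)=\tilde f(x_0)$ in $B_1$, $v=w_k$ on $\partial B_1$; (A4) gives quadratic approximability of $v$ to order $2+\bar{\alpha}$ at scale $\rho$, while the discrepancies coming from the oscillation of $G$ (controlled by $\theta$), the oscillation of $\tilde f$ ($C^\alpha$), and the residual first-order term are absorbed through a compactness/stability argument — and it is precisely here, for $0<m<1$, that the new stability-type lemma proved earlier in the paper and the standing assumption $n<mq$ are used, whereas for $m\in(1,2]$ (where, after rescaling, the gradient term remains of the same order as $D^2u$) the a priori bound $\overline{W}\le\overline{W}_0$ must be invoked, forcing $\theta$ to depend on $\overline{W}_0$. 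Telescoping the increments, $(M_k,\ell_k,a_k)\to(D^2u(x_0),Du(x_0),u(x_0))$ with the modulus in \eqref{est Schauder local}; carrying this out as in \cite{Norn19} avoids appealing to twice differentiability of $u$, which is unavailable for unbounded coefficients.

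The main obstacle is the sublinear regime $0<m<1$: $|Du|^m$ is only $m\alpha_0$-Hölder — it fails to be Lipschitz near $\{Du=0\}$ — so the usable Hölder exponent of $\tilde f$ degrades (this is why $\alpha$ must be taken $\le m\alpha_0$), the normalization carrying the $\|\mu\|^{1/(1-m)}$ factor has to be tracked carefully through the rescalings, and the approximation step in the iteration genuinely depends on the new stability result together with $n<mq$. The quadratic endpoint $m=2$ is the other delicate point, since after rescaling the gradient term does not become negligible; but both difficulties are already resolved in the preceding theorems of this paper, so here they only have to be fed into the otherwise classical Schauder scheme.
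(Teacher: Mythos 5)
Your overall route — freeze the gradient to pass from \eqref{F=f} with \eqref{MC_intro} to a pure second-order equation $G(x,D^2u)=\tilde f$ with $\tilde f$ Hölder, then invoke Schauder theory for that reduced equation — is exactly what the paper does, with the reduction furnished by Lemma \ref{RHS lemma} together with Proposition \ref{prop Lp iff Lupsilon}. You are in fact more explicit than the paper about the loss of Hölder exponent in the sublinear regime: since $t\mapsto t^m$ fails to be Lipschitz near the origin when $m<1$, the function $|Du|^m$ is only $m\alpha_0$-Hölder when $Du$ is $\alpha_0$-Hölder, so the Schauder exponent $\alpha$ must be pinned below $m\alpha_0$ for the estimate $|g|_\alpha\le|f|_\alpha+|b|_\alpha\|u\|_{C^{1,\alpha}}+|\mu|_\alpha\|u\|_{C^{1,\alpha}}^m$ to hold; the paper writes this estimate without drawing attention to the calibration of $\alpha$ it requires, which is left implicit in "there exists $\alpha\in(0,\bar\alpha)$." Flagging it is a genuine improvement in clarity.

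Where you go astray is in the last step. Having already reduced to $G(x,D^2u)=\tilde f$, you nevertheless assert that in Caffarelli's iteration "the residual first-order term" must be absorbed and that the new stability-type lemma for sublinear growth together with the hypothesis $n<mq$ are used "precisely here." Neither is needed at that stage: the reduced equation carries no gradient dependence, so the compactness/approximation ingredient is the classical one for operators of the form $G(x,X)$ and involves neither $\mu$ nor the condition $n<mq$. Those ingredients belong to the preliminary steps you had already correctly cited — the $C^{1,\alpha_0}$ regularity from Theorem \ref{C1,alpha regularity estimates geral} (hence Lemma \ref{AproxLem}) and the equivalence of $L^n$- and $L^p$-viscosity notions from Proposition \ref{prop Lp iff Lupsilon} — not to the pure-Hessian Schauder iteration. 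Re-deriving that iteration is also unnecessary: once $\tilde f\in C^\alpha$ is established, the $C^{2,\alpha}$ conclusion is a direct citation to \cite{Caf89} and \cite{CC03}, and the paper's proof is correspondingly short, consisting only of the reduction plus the arithmetic that turns $|g|_\alpha$ into the $\overline W$ quantity. Correct the attribution and drop the re-derivation, and your argument coincides with the paper's.
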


We stress that our geometric approach is particularly refined and flexible in order to be employed in a wide class of problems and Hamiltonians. In effect, we believe that it can be extended to both divergence and nondivergence type operators, with either degenerate or singular structures, or even free boundary problems of elliptic and parabolic type, just to mention a few.

The main tool in order to accomplish Hessian-type regularity is H\"{o}lder regularity estimates for our equations together with equivalence notions of viscosity solutions, that is, our Theorem \ref{C1,alpha regularity estimates geral} and Proposition \ref{prop Lp iff Lupsilon} ahead.
Essentially, it reduces any type of further regularity to be an easy consequence of the corresponding regularity estimates for equations without lower coefficients; in particular it simplifies the local approach developed in \cite{Wangpointwise2020} when $m\ge 1$, and up to our knowledge is completely new for $m<1$.

\smallskip

The rest of the paper is structured as follows. In Section \ref{Preliminaries} we recall some known results used along the text.
In Section \ref{section intrinsic} we present some fundamental results of the theory regarding equivalency of viscosity notions for the class of equations we consider.
In Section \ref{section Approx and Scal} we work on the pivotal scaling and approximation tools, while in Sections \ref{proof main th}, \ref{section further regularity} we give a detailed proof of the main theorems. Finally, Section \ref{section examples} is devoted to examples and applications. In particular, we consider two model cases involving nonconvex operators for which our results are sharp. We also establish \textit{a priori} BMO estimates, some scenarios for Sobolev integral estimates, in addition to regularity for a class of fully nonlinear singular PDEs which degenerates when the gradient vanishes.

\section{Preliminaries}\label{Preliminaries}

Let us start by recalling some definitions and comments about viscosity solutions. \smallskip

In all what follows, we consider an operator $F$ as in \eqref{SC} -- in particular, having the ``sign'' of $\Delta$.

\begin{defin}\label{def Lp-viscosity sol}
Let $f\in L^p_{\textrm{loc}}(\Omega)$. We say that $u\in C(\Omega)$ is an $L^p$-viscosity subsolution $($resp. supersolution$)$ of $F(x,u,Du,D^2u)=f(x)$ in $\Omega$ if for $\phi\in  W^{2,p}_{\mathrm{loc}}(\Omega)$, it follows that
\begin{align}\label{limSubsolution1}
\mathrm{ess.}\varlimsup_{y\to x} \{F(y,u(y),D\phi(y),D^2\phi (y))-f(y)\} \geq 0
\;
(\mathrm{ess.}\varliminf_{y\to x} \{F(y,u(y),D\phi(y),D^2\phi (y))-f(y)\} \leq 0 )
\end{align}
whenever $u-\phi$ assumes its local maximum (resp.\ minimum) at $x\in\Omega$.
\end{defin}
It is worth mentioning that the Definition \ref{def Lp-viscosity sol} is formulated for any $p>\frac{n}{2}$. Such a restriction makes all test functions $\phi\in W^{2,p}_{\mathrm{loc}}(\Omega)$ continuous and having a second order Taylor expansion, see \cite{CCKS}.\smallskip
\smallskip

Throughout the text we always consider $b\in L^{\varrho}_+ (\Omega)$, $\mu \in L^q_+ (\Omega)$, and $f\in L^p (\Omega)$ such that
$q, \varrho\geq p > p_0$ and $q, \varrho>n,$
where $p\in (p_0,+\infty)$ is the exponent in Definition \ref{def Lp-viscosity sol} of $L^p$-viscosity solutions.
Anyway $q,\varrho\ge p$ is the minimal condition required on $b$ and $\mu$ to make sense the definition of $L^p$-viscosity solution.

Note that if $\mu\in L^\infty_+(\Omega)$ then in particular $\mu\in L^q_+(\Omega)$ for $q>n$.
In this case we adopt the convention of writing $q$ in a unified way for $m\in (1,2]$: if $m=2$ we understood $n/q$ as being $0$; while for $m<2$ we just take into account some large $q<+\infty$ verifying (A2).

\begin{rmk}\label{Remark C-visc}
When $F$ and $f$ are continuous in $x$, \eqref{limSubsolution1} reduces to an evaluation at the point $x$ provided $\phi\in C^2(\Omega)$. In this case, we say that $u$ is a \textit{$C$-viscosity} sub(super)solution of \eqref{F=f}, see, \cite{user}. These notions are equivalent if for instance $F$ satisfies (A1) with $b, \mu \equiv 0$, $p\geq n$, see, \cite[Proposition 2.9]{CCKS}. In particular, when $F$ only depends on $X$ we will use the $L^p$ and $C$ viscosity concepts interchangeably.
\end{rmk}

We say that $u\in C(\Omega)$ is a \textit{strong} subsolution (resp.\ supersolution) of \eqref{F=f} if $u\in W^{2,p}_{\mathrm{loc}}(\Omega)$ and $u$ satisfies the differential inequality $F(x,Du,D^2u)\ge f(x)$ ($\le f(x)$) at almost every point $x\in \Omega$. This notion is equivalent to Definition \ref{def Lp-viscosity sol} for $u\in W^{2,p}_{\mathrm{loc}}(\Omega)$ when $m\ge 1$, cf.\  \cite[Theorem 3.1, Proposition 9.1]{KSweakharnack}.
In Section \ref{section intrinsic} ahead we extend this equivalence for all $m\in (0,2]$ and possibly unbounded $\mu$, under the exponents relation in (A2).

In any sense, a \textit{solution} $u \in C(\Omega)$ of \eqref{F=f} is simultaneously a subsolution and a supersolution of it.

\smallskip

Next, we recall the Alexandrov-Bakelman-Pucci (ABP for short) maximum principle with unbounded ingredients, in both sublinear and superlinear gradient growth regimes.

\begin{prop}[{\bf{ABP sublinear regime} \cite[Theorem 3.3]{KoKo17}}]\label{wABPsublin}
Assume $p_0 < p \leq \varrho$, $\varrho > n$, and (A2) with $0<m<1$.
If $b\in L_{+}^\varrho(\Omega)$, $\mu \in L^q_{+}(\Omega)$, $f \in L^p_{+}(\Omega)$, and $u \in C(\overline{\Omega})$ is an $L^p$-viscosity subsolution (resp.\ supersolution) of
\begin{align*}
\mathcal{L}^+_m\,[u] \geq f(x)\;\; \mathrm{in}\quad \Omega^{+} \quad
(\, \text{resp}. \,\,\,\mathcal{L}^-_m\,[u] \leq f(x)\;\; \mathrm{in}\;\; \Omega^{-} \,),
\end{align*}
where $ \Omega^{\pm} = \{x \in \Omega: \pm  u(x)>\max_{\partial \Omega} u^{\pm}\}$, then we have
\begin{align*}
\max_{\overline{\Omega}} u \leq \max_{\partial \Omega} u^{+} +C\left\{\|f\|_{L^p(\Omega^{+})}+ \|\mu\|^{\frac{1}{1-m}}_{L^q(\Omega^{+})} \right\}
\;\left(\,\min_{\overline{\Omega}} u \geq -\max_{\partial \Omega} u^{-} -C\left\{\|f\|_{L^p(\Omega^{-})}+ \|\mu\|^{\frac{1}{1-m}}_{L^q(\Omega^{-})} \right\}\,\right),
\end{align*}
for a positive constant $C=C(n, \lambda, \Lambda, p, q, \varrho, m, \|b\|_\varrho,\mathrm{diam}(\Omega))$.
\end{prop}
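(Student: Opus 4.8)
\medskip
\noindent\textbf{Sketch of proof.}
The plan is to derive the bound from the contact--set (Alexandrov--Bakelman--Pucci) method, slotting the sublinear Hamiltonian term into the argument by means of the a priori gradient bound available on the upper contact set, and invoking the known maximum principle with \emph{unbounded drift} (no smallness of $\|b\|_{L^\varrho(\Omega)}$ being needed) for the linear part. It suffices to treat a subsolution $v\in C(\overline{\Omega})$ of
$$\mathcal{M}^{+}_{\lambda,\Lambda}(D^2 v)+b(x)|Dv|+\mu(x)|Dv|^m\ \ge\ g(x)\quad\text{in}\quad \Omega^{+}:=\{v>{\textstyle\max_{\partial\Omega}}\,v^{+}\},$$
with $g\in L^p(\Omega)$: the stated estimate for $\mathcal{L}^{+}_{m}$ is the case $g=f$, while the one for $\mathcal{L}^{-}_{m}$ follows by applying it to $v=-u$, for which $\mathcal{L}^{-}_{m}[u]\le f$ reads $\mathcal{M}^{+}_{\lambda,\Lambda}(D^2 v)+b(x)|Dv|+\mu(x)|Dv|^m\ge -f$. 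Note that, thanks to (A2), for every $\phi\in W^{2,p}_{\mathrm{loc}}(\Omega)$ one has $\mu\,|D\phi|^m\in L^p_{\mathrm{loc}}(\Omega)$ --- via the Sobolev embedding $W^{1,p}\hookrightarrow L^{np/(n-p)}$ and H\"older when $p<n$, and trivially when $p\ge n$ --- so that the displayed inequality is meaningful in the $L^p$-viscosity sense and the machinery of \cite{CCKS} applies; this is exactly where (A2) enters.

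Set $M:=\max_{\overline{\Omega}}v-\max_{\partial\Omega}v^{+}$ and assume $M>0$, otherwise there is nothing to prove. Let $\Gamma^{+}\subset\Omega^{+}$ be the upper contact set of $v$ relative to the concave envelope, over a ball $B_{R}$ with $\Omega\subset\subset B_{R}$ and $R\simeq\diam(\Omega)$, of the function $(v-\max_{\partial\Omega}v^{+})^{+}$ extended by $0$ outside $\Omega$; by standard properties of the concave envelope (see \cite{CafCab, CCKS}) one has the pointwise bound $|Dv(x)|\le C_0 M$ on $\Gamma^{+}$, with $C_0=C_0(\diam(\Omega))$. The crucial point is that, since the ABP/GMP estimate only uses the equation along $\Gamma^{+}$, on that set the sublinear term is controlled by the \emph{fixed} $L^p$ function $(C_0M)^m\mu$: indeed $\mathcal{M}^{+}_{\lambda,\Lambda}(D^2v)+b(x)|Dv|\ge g-\mu|Dv|^m$ there, with $0\le\mu|Dv|^m\le (C_0M)^m\mu$ on $\Gamma^{+}$. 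Running the contact--set measure estimate with the unbounded drift $b\in L^\varrho_{+}(\Omega)$, $\varrho>n$ --- in its generalized (weak ABP/GMP) form for $p_0<p<n$ \cite{Esc93, KSmpite2007, CCKS} and in its classical form for $p\ge n$ \cite{CCKS}, neither of which requires smallness of $\|b\|_{L^\varrho(\Omega)}$ --- one obtains, using $f\in L^p_{+}$ (so $g^-$ is the only relevant part of the right-hand side when $g=\pm f$) and H\"older's inequality (here $q\ge p$, $\Gamma^{+}\subset\Omega^{+}\subset\Omega$),
\begin{align*}
M\ \le\ C\big\|\big(g-\mu|Dv|^m\big)^{-}\big\|_{L^p(\Gamma^{+})}\ \le\ C\Big(\|g^{-}\|_{L^p(\Omega^{+})}+C_0^{m}M^{m}\,|\Omega|^{\frac1p-\frac1q}\,\|\mu\|_{L^q(\Omega^{+})}\Big),
\end{align*}
for $C=C(n,\lambda,\Lambda,p,\varrho,\|b\|_{L^\varrho(\Omega)},\diam(\Omega))$.

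Finally, since $0<m<1$, Young's inequality gives $C\,C_0^{m}\,|\Omega|^{\frac1p-\frac1q}\,M^{m}\|\mu\|_{L^q(\Omega^{+})}\le\tfrac12 M+C_1\,\|\mu\|_{L^q(\Omega^{+})}^{\frac1{1-m}}$, and absorbing $\tfrac12 M$ on the left yields $M\le C_2\big(\|g^{-}\|_{L^p(\Omega^{+})}+\|\mu\|_{L^q(\Omega^{+})}^{\frac1{1-m}}\big)$ with $C_2=C_2(n,\lambda,\Lambda,p,q,\varrho,m,\|b\|_{L^\varrho(\Omega)},\diam(\Omega))$; taking $g=f$ (so $g^{-}\equiv 0$) gives the subsolution assertion and $g=-f$ (so $g^{-}=f$) the supersolution one. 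The hardest part is not the sublinear term itself --- once restricted to $\Gamma^{+}$ via $|Dv|\le C_0M$ it becomes an honest $L^p$ datum and costs only a factor $M^{m}$, hence merely the $L^q$-norm of $\mu$ (a global Young inequality instead of the contact--set bound would require $\mu\in L^{p/(1-m)}$, which is stronger than (A2)) --- but rather making the contact--set argument rigorous for $L^p$-viscosity solutions and carrying the drift $b|Dv|$ through the measure estimate \emph{without} assuming $\|b\|_{L^\varrho(\Omega)}$ small; this is supplied, respectively, by the framework of \cite{CCKS}, the iterated comparison function technique of \cite{KSmpite2007} for $p\ge n$, and Escauriaza's argument \cite{Esc93} in the range $p<n$.
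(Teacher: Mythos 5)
The paper does not prove this proposition; it is recalled verbatim with a citation to \cite[Theorem 3.3]{KoKo17}, so there is no in-paper argument to compare against. Your sketch is nonetheless correct and, as far as I can tell, mirrors the argument in the Koike--Kosugi source: the two ideas that carry the proof are exactly the ones you isolate, namely (i) the pointwise gradient bound $|Dv|\le C_0 M$ on the upper contact set, which converts the sublinear term into the fixed datum $(C_0M)^m\mu$ visible to the measure estimate, and (ii) the absorption $A\,M^m\le\tfrac12 M+C\,A^{1/(1-m)}$ valid precisely because $m<1$, which yields the $\|\mu\|_{L^q}^{1/(1-m)}$ term. You also correctly locate where (A2) enters --- it is exactly the Sobolev/H\"older condition guaranteeing $\mu|D\phi|^m\in L^p_{\mathrm{loc}}$ for test functions when $p<n$, without which the $L^p$-viscosity inequality is not even well-posed --- and you rightly observe that a naive global Young inequality would overshoot the integrability of $\mu$. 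The one point worth stating more carefully is the step where you feed the frozen right-hand side $g-(C_0M)^m\mu$ into the weak ABP/GMP for unbounded drift $b\in L^\varrho_+$, $\varrho>n$, \emph{without} smallness: this is legitimate only because the GMP in the references you invoke is indeed obtained through a contact-set measure estimate (applied, in the $L^p$-viscosity setting, to inf-convolutions), so that the sublinear term can be evaluated only on $\Gamma^+$ where the gradient bound holds; you gesture at this but it is the single nontrivial technical hinge and deserves the extra sentence.
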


\begin{prop}[{\bf ABP superlinear regime \cite[Theorem 2.6]{KSweakharnack}}]\label{wABPsuperlin}
Let $m\in (1,2]$, $f \in L^p(\Omega)$, $b\in L_{+}^\varrho(\Omega)$, $\mu \in L_{+}^q(\Omega)$; with $\mu\in L^\infty_+(\Omega)$ if $m=2$. Let $u \in C(\overline{\Omega})$ be an $L^p$-viscosity subsolution of
\vspace{-0.1cm}
\begin{center}
$\mathcal{L}^{+}_m[u] \geq  f(x)$\; in \; $\Omega$.
\end{center}
\begin{enumerate}[(i)]
\item For $q \geq  p> n$, there exist constants $\delta = \delta(n, \varrho,\lambda, \Lambda, m, p,q, \|b\|_\varrho)>0 $ and $C =C(n, \lambda, \Lambda, m, p,q,\varrho,$  $\|b\|_\varrho, \mathrm{diam}(\Omega))>0$ such that
\vspace{-0.3cm}
\begin{align*}
\textrm{if }\;\; \|f\|_p^{m-1}\|\mu\|_q<\delta \quad
 \textrm{ then }\quad \max_{\overline{\Omega}} u \leq \max_{\partial \Omega} u^+ +C\left(\|f\|_n + \|f\|^m_p\|\mu\|_q \right).
\end{align*}
\vspace{-0.5cm}

\item Let $p_0 < p \leq n < q$ satisfy $p> \frac{nq(m-1)}{mq-n}$, and set $a_0 = 0$, $ a_k = \sum_{j=0}^{k-1} m^j$ for $k \ge 1$. There exist an integer
$N = N(n, m, p, q)\geq 1$, and constants $\delta = \delta(n, \lambda, \Lambda, m, p, q, \|b\|_\varrho) > 0$ and $C =C(n, \lambda, \Lambda, m, p, q,\varrho, \|b\|_\varrho, \mathrm{diam}(\Omega))> 0$ such that
\vspace{-0.3cm}
\begin{align*}
\textrm{if }\;\; \|f\|_p^{m^N(m-1)}\|\mu\|^{m^N}_q<\delta\quad
 \textrm{ then }\quad
 \max_{\overline{\Omega}} u \leq \max_{\partial \Omega} u^+ +C\sum_{k=0}^{N+1} \|f\|^{m^k}_p\|\mu\|^{a_k}_q .
\end{align*}
\end{enumerate}
\vspace{-0.3cm}
A similar result with minima holds for $L^p$-viscosity supersolutions of $\mathcal{L}^{-}_m[u] \leq  f(x)$ in $\Omega$.
\end{prop}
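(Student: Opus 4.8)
\noindent The plan is to reduce the superlinear/quadratic Pucci inequality to a \emph{linear--drift} one, $\mathcal{M}^{+}_{\lambda,\Lambda}(D^2u)+\widetilde b(x)|Du|\ge \widetilde f(x)$ with $\widetilde b\in L^{s}$ for some $s>n$, and then invoke the Alexandrov--Bakelman--Pucci estimate with unbounded ingredients in the $L^p$-viscosity setting: the classical one when $p>n$, and the generalized maximum principle (weak ABP, Escauriaza's exponent $p_0$) of \cite{CCKS, KSmpite2007, Esc93} when $p_{0}<p\le n$. Throughout we normalize $\max_{\partial\Omega}u^{+}=0$ (the operator $\mathcal{L}^{+}_{m}$ in \eqref{def Lm} is invariant under addition of constants) and put $M\defeq\max_{\overline{\Omega}}u$, which may be assumed positive. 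Since $u$ is only an $L^p$-viscosity subsolution, the contact-set computations are carried out as in the $L^p$-viscosity ABP theory, so no pointwise second differentiability of $u$ is needed, and the nonlinear changes of variable below are legitimate at that level, as in \cite{Norn19, arma2010}.

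\medskip

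\noindent\emph{Quadratic case $m=2$} (so $\mu\equiv\mathrm{const}$, $q=\infty$). Use the Hopf--Cole substitution $v\defeq\tfrac{\lambda}{\mu}\bigl(e^{\mu u/\lambda}-1\bigr)$, for which $Dv=e^{\mu u/\lambda}Du$ and $D^2v=e^{\mu u/\lambda}\bigl(D^2u+\tfrac{\mu}{\lambda}\,Du\otimes Du\bigr)$. As $Du\otimes Du\ge0$ has rank one, $\mathcal{M}^{-}_{\lambda,\Lambda}\bigl(\tfrac{\mu}{\lambda}Du\otimes Du\bigr)=\mu|Du|^{2}$, whence $\mathcal{M}^{+}_{\lambda,\Lambda}(D^2v)\ge e^{\mu u/\lambda}\bigl(\mathcal{M}^{+}_{\lambda,\Lambda}(D^2u)+\mu|Du|^{2}\bigr)\ge e^{\mu u/\lambda}\bigl(f-b|Du|\bigr)=e^{\mu u/\lambda}f-b|Dv|$, a linear--drift inequality for $v$. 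Applying classical ABP to $v$, bounding $e^{\mu u/\lambda}\le e^{\mu M/\lambda}$ on $\overline{\Omega}$ and noting $v\le0$ where $u\le0$, one gets $\tfrac{\lambda}{\mu}\bigl(1-e^{-\mu M/\lambda}\bigr)\le C\|f\|_{n}$. If $\mu M/\lambda\le1$, then $1-e^{-t}\ge t/2$ yields $M\le2C\|f\|_{n}\le C'(\|f\|_{n}+\mu\|f\|_{p}^{2})$; the complementary case $\mu M/\lambda>1$ would force $\mu\|f\|_{p}$ to exceed a universal constant, excluded once $\delta$ in the smallness hypothesis is small. This proves (i) for $m=2$, and (ii) is vacuous.

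\medskip

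\noindent\emph{Subquadratic case $1<m<2$.} On the upper contact set of $u$ the gradient coincides with that of the concave envelope, and the elementary geometry behind ABP bounds it by $c\,M/\mathrm{diam}(\Omega)$; hence there $\mu|Du|^{m}=\mu|Du|^{m-1}|Du|\le(c\,M/\mathrm{diam}(\Omega))^{m-1}\mu|Du|$, so $u$ satisfies $\mathcal{M}^{+}_{\lambda,\Lambda}(D^2u)+\widetilde b(x)|Du|\ge f$ there, with $\widetilde b\defeq b+(c\,M/\mathrm{diam}(\Omega))^{m-1}\mu$ and $\|\widetilde b\|_{s}\le c(\|b\|_{\varrho}+M^{m-1}\|\mu\|_{q})$, $s\defeq\min\{q,\varrho\}>n$. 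When $q\ge p>n$ (part (i)) one applies classical ABP directly, getting $M\le C(1+M^{m-1}\|\mu\|_{q})\|f\|_{n}$; the smallness $\|f\|_{p}^{m-1}\|\mu\|_{q}<\delta$ then absorbs the superlinear term (a short contradiction argument gives first $M\lesssim\|f\|_{n}$, then the term $\|f\|_{p}^{m}\|\mu\|_{q}$), yielding $M\le C(\|f\|_{n}+\|f\|_{p}^{m}\|\mu\|_{q})$. When $p_{0}<p\le n<q$ (part (ii)) only the weak ABP is available; to use it alongside a drift one rescales on small balls $B_{\rho}(x_{0})\subset\Omega$ via $u_{\rho}(x)\defeq\rho^{-(2-n/p)}u(x_{0}+\rho x)$, whose coefficients obey $\|b_{\rho}\|_{L^{\varrho}(B_{1})}=\rho^{\,1-n/\varrho}\|b\|_{L^{\varrho}(B_{\rho})}\to0$ and $\|\mu_{\rho}\|_{L^{q}(B_{1})}=\rho^{\,m+\frac{n}{p}(1-m)-\frac{n}{q}}\|\mu\|_{L^{q}(B_{\rho})}$; the exponent of $\rho$ here is nonnegative \emph{exactly when} $p\ge\frac{nq(m-1)}{mq-n}$, which is the hypothesis on $p$ and is precisely what prevents the subquadratic term from blowing up under the scaling (condition $(3.1)(\mathrm{iii})$ of \cite{KSweakharnack, KSexist2009}). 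Choosing $\rho$ so small that $b_{\rho}$ is admissible for the weak ABP and iterating the bootstrap above — at step $k{+}1$ replacing the bound on $M$ entering $\widetilde b$ by the bound from step $k$ — the smallness hypothesis $\|f\|_{p}^{m^{N}(m-1)}\|\mu\|_{q}^{m^{N}}<\delta$ forces termination after $N=N(n,m,p,q)$ steps; tracking the accumulated powers of $\|f\|_{p}$ and $\|\mu\|_{q}$, governed by $m^{k}$ and $a_{k}=\sum_{j=0}^{k-1}m^{j}$ (recursion $a_{k+1}=1+m\,a_{k}$), gives the stated sum $\sum_{k=0}^{N+1}\|f\|_{p}^{m^{k}}\|\mu\|_{q}^{a_{k}}$.

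\medskip

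\noindent The statement for $L^p$-viscosity supersolutions of $\mathcal{L}^{-}_{m}[u]\le f$ follows by applying the above to $-u$, which is an $L^p$-viscosity subsolution of $\mathcal{L}^{+}_{m}[-u]\ge-f$. The main obstacle is the subquadratic regime: keeping every invocation of the (weak) ABP inside its validity range forces the rescaling and pins down the threshold $p>nq(m-1)/(mq-n)$, and closing the bootstrap with \emph{exactly} the exponents $a_{k}$ and a finite, explicit step count $N$ is the delicate part; once the linear--drift ABP with an $L^{s}$, $s>n$, coefficient is granted, the reduction of the superlinear term is routine.
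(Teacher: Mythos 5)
This proposition is not proved in the paper at all: it is imported verbatim from Koike and \'{S}wi\c{e}ch \cite[Theorem 2.6]{KSweakharnack}, so there is no in-paper argument to compare yours against. That said, your sketch does reproduce the standard heuristics behind such ABP results: the Hopf--Cole substitution for $m=2$ (as in \cite{arma2010}), the absorption of $\mu|Du|^m$ into a linear drift via the contact-set gradient bound $|Du|\le c\,M/\mathrm{diam}(\Omega)$, and the scaling computation whose exponent $m-\tfrac{n(m-1)}{p}-\tfrac{n}{q}\ge 0$ is exactly the threshold $p\ge \frac{nq(m-1)}{mq-n}$. The quadratic case (i) is essentially complete, modulo the (nontrivial but standard) justification of the exponential change of variables at the level of $L^p$-viscosity subsolutions.

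The subquadratic part, however, has genuine gaps. First, the inequality ``$M\le C(1+M^{m-1}\|\mu\|_q)\|f\|_n$'' does not follow from ABP as written: the ABP constant depends on the $L^n$ (or $L^s$, $s>n$) norm of the drift in a nonlinear, in general exponential, way, so what one actually obtains is $M\le C\bigl(\|b\|_\varrho+M^{m-1}\|\mu\|_q\bigr)\,\|f\|_n$ with $C(\cdot)$ an increasing function of its argument; absorbing the $M$-dependence then requires precisely the quantitative control that the iterated comparison function method of \cite{KSmpite2007, KSweakharnack} is designed to provide, and your ``short contradiction argument'' does not supply it. Relatedly, the pointwise statement that $u$ satisfies a linear-drift inequality ``on the upper contact set'' is not a well-formed viscosity assertion and must be run through the $L^p$-viscosity contact-set machinery of \cite{CCKS}. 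Second, in part (ii) the rescaling to small balls $B_\rho(x_0)$ yields only local estimates; you never explain how these reassemble into the global bound $\max_{\overline{\Omega}}u\le\max_{\partial\Omega}u^+ +\cdots$, nor how the uniform choice of $\rho$ interacts with $\delta$ and with the number of iterations. Finally, the termination after exactly $N=N(n,m,p,q)$ steps and the precise form $\sum_{k=0}^{N+1}\|f\|_p^{m^k}\|\mu\|_q^{a_k}$ are asserted rather than derived. If the intent is to use this proposition, the correct move --- and the one the paper makes --- is simply to cite \cite[Theorem 2.6]{KSweakharnack}.
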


A consequence of superlinear ABP is the stability of the notion of viscosity solutions, i.e., the limit of a sequence of viscosity solutions turns out to be a viscosity solution of the limiting equation.

\begin{prop}[{\bf Stability for superlinear growth}]\label{stability}
	Let $F$, $F_k$ operators satisfying (A1), $b\in L^\varrho_+ (\Omega), \mu\in L^q_+ (\Omega)$, $f, \, f_k\in L^p(\Omega)$. Assume (A2) with $m\in [1,2]$. Let $u_k\in C(\Omega)$ be an $L^p$-viscosity subsolution $($resp.\ supersolution$)$ of
	$$
	F_k(x,Du_k,D^2u_k)\geq f_k(x) \;\;\;\textrm{in} \;\;\;\Omega\quad (\text{resp}.\leq f(x))\quad\textrm{ for all }\; k\in \n .
	$$
	Suppose  $u_k\rightarrow u$ in $L_{\mathrm{loc}}^\infty  (\Omega)$ as $k\rightarrow \infty$ and for each ball $B\subset\subset \Omega$ and $\varphi\in W^{2,p}(B)$, setting
	\begin{align*}
	g_k(x)\defeq F_k(x,D\varphi,D^2\varphi)-f_k(x) \,\,\,\,\, \text{and}\,\,\,\,
	g(x)\defeq F(x,D\varphi,D^2\varphi)-f(x),
	\end{align*}
	we have
	$ \| (g_k-g)^+\|_{L^p(B)} , (\| (g_k-g)^-\|_{L^p(B)}) \rightarrow 0 $ as $ k\rightarrow \infty.$
	Then $u$ is an $L^p$-viscosity subsolution (resp. supersolution) of
	$$
	F(x, Du, D^2u)\geq f(x) \quad (\text{resp.} \,\,\leq f(x))\,\,\, \text{in} \quad \Omega.
	$$
	Furthermore, if $F$ and $f$ are continuous in $x$, then it is enough that the above holds for every $\varphi\in C^2 (B)$; case in which  $u$ is a $C$-viscosity subsolution (resp. supersolution) of \eqref{F=f}.
\end{prop}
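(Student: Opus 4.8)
The plan is to follow the classical Caffarelli–Crandall–Kocan–Świȩch stability argument (cf. \cite{CCKS}), adapting it to the superlinear Hamiltonian via the ABP estimate of Proposition \ref{wABPsuperlin}. We argue only for the subsolution case; the supersolution case is symmetric, and the final assertion about $C$-viscosity solutions follows from the equivalence in Remark \ref{Remark C-visc} once the $L^p$ statement is established. Fix a ball $B\subset\subset\Omega$, a test function $\varphi\in W^{2,p}(B)$, and suppose $u-\varphi$ has a \emph{strict} local maximum at some $x_0\in B$ (the non-strict case is reduced to the strict one by replacing $\varphi$ with $\varphi+\epsilon|x-x_0|^2$ and letting $\epsilon\to 0$, which only changes $D\varphi,D^2\varphi$ by controlled amounts). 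Arguing by contradiction, assume
\[
\mathrm{ess.}\varlimsup_{y\to x_0}\{F(y,D\varphi(y),D^2\varphi(y))-f(y)\}=:-2\delta_0<0 .
\]
Then there is a small ball $B_\rho(x_0)\subset B$ on which $F(y,D\varphi,D^2\varphi)-f(y)\le -\delta_0$ a.e.

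The next step is to transfer this strict inequality to the approximating operators. Since $u_k\to u$ uniformly on $\overline{B_\rho(x_0)}$ and $u-\varphi$ has a strict max at $x_0$, for $k$ large the function $u_k-\varphi$ attains its maximum over $\overline{B_\rho(x_0)}$ at an interior point $x_k$ with $x_k\to x_0$. Set $w_k:=u_k-\varphi$ and let $M_k:=\max_{\partial B_\rho(x_0)}w_k$, so that $w_k(x_k)>M_k$ for large $k$. Using the structure condition \eqref{SC} applied to $F_k$ with the pair $(Du_k,D^2u_k)$ against $(D\varphi,D^2\varphi)$, at every point of the contact set one gets, in the $L^p$-viscosity sense,
\[
\mathcal{M}^+_{\lambda,\Lambda}(D^2 w_k)+H_k(x,Du_k,D\varphi)\ \ge\ F_k(x,Du_k,D^2u_k)-F_k(x,D\varphi,D^2\varphi)\ \ge\ f_k(x)-F_k(x,D\varphi,D^2\varphi),
\]
and writing $H_k\le b|Dw_k|+\mu|Dw_k|(|Du_k|^{m-1}+|D\varphi|^{m-1})$ one sees that $w_k$ is an $L^p$-viscosity subsolution of $\mathcal{L}^+_m[w_k]\ge g_k(x)$ on $B_\rho(x_0)$, where $g_k(x):=f_k(x)-F_k(x,D\varphi,D^2\varphi)=-(F_k(x,D\varphi,D^2\varphi)-f_k(x))$. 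By hypothesis $\|(g_k-g)^-\|_{L^p(B_\rho(x_0))}\to 0$ where $g=-(F(x,D\varphi,D^2\varphi)-f(x))\ge\delta_0$ a.e. on $B_\rho(x_0)$; hence for $k$ large $g_k\ge\delta_0/2$ a.e.\ on $B_\rho(x_0)$. (Here one uses that the relevant $L^p$-viscosity inequality for $w_k$ with the $W^{2,p}$ ingredient $\varphi$ is exactly the almost-everywhere differential inequality on the set where $w_k$ is touched from above, as in \cite[Theorem 3.1]{KSweakharnack}; alternatively one keeps everything at the level of test functions as in Definition \ref{def Lp-viscosity sol}.)

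Now apply Proposition \ref{wABPsuperlin} to $w_k$ on $B_\rho(x_0)$. Since $w_k(x_k)>M_k=\max_{\partial B_\rho(x_0)}w_k$, the ABP estimate gives
\[
0<w_k(x_k)-M_k\ \le\ C\Big(\|g_k^+\|_{\text{(appropriate $L^p$)}}+\text{(higher powers)}\,\|\mu\|_q\Big)
\]
\emph{but only under the smallness condition} on $\|g_k^+\|_p^{\,\cdot}\|\mu\|_q$; the point is the reverse — we want a contradiction, so we instead note that $-w_k+w_k(x_k)$ is a supersolution with a \emph{negative} right-hand side $-g_k\le-\delta_0/2<0$, and ABP applied to $v_k:=M_k-w_k+$ (suitable shift), or more cleanly to $w_k$ directly observing that a subsolution of $\mathcal{L}^+_m[w_k]\ge \delta_0/2>0$ on $B_\rho(x_0)$ cannot have an interior maximum exceeding its boundary maximum unless the ABP bound forces $\|g_k^+\|_p$ to be large; but $g_k$ is bounded below by a positive constant and this is precisely what rules out the interior touching. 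Concretely: a function with $\mathcal{L}^+_m[w_k]\ge c>0$ in a domain, if it had an interior max, would violate the strong/ABP maximum principle — more precisely, shifting so the max is $0$ and applying the supersolution ABP of Proposition \ref{wABPsuperlin} to $-w_k$ (which satisfies $\mathcal{L}^-_m[-w_k]\le -c<0$) yields $\min(-w_k)\ge-\max_{\partial}(-w_k)^- - C(\cdots)$, and combining with the interior minimum of $-w_k$ being strictly below its boundary values produces $0<0$. This contradiction, valid for all large $k$, shows the assumed strict inequality $-2\delta_0<0$ is impossible, i.e.
\[
\mathrm{ess.}\varlimsup_{y\to x_0}\{F(y,D\varphi(y),D^2\varphi(y))-f(y)\}\ge 0,
\]
which is exactly the $L^p$-viscosity subsolution property of $u$ at $x_0$. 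The final sentence concerning $C^2$ test functions and $C$-viscosity solutions follows since, when $F,f$ are continuous in $x$, the $\mathrm{ess.}\varlimsup$ reduces to a pointwise evaluation (Remark \ref{Remark C-visc}) and the $W^{2,p}$-test-function argument above specializes verbatim to $\varphi\in C^2(B)$.

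\textbf{Main obstacle.} The delicate point is not the topological convergence $x_k\to x_0$ (standard, from uniform convergence plus strictness of the maximum) but the careful bookkeeping of the Hamiltonian term under the subtraction $F_k(x,Du_k,D^2u_k)-F_k(x,D\varphi,D^2\varphi)$: one must check that the error it generates is absorbed into the operator $\mathcal{L}^+_m$ with coefficients $b\in L^\varrho_+$, $\mu\in L^q_+$ \emph{independent of $k$}, so that the \emph{same} ABP constant and smallness threshold $\delta$ from Proposition \ref{wABPsuperlin} apply uniformly in $k$; this is exactly where the hypothesis (A2) with $m\in[1,2]$ and the common coefficients $b,\mu$ are used. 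A secondary subtlety is justifying that the $L^p$-viscosity inequality for $w_k=u_k-\varphi$ with a genuine $W^{2,p}$ test function legitimately feeds into the ABP estimate — this is handled by the equivalence between the viscosity and strong formulations for such data (\cite[Theorem 3.1, Proposition 9.1]{KSweakharnack}), which requires $m\ge 1$, matching the hypothesis of the proposition.
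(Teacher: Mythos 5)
Since the paper does not prove Proposition~\ref{stability} but cites \cite{arma2010} and \cite[Proposition 9.4]{KSweakharnack}, I evaluate your argument on its own merits. Your overall strategy---contradiction, pass to $w_k:=u_k-\varphi$, show $w_k$ is an $L^p$-viscosity subsolution of $\tilde{\mathcal{L}}^+_m[w_k]\ge -g_k$ for a modified drift $\tilde b=b+2\mu|D\varphi|^{m-1}$, and apply the superlinear ABP of Proposition~\ref{wABPsuperlin}---is the correct one, but the argument breaks at its crux. From ``$\|(g_k-g)^-\|_{L^p(B_\rho(x_0))}\to 0$ and $g\ge\delta_0$ a.e.'' (your sign convention) you infer that ``for $k$ large $g_k\ge\delta_0/2$ a.e.\ on $B_\rho(x_0)$''. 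This is false: $L^p$-smallness of $(g_k-g)^-$ does not yield any a.e.\ lower bound on $g_k$. For instance if $g\equiv\delta_0$ and $g-g_k=\delta_0\,\mathbf{1}_{E_k}$ with $|E_k|\to 0$, then $\|(g_k-g)^-\|_p\to 0$ yet $g_k=0$ on $E_k$. Consequently the chain ``$(-g_k)^+\equiv 0$, so ABP yields $0<0$'' cannot be run: the relevant right-hand side in the ABP inequality is small in $L^p$ but not identically zero, so the ABP estimate gives a \emph{small positive} upper bound, not a contradiction by itself.

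The argument can be repaired, but the fix requires an ingredient you never write down. Keeping the paper's sign $g_k:=F_k(x,D\varphi,D^2\varphi)-f_k$, the contradiction hypothesis gives $g\le-\delta_0$ a.e.\ on $B_\rho(x_0)$, and therefore $g_k^+\le(g_k-g)^+$, so $\|g_k^+\|_{L^p(B_\rho)}\to 0$; in particular the ABP smallness threshold of Proposition~\ref{wABPsuperlin} is met for $k$ large, and, applied to the subsolution $w_k-M_k$ of $\tilde{\mathcal{L}}^+_m[\cdot]\ge -g_k\ge -g_k^+$ (where $M_k:=\max_{\partial B_\rho}w_k$), it yields
$w_k(x_k)-M_k\le C\big(\|g_k^+\|_{L^n}+\|g_k^+\|_{L^p}^m\|\mu\|_{L^q}\big)\to 0$.
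The contradiction must then come from the \emph{lower} bound $w_k(x_k)-M_k\ge\eta>0$ for $k$ large, which follows because $u-\varphi$ has a strict maximum at $x_0$ (so $(u-\varphi)(x_0)-\max_{\partial B_\rho}(u-\varphi)=:2\eta>0$) and $u_k\to u$ uniformly on $\overline{B_\rho}$. This quantitative lower bound on the interior excess is precisely what the strict-maximum reduction is for and is the real engine of the contradiction; it is absent from your proof, which instead tries (incorrectly) to drive the ABP bound identically to zero. A secondary unaddressed point: for $p_0<p\le n$ you must check that $\tilde b=b+2\mu|D\varphi|^{m-1}$ belongs to some $L^{\varrho'}$ with $\varrho'>n$; this uses Sobolev embedding $D\varphi\in L^{p^*}$, the condition $q(2-m)>n$ from (A2), and $p>n/2$, none of which you verify.
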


For the proof of Proposition \ref{stability} when $m>1$ we refer \cite{arma2010} and \cite[Proposition 9.4]{KSweakharnack}.
It does not seem to hold in general in the sublinear regime.
However, in the context of Lemma \ref{AproxLem} proof we show that this is true under a smallness assumption on the $L^q$-norm of the function $\mu$.

Next, we recall some results about pure second order profiles $F(D^2 u)$, i.e. uniformly elliptic operators $F$ depending only on $X$ (so Lipschitz continuous in $X$) and satisfying $F(0)=0$.

Let us consider solutions of the operator $F(X)$ in the $C$-viscosity sense, in view of Remark \ref{Remark C-visc}.

\begin{prop}\label{prop F(X)}
\begin{enumerate}[(i)]
\item If $F(D^2 u)=0$ in $B_1$, then $u\in C^{1,\bar{\alpha}} (\overline{B}_{1/2})$ for some universal $\bar{\alpha}\in (0,1]$ and there exists $K_2>0$, depending on $n,\lambda$ and $\Lambda$, such that
	$\|u\|_{C^{1,\bar{\alpha}} (\overline{B}_{1/2})} \leq K_2 \, \|u\|_{L^\infty (B_1)};$
\vspace{-0.1cm}	
	
\item For $\psi\in C(\partial B_1)$, there exists a unique solution $u\in C(\overline{B}_1)\cap C^{1,\bar{\alpha}}_{\mathrm{loc}} ({B}_{1})$ of	$F(D^2 u)= 0$  in  $B_1$, $u = \psi $  on $\partial B_1$, satisfying $\|u\|_{C^{1,\bar{\alpha}} (\overline{B}_{1/2})} \leq K_2 \, \|u\|_{L^\infty (B_1)} , $ for some universal $\bar{\alpha}\in (0,1)$ and $K_2>0$ depending only on $n,\lambda$ and $\Lambda$.
\end{enumerate}
\end{prop}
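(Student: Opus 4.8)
The plan is to derive both parts from the Krylov--Safonov regularity theory together with Caffarelli's interior estimates for fully nonlinear operators with no $x$-dependence, as developed in \cite{CafCab}. The starting observation is that, taking $\xi=\eta$ and $H\equiv 0$ in \eqref{SC} and using $F(0)=0$, every viscosity solution of $F(D^2u)=0$ satisfies $\mathcal{M}^-_{\lambda,\Lambda}(D^2u)\le 0\le \mathcal{M}^+_{\lambda,\Lambda}(D^2u)$ in the viscosity sense; that is, $u$ belongs to the Pucci class $S^*(\lambda,\Lambda,0)$ (in the $L^p$ or $C$ sense interchangeably, by Remark \ref{Remark C-visc}).

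For (i), first apply the Harnack inequality for $S^*(\lambda,\Lambda,0)$ \cite{Kry-Saf79} to get $u\in C^{0,\alpha_0}_{\mathrm{loc}}(B_1)$ with $[u]_{C^{0,\alpha_0}(\overline{B}_{3/4})}\le C\|u\|_{L^\infty(B_1)}$ for a universal exponent $\alpha_0\in(0,1)$. The key structural point is that $F(D^2u)=0$ is translation invariant, so for small $|h|$ the increment $u(\cdot+h)-u(\cdot)$ again lies in $S^*(\lambda,\Lambda,0)$; applying Krylov--Safonov to the normalized difference quotients and using the Zygmund-type characterization of Hölder classes, one upgrades the regularity exponent of $u$ finitely many times until $u$ is Lipschitz, and one further step yields $u\in C^{1,\bar\alpha}(\overline{B}_{1/2})$ for a universal $\bar\alpha\in(0,1)$ with $\|u\|_{C^{1,\bar\alpha}(\overline{B}_{1/2})}\le K_2\|u\|_{L^\infty(B_1)}$, by keeping track of the constants — this is exactly the content of \cite[Ch.~8]{CafCab}.

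For (ii), uniqueness follows from the comparison principle: if $u$ is a subsolution and $v$ a supersolution of $F(D^2\cdot)=0$ with $u\le v$ on $\partial B_1$, then $w:=u-v$ satisfies $\mathcal{M}^+_{\lambda,\Lambda}(D^2w)\ge F(D^2u)-F(D^2v)\ge 0$ in $B_1$, so the ABP estimate (no lower order terms here) gives $\max_{\overline{B}_1}w\le\max_{\partial B_1}w^+=0$. For existence I would run Perron's method \cite{user}: set $u(x)\defeq\sup\{v(x): v \text{ viscosity subsolution of } F(D^2v)=0 \text{ in } B_1,\ v\le\psi \text{ on } \partial B_1\}$, which is nonempty (constants $\le\min_{\partial B_1}\psi$ work) and bounded above; the standard argument shows $u$ solves the equation, and since $\partial B_1$ is smooth one produces upper and lower barriers (from the radial profiles associated with $\mathcal{M}^\pm_{\lambda,\Lambda}$, adjusted by a modulus of continuity of $\psi$) to force $u\in C(\overline{B}_1)$ with $u=\psi$ on $\partial B_1$. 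The interior $C^{1,\bar\alpha}$ bound then comes from (i) applied on interior balls, and uniqueness from the comparison just shown.

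The only genuinely delicate point is the bootstrap in (i): one must check that translation invariance of the \emph{nonlinear} operator really places the increments into the Pucci class and that the Zygmund/Campanato iteration closes to an honest $C^{1,\bar\alpha}$ modulus, rather than merely $C^{1-\varepsilon}$ for every $\varepsilon>0$ — this is precisely where the absence of $x$-dependence is used. For (ii) the only care needed is the routine construction of boundary barriers for the ball.
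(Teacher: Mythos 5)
The paper does not prove this proposition; it simply cites \cite[Corollaries 5.4, 5.7]{CafCab} and \cite{Tru88}. Your sketch correctly reconstructs the standard argument behind those references: part (i) via the observation that solutions of $F(D^2u)=0$ lie in the Pucci class $S^*(\lambda,\Lambda,0)$, followed by Krylov--Safonov applied to first-order finite-difference quotients (which again lie in $S^*$ by translation invariance), iterated to reach Lipschitz and then one more step to $C^{1,\bar\alpha}$; part (ii) via ABP-based comparison, Perron's method, and Pucci-type radial barriers at $\partial B_1$, with the interior estimate inherited from (i). You correctly flag the only genuinely delicate step — closing the Hölder bootstrap past the Zygmund endpoint rather than stalling at $C^{1-\varepsilon}$ — and the remark that translation invariance is what makes the increments stay in $S^*$ is exactly the mechanism in \cite{Tru88} and \cite[Ch.~5]{CafCab}. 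One minor slip: the $C^{1,\alpha}$ interior theory in \cite{CafCab} is in Chapter 5, not Chapter 8, but this does not affect the mathematics.
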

For a proof, see \cite[Corollaries 5.4, 5.7]{CafCab}, \cite{Tru88}, see also \cite[Propositions 2.9--2.11]{Norn19}.

We highlight that, without convexity/concavity assumptions on $F$, the $C^{1, \alpha}$ regularity in Proposition \ref{prop F(X)} (i) is, in general, the optimal one. In effect, this is accomplished due to Nadirashvili and Vl\u{a}du\c{t}'s counterexamples, see \cite{Na3} and references therein. See Section \ref{section examples} for additional comments about $C^{2, \alpha}$ estimates to homogeneous problems with regular coefficients.

Next, we recall a global version of Theorem \ref{Calpha known} which is used in compactness arguments.
\begin{prop}[\cite{KoKo17, KSexist2009, arma2010}]\label{Cbeta global}
Assume the hypotheses of Theorem \ref{Calpha known} and, in addition, $u\in C (\overline{\Omega})\cap C^\tau (\partial\Omega)$ and $\Omega$ satisfies a uniform exterior cone condition with size $L>0$, then there exists $\beta_0= \beta_0 (n,m,p,q,\varrho,\lambda, \Lambda, L,\|b\|_{L^\varrho(\Omega)}) \in (0,1)$ and $\beta = \min (\beta_0, \frac{\tau}{2})$ such that
\begin{align}\label{estim Calpha global}
\|u\|_{C^{0, \beta}(\overline{\Omega})} \leq K_1
W_{u,\mu,m,\Omega\,}(\,\|f\|_{L^p(\Omega)}+ \|u\|_{C^\tau (\partial\Omega)}) , \textrm{ \;\; for $W$ as in }\eqref{def W geral},
\end{align}
where $K_1$ depends on $n,m,p,q,\varrho,\varrho,\lambda,\Lambda,L, \|b\|_{L^\varrho(\Omega)}, \|\mu\|_{L^q(\Omega)}, \mathrm{diam} (\Omega)$; in addition on $W_0$ if $m\in (1,2]$.
Here $K_1$ remains bounded if these quantities are bounded.
\end{prop}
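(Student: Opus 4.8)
The statement to be proved is Proposition \ref{Cbeta global}, a global H\"older estimate up to the boundary under an exterior cone condition, with the explicit dependence on $\|u\|_{C^\tau(\partial\Omega)}$ and the nonlinear $W$ quantity.

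The plan is to combine the interior estimate from Theorem \ref{Calpha known} with a boundary H\"older estimate obtained from a barrier argument. The key steps in order:

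1. First I would reduce the Hamiltonian to the linear case by the exponential change of variables $v = g(u)$ (or additive modifications) as in \cite[proof of Theorem 2]{arma2010}, which absorbs the superlinear/sublinear gradient term into a bounded drift plus source modification; for $m<1$ one must also handle the $\mu|Du|^m$ term, which under (A2) produces the $\|\mu\|_{L^q}^{1/(1-m)}$ correction appearing in $W$.

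2. Interior estimate: Theorem \ref{Calpha known} gives $C^{0,\beta_0}_{\mathrm{loc}}$ with $\beta_0$ depending on the stated quantities, and the local bound \eqref{estim Calpha local} on any $\Omega'\subset\subset\Omega$.

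3. Boundary estimate: using the uniform exterior cone condition of size $L$, construct at each boundary point $x_0\in\partial\Omega$ a barrier $w(x) = A|x-x_0|^{\gamma} + \max_{\partial\Omega}u + \text{osc terms}$ that is a supersolution of $\mathcal{L}^+_m$ (resp.\ subsolution of $\mathcal{L}^-_m$) near $x_0$; the cone condition ensures such a barrier can be fitted with $\gamma = \gamma(n,\lambda,\Lambda,L)$, and the ABP maximum principles of Propositions \ref{wABPsublin}, \ref{wABPsuperlin} control the solution between the barrier and $u$. This yields $|u(x)-u(x_0)|\le C|x-x_0|^{\min(\gamma,\tau)}$ for $x$ near the boundary, with $C$ absorbing $\|f\|_{L^p}$, the $W$-correction, and $\|u\|_{C^\tau(\partial\Omega)}$.

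4. Patching: combine the interior H\"older seminorm with the boundary decay via a standard covering/interpolation argument (as in \cite[Ch.\ 8]{CafCab} or \cite{KSexist2009}): for $x,y\in\overline\Omega$ one either has both points far from $\partial\Omega$ (interior estimate applies), both near $\partial\Omega$ (boundary estimate plus triangle inequality), or mixed; taking $\beta = \min(\beta_0,\tau/2)$ makes all three cases combine into \eqref{estim Calpha global}.

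The main obstacle will be the barrier construction in the unbounded-coefficient, nonlinear-gradient setting: one must verify that the standard power-type barrier survives the presence of $b\in L^\varrho$ and $\mu\in L^q$ with $\varrho,q$ only slightly above $n$, which is exactly where the exponents relation (A2) and the $\|\mu\|_{L^q}^{1/(1-m)}$ term in $W$ are needed. Once the barrier is in place, the ABP-type estimates in Propositions \ref{wABPsublin}--\ref{wABPsuperlin} do the rest, and the gluing is routine. I would remark that the result, including the explicit form of $W$ and the claim that $K_1$ stays bounded when the listed quantities are bounded, follows by tracking constants through \cite{KoKo17, KSexist2009, arma2010}, so the proposition is essentially a citation-level compilation; a short proof sketch emphasizing the barrier and the reduction to the linear drift case suffices.
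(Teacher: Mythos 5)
The paper does not give a proof of this proposition at all: it is presented in the Preliminaries section as a citation-level statement, with the global $C^{0,\beta}$ estimate attributed directly to \cite{KoKo17, KSexist2009, arma2010} (sublinear, superlinear subquadratic, and quadratic regimes respectively). There is therefore no ``paper's own proof'' to compare against; the paper simply imports the result. Your sketch is a reasonable reconstruction of the argument those references carry out, and you correctly identify the three structural pieces — interior H\"older estimate, boundary barrier using the exterior cone condition, and covering/patching — as well as where the $\|\mu\|_{L^q}^{1/(1-m)}$ correction in $W$ and the $\tau/2$ loss come from, and you are right that the statement is essentially a compilation.

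One point of your plan does not quite align with how the cited works actually proceed. The exponential change of variables $v=g(u)$ (``Hopf--Cole'' type) is the device used in \cite[Theorem 2]{arma2010} specifically for the quadratic regime $m=2$ with bounded $\mu$; it is not how the subquadratic or sublinear cases are handled. For $1<m<2$, \cite{KSexist2009, KSweakharnack} treat the superlinear term $\mu|Du|^m$ directly inside the weak Harnack/growth-lemma machinery (this is where the restriction $m<2-n/q$ in (A2) enters). For $0<m<1$, the sublinear term is absorbed through Young's inequality, $\mu|Du|^m \le (1-m)\mu^{1/(1-m)} + m|Du|$, which converts it into a bounded source correction plus a linear drift — this is exactly what produces the extra $\|\mu\|_{L^q}^{1/(1-m)}$ in $W$, and it is the route taken in \cite{KoKo17}, not an exponential substitution. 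So your Step 1 should read as three separate reductions, one per regime, rather than a single exponential change across all $m$. The barrier and patching steps are otherwise as in the literature, so the outline is sound once this is corrected.
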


To finish the section, we recall some useful inequalities, which we are going to use without mentioning throughout the text, in order to deal with the superlinear and sublinear power terms. For $a, b \ge 0$,
$$
\left\{
\begin{array}{rcl}
  a^m+b^m\leq (a+b)^m\leq 2^{m-1}(a^m+b^m) & \text{if} & m\geq 1 \\
  2^{m-1}(a^m+b^m)\leq (a+b)^m\leq a^m+b^m & \text{if} & m\in (0,1].
\end{array}
\right.
$$

\section{Equivalence notions of viscosity solutions}\label{section intrinsic}

In this section, we analyze the conditions on $p,q,\varrho,m,$ and $n$ for which our regularity results do not depend on the exponent $p$ in the $L^p$-viscosity sense of solution, but on the integrability of $f$.

We mention that such difficulty never appears in \cite{Tei14} because lower order terms are not considered there.
Bounded Hamiltonians with linear gradient growth (i.e.\ $m=1$) can be treated in light of \cite[Theorem 2.1(iii)]{CKSS}.
Its extension to unbounded coefficients is given in \cite[Proposition 2.9]{tese} when the exponents are larger than $n$; while for $m=2$ and bounded $\mu$ it is proven in \cite[Proposition 2(iii)]{Swiech20}.

In what follows, we provide a proof that covers the preceding results, but also extend them to sublinear and superlinear gradient growths with unbounded coefficients in a unified fashion.

For the sake of generality and further reference, throughout this section, exclusively, we consider our equations depending also on $u$, without restrictions on the continuity modulus. For instance,
\begin{align}\label{H SC on u}
\textrm{$F_r(x,\xi,X)\defeq F(x, r,\xi,X)$ satisfies (A1), \;
	$|F(x,r,\xi,X)-F(x,s,\xi,X)|\le d(x)\,\omega (|r-s|)$,}
\end{align}
for all $r,s\in \real$, where $d\in L^p_+(\Omega)$, and $\omega$ is a continuous function with $\omega (0)=0$.

\begin{prop}\label{prop Lp iff Lupsilon}
Assume (A2), \eqref{H SC on u},  in addition to
	\begin{align}\label{H equiv visc}
q>\varrho>p \, , \;\;\; (m-1) (n-p)\varrho q \le pn (q-\varrho) \;\;\quad\textrm{if\, $m\in (1,2)$}.
	\end{align}Let $f\in L^\upsilon_{\mathrm{loc}}(\Omega)$, for some $\upsilon >p$.	Then $u$ is an $L^p$-viscosity subsolution (resp.\ supersolution) of
	\begin{align}\label{eq F com u =f}
	\textrm{$F(x,u,Du,D^2u)\ge f(x)$\; in\, $\Omega$\qquad  (resp.\ $F(x,u,Du,D^2u)\le f(x)$ in $\Omega$)}
	\end{align}
	if and only if $u$ is an $L^\upsilon$-viscosity subsolution (supersolution) of \eqref{eq F com u =f}.
\end{prop}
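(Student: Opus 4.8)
The plan is to prove the equivalence by the standard ``test function'' argument comparing the two viscosity notions, where the only subtlety is the treatment of the lower order terms with unbounded coefficients in the regime $m\in(1,2)$. Since an $L^\upsilon$-viscosity (sub/super)solution is trivially an $L^p$-viscosity one whenever $\upsilon\ge p$ (fewer test functions to check: $W^{2,\upsilon}_{\mathrm{loc}}\subset W^{2,p}_{\mathrm{loc}}$, and the ess.$\limsup$/$\liminf$ conditions are monotone in this sense), the content is the converse. So I would fix an $L^p$-viscosity subsolution $u$, a test function $\phi\in W^{2,\upsilon}_{\mathrm{loc}}(\Omega)$, and a point $x_0$ where $u-\phi$ has a strict local maximum (strictness arranged by subtracting a paraboloid), and argue by contradiction: if
\[
\mathrm{ess.}\varlimsup_{y\to x_0}\{F(y,u(y),D\phi(y),D^2\phi(y))-f(y)\}<0,
\]
then there is a small ball $B_r(x_0)$ and $\delta>0$ such that $F(y,u(y),D\phi(y),D^2\phi(y))-f(y)\le-\delta$ for a.e.\ $y\in B_r(x_0)$.

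The core step is then to smooth $\phi$: approximate $\phi$ by $\phi_k\in C^2$ with $\phi_k\to\phi$ in $W^{2,\upsilon}_{\mathrm{loc}}$ (hence $D\phi_k\to D\phi$ in $C^0$ by Morrey, since $\upsilon>p>n/2$ only gives $W^{2,\upsilon}\hookrightarrow C^{1,\gamma}$ when $\upsilon>n$; for $\upsilon\le n$ one stays at the $L^\upsilon$ level and uses the generalized maximum principle instead of the classical one). For each $k$ one wants
\[
F(y,u(y),D\phi_k(y),D^2\phi_k(y))-f(y)\le -\tfrac{\delta}{2}\quad\text{a.e.\ in }B_r(x_0),
\]
which follows from the structure condition \eqref{SC}: the difference from the $\phi$-evaluation is controlled by $\mathcal M^+_{\lambda,\Lambda}(D^2\phi_k-D^2\phi)+H(y,D\phi_k,D\phi)$ plus the $u$-modulus term $d(y)\,\omega(0)=0$, and in the superlinear regime $H(y,D\phi_k,D\phi)=b(y)|D\phi_k-D\phi|+\mu(y)|D\phi_k-D\phi|(|D\phi_k|^{m-1}+|D\phi|^{m-1})$. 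The term $\mathcal M^+(D^2\phi_k-D^2\phi)$ has small $L^\upsilon$-norm; $b(y)|D\phi_k-D\phi|$ has small $L^\varrho$-norm; and the genuinely delicate term is $\mu(y)|D\phi_k-D\phi|(|D\phi_k|^{m-1}+|D\phi|^{m-1})$, whose $L^{?}$-integrability I must check is still below the GMP threshold. This is exactly where \eqref{H equiv visc} enters: $|D\phi|^{m-1}\in L^{\varrho/(m-1)}_{\mathrm{loc}}$ (from $D\phi\in W^{1,\varrho}$ via Sobolev), $\mu\in L^q$, so $\mu|D\phi_k-D\phi|(|D\phi_k|^{m-1}+\cdots)$ lies in $L^s$ with $\tfrac1s=\tfrac1q+\tfrac{m-1}{\varrho}$ after absorbing the small $C^0$ factor $|D\phi_k-D\phi|$; the inequality $(m-1)(n-p)\varrho q\le pn(q-\varrho)$ is precisely the rearrangement of $s\ge$ (the exponent for which the weak ABP/GMP of Escauriaza applies, namely $s\ge$ the relevant value tied to $p$ and $p_0$), while $q>\varrho>p$ keeps everything consistent. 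For $m\in(0,1]$ no such condition is needed since $|D\phi_k-D\phi|^m\to0$ uniformly and $\mu\in L^q$, $q>n$, already suffices.

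Once the smoothed inequality holds, $\phi_k\in C^2$ is a classical (hence $L^\upsilon$-) strong supersolution of the relevant inequality in $B_r(x_0)$, so $w_k:=u-\phi_k$ is an $L^p$-viscosity subsolution of $\mathcal M^+_{\lambda,\Lambda}(D^2 w_k)+H(\cdot)\ge\delta/2$ there (with the appropriate $\mathcal L^+_m$-type operator), and the generalized maximum principle (weak ABP, Escauriaza's exponent $p_0$, as in Propositions \ref{wABPsublin}--\ref{wABPsuperlin}) forces $\max_{\overline{B_r(x_0)}}w_k$ to be attained on $\partial B_r(x_0)$ — contradicting, after passing $k\to\infty$ using $\phi_k\to\phi$ uniformly, the strict interior maximum of $u-\phi$ at $x_0$. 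The supersolution case is symmetric. \textbf{Main obstacle.} The delicate point, and the only place requiring real care, is verifying that in the superlinear subquadratic regime the combined lower-order error term $\mu(y)|D\phi_k-D\phi|\,(|D\phi_k|^{m-1}+|D\phi|^{m-1})$ has enough integrability for the GMP to apply after one lets $k\to\infty$; tracking the Hölder/Sobolev exponents there is exactly what forces the hypothesis \eqref{H equiv visc}, and getting the bookkeeping right (rather than the conceptual structure, which is classical) is where the work lies.
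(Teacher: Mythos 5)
You have the two directions reversed, and this misorientation undermines the whole argument. Since $\upsilon>p$ and $\Omega$ is bounded, $W^{2,\upsilon}_{\mathrm{loc}}\subset W^{2,p}_{\mathrm{loc}}$, so an $L^p$-viscosity subsolution is tested against \emph{more} functions than an $L^\upsilon$-viscosity one. Hence the trivial implication is $L^p\Rightarrow L^\upsilon$, not $L^\upsilon\Rightarrow L^p$ as you state. Your setup (``fix an $L^p$-viscosity subsolution $u$, a test function $\phi\in W^{2,\upsilon}_{\mathrm{loc}}$'') addresses precisely the easy direction: with $\phi\in W^{2,\upsilon}\subset W^{2,p}$, the ess.\ limsup condition follows directly from Definition~\ref{def Lp-viscosity sol}, and the entire smoothing-and-GMP machinery you then set up is superfluous. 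The genuine content — starting from an $L^\upsilon$-viscosity subsolution $u$ and a test function $\varphi\in W^{2,p}_{\mathrm{loc}}\setminus W^{2,\upsilon}_{\mathrm{loc}}$, approximating $\varphi$ by $\varphi_k\in W^{2,\upsilon}$ in the $W^{2,p}$ topology and deriving a contradiction — is never addressed in your proposal.

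Two further gaps that would survive even if the direction were fixed. First, ``strictness arranged by subtracting a paraboloid'' is exactly what fails for $L^p$-viscosity solutions with \emph{unbounded} $b,\mu$: perturbing $\phi$ by $\varepsilon|x-x_0|^4$ changes the Hamiltonian evaluation pointwise by quantities like $b(x)\,\varepsilon|x|^3$, which is not small a.e.\ near $x_0$ when $b\notin L^\infty$. This is why the paper devotes Lemma~\ref{lema strict} to constructing corrections $\psi_\delta$ as strong solutions of auxiliary problems \eqref{eq psi delta} and estimating their $L^\infty$ norm via Chebyshev; it is in that lemma — in showing $\widetilde b=b+\mu|D\phi|^{m-1}\in L^\varrho$ for $\phi\in W^{2,p}$ via $D\phi\in L^{p^\star}$ and $(m-1)\frac{\varrho q}{q-\varrho}\le p^\star$ — that \eqref{H equiv visc} and $q>\varrho>p$ actually enter, not in a GMP integrability threshold. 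Second, your exponent chain is incorrect: you write $D\phi\in W^{1,\varrho}$, which would require $\phi\in W^{2,\varrho}$, whereas the available regularity is $\phi\in W^{2,p}$ with $p<\varrho$; and once the directions are untangled, $\varphi_k\to\varphi$ only in $W^{2,p}$, so the pointwise a.e.\ inequality $F(y,u,D\varphi_k,D^2\varphi_k)-f\le-\delta/2$ you aim for cannot be obtained — the paper instead \emph{integrates} the nonnegative inequality $n(\lambda+\Lambda)|D^2(\varphi_k-\varphi)|+H(x,D\varphi_k,D\varphi)\ge\varepsilon/2$ over $B_r$ and uses H\"older/Sobolev (exploiting $q>\frac{n}{2-m}$ from (A2)) to show the integral tends to $0$, which is the step you would need to supply.
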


\begin{rmk}
Note that if $m\in (1,2)$ and $q>\varrho>p\ge n$ then \eqref{H equiv visc} is always satisfied.
\end{rmk}

First we need the following lemma.
The case $m=1$ is proven in \cite[Lemm 2.10]{tese}, while for $m=2$ and constant $\mu$ in \cite[Proposition 2(ii)]{Swiech20}. Here, we give a unified proof for $m\in (0,2]$, in the spirit of \cite{tese}.
\begin{lem}\label{lema strict}
	Assume (A2), \eqref{H SC on u}, and
	\eqref{H equiv visc}.
	Then the maximum $($minimum$)$ in Definition \ref{def Lp-viscosity sol} can be replaced by a strict maximum $($minimum$)$.
\end{lem}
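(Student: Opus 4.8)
The plan is to reduce the statement to a perturbation argument: given a test function $\phi\in W^{2,p}_{\mathrm{loc}}(\Omega)$ touching $u$ from above (say) at $x_0$ with a non-strict maximum, I would modify $\phi$ by subtracting a small correction $\psi_\varepsilon$ that turns the contact into a strict one while changing $D\phi$, $D^2\phi$ — and hence the value of $F(\cdot,u,D\phi,D^2\phi)-f$ — by an amount controlled in $L^p$ near $x_0$. The natural candidate is $\psi_\varepsilon(x)=\varepsilon\,\chi(x)\,|x-x_0|^2$ with $\chi$ a smooth cutoff equal to $1$ near $x_0$; then $u-(\phi-\psi_\varepsilon)$ has a strict local maximum at $x_0$. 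The point is that $\phi-\psi_\varepsilon\in W^{2,p}_{\mathrm{loc}}$, that $D\psi_\varepsilon\to 0$ uniformly and $D^2\psi_\varepsilon$ stays bounded (indeed $\to 2\varepsilon I$ near $x_0$) as $\varepsilon\to 0$, and that one can pass to the limit in the essential-limsup inequality \eqref{limSubsolution1}.

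The key steps, in order, are: (1) Fix $\phi$ with a (non-strict) local max of $u-\phi$ at $x_0$; without loss of generality normalize so the max value is $0$ and work in a small ball $B_r(x_0)$. (2) Introduce $\phi_\varepsilon\defeq \phi-\psi_\varepsilon$ with $\psi_\varepsilon$ as above; check $\phi_\varepsilon\in W^{2,p}_{\mathrm{loc}}$ and that $u-\phi_\varepsilon$ attains a \emph{strict} local maximum at $x_0$ (since $\psi_\varepsilon>0$ away from $x_0$ on the cutoff region). (3) Apply the hypothesis — $u$ is an $L^p$-viscosity subsolution — to get, for each $\varepsilon>0$, that $\mathrm{ess.}\varlimsup_{y\to x_0}\{F(y,u(y),D\phi_\varepsilon(y),D^2\phi_\varepsilon(y))-f(y)\}\ge 0$. (4) Using the structure condition (A1), estimate $|F(y,u(y),D\phi_\varepsilon,D^2\phi_\varepsilon)-F(y,u(y),D\phi,D^2\phi)|$ by $\mathcal{M}^+_{\lambda,\Lambda}(D^2\psi_\varepsilon)+H(y,D\phi,D\phi-D\psi_\varepsilon)$, which is bounded by $C\varepsilon(n\Lambda) + b(y)|D\psi_\varepsilon(y)| + \mu(y)\,(\text{a power of }|D\psi_\varepsilon|)$ — with, in the superlinear case $m>1$, the mixed term $\mu(y)|D\psi_\varepsilon|\,(|D\phi|^{m-1}+|D\phi_\varepsilon|^{m-1})$. (5) Show that the $L^p$-norm over $B_r(x_0)$ of this error tends to $0$ as $\varepsilon\to 0$: the $\mathcal{M}^+$ term is $O(\varepsilon)$, the $b|D\psi_\varepsilon|$ term is $O(\varepsilon)\|b\|_{L^p(B_r)}$, and the $\mu$-term is where the exponent condition \eqref{H equiv visc} enters — one uses $|D\psi_\varepsilon|\lesssim \varepsilon r$, Hölder's inequality splitting $\mu\in L^q$ against $|D\phi|^{m-1}$ (which by the regularity assumptions, or by taking $\phi$ smooth enough after a standard density reduction, lies in a suitable Lebesgue space), so that the product lies in $L^p$ with norm $\to 0$. (6) Conclude that the limsup for $\phi$ itself is $\ge 0$, i.e. $u$ satisfies the definition with the non-strict maximum; the reverse implication is trivial.

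The main obstacle is step (5) in the superlinear range $m\in(1,2)$: one must verify that the $L^p$-integrability of the cross term $\mu(y)|D\psi_\varepsilon(y)|\big(|D\phi(y)|^{m-1}+|D\phi_\varepsilon(y)|^{m-1}\big)$ actually holds and vanishes as $\varepsilon\to0$, and this is precisely what dictates the quantitative constraint $(m-1)(n-p)\varrho q\le pn(q-\varrho)$ together with $q>\varrho>p$. Concretely, the gradient $D\phi$ of a $W^{2,p}$ function sits in $L^{p^*}_{\mathrm{loc}}$ with $p^*=np/(n-p)$ (for $p<n$; no issue if $p\ge n$), so $|D\phi|^{m-1}\in L^{p^*/(m-1)}_{\mathrm{loc}}$, and one needs $\frac{1}{p}\ge \frac{1}{q}+\frac{m-1}{p^*}$ for the Hölder product against $\mu\in L^q$ to land in $L^p$ — rearranging this inequality with $p^*=np/(n-p)$ gives exactly the stated relation (the role of $\varrho$ entering through the comparison $b\in L^\varrho$ and the bookkeeping in (A2)). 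Handling $m\le 1$ is comparatively easy since the sublinear power term only improves integrability; the borderline $m=2$, $\mu\equiv$ const is immediate. I would also take care, before all this, to reduce via mollification to the case $\phi\in C^2$ where the Taylor expansion and the pointwise manipulations are transparent, invoking the standard stability of $L^p$-viscosity solutions under $W^{2,p}$-convergence of test functions.
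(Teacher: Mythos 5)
Your plan correctly identifies that one should perturb $\phi$ to force a strict contact point and that the exponent condition enters in controlling a $\mu|D\phi|^{m-1}$ term, but the central mechanism you propose does not work, and the paper's proof is built around precisely the obstruction you miss. First, a sign slip: with $\psi_\varepsilon\ge 0$ you want $\phi_\varepsilon=\phi+\psi_\varepsilon$ so that $u-\phi_\varepsilon = (u-\phi)-\psi_\varepsilon$ has a strict maximum at $x_0$; writing $\phi_\varepsilon=\phi-\psi_\varepsilon$ goes the wrong way. That is easily fixed. The genuine gap is step (6): you propose to pass from the ess-limsup inequality for $\phi_\varepsilon$ to the one for $\phi$ using the fact that the operator error $F(\cdot,u,D\phi_\varepsilon,D^2\phi_\varepsilon)-F(\cdot,u,D\phi,D^2\phi)$ is small in $L^p$. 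But the definition of $L^p$-viscosity solution is a pointwise a.e.\ statement, and $L^p$-smallness of the error is not enough to preserve an essential limsup. Concretely, with $\psi_\varepsilon=\varepsilon\chi|x-x_0|^2$ the error is bounded below only by $-\bigl(C\varepsilon + 2\varepsilon b(y)|y-x_0| + \mu(y)\cdot O(\varepsilon^{\min(m,1)})\bigr)$, and for $b\in L^\varrho$ unbounded with singularities accumulating at $x_0$ (say $b=\sum_k c_k|y-y_k|^{-\gamma}$ with $y_k\to x_0$, $0<\gamma<n/\varrho$) one has $\mathrm{ess.}\varlimsup_{y\to x_0} b(y)|y-x_0|=+\infty$, so $\mathrm{ess.}\varlimsup_{y\to x_0}$ of the error is $+\infty$. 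Thus the inequality $\mathrm{ess.}\varlimsup\,[F(\phi_\varepsilon)-f]\ge 0$ carries no information about $\mathrm{ess.}\varlimsup\,[F(\phi)-f]$, and the implication in (6) fails.

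This is exactly why the paper's proof does not use a simple quadratic bump. It argues by contradiction, assuming $F(x,u,D\phi,D^2\phi)\le f(x)-\varepsilon$ a.e.\ near $x_0$, and takes $\phi_\delta=\phi+\delta|x|^4+\psi_\delta$ where $\psi_\delta$ is the strong solution of an auxiliary boundary value problem for the extremal operator $\widetilde{\mathcal{L}}^+[\,\cdot\,]+\mu|D(\cdot)|^m$ with a carefully designed right-hand side $g_\delta$. The point is that $\psi_\delta$ is built to absorb, \emph{pointwise a.e.}, precisely the unbounded-coefficient error terms $4\delta\widetilde b(x)|x|^3$ and (for $m>1$) the corresponding $\mu$-term; the residual error is then bounded a.e.\ by $\varepsilon/4$, yielding a genuine pointwise contradiction. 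The exponent condition \eqref{H equiv visc} is used not for $L^p$-smallness of an error term, but to show $\widetilde b = b+\mu|D\phi|^{m-1}\in L^\varrho$ (via Sobolev embedding for $D\phi$), so that the auxiliary PDE is solvable and its ABP/$L^\infty$ estimates apply; the smallness of $\psi_\delta$ in $L^\infty$ as $\delta\to 0$ then follows from a Chebyshev argument. Finally, your suggested reduction to $C^2$ test functions by mollification is not a ``standard'' preliminary here: stability of $L^p$-viscosity inequalities under $W^{2,p}$-convergence of test functions is itself a nontrivial statement, essentially equivalent to Proposition \ref{prop Lp iff Lupsilon}, whose proof in the paper in fact relies on the very Lemma \ref{lema strict} you are trying to prove.
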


\begin{proof}
	Suppose on the contrary that there exist $\varepsilon,r >0$, $x_0\in \Omega$, and $\phi\in W^{2,p}(B_{2r}(x_0))$ such that $u-\phi$ attains a local maximum at $x_0$, say $u-\phi \leq (u-\phi) (x_0)$ in $B_{2r}=B_{2r}(x_0)$, but
	\begin{align}\label{strict eq varphi}
	F(x,u(x),D\phi (x), D^2\phi (x))\leq f(x)-\varepsilon\quad\textrm{a.e. in }B_{2r}(x_0).
	\end{align}	
	
	Let us consider $\delta\in (0,1)$ such that $4\delta r^2 (2+n\Lambda)\le \varepsilon/8$, and
	\begin{center}
		$\mathcal{\widetilde{L}}^+[u]=\mathcal{M}^+(D^2 u)+\widetilde{b}(x)|Du|$,	\qquad
		$\widetilde{b}(x)=
		\begin{cases}
		b(x) \;\textrm{ if } m\in (0,1]\\
		b(x)+\mu (x)|D\phi (x)|^{m-1}\;\textrm{ if } m\in (1,2],
		\end{cases}
		$
		
		\smallskip
		
		$g_\delta (x) \defeq
		\begin{cases}
		-4\delta \widetilde{b}(x) |x|^3 +\varepsilon /8 \;\textrm{ if } m\in (0,1]\\
		-4\delta \widetilde{b}(x) |x|^3 -(4\delta \widetilde{b}(x) |x|^3)^{m-1}\mu (x)+\varepsilon /8  \;\textrm{ if } m\in (1,2].
		\end{cases}$
	\end{center}
	
	We first claim that our hypothesis \eqref{H equiv visc} imply $\widetilde{b}\in L^\varrho(B_{2r})$. Indeed, first notice that this is obvious if either $m\in (0,1]$ or $m=2$, by the definition of $\widetilde{b}$ and (A2).  Instead, when $m\in (1,2)$ we apply H\"{o}lder's inequality to get 
\begin{center}
		$\|\mu |D\phi|^{m-1}\|_{L^{\varrho}(B_1)} \le \|\mu\|_{L^q(B_1)} \,\| |D\phi|^{m-1}\|_{L^{q^\prime}(B_1)}= \|\mu\|_{L^q(B_1)}\, \| D\phi\|^{m-1}_{L^{(m-1)q^\prime}(B_1)}$\quad  where\, $\frac{1}{\varrho}=\frac{1}{q}+\frac{1}{q^\prime}$.
	\end{center}
	The case $p>n$ is simpler since $\phi \in W^{2,p}\subset C^1$. If $p=n$ then Sobolev inequality yields $D\phi \in W^{1,n}\subset L^{s}$ for all $s<+\infty$, in particular for $s=(m-1)\frac{\varrho q}{q-\varrho}<+\infty$, and so $\|\widetilde{b}\|_{L^{\varrho}(B_1)} \le \|b\|_{L^{\varrho}(B_1)} +\|\mu\|_{L^q(B_1)} \| D\phi\|^{m-1}_{L^r(B_1)}<\infty$.
	On the other hand, if $p_0<p<n$ one uses $D\phi \in W^{1,p}\subset L^{p^\star}$ for $p^\star=\frac{pn}{n-p}$ since $(m-1)\frac{\varrho q}{q-\varrho}\le p^\star$, thus
	$\|\widetilde{b}\|_{L^{\varrho}(B_1)} \le  \|b\|_{L^{\varrho}(B_1)} +C\|\mu\|_{L^q(B_1)} \| D^2\phi\|^{m-1}_{L^{p^\star}(B_1)}<\infty$.
	
	\medskip
	
	Set $\phi_\delta \defeq \phi +\delta |x|^4+\psi_\delta$, where $\psi_\delta$ as the strong solution of
	\begin{align}\label{eq psi delta}
	\left\{
	\begin{array}{rclcc}
	\widetilde{\mathcal{L}}^+[\psi_\delta]+\mu (x)|\psi_\delta|^m &=& g_\delta (x) & \mbox{in}& B_{\gamma r} \\
	\psi_\delta &=& 0 & \mbox{on}& \partial B_{\gamma r},
	\end{array}
	\right.
	\end{align}
	given by \cite[Proposition 2.4]{KSexist2009} if $m\in[1,2]$, and by \cite[Theorem 3.1]{KoKo17} if $m\in (0,1)$.
	Here $\gamma\in (0,1]$ is taken so that
	$
	\|\mu\|_{L^q(\Omega)} \|g_1\|_{L^p(\Omega)} \gamma\leq  \varepsilon_1,
	$
	for $\varepsilon_1$ defined in \cite{KSexist2009} when $m\in (1,2]$; while $\gamma=1$ for $m\in (0,1]$.
	
	\begin{claim}\label{lema quadro}
		$u-\phi_\delta$ attains a strict local maximum in $B_r$ for small $\delta>0$.
	\end{claim}
	
	\begin{proof} We have
		$\mathcal{\widetilde{L}}^+[\psi_\delta\, \delta^{-\kappa} ]=g(x) $ a.e.\ in $B_{\gamma r}$ where $\kappa=1$ if $m\in (0,1]$ while $\kappa=m-1$ if $m\in (1,2]$,
		and $g(x)\defeq g_\delta (x)\delta^{-\kappa}$.
		Since $g\in L^\varrho(\Omega)$, note that Chebyshev's inequality
		$|\{|g|>t\}|\leq t^{-\varrho} \|g\|_{L^{\varrho}(B_1)}$ yields
		\begin{align*}
		|B_{\gamma r}\cap \{g<0\}|&=\begin{cases}
		\, |B_{\gamma r}\cap \{4 \widetilde{b}(x)|x|^3 >+\frac{\varepsilon}{8\delta}  \, \}| \;\textrm{ if } m\in (0,1]
		\smallskip\\
		\, |B_{\gamma r}\cap \{
		4\delta^{2-m} \widetilde{b}(x) |x|^3 +(4 |x|^3)^{m-1}\mu (x)>\frac{\varepsilon}{8\delta^{m-1}}  \, \}| \;\textrm{ if } m\in (1,2];
		\end{cases}\\
		&\le \frac{C}{\varepsilon^\varrho}
		\begin{cases}
		\,{\delta}^\varrho \,\| \widetilde{b}\|_{L^{\varrho}(B_1)}  \;\;\textrm{ if } m\in (0,1]
		\smallskip\\
		\, \delta^{(m-1)\varrho}\,
		\{\| \widetilde{b}\|_{L^{\varrho}(B_1)}+  \|\mu\|_{L^{\varrho}(B_1)} \} \; \;\textrm{ if } m\in (1,2] ;
		\end{cases}\\
		& \le {C} \,\{\| \widetilde{b}\|_{L^{\varrho}(B_1)}+  \|\mu\|_{L^{\varrho}(B_1)} \} \, ({\delta^{\kappa}}/\varepsilon)^\varrho.
		\end{align*}
		Next, by $L^\infty$ estimate in the respective existence theorems produces
		\begin{align*}
		\frac{\psi_\delta}{\delta^\kappa} &\leq C_{A}\,\|g^-\|_{L^p (B_r)} \leq C_A \, |B_r\cap \{g<0\}| ^{\frac{1}{p}-\frac{1}{\varrho}}\, \|g^-\|_{L^\varrho (B_r)}\\
		&\leq  {C}
		\,(\| \widetilde{b}\|_{L^{\varrho}(B_1)}+  \|\mu\|_{L^{\varrho}(B_1)} )^{\frac{\varrho-p}{p\varrho}} \,
		\{\varepsilon + \widetilde{b}\|_{L^{\varrho}(B_1)}+  \|\mu\|_{L^{\varrho}(B_1)} \}\,
		({\delta^{\kappa}}/\varepsilon)^{\frac{\varrho-p}{p}} \xrightarrow[\delta\rightarrow 0^+]{} 0\,
		\end{align*}
		for all $x\in B_{\gamma r}$, since $\varrho>p$. In particular, $\psi_\delta (0) < \delta^\kappa (\gamma r)^4$ for small $\delta >0$, so we have on $ \partial B_{\gamma r}$ that $ u-\phi_\delta =u-\phi -\delta^\kappa r^4 < (u-\phi) (0)  -\psi_\delta (0)= (u-\phi_\delta)(0).$
		Hence $u-\phi_\delta$ attains a strict local maximum at some point in $B_{\gamma r}$, say in the ball $B_s (x_0)\subset B_{\gamma r}(0)$, and the claim is proved.
	\end{proof}
	
	Therefore, by applying the definition of $L^p$-viscosity solution with strict local maximum at the point $x_0$, with respect to the test function $\phi_\delta$, one finds
	\begin{center}
		$F(x,u(x), D\phi_\delta (x), D^2\phi_\delta (x) )\geq f(x) -\varepsilon/2$ \;a.e. $x\in B_s (x_0)$.
	\end{center}
	So, using the latter and \eqref{strict eq varphi}, we obtain
	\begin{align*}
	\varepsilon/2&\leq F(x,u(x), D\phi_\delta (x), D^2\phi_\delta (x) ) - F(x,u(x), D\phi (x),D^2 \phi (x)) \\
	&\leq \mathcal{M}^+ (D^2(\delta |x|^4+\psi_\delta) )
	+ H(x, D\phi_\delta, D\phi )\\
	&\leq  4\delta |x|^2 (2+n\Lambda)+ 4\delta  \widetilde{b}(x)|x|^3 +(4\delta |x|^3)^{m-1}\mu (x) +\mathcal{\widetilde{L}}^+ [\psi_\delta ] +\mu (x) |D\psi_\delta|^m \leq \varepsilon /4,
	\end{align*}
	a.e.\ $x\in B_s (x_0)$. Here we have used \eqref{eq psi delta} and the choice of $\delta$, in addition to
	\begin{center}
		$|D\phi_\delta|^{m-1} +|D\phi|^{m-1}\le |D(\phi_\delta -\phi)|^{m-1}+2|D\phi|^{m-1}\le (4\delta|x|^3)^{m-1}+|D\psi_\delta|^{m-1}+2|D\phi|^{m-1}$
	\end{center}
	in the case $m\in (1,2]$, which derives a contradiction.
\end{proof}

\begin{proof}[Proof of Proposition \ref{prop Lp iff Lupsilon}]
	It is immediate by Definition \ref{def Lp-viscosity sol} that $L^p$-viscosity solutions are $L^\upsilon$-viscosity, since $f\in L^\upsilon_{\mathrm{loc}}(\Omega)$, $\upsilon>p$. Suppose by contradiction that $u$ is an $L^\upsilon$-viscosity but not $L^p$-viscosity subsolution of \eqref{eq F com u =f}. Then, there exist $x_0\in \Omega$, $\varepsilon>0$, and $\varphi\in W^{2,p}(B_{2r}(x_0) \setminus W^{2,\upsilon}(B_{2r}(x_0))$ such that $u-\varphi$ attains a local maximum at $x_0$ in $B_{2r}=B_{2r}(x_0)$, but
	\begin{align}\label{eq abs LniffLp}
	F(x,u,D\varphi,D^2\varphi)\leq f(x)-\varepsilon\;\; \textrm{ a.e. in } B_r(x_0).
	\end{align}
	By Lemma \ref{lema strict} we may assume there exists $\delta>0$ so that  $(u-\varphi)(x_0)=0$ and $u-\varphi\leq -\delta$ on $\partial B_r(x_0)$.	
	
	\vspace{0.05cm}
	
	Let $\varphi_k\in W^{2,\upsilon}({B}_{2r})$ with $\varphi_k \rightarrow \varphi$ in $W^{2,p}(B_r)$. Note that $\varphi_k \rightarrow \varphi$ uniformly in $B_{2r}$ since $p>n/2$. Then, for large $k$ we have
	$(u-\varphi_k){|_{\partial B_r}} \le -\delta/ 2 \leq (u-\varphi_k) (x_0)$.
	Hence, the definition of $u$ as an $L^\upsilon$-viscosity solution of \eqref{eq F com u =f} yields
	\begin{align}\label{eq2 LniffLp}
	F(x,u,D\varphi_k,D^2\varphi_k)\geq f(x)-\varepsilon/2\;\; \textrm{ a.e. in } B_r(x_0).
	\end{align}
	
	By putting together \eqref{eq abs LniffLp} and \eqref{eq2 LniffLp} we find, for large $k$,
	\begin{align*}
	n(\lambda +\Lambda) |D^2 (\varphi_k-\varphi)| + H(x, D\varphi_k,D\varphi)\ge  F(x,u,D\varphi_k,D^2\varphi_k)-F(x,u,D\varphi,D^2\varphi)\geq \varepsilon/2\; \textrm{ a.e. in } B_r(x_0).
	\end{align*}
	
	Case 1) $p\in (p_0,n)$. Note that, since $p\ge 1$, and $p<n$ then in particular $n>1$, i.e.\ $n\ge 2$.
	
	\smallskip
	
	Now, we integrate in $B_r(x_0)$ both nonnegative sides of the inequality above, by writing $H(x,\xi,\eta)=b(x)|\xi-\eta|+V(x,\xi,\eta)$, and use H\"{o}lder inequality to obtain
	\begin{align}\label{LniffLp integral}
	\varepsilon/2\, |B_r| &\le n(\lambda +\Lambda)\int_{B_r(x_0)} |D^2 (\varphi_k-\varphi)|
	+\int_{B_r(x_0)} H(x,\varphi_k,\varphi) \d x \nonumber \\
	& \le n(\lambda +\Lambda) |\Omega|^{1-\frac{1}{p}} \|D^2 (\varphi_k-\varphi)\|_{L^p(B_r)} +
	\|b\|_{L^\varrho (\Omega)} \|D(\varphi_k-\varphi)\|_{L^{\varrho^\prime}} +\int_{B_r}  V(x,\varphi_k,\varphi) \d x ,
	\end{align}
	where $\varrho^\prime =\frac{\varrho}{\varrho-1}<\frac{n}{n-1}\le \frac{pn}{n-p}=p^\star$, since $\varrho>n$ and $p\ge 1$. Thus, Sobolev continuous inclusion yields
	\begin{align}\label{LniffLp2 integral}
	\|D(\varphi_k-\varphi)\|_{L^{\varrho^\prime}(B_r(x_0))} \le |\Omega|^{\frac{1}{\varrho^\prime}-\frac{1}{p^\star}} \|D(\varphi_k-\varphi)\|_{L^{p^\star}(B_r(x_0))}\le  C\, \| D^2(\varphi_k-\varphi)\|_{L^p(B_r(x_0))}.
	\end{align}
	
	\vspace{0.05cm}
	
	Now, we claim that
	\begin{align}\label{LniffLp3 integral}
	\int_{B_r(x_0)} V(x,\varphi_k,\varphi) \d x\, \le  C\, \| D^2(\varphi_k-\varphi)\|^{\min(m,1)}_{L^p(B_r(x_0))} \quad \textrm{for }m\in (0,2].
	\end{align}
	Indeed, if $m\in (0,1]$, then
	\begin{align*}
	\int_{B_r(x_0)} V(x,\varphi_k,\varphi) \d x\,&\le\int_{B_r(x_0)} \mu(x)|D
	\varphi_k- D\varphi|^m \d x \, \le \|\mu\|_{L^q (\Omega)} \||D(\varphi_k-\varphi)|^m\|_{L^{q^\prime}}\\
	& =\|\mu\|_{L^q (\Omega)} \|D(\varphi_k-\varphi)\|^m_{L^{mq^\prime}}
	\le \|\mu\|_{L^q (\Omega)} |\Omega|^{\frac{1}{q^\prime}-\frac{m}{p^\star}}\|D(\varphi_k-\varphi)\|^m_{L^{p^\star}(B_r(x_0))},
	\end{align*}
	by H\"{o}lder's inequality as in \eqref{LniffLp2 integral}, since $m q^\prime \le q^\prime =\frac{q}{q-1}<\frac{n}{n-1} \le \frac{np}{n-p}=p^\star$ for $q>n$ and $p\ge 1$. Thus, again Sobolev embedding permits us concluding \eqref{LniffLp3 integral}.
	
	On the other hand, if $m\in (1,2)$ then we use the generalized H\"{o}lder inequality to write
	\begin{align*}
	\int_{B_r(x_0)} V(x,\varphi_k,\varphi) \d x\,&\le \int_{B_r(x_0)} \mu(x)|D (\varphi_k-\varphi)| (|D\varphi|^{m-1} + |D\varphi_k|^{m-1}) \d x \\
	&\le \|\mu(x) \|_{L^q(B_r)} \| D (\varphi_k-\varphi)\|_{L^{q_1}(B_r)} \{\,\|D\varphi\|^{m-1}_{L^{(m-1)q_2}(B_r)} + \|D\varphi_k\|^{m-1}_{L^{(m-1)q_2}(B_r)} \}  ,
	\end{align*}
	where $\frac{1}{q}+\frac{1}{q_1}+\frac{1}{q_2}=1$, and we choose $q_1=q_2(m-1)$. So,
	$q_1= \frac{mq}{q-1}<\frac{mn }{n-2+m} \le n\le  \frac{pn}{n-p}=p^\star$, since $q>\frac{n}{2-m}$ from (A2), and $n\ge 2$ (recall that we are in Case 1). Then, again Sobolev's continuous inclusion produces \eqref{LniffLp3 integral}.
	
	\vspace{0.07cm}
	
	Finally, if $m=2$ it is simpler, since $2\le n\le p^\star$, and
	\begin{align*}
	\int_{B_r(x_0)} V(x,\varphi_k,\varphi) \d x\,&\le \|\mu \|_{L^\infty (\Omega)}\int_{B_r(x_0)} |D (\varphi_k-\varphi)| (|D\varphi| + |D\varphi_k|) \d x \\
	& \le \|\mu \|_{L^\infty (\Omega)} \|D(\varphi_k-\varphi)\|_{L^2(B_r)} \{\,\|D\varphi\|_{L^2(B_r)} + \|D\varphi_k\|_{L^2(B_r)} \}  .
	\end{align*}
	
	Hence, by plugging the integral estimates in \eqref{LniffLp2 integral} and \eqref{LniffLp3 integral} into \eqref{LniffLp integral}, we obtain
	\begin{center}
		$\varepsilon/2 \,|B_r|\,\le C\, \|\varphi_k-\varphi\|_{W^{2,p}(B_r(x_0))} \to 0$ \;\; as  $k\rightarrow +\infty$,
	\end{center} which is impossible for $\varepsilon >0$ fixed.
	
	\smallskip
	
	Case 2) $p\ge n$.
	In this case Sobolev embedding yields  $W^{2,p}\subset W^{2,n}\subset W^{1,s}$ for all $s<+\infty$; the proof is easier, since we do not have problems with the integrability of the gradient to be less than $p^\star$.
\end{proof}

\section{The essential arranging tools}\label{section Approx and Scal}

Our first step towards accessing regularity estimates of gradient type is to approximate our equation by the corresponding homogeneous one with frozen coefficients, which already enjoys ``good regularity estimates''.

\subsection{Approximation Lemma}\label{section lemmas}

Let us consider, as in \cite[Chapter 8]{CafCab} and \cite[Lemma 3.1]{Norn19}, a slightly smaller version of $\beta_F$ given by
\begin{align} \label{def beta bar}
\displaystyle \bar{\beta}(x,x_0)=\bar{\beta}_F (x,x_0) \defeq \sup_{X\in \mathbf{S}^n} \frac{|F(x,0, X)-F(x_0,0,X)|}{\|X\|+1}.
\end{align}
We point out that the control of oscillation considering  \eqref{def beta bar} in place of \eqref{def beta} usually produces estimates which are not optimal, in the sense of appearing $1$ on the right hand side. This is irrelevant when only \textit{a priori} bounds are considered, as in \cite{multiplicidade}, while it could be an issue in some stability arguments.

In what follows, $\bar{\beta}_F$ will be an auxiliary function used in approximation arguments.

Our first goal is to approximate solutions of \eqref{F=f} through first order perturbations. The method essentially relies on stability-like and compactness arguments, cf.\
\cite[Lemma 6.3]{CCKS}, \cite[Theorem 1]{DD19}, \cite[Lemma 2.6]{daST17}, \cite[Theorem 2.2]{dosPT16}.

The quadratic regime $m=2$ for $p>n$ was treated in \cite[Lemma 3.1]{Norn19}. Here, we need to work a bit more in that reasoning to obtain an approximation result applicable under the sublinear regime. We also cover the quadratic growth when $p=n$, since this is needed to prove Log-Lipschitz estimates.

\begin{lem}\label{AproxLem}
	Let $F$ satisfy (A1)-(A2) in $B_1$, $f \in L^p (B_1)$. Let $\psi \in C^\tau (\partial B_1)$ with $\|\psi\|_{C^\tau (\partial B_1)}\leq K_0$, and $\frak{B}\in \rn$.
	Then, for every $\varepsilon>0$, there exists $\delta\in (0,1)$, $\delta=\delta (\varepsilon,n,p,q,\varrho,\lambda,\Lambda,\tau,K_0)$, such that
\begin{align}\label{delta Ap Lemma}
\max\left\{\,\|{\bar{\beta}}_F(\cdot,0)\|_{L^p(B_1)}\, ,\;\|f\|_{L^p(B_1)}\, ,\; \|\mu\|_{L^q(B_1)}(|\frak{B}|^m+1) \, ,\;		\|b\|_{L^\varrho(B_1)}(|\frak{B}|+1) \,\right\}\leq \delta
\end{align}
imply that any two $L^p$-viscosity solutions $v$ and ${h}$ of
	\begin{align*}
	\left\{
	\begin{array}{rclcc}
	F(x,Dv+\frak{B},D^2v)&=& f& \mbox{in} & B_1 \\
	v &=& \psi \; & \mbox{on} & \partial B_1
	\end{array}
	\right., \;
\;	\left\{
	\begin{array}{rclcc}
	F(0,0,D^2 {h})&=& 0 & \mbox{in} & B_1 \\
	{h} &=&\psi & \textrm{on} & \partial B_1
	\end{array}
	\right.
	\end{align*}
	satisfy $\|v-{h}\|_{L^\infty (B_1)}\leq \varepsilon$.
\end{lem}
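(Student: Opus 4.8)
The plan is to argue by contradiction via a normal-families/compactness scheme, exactly in the spirit of \cite[Lemma 3.1]{Norn19} but adding the two new ingredients needed here: the sublinear correction term $\mu(x)|Dv+\mathfrak{B}|^m$ with unbounded $\mu$, and the quadratic regime at the borderline $p=n$. Suppose the conclusion fails: then there are $\varepsilon_0>0$, operators $F_k$ satisfying (A1)--(A2), data $f_k\in L^p(B_1)$, boundary values $\psi_k\in C^\tau(\partial B_1)$ with $\|\psi_k\|_{C^\tau}\le K_0$, vectors $\mathfrak{B}_k$, and $L^p$-viscosity solutions $v_k$, $h_k$ of the two problems, with
\begin{align}\label{proofplan-neg}
\max\Big\{\|\bar{\beta}_{F_k}(\cdot,0)\|_{L^p(B_1)},\ \|f_k\|_{L^p(B_1)},\ \|\mu_k\|_{L^q(B_1)}(|\mathfrak{B}_k|^m+1),\ \|b_k\|_{L^\varrho(B_1)}(|\mathfrak{B}_k|+1)\Big\}\le \tfrac1k,
\end{align}
yet $\|v_k-h_k\|_{L^\infty(B_1)}>\varepsilon_0$.

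First I would establish uniform bounds and equicontinuity. Since $v_k$ solves $\mathcal{L}^{-}_m[v_k]\le f_k(x)$ and $\mathcal{L}^{+}_m[v_k]\ge -f_k(x)$ in the shifted-gradient sense, the ABP estimates of Proposition \ref{wABPsublin} (for $m<1$) and Proposition \ref{wABPsuperlin} (for $m\in[1,2]$) give a uniform bound $\|v_k\|_{L^\infty(B_1)}\le C(K_0)$: here is where \eqref{proofplan-neg} is used — the terms $\|\mu_k\|_{L^q}(|\mathfrak{B}_k|^m+1)$ and $\|b_k\|_{L^\varrho}(|\mathfrak{B}_k|+1)$ being small absorb the contribution of the frozen shift $\mathfrak{B}_k$ when one expands $|Dv_k+\mathfrak{B}_k|$ and $|Dv_k+\mathfrak{B}_k|^m$, and in the superlinear case the smallness feeds the $\delta$-threshold in Proposition \ref{wABPsuperlin}. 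Then the global Hölder estimate of Proposition \ref{Cbeta global} (applied to the same differential inequalities, absorbing $\mathfrak{B}_k$ into the drift/source as above) yields $\|v_k\|_{C^{0,\beta}(\overline{B}_1)}\le C(K_0)$ uniformly, with $\beta=\min(\beta_0,\tau/2)$. The same arguments applied to $h_k$ (which solves $\mathcal{M}^{\pm}_{\lambda,\Lambda}(D^2h_k)=0$ up to sign and is even cleaner) give $\|h_k\|_{C^{0,\beta}(\overline{B}_1)}\le C(K_0)$. By Arzelà--Ascoli, along a subsequence $v_k\to v_\infty$ and $h_k\to h_\infty$ uniformly on $\overline{B}_1$, and $\psi_k\to\psi_\infty$ uniformly on $\partial B_1$; also $\mathfrak{B}_k\to 0$ if $b,\mu\not\equiv0$ (or is bounded, which is all that matters), and $\bar{\beta}_{F_k}(\cdot,0)\to0$ in $L^p(B_1)$, $f_k\to0$ in $L^p(B_1)$, $\|\mu_k\|_{L^q}\to0$.

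Next I would pass to the limit in the equations. For $h_k$ this is the standard stability of $C$-viscosity solutions for the fixed homogeneous operator $F(0,0,\cdot)$ (Proposition \ref{prop F(X)} gives existence/uniqueness; a Lipschitz-in-$X$ stability argument as in \cite{CCKS} gives $h_\infty$ solves $F(0,0,D^2h_\infty)=0$ in $B_1$ with $h_\infty=\psi_\infty$ on $\partial B_1$). For $v_k$ I would use Proposition \ref{stability} when $m\in[1,2]$: with test function $\varphi\in W^{2,p}(B)$ one checks $g_k(x)=F_k(x,D\varphi+\mathfrak{B}_k,D^2\varphi)-f_k(x)\to F(0,0,D^2\varphi)$ in $L^p(B)$, since $|F_k(x,D\varphi+\mathfrak{B}_k,D^2\varphi)-F_k(x,0,D^2\varphi)|\le H_k(x,D\varphi+\mathfrak{B}_k,0)$ is controlled in $L^p$ by $\|b_k\|_{L^\varrho}(\|D\varphi\|_\infty+|\mathfrak{B}_k|)+\|\mu_k\|_{L^q}(\|D\varphi\|_\infty^{m-1}+|\mathfrak{B}_k|^{m-1})(\cdots)\to0$, and $|F_k(x,0,D^2\varphi)-F(0,0,D^2\varphi)|\le \bar{\beta}_{F_k}(x,0)(\|D^2\varphi\|+1)\to0$ in $L^p$; similarly $f_k\to0$. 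Hence $v_\infty$ is an $L^p$-viscosity — equivalently $C$-viscosity, by Remark \ref{Remark C-visc} — solution of $F(0,0,D^2v_\infty)=0$ in $B_1$ with $v_\infty=\psi_\infty$. By uniqueness in Proposition \ref{prop F(X)}(ii), $v_\infty=h_\infty$, contradicting $\|v_k-h_k\|_{L^\infty}>\varepsilon_0$ in the limit.

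The one genuinely new point — and the main obstacle — is the sublinear case $m\in(0,1)$, where Proposition \ref{stability} is not available (the sublinear stability can fail). Here I would run the same compactness argument but replace the stability step by a direct one exploiting the smallness of $\|\mu_k\|_{L^q}$: as noted after Proposition \ref{stability}, under a smallness assumption on $\|\mu\|_{L^q}$ the sublinear notion is stable, and \eqref{proofplan-neg} furnishes exactly such smallness. Concretely, given a test function $\varphi$ touching $v_\infty$ strictly from above at $x_0$, one uses the perturbation-by-a-corrector device (solve an auxiliary Pucci problem with right-hand side absorbing the $\mu_k|D\varphi+\mathfrak{B}_k|^m$ discrepancy, whose $L^\infty$-norm tends to $0$ because it is controlled by $C\|\mu_k\|_{L^q}(\|D\varphi\|_\infty^m+|\mathfrak{B}_k|^m+1)\to0$ via Proposition \ref{wABPsublin}), thereby transferring the touching property to $v_k$ for large $k$ and evaluating the $L^p$-viscosity inequality there; letting $k\to\infty$ the corrector and all lower-order contributions vanish, leaving the desired inequality for $F(0,0,D^2\varphi)$ at $x_0$. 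A second, milder, technical point is the quadratic case at $p=n$: there one invokes Proposition \ref{wABPsuperlin} and the $p=n$ version of Hölder regularity quoted in Theorem \ref{Calpha known} (valid for $p\in(p_0,n]$), so no separate difficulty arises beyond bookkeeping the exponential/logarithmic change of variables as in \cite{Norn19}.
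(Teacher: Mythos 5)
Your proposal mirrors the paper's own proof closely: contradiction via a compactness argument, uniform $L^\infty$ bounds from ABP (Propositions \ref{wABPsublin}, \ref{wABPsuperlin}), equicontinuity from Proposition \ref{Cbeta global}, Arzel\`a--Ascoli, passage to the limit via Proposition \ref{stability} when $m\in[1,2]$ and a corrector construction (auxiliary Pucci problem with vanishing right-hand side) when $m\in(0,1)$, then uniqueness for the frozen equation from Proposition \ref{prop F(X)}(ii).

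There is, however, one genuine gap in the limit-equation step. Since $\delta$ must be independent of $F$, the contradicting sequence has varying operators $F_k$, as you correctly set up at the beginning; but the frozen limit equation is then $F_\infty(D^2u)=0$ for some subsequential locally uniform limit $F_\infty$ of $X\mapsto F_k(0,0,X)$ (such a limit exists because each $F_k(0,0,\cdot)$ is uniformly Lipschitz with $F_k(0,0,0)=0$ by (A1)), not $F(0,0,D^2u)=0$. In particular, the bound $|F_k(x,0,D^2\varphi)-F(0,0,D^2\varphi)|\le \bar{\beta}_{F_k}(x,0)(\|D^2\varphi\|+1)$ you invoke is false as written: $\bar{\beta}_{F_k}(x,0)$ controls the oscillation $|F_k(x,0,X)-F_k(0,0,X)|$, not the distance from $F_k$ to any fixed operator. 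The discrepancy must be split, as the paper does, into the oscillation $F_k(x,0,D^2\varphi)-F_k(0,0,D^2\varphi)$ (killed by $\|\bar{\beta}_{F_k}(\cdot,0)\|_{L^p}\to 0$) plus the frozen-operator convergence $F_k(0,0,D^2\varphi)-F_\infty(D^2\varphi)$ (killed by the locally uniform convergence $F_k(0,0,\cdot)\to F_\infty$). With that correction, both $v_\infty$ and $h_\infty$ solve $F_\infty(D^2u)=0$ with the same boundary data, uniqueness gives $v_\infty=h_\infty$, and the argument closes.
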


\begin{rmk}
	We have omitted the zero order term in \eqref{SC} for simplicity of notation, but a similar argument could also include linear perturbations in the form $L(x)=\frak{A}+\frak{B}\cdot x$, $\frak{A}\in \real$; see \cite{Norn19} for details.
\end{rmk}

\begin{proof}
Let us prove that for all $\varepsilon>0$, there exists $\delta\in (0,1)$ satisfying \eqref{delta Ap Lemma}, with $\delta\leq \widetilde{\delta}^{\frac{1}{2m^N}}$, where $\widetilde{\delta}$ and $N = N(n, m, p,\varrho, q)$ ($N=0$ if $p>n$) are the constants from Proposition \ref{wABPsuperlin} if $m>1$. 
	\vspace{0.03cm}	
	
By contradiction, we assume that the conclusion of Lemma \ref{AproxLem} is not true. Then, there exist $\varepsilon_0>0$ and a sequence of operators $F_k$ satisfying (A1) for $b_k\in L^\varrho_+(B_1), \mu_k \in L_{+}^q(B_1)$, $f_k\in L^p(B_1)$, $\frak{B}_k\in \rn$;
	also $\delta_k\in(0,1)$ satisfying $(2\delta_k^2)^{m^{N}}\leq \widetilde{\delta}_k$
	with $\widetilde{\delta}_k$, $N$ ($N=0$ if $p>n$) from Proposition \ref{wABPsuperlin} so that
	\begin{center}
		$\max\left\{\|{\bar{\beta}}_{F_k}(\cdot,0)\|_{L^p(B_1)}\, ,\;\|f_k\|_{L^p(B_1)}\, ,\; \|\mu_k\|_{L^q(B_1)}(|\frak{B}_k|^m+1) \, ,\;
		\|b_k\|_{L^{\varrho}(B_1)}(|\frak{B}_k|+1) \right\}\leq \delta_k$,
	\end{center}
	with $\delta_k\rightarrow 0$; and $v_k$, ${h}_k\in C(\overline{B}_1)$ $L^p$-viscosity solutions of
	\begin{align*}
	\left\{
	\begin{array}{rclc}
	F_k(x,Dv_k+\frak{B}_k,D^2v_k)&=&f_k & B_1 \\
	v_k &=& \psi_k  & \partial B_1
	\end{array}
	\right.
	, \;
	\left\{
	\begin{array}{rclc}
	F_k(0,0,D^2 {h}_k)&=& 0 &  B_1 \\
	h_k &=& \psi_k & \partial B_1
	\end{array}
	\right.
	\end{align*}
	where $\|\psi_k\|_{C^\tau (\partial B_1)}\leq K_0$, but
	$\|v_k-h_k\|_{L^\infty (B_1)} >\varepsilon_0.$
	\vspace{0.1cm}
	
Let us first show that there exists a universal positive constant $C_0$ such that
\begin{align}\label{estim Linfty LemApr}
\textrm{$\|v_k\|_{L^\infty (B_1)}\,, \;\|{h}_k\|_{L^\infty (B_1)}\leq C_0$
	\; for large $k$.}
\end{align}
Indeed,
	$
	\mathcal{M}^-(D^2{h}_k) \leq 0\leq \mathcal{M}^+(D^2{h}_k) \;\;\text{in}\; B_1
	$
yields
$\|{h}_k\|_{L^\infty (B_1)}\leq \|\psi_k\|_{L^\infty (\partial B_1)}\leq K_0.$ 	For $v_k$ we set
\begin{center}
$\mathcal{{L}}_k^\pm[w]=\mathcal{M}^\pm(D^2w)\pm b_k(x)|Dw|$, \;\;\quad $g_k(x)=b_k(x)|\frak{B}_k|+2\mu_k(x)|\frak{B}_k|^{m}$.
\end{center}	
Note that $v_k$ is an $L^p$-viscosity solution of the inequalities
	$$
	\mathcal{{L}}_k^+[v_k]+2{\mu}_k(x)|D v_k|^m  +g_k\geq f_k \geq \mathcal{{L}}_k^-[v_k]-2{\mu}_k(x) |Dv_k|^m-g_k\quad \text{in} \quad  B_1,
	$$
with
\vspace{-0.5cm}
\begin{center}
	$(\,\|f_k\|_{L^p(B_1)}+\|g_k\|_{L^p(B_1)} ) \,\|\mu_k\|_{L^q(B_1)}  \le 2\delta_k^2\leq  \widetilde{\delta}_k^{\,\frac{1}{m^{N}}}$  \;\;\;  for all $ k\in\n$ \quad if $m\in (1,2]$.
\end{center}
\vspace{0.1cm}
Then, by applying ABP in $B_1(0)$ with RHS equal to $|f_k|+g_k$ yields
$$
  \|v_k\|_{L^\infty(B_1)} \leq\| v_k\|_{L^\infty(\partial B_1)} +C_A^k \, W_k, \quad
W_k=\left\{
\begin{array}{lcl}
   \|{f}_k\|_{L^p(B_1)}+\|{g}_k\|_{L^p(B_1)}+ \|\mu_k\|_{L^q(B_1)}^{\frac{1}{1-m}} & \text{if} & m\in (0,1)\vspace{0.1cm} \\
  \|{f}_k\|_{L^p(B_1)}+\|{g}_k\|_{L^p(B_1)} & \text{if} & m\in [1,2].
\end{array}
\right.
$$
Since $\|\tilde{b}_k\|_{L^{\min(q,\varrho)}(B_1)}\leq C$ for large $k$, then $v_k$ also verifies \eqref{estim Linfty LemApr}.
Next, by global $C^{0, \beta}$ estimate in Proposition \ref{Cbeta global}, there exists $\beta\in (0,1)$ such that
	\begin{align}\label{estim Cbeta LemAp}
	\|v_k\|_{C^{0, \beta} (\overline{B}_1)}\,, \;\|{h}_k\|_{C^{0, \beta}(\overline{B}_1)}\leq C .
	\end{align}
	Here, $\beta$ and $C>0$ do not depend on $k$, since $\|\mu_k\|_{L^q(B_1)}$, $\|b_k\|_{L^\varrho(B_1)}$, $\|f_k\|_{L^p(B_1)}$, $\|{b}_k\|_{L^{\min(\varrho,q)}(B_1)}\le 1$ for large $ k$.	Thus, up to subsequences, one has $v_k\rightarrow v_\infty$ and ${h}_k\rightarrow {h}_\infty$ in $C(\overline{B}_1)$ as $k\rightarrow\infty$, for some $v_\infty, \; {h}_\infty \in C(\overline{B}_1)$ with $v_\infty = {h}_\infty = \psi_\infty$ on $\partial B_1$. Further, a subsequence of $F_k(0,0,X)$ uniformly converges on compact sets of $\mathbb{S}^n$ to some uniformly elliptic operator $F_\infty$.
	
	\vspace{0.05cm}
	
	Next, we claim that both $v_\infty$ and ${h}_\infty$ are $C$-viscosity solutions of	$F_\infty (D^2 u)=0 $ in $B_1$, $u=\psi_\infty  $ on $\partial B_1$.	
	For ${h}_\infty$ this follows by taking the uniform limits at the equation satisfied by $h_k$. For $v_\infty$ we infer that this follows by a stability argument. To see this, for any $\varphi\in C^2(B_1)$ we set
	$$
	\Phi_k(x)\defeq F_k(x, D\varphi+\frak{B}_k, D^2 \varphi)-f_k(x)-F_\infty (D^2 \varphi),
	$$
	so we verify that $\|\Phi_k\|_{L^p(B_1)}\to 0$ as $k\rightarrow\infty$.
	In fact, by splitting
	$\Phi_k= \Xi_1+ \Xi_2 +\Xi_3-f_k$, where
$$
\left\{
\begin{array}{rcl}
  \Xi_1 & \defeq & F_k(x, D\varphi+\frak{B}_k, D^2 \varphi)- F_k(x,0, D^2 \varphi) \\
  \Xi_2 & \defeq & F_k(x,0, D^2 \varphi)- F_k(0,0, D^2 \varphi) \\
  \Xi_3 & \defeq & F_k(0,0, D^2 \varphi) - F_\infty (D^2 \varphi),
\end{array}
\right.
$$
we have $\|\Xi_i\|_{L^p(B_1)}\to 0$, $i=1,2,3$, since
	\begin{center}
		$|\Xi_1|\leq 2\mu_k(x) (|D\varphi |^m+|\frak{B}_k|^m)+ b_k(x)( |D\varphi|+|\frak{B}_k|)$ and $\|\Xi_2\|_{L^p(B_1)} \leq \|{\bar{\beta}}_{F_k}(\cdot,0)\|_{L^p(B_1)}(\|D^2 \varphi \|_{\infty} +1)$.
	\end{center}
Hence, if $m\in [1,2]$ it is enough to apply Proposition \ref{stability}.
	\smallskip
	
	On the other side, if $m\in (0,1)$ we work directly with the definition of $C$-viscosity solution for $v_\infty$.
	To show the subsolution case, assume on the contrary that there exists $x_0\in B_1$, $\varepsilon ,\, \delta >0$ and a ball $B\subset B_1$ centered at $x_0$ such that $F_\infty(D^2\varphi)<-\varepsilon$ in $B$, with $v_\infty -\varphi$ having a local maximum at $x_0$. w.l.g.\ we can assume it to be zero and strict, i.e.\ $(v_\infty-\varphi)(x_0)=0$, $v_\infty-\varphi <0$ in $B\setminus \{x_0\}$, with $v_\infty-\varphi < -2\delta$ on $\partial B$ (see for instance Lemma \ref{lema strict} for equations without lower order terms).
Since $v_k\to v_\infty$ uniformly, we have $(v_k-\varphi)(x_0)>-\delta> (v_k-\varphi)_{|{\partial B}}$ for large $k$.
	
Now, by \cite[Theorem 3.1]{KoKo17}, let $\varphi_k$ be a strong solution of
	\begin{align*}
	\mathcal{L}^+_{m,k}\,[ \varphi_k]=-\Phi_k^+ \;\;\;\mathrm{in} \;B, \qquad \varphi_k=0 \;\;\;\mathrm{on} \; \partial B , \qquad \varphi_k\in W^{2,p}_{\mathrm{loc}} (B)\cap C(\overline{B}),
	\end{align*}
	where $\mathcal{L}^+_{m,k}$ is the operator in \eqref{def Lm} for $b_k,2\mu_k$ in place of $b,\mu$. Since $\|\mu_k\|_{L^p(B)}\to 0$ as $k\to +\infty$, then ABP in the sublinear regime (Proposition \ref{wABPsublin}) yields $\|\varphi_k \|_{L^\infty (B)}\to 0$. Further,
	\begin{align*}
	F_k(x,& D\varphi +D\varphi_k+\frak{B}_k, D^2\varphi +D^2\varphi_k))- f_k\\
	& \leq  \{F_k(x, D\varphi+\frak{B}_k, D^2 \varphi)- f_k\} +\mathcal{L}^+_{m,k}\,[\varphi_k] \leq \Phi_k-\Phi_k^+ + F_\infty(D^2\varphi)  \leq -\varepsilon\; \textrm{  a.e.\ in }B.
	\end{align*}
	But this produces a contradiction with the definition of $v_k$ as a subsolution, since for large $k$,
	\begin{align*}
	(v_k-(\varphi +\varphi_k))(x_0)&=(v_\infty-\varphi)(x_0)+\{ (v_k-v_\infty)(x_0)+\varphi_k(x_0) \}>-{\delta}\\ &>(v_\infty-\varphi)\mid_{\partial B}+\,\{ (v_k-v_\infty)+\varphi_k \}\mid_{\partial B} \;=(v_k-(\varphi +\varphi_k))\mid_{\partial B}
	\end{align*}
	and $v_k-(\varphi +\varphi_k)$ would have a maximum in the interior of $B$.

Analogously, one shows the supersolution case. Thus, $v_\infty$ satisfies $F_\infty(D^2v_\infty)=0$ in the $C$-viscosity sense in $B_1$.
	Finally, by Proposition \ref{prop F(X)} (ii) we have $v_\infty=h_\infty$ in $B_1$, which contradicts $\|v_\infty-{h}_\infty\|_{L^\infty (B_1)}\geq \varepsilon_0$, and so concludes the proof.
\end{proof}

Finally, we move to a scaling approach in order to put our equation in terms of the smallness assumptions prescribed by the approximations lemmas above.

\subsection{Normalization through scaling}\label{section scaling}

Notice that if $m\neq 2$, the normalization/reduction reasoning in Lemma \ref{AproxLem} follows through a standard scaling argument (see, \cite{Caf89, CafCab, Swiech}), which does not make use of local H\"{o}lder estimates (Theorem \ref{Calpha known}).
Nevertheless, the quadratic regime $m=2$ is similar to the one treated in \cite{Norn19}, in light of \cite{W2}.

Let $\Omega^\prime\subset\subset\Omega$ be a domain.
Consider $K_1$ and $\beta$ the pair given by the $C^\beta$ local superlinear estimate (Theorem \ref{Calpha known}) for $\Omega^\prime $, related to the initial $n,p,\lambda, \Lambda,\mu,\|b\|_{L^p(\Omega)}$, $\mathrm{dist}(\Omega^\prime,\partial\Omega)$ and $C_0$ such that
\begin{align}
\|u\|_{C^{0,\beta}(\overline{\Omega^\prime})} \leq
K_1 \, W_{u,\mu,m,\Omega\,}(\,\|f\|_{L^p(\Omega)}) , \textrm{ \;\; for $W$ as in }\eqref{def W geral}.
\end{align}

Also, let $K_2\geq 1$ and $\bar{\alpha}$ be the constants of Proposition \ref{prop F(X)}(i) associated to $n,\lambda,\Lambda$ in the ball $B_1(0)$.
w.l.g.\ we can suppose $K_1\geq \widetilde{K}_1$ and $\beta\leq \widetilde{\beta}$, where $\widetilde{K}_1,\widetilde{\beta}$ is the pair of $C^{0, \beta}$ local estimate in the ball $B_1$ with respect to an equation with given constants $n,m,p,q,\varrho,\lambda,\Lambda$ and bounds on the coefficients $\|\mu\|_{L^q(B_2)}\leq 2$ and $\|b\|_{L^\varrho(B_2)}\leq 1$, for solutions $u$ in $B_2$ with $\|u\|_{L^\infty(B_2)}\leq 1$.

\vspace{0.1cm}

Let us bear in mind some constants. First, let $\theta$ and $r_0 = r_0 (\theta)$ be such that \eqref{Htheta} holds for all $r\leq \min \{ r_0, \mathrm{dist} (x_0,\partial\Omega)\}$, for all $x_0 \in \Omega$.
We also fix constants $\delta>0$ and $\kappa=\kappa(m,p,q,\varrho,K_2)>0$.
These $\theta,\delta,\kappa$ will be chosen appropriately in the proof of the main theorems.

\vspace{0.1cm}

For ease of notation say $0\in \Omega^\prime$ and set $s_0=\min (r_0, \mathrm{dist}(0,\partial\Omega^\prime))$. Let $\sigma\leq \frac{s_0}{2}$ be such that
\begin{align} \label{sigma}
\textstyle{\sigma^{\upsilon} M \leq \frac{\delta}{\kappa}}
\end{align}
where $M= \max{ \{ 1, \|{b}\|_{L^{\varrho} (\Omega)} \} }$ and
\vspace{0.07cm}
\begin{center}
	$\upsilon=
	\min\left\{2-\iota-\frac{n}{p}\, ,\, 1-\frac{n}{\varrho} \right\}$ \;\; with \; $\iota=1$\, if $p>n$,\; while $\iota<2-\frac{n}{p}$\, if $p_0<p\le n$.
\end{center}
\vspace{0.07cm}
Up to making $\upsilon$ smaller and $M$ larger, we can assume in addition, in the superlinear regime $m>1$, that
\vspace{0.07cm}
$$
\left\{
\begin{array}{lcl}
  \upsilon\le 2-m-\frac{n}{q} \quad \text{and} \quad M\ge  W_0^{m-1}\|\mu\|_{L^q(\Omega)} & \text{if} & m\in (1,2-\frac{n}{q}) \\
  \upsilon\le \beta \quad \text{and}\quad M\ge (1+2^\beta K_1)W_0 \|\mu\|_{L^q(\Omega)} & \text{if} & m=2
\end{array}
\right.
$$
Here $\sigma$ depends on $s_0,\delta,n,p,q,\varrho ,m ,\|b\|_{L^{\varrho} (\Omega)},\|\mu\|_{L^{q} (\Omega)},K_1,K_2,\kappa,W_0$.
We highlight that $\sigma$ does not depend on $W_0$ if $m\le 1$.

\vspace{0.15cm}

In particular, $\overline{B}_{2\sigma} (0)\subset \Omega^\prime$ and we may define
\begin{align*}
N=N(0)=
\begin{cases}
\displaystyle\;\sigma^\iota W + \sup_{x\in B_2} |u(\sigma x) - u(0)|;
&\textrm{ if } m=2,\\
\; W &\textrm{ if } m\neq 2.
\end{cases}
\end{align*}
Then, by construction and $C^\beta$ local estimate (Theorem \ref{Calpha known}), if $m=2$ then $N$ is uniformly bounded by
\begin{align}\label{N}
\sigma^\iota W \leq N \leq (\sigma + 2^\beta K_1 \sigma ^\beta)W
\leq (1+2^\beta K_1) W_0\,\sigma ^\beta .
\end{align}

Similarly to \cite[Claim 3.2]{Norn19}, one sees that the function
\begin{align*}
\widetilde{u}(x)=
\begin{cases}
\; \{ u(\sigma x)-u(0) \}/N & \textrm{ if } m=2,\\
\;{u(\sigma x)}/{N}  & \textrm{ if } m\neq 2,
\end{cases}
\end{align*}
is an $L^p$-viscosity solution of $\widetilde{F}[\,\widetilde{u}\,]=\widetilde{f}(x)$ in $B_2$, where
$\widetilde{F}(x,\eta,X)= \frac{\sigma^2}{N} F\left( \sigma x,\frac{N}{\sigma}\eta, \frac{N}{\sigma^2} X \right)$, $\widetilde{f}(x)={\sigma^2} f(\sigma x)/N
$, $\widetilde{F}$ satisfying $(A1)$ for $\widetilde{b}(x)= \sigma b(\sigma x)$ and $\widetilde{\mu}(x)=N^{m-1}\sigma^{2-m}\mu(\sigma x)$.
Then, from \eqref{sigma},
\begin{center}
$\|\widetilde{u}\|_{L^\infty (B_2)} \leq 1$,	\qquad $\|\widetilde{b}\|_{L^{\varrho} (B_2)}= \sigma^{1-\frac{n}{\varrho}} \|{b}\|_{L^{\varrho} (B_{2\sigma})} \leq \frac{\delta}{\kappa} $,\medskip
\\
$\|\widetilde{f}\|_{L^p (B_2)}=\frac{\sigma^{2-\frac{n}{p}}}{N}\|{f}\|_{L^p (B_{2\sigma})}\leq \sigma^{2-\iota-\frac{n}{p}} \frac{\|{f}\|_{L^p (\Omega)}}{W}\le \delta$,
\end{center}
\vspace{-0.2cm}
\begin{align*}
\|\widetilde{\mu}\|_{L^q(B_2)}=N^{m-1}\sigma^{2-m-\frac{n}{q}}\|\mu\|_{L^q(B_{2\sigma})}\vspace{0.3cm}\le
\begin{cases}
\;(1+2^\beta K_1)W_0 \|\mu\|_{L^q(\Omega)} \, \sigma^{\beta}&\textrm{ if } m=2, \; q=\infty,	\vspace{0.2cm} \\
\; W_0^{m-1}\|\mu\|_{L^q(\Omega)} \, \sigma^{2-m-\frac{n}{q}}
&\textrm{ if }  m\in [1,2-\frac{n}{q}),	\vspace{0.2cm}\\
\;\frac{\|\mu\|_{L^q(\Omega)}}{W^{1-m}}\, \sigma^{2-m-\frac{n}{q}}
&\textrm{ if }  m\in (0,1),
\end{cases}
\end{align*}
thus $\|\widetilde{\mu}\|_{L^q(B_2)}\le \frac{\delta}{\kappa}$ for the choice of $\varrho, q$ given in (A2).
Moreover,\vspace{-0.1cm}
\begin{center}
	$\|{\bar{\beta}}_{\widetilde{F}}(\cdot,0)\|_{L^p (B_1)}=\left( \intav{B_\sigma(0)} \beta_F(y,0)^p \mathrm{d}y \right)^{{1}/{p}} \leq \delta $,\;\; by choosing $\theta=\delta$.
\end{center}
Therefore, under (A2),\vspace{-0.1cm}
\begin{align}\label{eq scaling delta}
\textstyle \|\widetilde{u}\|_{L^\infty (B_2)} \leq 1,
\;\;
\|{\bar{\beta}}_{\widetilde{F}}(\cdot,0)\|_{L^p (B_1)} \leq \delta,
\;\;
\|\widetilde{\mu}\|_{L^q(B_2)}\le \frac{\delta}{\kappa} ,
\;\;
\|\widetilde{b}\|_{L^{\varrho} (B_2)}\leq \frac{\delta}{\kappa}, \quad
\|\widetilde{f}\|_{L^p (B_2)}\leq\delta .
\end{align}
In particular, $\widetilde{F},\, \widetilde{u},\, \widetilde{\mu},\, \widetilde{b}$
(with $ \frak{B},\frak{C}=0$) satisfy Lemma \ref{AproxLem} hypotheses. Further, if for instance we show that $\|\widetilde{u}\|_{C^{1,\alpha}(\overline{B}_1)} \leq C$, then we get
$
\| {u} \|_{C^{1,\alpha}(\overline{B}_\sigma)}\leq C W ,
$
where $C$ depends on $\sigma$. Finally, we apply a covering argument for $\Omega^\prime$. For the other norms the reasoning is analogous.

We just write $F,u,M, \mu,b$ as the shorthand notation for $\widetilde{F},\widetilde{u}, \widetilde{\mu},\widetilde{b}$. In other words, in what follows we may assume w.l.g.\ that the original data $F,u,M, \mu,b$ satisfy the smallness regime \eqref{eq scaling delta}.

\section{General regularity theory}\label{proof main th}

The strategy for proving optimal regularity estimates is based on a refined compactness method from the geometric tangential analysis in \cite{Caf89, CCKS, Swiech, Tei14, W2}. It relies on control  of oscillation decay, which permits us to access the corresponding regularity theory available for pure second order operators with frozen coefficients.

\subsection{H\"{o}lder regularity: Proof of Theorem \ref{Calpha regularity estimates geral}}\label{section optimal Calpha}

We start proving a fundamental lemma which provides the first step in the iteration process. The core idea is to approximate solutions of \eqref{F=f} by constants, as in \cite{daST17} and \cite{Tei14}.

\begin{lem}\label{lemma3.1} Let $u$ be an $L^{p}$-viscosity solution for \eqref{F=f} in $B_2$ with $\|u\|_{L^{\infty}(B_2)}\leq 1$. Given $0<\alpha_0<1$, there exist $\delta >0$ and $0<\gamma\ll 1$, depending only on $n,  \lambda, \Lambda, \alpha_0$, so that if \eqref{delta Ap Lemma} and (A2) are in force for $ p_0 < p < n $, then\vspace{-0.2cm}	\begin{center}
		$\displaystyle \sup_{B_{\gamma}} |u - \frak{a}| \leq \gamma^{\alpha_0}$ for some $\frak{a} \in \mathbb{R}$, with $|\frak{a}| \leq C(n, \lambda, \Lambda)$.
	\end{center}
\end{lem}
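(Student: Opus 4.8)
The strategy is the classical ``approximation by constants'' step that seeds a geometric iteration, so the plan is to transfer regularity of the frozen, homogeneous operator $F(0,0,D^2h)=0$ down to $u$ via Lemma \ref{AproxLem}. First I would fix $\alpha_0 \in (0,1)$ and invoke Proposition \ref{prop F(X)}: solutions $h$ of $F(0,0,D^2h)=0$ in $B_1$ with $h=\psi$ on $\partial B_1$ satisfy the interior $C^{1,\bar\alpha}$ estimate $\|h\|_{C^{1,\bar\alpha}(\overline{B}_{1/2})}\le K_2\|h\|_{L^\infty(B_1)}$. Since we will compare against $\psi=u|_{\partial B_1}$ and $\|u\|_{L^\infty(B_2)}\le 1$ gives $\|h\|_{L^\infty(B_1)}\le 1$ by the maximum principle (Pucci bounds), evaluating $h$ near the center yields a constant $\mathfrak{a}:=h(0)$ with $|\mathfrak{a}|\le \|h\|_{L^\infty}\le 1 \le C(n,\lambda,\Lambda)$ and, crucially, $\sup_{B_\gamma}|h-\mathfrak{a}| \le \|Dh\|_{L^\infty(B_{1/2})}\,\gamma \le K_2\,\gamma$ for $\gamma\le 1/2$ (one can even use the $\bar\alpha$-Hölder bound to get $\gamma^{1+\bar\alpha}$, but linear-in-$\gamma$ suffices since $\alpha_0<1$).

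The second step is the choice of parameters. Given $\alpha_0<1$, pick $\gamma=\gamma(n,\lambda,\Lambda,\alpha_0)\in(0,1/2)$ small enough that $2K_2\gamma \le \gamma^{\alpha_0}$, i.e. $\gamma^{1-\alpha_0}\le 1/(2K_2)$. Then set $\varepsilon := \tfrac12\gamma^{\alpha_0}$ and let $\delta=\delta(\varepsilon,n,p,q,\varrho,\lambda,\Lambda,\tau,K_0)$ be the constant furnished by Lemma \ref{AproxLem} with $\frak{B}=0$ and with $K_0$ a fixed bound for $\|\psi\|_{C^\tau(\partial B_1)}$ (which itself is controlled by the global Hölder estimate, Proposition \ref{Cbeta global}, applied to $u$ in $B_2$, so $\tau=\beta$ and $K_0$ is universal). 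Applying Lemma \ref{AproxLem}, once \eqref{delta Ap Lemma} and (A2) hold, we obtain an $F(0,0,D^2h)=0$ solution $h$ with $h=u$ on $\partial B_1$ and $\|u-h\|_{L^\infty(B_1)}\le \varepsilon$. Combining the two estimates: $\sup_{B_\gamma}|u-\mathfrak{a}| \le \|u-h\|_{L^\infty(B_1)} + \sup_{B_\gamma}|h-\mathfrak{a}| \le \varepsilon + K_2\gamma \le \tfrac12\gamma^{\alpha_0}+\tfrac12\gamma^{\alpha_0} = \gamma^{\alpha_0}$, which is exactly the claim, with $|\mathfrak{a}|\le C(n,\lambda,\Lambda)$ as noted.

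\textbf{Main obstacle.} The delicate point is bookkeeping the dependence of $\delta$ and $\gamma$ only on $n,\lambda,\Lambda,\alpha_0$ as asserted in the statement, even though Lemma \ref{AproxLem} hands back a $\delta$ also depending on $p,q,\varrho,\tau,K_0$. This is reconciled because $K_0$ and $\tau=\beta$ are determined by the universal Hölder estimate (Proposition \ref{Cbeta global}) for solutions with $\|u\|_{L^\infty}\le 1$ under the normalized smallness \eqref{eq scaling delta}, hence these are themselves controlled by $n,m,p,q,\varrho,\lambda,\Lambda$; one should track that in the rescaled setting of Section \ref{section scaling} these are uniform. A second subtlety in the sublinear range $m\in(0,1)$ is that the approximation in Lemma \ref{AproxLem} already absorbed the $\|\mu\|_{L^q}$-smallness and the lack of general stability — but since the lemma's statement is invoked as a black box, the only care needed here is to verify that the smallness condition \eqref{delta Ap Lemma} with $\frak{B}=0$ reduces precisely to $\max\{\|\bar\beta_F(\cdot,0)\|_{L^p(B_1)},\|f\|_{L^p(B_1)},\|\mu\|_{L^q(B_1)},\|b\|_{L^\varrho(B_1)}\}\le\delta$, which matches \eqref{eq scaling delta} after scaling.
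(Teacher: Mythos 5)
Your proof is correct and follows essentially the same route as the paper's: approximate $u$ by a solution $h$ of the frozen homogeneous problem via Lemma \ref{AproxLem}, set $\mathfrak{a}=h(0)$, use the interior Lipschitz bound $|h(x)-h(0)|\le C|x|$, and choose $\gamma$ so that $C\gamma\le \tfrac12\gamma^{\alpha_0}$ and $\varepsilon=\tfrac12\gamma^{\alpha_0}$. The paper makes the identical choices (with $c_0$ in place of your $K_2$, and additionally capping $\gamma\le r_0$), so no substantive difference exists; your side-remark about tracking the full parameter dependence of $\delta$ through Lemma \ref{AproxLem} is a fair observation that the paper handles implicitly by the normalization in Section \ref{section scaling}.
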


\begin{proof} We fix  $\varepsilon=\varepsilon (n, \lambda, \Lambda, \delta) > 0$ to be chosen, and $0<r_0\leq 1$ as the constant which appears in the hypotheses of Theorem \ref{Calpha regularity estimates geral}. Let $h$ be a solution of $F(0,0, D^2 h)=0$ in $B_2$ satisfying
$
\sup_{B_{1}}|u-h|\leq \varepsilon,
$
	given by Lemma \ref{AproxLem}.
	This $\varepsilon$ produces a $\delta= \delta(\varepsilon)$ in \eqref{delta Ap Lemma}, which we assume $\delta\leq \varepsilon$.
	By the regularity theory available for $h$,  see for instance  \cite{Caf89}, \cite{CafCab} and \cite{Tru88}, one can estimate
	\begin{equation}\label{eq3.3}
|h(x)-h(0)|\leq C |x| \,\,\,\;\;\mbox{for all} \,\,\, |x| < {1},\qquad 	|h(0)| \leq c_0, \quad C=C(n, \lambda, \Lambda) .
	\end{equation}
For $\frak{a} = h(0)$, and the constant $c_0$ given in \eqref{eq3.3}, it follows then
	\begin{center}
		$\displaystyle \sup_{B_{\gamma}} |u-\frak{a}| \leq  \sup_{B_{\gamma}} |u-h|+  \sup_{B_{\gamma}} |h-h(0)| \le \delta + c_0\gamma .
	$
	\end{center}
Now, it is enough setting
$\gamma =\min\{r_0, ({1}/{2c_0})^{\frac{1}{1- \alpha_0}}\}$ and $ 2\varepsilon = \gamma^{\alpha_0}$ to conclude the proof.
\end{proof}

\begin{proof}[{\bf Proof of Theorem \ref{Calpha regularity estimates geral}}] Set $\alpha_0 = 2-\frac{n}{p}\in (0,1)$. Via the scaling process in Section \ref{section scaling}, we may assume \eqref{eq scaling delta} with $\kappa=1$, where $\delta$ is the universal constant from Lemma \ref{lemma3.1}. Then, take $\theta  = \delta$ and assume $r_0=1$. Our goal is to iterate the conclusion of Lemma \ref{lemma3.1}. For a fixed $y \in B_{1}$ we claim that there exists a convergent sequence of real numbers $\{\frak{a}_k\}_{k \geq 1}$, such that
\begin{equation}\label{eq4.2}
  \sup_{B_{r_k}(y)} |u(x) - \frak{a}_k| \leq \rho^{k\alpha_0}, \quad \textrm{ for } \; r_k \defeq \gamma^k,
\end{equation}
where the radius $0<\gamma \ll 1$ is given by Lemma \ref{lemma3.1}, for $\alpha_0$ as above.

Let us perform an induction argument. Notice that Lemma \ref{lemma3.1} gives the first step of induction, for $k = 1$.  Now suppose the $k^{th}$ step is verified. We define
\begin{center}
$	v_k(x) =  \frac{u(y + r_k x)-\frak{a}_k}{r_k^{\alpha_0}}$ \;\; and \;	$ F_k(x, \xi, X) = r_k^{2- \alpha_0} F( \,{  y+ r_k x \, , \, r_k^{-1+ \alpha_0} {\xi}} \,, \, r_k^{-2+ \alpha_0}X\, ).$
\end{center}
Then, $F_k$ fulfills $(A1)$ for $b_{F_k}(x)=r_k\, b(y+r_k x)$, and $\mu_{F_k}(x)=r_k^{1+(1-\alpha_0)(1-m)}\mu(y+r_kx)$. Moreover  by the induction hypothesis, $v_k$ satisfies $|v_k|\leq 1$ and
\begin{center}
		$	F_k(x, Dv_k, D^2v_k) = r_k^{2- \alpha_0} f(y + r_k x)  =: f_k(x)$ \;\; in $B_2$,
\end{center}
in the $L^p$-viscosity sense. One easily computes
$$
\|f_k\|_{L^{p}(B_1)} =  r_k^{2-\frac{n}{p}-\alpha_0}\|f\|_{L^{p}(B_{r_k})} \leq \|f\|_{L^{p}(B_1)} \leq \delta,\quad
\|b_{F_k}\|_{L^{\varrho}(B_1)} =  r_k^{1-\frac{n}{\varrho} } \|b\|_{L^{\varrho}(B_{r_k})} \leq \delta,
$$
$$
 \|\mu_{F_k}\|_{L^{q}(B_1)} =  r_k^{1-\frac{n}{q}+(1-\alpha_0)(1-m)} \|\mu\|_{L^{q}(B_{r_k})} \leq \delta, \;\;
\| \bar{\beta}_{F_k}\|_{L^p(B_1)} \leq \max\{1, r_k^{{2-\alpha_0}}\}\, \|\beta_F\|_{L^p(B_{r_k}(y))}  \leq  \delta,\smallskip
$$
since $q, \varrho>n $, $\alpha_0 = 2-\frac{n}{p}$, and \begin{center}
	$A \defeq 1-\frac{n}{q}+(1-\alpha_0)(1-m)>0$, \quad for all $m \in (0, 2]$.
\end{center}
In fact, $ A> 2-\frac{n}{q}-m >0$ if $m\in (1,2)$; while $ A= 2-\frac{n}{p} >0$  by using $\frac{n}{q}=0$ if $m=2$.\smallskip

Then Lemma \ref{lemma3.1} provides the existence of real number $\overline{\frak{a}}_k$, with $|\overline{\frak{a}}_k| \leq C$, such that
$
\sup_{B_{\rho}} |v_k - \overline{\frak{a}}_k| \leq \rho^{\alpha_0}.
$
Thus, we set $\frak{a}_{k+1} \defeq \frak{a}_k + r_k^{\alpha_0}\,\overline{\frak{a}}_k$\, and rescale back to the unit ball, by obtaining the $(k+1)^{th}$ step of the induction process. In addition, we have that
$ |\frak{a}_{k+1} - \frak{a}_k|\leq C r_k^{\alpha_0},
$
so the sequence $\{\frak{a}_k\}_{k \geq 1}$ converges, with $\frak{a}_k \rightarrow u(y)$. Also,
\begin{equation}\label{eq4.3}
 |u(y) - \frak{a}_k|\leq \frac{C r_k^{\alpha_0}}{1- r_k^{\alpha_0}} .
\end{equation}
Finally, for $0 < r < \gamma$, let $k$ such that $x \in B_{r_k}(y)\setminus B_{r_{k+1}}(y)$. It follows from \eqref{eq4.2}, \eqref{eq4.3} that
$$
\displaystyle	\sup_{B_r(y)}  \frac{|u(x)-u(y)|}{|x-y|^{\alpha_0}}  \leq  \sup_{B_r(y)} \frac{|u(x)-\frak{a}_k| + |u(y) - \frak{a}_k|}{|x-y|^{\alpha_0}} \leq \left(1 + \frac{C}{1- \gamma^{\alpha_0}}\right)\sup_{B_r(y)} \frac{ {r_k}^{\alpha_0}}{|x-y|^{\alpha_0}}\leq  \frac{C_{\alpha_0}}{\gamma^{\alpha_0}}.
$$
	The last bound, combined with a standard covering argument provide the desired estimate.
\end{proof}

\smallskip

\subsection{Borderline and Gradient regularity: Proof of Theorems \ref{Log-Lip estimates} and \ref{C1,alpha regularity estimates geral}}\label{proof gradient regul}

In this section, we provide a unified proof to treat both cases $p=n$ and $p>n$.

\begin{proof}
\textbf{Step 1: Choice of constants}
\medskip

Assume $p\geq n$ and consider a fixed constant $\alpha$ such that
\begin{align}\label{choice alpha C1,alpha}
\textstyle 0<\alpha\le \min\left\{ 1-\frac{n}{\varrho}\,,\, 1-\frac{n}{q}\, , \, 1-\frac{n}{p}\right\}\;\,\,\, \text{and} \,\,\,\alpha<\bar{\alpha}\;\, \textrm{ if }\; p>n,\qquad
\text{and} \quad \alpha=0 \; \textrm{ if }\;p=n.
\end{align}
Then, we pick up $\gamma=\gamma(\alpha,\bar{\alpha},K_2)\in (0,{1}/{4}]$, in addition to
\begin{align} \label{gamma}
2^{2+\bar{\alpha}}K_2\, \gamma^{\bar{\alpha}}\leq \gamma^{\alpha}\quad\textrm{ if }\; p>n.
\end{align}
Next, we define
\vspace{-0.3cm}
\begin{align} \label{epsilon}
\varepsilon=\varepsilon (\gamma)=
K_2\, (2\gamma)^{1+\bar{\alpha}} .
\end{align}
Such an $\varepsilon$ provides a $\delta=\delta(\varepsilon)\in (0,1)$, the constant of Lemma \ref{AproxLem}, which w.l.g.\ we assume to satisfy
\begin{align} \label{delta}
\kappa\,\delta \leq \gamma^{\alpha}.
\end{align}
where the constant $\kappa$, given in Section \ref{section scaling}, is defined by
\begin{align}
\textstyle
\kappa =2\,(K_2 +K_2^m+K^2),\;\; \textrm { where\; $K={K_2}\,\frac{\gamma^{-\alpha}}{1-\gamma^\alpha} +{K_2}\,\frac{\gamma^{-1-\alpha}}{1-\gamma^{1+\alpha} }\ge K_2\ge 1$.
}
\end{align}

Finally, notice that $\sigma$, chosen in Section \ref{section scaling}, then depends also on $\alpha,\bar{\alpha},\beta$, since $\delta=\delta(\varepsilon)$.

\bigskip
\textbf{Step 2: Iterations scheme}
\medskip

Now, we can proceed with Caffarelli's iterations as in \cite{Caf89, CafCab, Swiech}, which consists of finding a sequence of linear functions $\frak{l}_k (x)=\frak{a}_k +\frak{b}_k \cdot x$ such that
\vspace{0.2cm}

$(i)_k \;\;\| u-\frak{l}_k \|_{L^\infty (B_{r_k})} \leq r_k^{1+\alpha}$,
\vspace{0.2cm}

$(ii)_k \;\;|\frak{a}_k-\frak{a}_{k-1}| \leq K_2 \,r_{k-1}^{1+\alpha}\;,\;\; |\frak{b}_k - \frak{b}_{k-1}|\leq K_2 \,r_{k-1}^\alpha$,
\vspace{0.2cm}

$(iii)_k \;\;|(u-\frak{l}_k)(r_k x)-(u-\frak{l}_k)(r_k y)|\leq (1+3K_1) \, r_k^{1+\alpha} |x-y|^\beta \;\; \textrm{for all } x,y\in B_1 $, \vspace{0.2cm}\\
for $r_k=\gamma^k$ for some $\gamma\in(0,1)$, for all $ k\geq 0$, with the convention that $\frak{l}_{-1}\equiv 0$.
\vspace{0.11cm}

In the case $p>n$, this implies the existence of $\frak{a}\in \real$, $\frak{b}\in \rn$ such that $\frak{a}_k\to \frak{a}$ and $\frak{b}_k\to \frak{b}$, since $|\frak{b}_k - \frak{b}| \leq  K_2 \frac{\gamma^{\alpha k}}{1-\gamma^\alpha}$ and
$|\frak{a}_k - \frak{a}|\leq K_2\frac{\gamma^{k(1+\alpha)}}{1-\gamma^{1+\alpha}}$.
Notice that $|\frak{b}_k| \leq\frac{K_2}{{\gamma^\alpha}(1-\gamma^\alpha)}\leq K$ and $|\frak{l}_k(x)|\leq   K$ for all $x\in B_1$.
Thus, for each $x\in B_1$ there exists $k\geq 0$ such that $r_{k+1} < |x|\le r_k$ and
\begin{center}
$| u(x) - u(0) - D u(0)\cdot x | \leq C|x|^{1+\alpha}$, \; and $|D u(0)| \leq C$,
\end{center}
since by definition of a differentiable function one has $\frak{a}=u(0)$, $\frak{b}=D u(0)$.

\vspace{0.07cm}

Instead, if $p=n$ then $\alpha = 0$ corresponds to the existence of $\frak{a}\in \real$ such that $\frak{a}_k\to \frak{a}$, since
$|\frak{a}_k - \frak{a}|\leq K_2\frac{r_k}{1-\gamma}$. Also,
$|\frak{b}_k| \leq  \sum_{j=1}^{k}|\frak{b}_j - \frak{b}_{j-1}| \leq kK_2 $\, for $\frak{b}_0=0$.
Then, for $x \in B_1\setminus \{0\}$ there exists $k\geq 0$ such that $r_{k+1} < |x|\le r_k$ and
\begin{center}
$|u(x)-u(0)| \leq |u-\frak{l}_k| + |u(0) - \frak{a}_k| + |\frak{b}_k|r_k \leq K_2\, r_k\left( k+1 + \frac{1}{1-\gamma}\right) \leq C|x|\,|\ln|x|| $.
\end{center}

Hence, for any $x_0\in\Omega$, we set $s_0=\min \{r_0,\mathrm{dist}(x_0,\partial\Omega^\prime)\}$ and $N=N (x_0)$ to get for the original $u$,
\begin{center}
$| u(x) - u(x_0) - D u(x_0)\cdot (x-x_0) | \leq CW|x-x_0|^{1+\alpha}$,\,\;\; $|D u(x_0)| \leq CW$\quad if \,$p>n$;
\vspace{0.2cm}

$| u(x)-u(x_0)| \leq CW|x-x_0|\,|\ln|x-x_0||$ \quad if \,$p=n$,
\end{center}
which implies the desired regularity estimates in the ball ${B}_{\sigma}(x_0)$.
Thus, for the complete local estimate, we just take finitely many such points in order to cover $\Omega^\prime$, see also \cite{Norn19} for further details.
\smallskip

Let us prove $(i)_k-(iii)_k$ by induction on $k$.
For $k=0$ we set $\frak{a}_0 = \frak{b}_0 = 0$. Recall that $\beta$ and $K_1$ are the constants from the $C^\beta$ local estimate (Theorem \ref{Calpha known}) in $B_1$ such that
$
\|u\|_{C^\beta(\overline{B}_1)}\leq \widetilde{K}_1 (1+\delta+1)\leq 3K_1,
$
which implies $(iii)_0$.
Obviously $(i)_0$ and $(ii)_0$ are satisfied too.\vspace{0.1cm}
As the induction step, we suppose the items $(i)_k-(iii)_k$ valid in order to construct $\frak{a}_{k+1}$ and $\frak{b}_{k+1}$ for which $(i)_{k+1}-(iii)_{k+1}$ hold. Define
\begin{align}\label{vk do lk}
v(x)=v_k({x}) = \frac{(u-\frak{l}_k)(r_k {x})}{r_k^{1+\alpha}}\, , \;\textrm{ for all }\, {x}\in B_2 .
\end{align}
Note that $(i)_k$ says precisely that $ |v({x})| \leq 1$ for all $x\in B_1.$ Further, from this and $(iii)_k$ we get
$$
\|v\|_{C^\beta (\overline{B}_1)}=\|v\|_{L^\infty ({B}_1)}+ \sup_{\substack{ x,y \in B_1 \\ x \neq y }} \frac{|v(x)-v(y)|}{|x-y|^\beta}  \leq 2+3K_1=:K_0.
$$

Similarly to \cite{Norn19},
$v$ is an $L^p$-viscosity solution of
\begin{center}
$F_k(x, Dv+\frak{B}_k, D^2 v) = f_k(x)$\;\; in $B_1$,
\end{center}
where $f_k(x)=r_k^{1-\alpha} f(r_k x)$, for $\frak{B}_k=r_k^{-\alpha}\frak{b}_k$, and
\begin{center}
$F_k(x,\eta,X)=r_k^{1-\alpha} F(r_kx,r_k^\alpha \eta,r_k^{\alpha-1} X)$,
\end{center}
which satisfies $(A1)$ for $b_{F_k}(x)=r_k b(r_kx)$, and $\mu_{F_k}(x)=r_k^{1+(m-1)\alpha}\mu(r_kx)$.
Thus, using the choice of $\alpha$, together with the estimates of the tilde-like coefficients, and $r_k\leq 1$, we have
\begin{center}
$\|\bar{\beta}_{F_k} (\cdot,0)\|_{L^p(B_1)}\leq  \delta$; \quad $\|f_k\|_{L^p(B_1)}\leq r_k^{1-\frac{n}{p}-\alpha} \|f\|_{L^p(B_{r_k})}\leq {\delta}$,
\end{center}
by the choice of $\alpha$ in Section \ref{section scaling}, for $p\geq n$.
Also, when $p>n$ one has
\begin{center}
$\|b_{F_k}\|_{L^\varrho (B_1)}(|\frak{B}_k|+1)=r_k^{1-\alpha-\frac{n}{\varrho}}( |\frak{b}_k|+1)\|b\|_{L^\varrho(B_{r_k})}\leq 2K\|b\|_{L^\varrho(B_1)}\leq \delta$,
\vspace{0.2cm}

$\|\mu_{F_k}\|_{L^q(B_1)} (|\frak{B}_k|^m+1)=r_k^{1-\alpha-\frac{n}{q}}( |\frak{b}_k|^m +1)\|\mu\|_{L^q(B_{r_k})} \leq 2K^2\|\mu\|_{L^q(B_1)} \leq \delta$.
\end{center}
On the other hand, if $p= n$ then $\alpha=0$ and	
$$\|b_{F_k}\|_{L^\varrho (B_1)}(|\frak{B}_k|+1)=r_k^{1-\frac{n}{\varrho}}( |\frak{b}_k|+1)\|b\|_{L^\varrho(B_{r_k})}\leq 2Kk\,r_k^{1-\frac{n}{\varrho}}\|b\|_{L^\varrho(B_1)}\leq \delta;$$
\vspace{-0.5cm}
\begin{align*}
\|\mu_{F_k}\|_{L^q(B_1)} (|\frak{B}_k|^m +1)  =  r_k^{1-\frac{n}{q}}( |\frak{b}_k|^m + 1)\|\mu\|_{L^q(B_{r_k})}
 \leq  2K^2 k^m r_k^{1-\frac{n}{q}}\|\mu\|_{L^q(B_1)} \leq \delta .
\end{align*}
Thus, $F_k,v,\mu_{F_k},b_{F_k}, \frak{B}_k$  also satisfy Lemma \ref{AproxLem} hypotheses.

\medskip

Now, we take $h=h_k\in C(\overline{B}_1)$ the $C$-viscosity solution of
$F_k (0,0,D^2 h) =0$ in $B_1$, with
$h = v$  on $ \partial B_1$.
By ABP we have $\|h\|_{L^\infty (B_1)}\leq \|h\|_{L^\infty (\partial B_1)}\leq 1$ and by the $C^{1,\bar{\alpha}}$ local estimate (Proposition \ref{prop F(X)} (i)), $\|h\|_{C^{1,\bar{\alpha}} (\overline{B}_{1/2})}\leq K_2\,\|h\|_{L^\infty (B_1)} \leq K_2$.
Hence, by Lemma \ref{AproxLem} applied to $F_k,v,\mu_{F_k},b_{F_k}$, $\psi=v\mid_{\partial B_1},\tau=\beta$, $K_0 $ and $h$ we get, for $\varepsilon$ given in \eqref{epsilon}, that
$
\|v-h\|_{L^\infty (B_1)}\leq \varepsilon.
$

\smallskip

Define $\overline{\frak{l}}(x)=\overline{\frak{l}}_k(x)=h(0)+Dh(0)\cdot x$ in $B_1$, then,
\begin{align} \label{wlinfty local}
\|v-\overline{\frak{l}}\,\|_{L^\infty (B_{2\gamma})}\leq \gamma^{1+\alpha}.
\end{align}
In fact, by the choice of $\gamma\leq \frac{1}{4}$ in \eqref{gamma}, we have for all $x\in B_{2\gamma}(0)$ that
\begin{align*}
|v(x)-\overline{\frak{l}}\,(x)|\leq  |v(x)-h(x)| + |h(x) - h(0) - D h(0)\cdot x|
\leq 2 K_2 \,(2\gamma)^{1+\bar{\alpha}}\leq \gamma^{1+\alpha}.
\end{align*}

However, inequality \eqref{wlinfty local} and the definition of $v$ imply
\begin{align*}
|u(r_k {x}) -\frak{l}_k(r_kx) - r_k^{1+\alpha} h(0) - r_k^{1+\alpha} D h(0) \cdot  x |\leq r_k^{1+\alpha} \gamma^{1+\alpha} = r_{k+1}^{1+\alpha}\, \;\; \textrm{ for all } x \in B_{2\gamma}\,,
\end{align*}
which is equivalent to
\begin{align*}
|u(y) -\frak{l}_{k+1}(y)|\leq r_k^{1+\alpha} \gamma^{1+\alpha} = r_{k+1}^{1+\alpha}\, \;\; \textrm{ for all } y=r_kx \in B_{2\gamma r_k}= B_{2r_{k+1}}\, ,
\end{align*}
where $\frak{l}_{k+1}(y)\defeq \frak{l}_k(y)+r_k^{1+\alpha} h(0) + r_k^{\alpha} D h(0) \cdot  y \,$. Then, we define
$$
\frak{a}_{k+1} = \frak{a}_k + h(0)\, r_k^{1+\alpha} , \quad
\frak{b}_{k+1} = \frak{b}_k + D h(0)\, r_k^{\alpha},
$$
from where we obtain $(i)_{k+1}$. Further, $|\frak{a}_{k+1} - \frak{a}_k|\leq K_2 \, r_k^{1+\alpha}$, $|\frak{b}_{k+1} - \frak{b}_k|\leq K_2 \, r_k^{\alpha}$, which is $(ii)_{k+1}$. To finish we observe that, in order to prove $(iii)_{k+1}$, it is enough to show
\begin{align}\label{iii}
\|v-\overline{\frak{l}}\,\|_{C^\beta (\overline{B}_\gamma)}\leq (1+3K_1) \,\gamma^{1+\alpha-\beta}.
\end{align}
Indeed, if $x,y\in B_1$ and \eqref{iii} is true, then
\begin{align*}
|(u-\frak{l}_{k+1})(r_{k+1}x)-(u-\frak{l}_{k+1})(r_{k+1}y)| \leq (1+2K_1) \, r_{k+1}^{1+\alpha} \,|x-y|^\beta.
\end{align*}

We obtain \eqref{iii} by applying Theorem \ref{Calpha known} to $w=v-\overline{\frak{l}}$ which is an $L^p$-viscosity solution of
\begin{center}
$\mathcal{L}_{k,m}^- [w]\leq g_k(x) ,\quad\mathcal{L}_{k,m}^+ \geq - g_k(x)$\;\; in $B_2$
\end{center}
where $\mathcal{L}_{k,m}^\pm$ is the operator given in \eqref{def Lm} with $b,\mu$ replaced by ${b}_{F_k}$, $2\mu_{F_k}$, $g_k(x)=|f_k(x)|+b_{F_k}(x)|Dh(0)|+2\mu_{F_k}(x)|Dh(0)|$, and using that $|\overline{\frak{l}}(x)|\leq |h(0)|+|Dh(0)|\,|x|\leq \|h\|_{C^{1,\bar{\alpha}} (\overline{B}_{1/2})}\leq K_2$ for all $ x \in B_1$,
we have
\begin{align*}
\|\,g_k\|_{L^p(B_1)} \leq \|f_k\|_{L^p(B_1)} +\|b_{F_k}\|_{L^p(B_1)} K_2+2\|\mu_k\|_{L^p(B_1)}K_2^ m\leq (1+K_2+2K_2^m)\,\delta \leq \gamma^{\alpha},
\end{align*}
by H\"{o}lder inequality and definition of $\delta$ in \eqref{delta}.
Thus, using the estimate above and \eqref{wlinfty local} in the $C^\beta$ local estimate (Theorem \ref{Calpha known}), properly scaled to the ball of radius $\gamma$, yields
\begin{align*}
[w]_{\beta,\overline{B}_\gamma} &\leq
\begin{cases}
\gamma^{-\beta} \widetilde{K}_1 \,( \, \|w\|_{L^\infty ({B_{2\gamma}})} +\gamma^{2-\frac{n}{p}} \|g_k\|_{L^p ({B_{2\gamma}})} \,) \quad\textrm{ if } m\ge 1,\vspace{0.15cm}\\
 \gamma^{-\beta} \widetilde{K}_1 \,( \, \|w\|_{L^\infty ({B_{2\gamma}})} +\gamma^{2-\frac{n}{p}} \|g_k\|_{L^p ({B_{2\gamma}})} + \gamma^{\frac{2-{n}/{q}}{1-m}} \|\mu\|_{L^q(B_{2\gamma})}^{\frac{1}{1-m}}  \,) \quad\textrm{ if } 0<m<1,
\end{cases} \\
\vspace{0.2cm}
&\le \begin{cases}
\gamma^{-\beta}  {K}_1 \,( \,\gamma^{1+\alpha} + \gamma^{2-\frac{n}{p}}\gamma^\alpha   \,)\quad\textrm{ if } m\ge 1,\vspace{0.15cm}\\
\gamma^{-\beta}  {K}_1 \,( \,\gamma^{1+\alpha} + \gamma^{2-\frac{n}{p}}\gamma^\alpha + \gamma^{\frac{2-{n}/{q}}{1-m}}  \,)\quad\textrm{ if } 0<m<1, \vspace{0.15cm}
\end{cases}\\
&\le \gamma^{-\beta}  {K}_1 \,( \,\gamma^{1+\alpha} + \gamma^{2-\frac{n}{p}}\gamma^\alpha + \gamma^{1+\alpha})\le \;3  K_1 \,\gamma^{1+\alpha-\beta} ,
\end{align*}
by using $\alpha\leq 1-\frac{n}{q}$\, when $0<m<1$.
Hence,
$
\|w\|_{C^\beta (\overline{B}_\gamma)}=\|w\|_{L^\infty ({B_\gamma})} +
[w]_{\beta,\overline{B}_\gamma}
\leq (1+3K_1)\,\gamma^{1+\alpha-\beta}
$.
\end{proof}

\section{Further regularity estimates}\label{section further regularity}

Now we exploit finer regularity estimates coming from the methods and results obtained in the preceding sections. Throughout this section we assume $F$ has the form \eqref{MC_intro}, for bounded coefficients $b,\mu$.

\subsection{Hessian type regularity: Proof of Theorems \ref{C1,logLip theorem} and \ref{Schauder theorem}}\label{proof Schauder}

Note that an $L^n$-viscosity solution is in particular an $L^p$-viscosity solution for all $p>n$. These two notions are in fact equivalent by Proposition \ref{prop Lp iff Lupsilon} when  $f,b,\mu$ are bounded.
In particular, we already know that a bounded solution $u$ considered either in Theorem \ref{C1,logLip theorem} or Theorem \ref{Schauder theorem}, for $f\in L^\infty(\Omega)$, enjoys $C^{1,\alpha}$ regularity estimates. For instance, in the local case one already has estimate \eqref{estim C1,alpha local}.

\begin{lem}\label{RHS lemma}
Assume hypothesis ($\tilde{A}$3), and let $u$ be an $L^p$-viscosity solution of \eqref{F=f}, where $F$ has the form \eqref{MC_intro}, $p>n$, and $b, \mu \in L^\infty(\Omega)$. Then $u$ is also an $L^p$-viscosity solution of
\begin{center}
	$G(x,D^2 u)=g(x)$\; in $\Omega$,
\;\; where $|g|\le |f|+b|Du|+\mu |Du|^m \in L^p_{\mathrm{loc}}(\Omega)$.
\end{center}
\end{lem}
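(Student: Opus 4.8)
The plan is to show that the gradient term $b(x)|Du|+\mu(x)|Du|^m$ can be absorbed into the right-hand side once we know that $Du$ is bounded locally, which is precisely the content of the $C^{1,\alpha}$ regularity already established (Theorem \ref{C1,alpha regularity estimates geral}, valid here since $f,b,\mu\in L^\infty$ and, via Proposition \ref{prop Lp iff Lupsilon}, the $L^n$- and $L^p$-viscosity notions coincide). First I would fix a subdomain $\Omega^{\prime\prime}\subset\subset\Omega$ and record that $u\in C^{1,\alpha}_{\mathrm{loc}}(\Omega)$, so $Du\in L^\infty_{\mathrm{loc}}(\Omega)$; combined with $b,\mu\in L^\infty(\Omega)$ and $f\in L^p_{\mathrm{loc}}(\Omega)$ this gives that the function $g(x):=f(x)+h(x,Du(x),0)$ — equivalently any measurable selection with $|g|\le |f|+b|Du|+\mu|Du|^m$ — lies in $L^p_{\mathrm{loc}}(\Omega)$, which is the integrability claim in the statement.

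The substantive point is the equivalence of the two $L^p$-viscosity formulations. I would argue as follows. Write $F(x,\xi,X)=G(x,X)+h(x,\xi,0)$ with $|h(\cdot,\xi,0)|\le H(\cdot,\xi,0)$ and $H$ of the form in (A1). Let $\phi\in W^{2,p}_{\mathrm{loc}}(\Omega)$ touch $u$ from above at $x_0$ (local maximum of $u-\phi$). Since $u\in C^1$ near $x_0$, necessarily $D\phi(x_0)=Du(x_0)$; moreover, after a standard correction replacing $\phi$ by $\phi+\eta|x-x_0|^2$ with $\eta\to 0$ (which does not affect $D\phi(x_0)$ and changes $D^2\phi$ by $2\eta I$, hence contributes an $O(\eta)$ error through the uniform ellipticity bound on $G$), one may assume the touching is strict and localize to a small ball. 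The $L^p$-viscosity subsolution inequality for $F(x,Du,D^2u)=f$ tested with $\phi$ reads
\begin{align*}
\mathrm{ess.}\varlimsup_{y\to x_0}\bigl\{G(y,D^2\phi(y))+h(y,D\phi(y),0)-f(y)\bigr\}\ge 0 .
\end{align*}
Because $y\mapsto h(y,D\phi(y),0)-f(y)$ and $y\mapsto h(y,Du(y),0)-f(y)$ agree in the essential-limit sense as $y\to x_0$ — here one uses $D\phi$ continuous at $x_0$ with $D\phi(x_0)=Du(x_0)$, $Du$ continuous, and $b,\mu\in L^\infty$ so that $h(y,\xi,0)$ is continuous in $\xi$ uniformly for $y$ near $x_0$ up to the $L^\infty$-bounded coefficients (the discrepancy $|h(y,D\phi(y),0)-g(y)|\le b(y)|D\phi(y)-Du(y)|+\mu(y)\bigl||D\phi(y)|^m-|Du(y)|^m\bigr|$ tends to $0$ essentially as $y\to x_0$) — one can replace $h(y,D\phi(y),0)-f(y)$ by $-g(y)+$ (a vanishing error) inside the $\mathrm{ess.}\varlimsup$. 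This yields $\mathrm{ess.}\varlimsup_{y\to x_0}\{G(y,D^2\phi(y))-g(y)\}\ge 0$, i.e. the $L^p$-viscosity subsolution inequality for $G(x,D^2u)=g$. The supersolution case is symmetric, and running the same computation backwards shows the converse implication, so the two notions are equivalent.

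The main obstacle I anticipate is the careful handling of the essential limsup/liminf: one must make sure that perturbing the test function and swapping $D\phi$ for $Du$ in the lower-order term is legitimate at the level of the $\mathrm{ess.}\varlimsup$, not merely pointwise. The key facts that make this work are that $u$ is genuinely $C^1$ near the touching point (so $D\phi(x_0)=Du(x_0)$ and $Du$ is continuous), that $b,\mu$ are bounded so the map $\xi\mapsto h(x,\xi,0)$ has a modulus of continuity independent of $x$, and that the $\eta|x-x_0|^2$ correction interacts with $G$ only through its uniform ellipticity, giving a controllable $O(\eta)$ term that is sent to zero at the end. Everything else — the $L^p_{\mathrm{loc}}$ membership of $g$, the strictness reduction — is routine given Theorem \ref{C1,alpha regularity estimates geral} and the structure condition (A1).
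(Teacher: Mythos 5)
Your argument is correct and follows the paper's own route: invoke the $C^{1,\alpha}$ regularity from Theorem \ref{C1,alpha regularity estimates geral} to get $Du$ continuous and bounded locally, use $D\phi(x_0)=Du(x_0)$ at a touching point together with $b,\mu\in L^\infty$ to show that the discrepancy between the Hamiltonian evaluated at $D\phi$ versus at $Du$ vanishes near $x_0$, and conclude the two $L^p$-viscosity inequalities are equivalent. The only cosmetic difference is that the paper phrases the step by contradiction and bounds the discrepancy by $b_0|Du-D\phi|+\mu_0\,\sigma_0(Du,D\phi)$ (with $\sigma_0$ as in (A1)) rather than by $\mu\bigl||D\phi|^m-|Du|^m\bigr|$ — your version of that bound is not literally what the structure condition gives (in the sublinear case $|\xi-\eta|^m\neq\bigl||\xi|^m-|\eta|^m\bigr|$), but since both quantities tend to zero as $y\to x_0$ by continuity, the conclusion is unaffected.
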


\begin{proof} When $m\ge 1$ this is the content of \cite[Claim 4.2]{Norn19}. Although the arguments are similar, here we give a unified proof comprising also the case $m\in (0,1)$ for the sake of completeness.
	
First we observe that by our $C^{1,\alpha}$ local regularity estimates over the set $\Omega^{\prime\prime}\subset \subset\Omega$ we get $u\in C^{1,\alpha}(\overline{\Omega^\prime})$ for any $\Omega^\prime\subset\Omega^{\prime\prime}$, and $\|u\|_{C^{1,\alpha}(\overline{\Omega^\prime})}\le CW(\|f\|_{L^p(\Omega^{\prime\prime})})$, where $W$ is given in \eqref{def W geral}. 

Now, to prove the lemma we suppose by contradiction that $u$ is not an $L^p$-viscosity subsolution of $G(x,D^2 u)\ge g$ (the supersolution case is analogous). Then, there exist $\phi\in W^{2,p}(B_r)$,  $x_0\in \Omega$, and $r,\epsilon\in (0,1)$ such that $u-\phi$ attains a local maximum at $x_0$, but $G(x, D^2 \phi) \le g(x)-\epsilon$ a.e.\ in $B_r(x_0)$. On the other hand, the definition of viscosity subsolution of \eqref{F=f}, where $F$ is as in \eqref{MC_intro}, yields
\begin{center}
$G(x,D^2\phi)+b_0|D\phi|+\mu_0|D\phi|^m \ge f(x)-\epsilon/2$ \; a.e.\ in $B_r(x_0)$, \; where  $b(x)\le b_0$, $\mu(x)\le \mu_0$.
\end{center}
Thus one gets for a.e.\ in $B_r(x_0)$, and for all $s\le r$,
	\begin{center}
$-\epsilon \ge G(x,D^2 \phi)-g(x)\ge -b_0|Dh-D\phi|-\mu_0\, \sigma_0(Dh,D\phi)-\epsilon/2 $,
	\end{center} where $\sigma_0(\xi,\eta)=|\xi-\eta|^{m}$ if $m\leq 1$, and $\sigma_0(\xi,\eta)=|\xi-\eta|(|\xi|^{m-1}+|\eta|^{m-1})$ if $m>1$. Anyway, since $u-\psi\in C^1(\overline{B}_r)$ with $D(h-\phi)(x_0)=0$ one gets a contradiction by letting $s\to 0$.
\end{proof}

Now we are able to conclude the proof of Hessian regularity estimates.\smallskip

Let us take a bounded $C^{1,1}$ domain $\Omega^{\prime\prime}$ such that $\Omega^\prime\subset\subset\Omega^{\prime\prime}\subset\subset\Omega$.

Firstly we note that the function $g$ in Lemma \ref{RHS lemma} satisfies $g\in L^\infty(\Omega^{\prime\prime})$ when $b,\mu,f\in L^\infty(\Omega^{\prime\prime})$ (Theorem \ref{C1,logLip theorem}), while $g\in C^\alpha (\overline{\Omega^{\prime\prime}})$ when $b,\mu,f\in C^\alpha (\overline{\Omega^{\prime\prime}})$ (Theorem \ref{Schauder theorem}).
Next, we have that $u$ is an $L^n$-viscosity solution of $G(x,D^2 u)=g$ in $\Omega^{\prime\prime}$ by Lemma \ref{RHS lemma} and Proposition \ref{prop Lp iff Lupsilon}.
Then we obtain the corresponding regularity from the second order operator $G(x,X)$ without lower order terms.
In the first case $u\in C^{1,\mathrm{Log-Lip}}(\Omega^\prime)$ by \cite{ST}; while in the second case $u\in C^{2,\alpha}(\overline{\Omega^\prime})$ by \cite{Caf89} and \cite{CC03}.

For the corresponding estimates we argue as follows.
To make the proof shorten let us just write $[u]$ to mean the norm of $u$ either in  $C^{1,\mathrm{Log-Lip}}(\Omega^\prime)$ or $C^{2,\alpha}(\overline{\Omega^\prime})$; we also denote simply $|f|_\alpha ,\, |b|_\alpha , \,|\mu|_\alpha$ with $\alpha\ge 0$, for the norms in either $L^{\infty}(\Omega^{\prime\prime})$ when $\alpha=0$, while in $C^{\alpha}(\overline{\Omega^{\prime\prime}})$ when $\alpha>0$. With these conventions, we derive from \cite{Caf89, ST} that
\begin{center}
$[u]\le C \,\{\,\|u\|_{L^\infty (\Omega^{\prime\prime})} +|g|_\alpha \,\}$, \;\; $\alpha \in [0,1)$,
\end{center}
for some universal positive constant $C$ depending only on $n,\lambda,\Lambda$, in addition to $\bar{\alpha}, \theta, r_0$ from ($\tilde{A}$3), and on $\Omega^{\prime\prime}$, $\mathrm{dist}(\Omega^\prime, \partial\Omega)$. Now it remains to estimate $|g|_\alpha$ from Lemma \ref{RHS lemma} in terms of the $C^{1,\alpha}$-norm of $u$ from Theorem \ref{C1,alpha regularity estimates geral}, in order to obtain estimates \eqref{est C1,LogLip} and \eqref{est Schauder local}.

Since $u\in C^{1}(\overline{\Omega^{\prime\prime}})$, we consider $G(x,D^2 u)=g$\, in $\Omega^{\prime\prime}$, and estimate
\begin{center}
$|g|_\alpha\,\le\, |f|_\alpha+ |b|_\alpha\, \|u\|_{ C^{1,\alpha}(\overline{\Omega^{\prime\prime}})} +|\mu|_\alpha\, \|u\|_{ C^{1,\alpha}(\overline{\Omega^{\prime\prime}})}^m$.
\end{center}
In the case $m=1$ then we are done by the $C^{1,\alpha}$ estimates from \cite{Norn19} given in \eqref{estim C1,alpha local}. \smallskip

When $m\in (1,2]$ we assume $\overline{W}([f])\le \overline{W}_0$ where $\overline{W}$ is defined in \eqref{def W geral BMO}. Now, since
\begin{center}
	$\|u\|_{C^{1,\alpha}(\overline{\Omega^{\prime\prime}})}^m \le   (C \overline{W}_0)^{m-1} \, \|u\|_{C^{1,\alpha}(\overline{\Omega^{\prime\prime}})}$,
\end{center}
we derive the estimates \eqref{est C1,LogLip} and \eqref{est Schauder local} directly.

Instead, if $m\in (0,1)$ we need to work a little bit more. Let us denote $\omega=\|u\|_{L^\infty(\Omega)}+|f|_\alpha$, so
\begin{center}
$[u]\le C\, \left\{\,\omega + |b|_\alpha\, C (\omega +|\mu|_\alpha^{\frac{1}{1-m}} ) +|\mu|_\alpha \, C (\omega +|\mu|_\alpha^{\frac{1}{1-m}} )^m \,\right\}$.
\end{center}
Then, we split the analysis in two cases: either
(i)	 $\omega \le |\mu|_\alpha^{\frac{1}{1-m}}$, or (ii) $\omega \ge |\mu|_\alpha^{\frac{1}{1-m}}$. \medskip

In case (i) we have $|\mu|_\alpha (\omega +|\mu|_\alpha^{\frac{1}{1-m}} )^m\le 2^m |\mu|_\alpha\, |\mu|_\alpha^{\frac{m}{1-m}} =2^m |\mu|_\alpha^{\frac{1}{1-m}} $; while case (ii) produces
\begin{center}
	$|\mu|_\alpha(\,\omega +|\mu|_\alpha^{\frac{1}{1-m}} )^m \le \, |\mu|_\alpha \,\omega^m +|\mu|_\alpha^{\frac{1}{1-m}}$,
\end{center}
and by writing \, $|\mu|_\alpha \, \omega^m = |\mu|_\alpha^{\frac{1}{1-m}} (\, {\omega}\,{|\mu|_\alpha^{\frac{1}{m-1}}})^{m}\le\, |\mu|_\alpha^{\frac{1}{1-m}}\, {\omega}\,{|\mu|_\alpha^{\frac{1}{m-1}}}=\omega$, since $m<1$. Hence, the estimates \eqref{est C1,LogLip} and \eqref{est Schauder local} are also achieved in this case.

\begin{rmk}
When $b,\mu \in L^\infty (\Omega)$, then $u$ is pointwise twice differentiable a.e.\ in the sense of possessing a second order Taylor expansion, see \cite{CCKS}. Indeed, for $m<1$, $u$ is an $L^p$-viscosity subsolution of
	\begin{align}\label{estim Holder m<1}
	\textrm{$\mathcal{M}^+(D^2u)+(b+m)|Du|\ge -f^-(x) - (1-m)\,\mu^{\frac{1}{1-m}} $ \;\; in $\Omega^{\prime\prime}$,}
	\end{align}
	since $\mu |Du|^m\le (1-m)\mu^{\frac{1}{1-m}}+m|Du|$ by Young's inequality. Then the twice superdifferentiable a.e.\ for $u$ comes for instance from \cite{CCKS}, since $b$ is bounded. The subdifferentiable case for supersolutions is analogous. For the superlinear gradient growth with $m\in (1,2]$ see \cite[Corollary 7.4]{multiplicidade}.
\end{rmk}

\subsection{Sobolev and BMO regularity estimates}\label{section a priori BMO}

In this section, we exploit some finer BMO estimates for solutions which are a priori in suitable Sobolev spaces and satisfying the respective estimates. They might be seen as an improvement of Log-Lipschitz regularity, in light of \cite{DKM14} and \cite{PT16}, see also \cite{CafHua03}.

\begin{defin}\label{def BMO and p-BMO}
	For $p\in [1,+\infty)$, we say that $f\in p\text{--BMO}(\Omega)$ if $f$ is $L^p$-integrable in $\Omega$ and
	\begin{align}\label{eq:def p-BMO}
[f]_{p\text{-BMO}(\Omega)} \defeq \sup_{B\subset \Omega \textrm{ (ball) }} \left( \, \intav{B} |\, f(x)-f _{B}\,|^pdx\right)^{1/p} <+\infty, \quad \text{with} \;\; f_{B}=\intav{B}f(x)dx.
	\end{align}
	When $p=1$ one says simply that $f\in\text{BMO}(\Omega)$.
\end{defin}
As a consequence of John-Nirenberg inequality, BMO functions are $L^p$-integrable for all $p\in (1,+\infty)$ and the norms $p$--BMO and BMO are equivalent. Namely, it holds:
\begin{align}\label{equiv BMO e p-BMO}
\textstyle \mathrm{C}^{-1}_p [f]_{\text{BMO}(\Omega)} \leq [f]_{p\text{--BMO}(\Omega)} \leq \mathrm{C}_p [f]_{\text{BMO}(\Omega)} \quad \textrm{ for $f\in \text{BMO}(\Omega)$},
\end{align}
where the constant $\mathrm{C}_p>0$ depends only upon $p$ and $n$.
In particular, $\text{BMO}(\Omega)\subset L^p(\Omega)$ for all $p\in [1,+\infty)$ whenever $\Omega$ is a bounded domain.

\begin{defin}\label{def Ws,p estimates}
	We say that the equation \eqref{F=f} enjoys $W^{s,p}$ estimates if any bounded viscosity solution $u$ of \eqref{F=f} with $f\in L^p(\Omega)$ belongs to $ W^{s,p}_{\mathrm{loc}}(\Omega)$ and satisfies
\begin{center}
	$\|u\|_{W^{s,p}(\Omega^\prime)} \leq C\,W(\|f\|_{L^p(\Omega)})$, \quad for any $\Omega^\prime\subset\subset\Omega$,
\end{center}
where $W$ is given in \eqref{def W geral}, and
 $C$ depends on $n,p,\varrho, q,\lambda,\Lambda,\mu,\| b \|_{L^\varrho (\Omega)}, \|\mu\|_{L^q(\Omega)}, \mathrm{dist}(\Omega^\prime, \partial\Omega), \mathrm{diam} (\Omega)$, also on $C_1$ so that $W\le C_1$ if $m>1$.

If, in addition, $u=\psi$ on $\partial\Omega$ for some $\psi \in W^{s,p}(\Omega)$, then $u\in W^{s,p}(\Omega)$, and
\begin{center}
	$\|u\|_{W^{s,p}(\Omega)} \leq C\, W(\|f\|_{L^p(\Omega)}+ \|\psi\|_{W^{s,p} (\Omega)} )$,
\end{center} where $W$ is given in \eqref{def W geral}, for a constant depending also on $\partial\Omega$.
\end{defin}

We notice that $W^{s,p}$ estimates for solutions of \eqref{F=f} in the sense of Definition \ref{def BMO and p-BMO} yield BMO regularity estimates for the gradient and for the Hessian. Despite this could be known among specialists, we give a proof for the sake of completeness in order to illustrate an application of the iterations method.

\begin{teo}[{\bf A priori $\text{BMO}$ estimates}]\label{BMO a priori}
Let $u$ be a viscosity solution to \eqref{F=f}, where $F$ has the form \eqref{MC_intro}. Suppose that \eqref{F=f} enjoys $W^{s, p}$ estimates in the following sense:
	\begin{enumerate}[(i)]
\item If $f \in L^n(\Omega)$, $s=1$, and $p=n$, then $|D u| \in \text{n--BMO}_{\text{loc}}(\Omega)$ and it satisfies
		\begin{equation}\label{a priori BMO Du}
		[D u]_{\textrm{n--BMO}(\Omega^\prime)} \le C{W}(\|f\|_{L^n(\Omega)}) , \textrm{ for any $\Omega^\prime\subset \subset \Omega$,  with $W$ as in }\eqref{def W geral};
		\end{equation}
		\item If $f \in L^\infty(\Omega)$, $s=2$, and $p>n$, then $D^2 u \in p \text{--BMO}_{\text{loc}}(\Omega)$ and it holds
		\begin{equation}\label{a priori BMO D2u}
		[D^2 u]_{p\textrm{--BMO}(\Omega^\prime)} \le C \overline{W}(\|f\|_{L^\infty (\Omega)}),  \textrm{ for any $\Omega^\prime\subset \subset \Omega$, with $\overline{W}$ as in }\eqref{def W geral BMO}.
		\end{equation}
	\end{enumerate}
\end{teo}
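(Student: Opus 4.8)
The plan is to run the geometric iteration of Sections~\ref{section scaling}--\ref{proof gradient regul}, but to replace the H\"older oscillation control coming from Theorem~\ref{Calpha known} (the tool that produces $(iii)_k$ in the proof of Theorem~\ref{C1,alpha regularity estimates geral}) by the \emph{assumed} $W^{s,p}$ estimate, which supplies directly the integral control of $Du$ (resp.\ $D^2u$) around an approximating polynomial. Recall that for a ball $B$ one has $\inf_{\mathbf{p}}\intav{B}|g-\mathbf{p}|^{p}\le[g]_{p\text{-BMO}(B)}^{p}\le 2^{p}\inf_{\mathbf{p}}\intav{B}|g-\mathbf{p}|^{p}$, so it suffices to exhibit, for every ball $B_\rho(x_0)\subset\subset\Omega$, a constant vector (resp.\ matrix) $\mathbf{p}$ with $\intav{B_\rho(x_0)}|Du-\mathbf{p}|^{n}\le C\,{W}(\|f\|_{L^n(\Omega)})^{n}$ (resp.\ $\intav{B_\rho(x_0)}|D^2u-\mathbf{p}|^{p}\le C\,\overline{W}(\|f\|_{L^\infty(\Omega)})^{p}$), to compare a general scale with the enclosing dyadic one, and to cover $\Omega'$ by finitely many such balls.

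\emph{Proof of (i).} After the normalization of Section~\ref{section scaling} we may assume \eqref{eq scaling delta}, so $\|u\|_{L^\infty(B_2)}\le 1$ and all coefficients and the source have the relevant norms $\le\delta$. Fix $y\in B_{1/2}$. Exactly as in the proof of Theorem~\ref{Log-Lip estimates} (the case $p=n$, i.e.\ $\alpha=0$), one builds affine maps $\frak{l}_k(x)=\frak{a}_k+\frak{b}_k\cdot x$ and radii $r_k=\gamma^k$ with $\|u-\frak{l}_k\|_{L^\infty(B_{r_k}(y))}\le r_k$, $|\frak{a}_{k+1}-\frak{a}_k|\le K_2 r_k$ and $|\frak{b}_{k+1}-\frak{b}_k|\le K_2$: at each step the blow-up $v_k(x)=(u-\frak{l}_k)(r_kx+y)/r_k$ solves an equation of the class \eqref{F=f} whose data still satisfy \eqref{delta Ap Lemma} — the at most $k$-linear growth of $|\frak{b}_k|$ is absorbed because $1-\tfrac{n}{\varrho}>0$ and $1-\tfrac{n}{q}>0$ force $r_k^{1-n/\varrho}k,\ r_k^{1-n/q}k^{m}\to 0$ — so Lemma~\ref{AproxLem} furnishes $h$ with $G(0,D^2h)=0$, $\|h\|_{C^{1,\bar\alpha}(\overline B_{1/2})}\le K_2$, $\|v_k-h\|_{L^\infty(B_1)}\le\varepsilon$; choosing $\varepsilon\le\gamma/2$ and $\gamma^{\bar\alpha}\le(2K_2)^{-1}$ gives $\|v_k-\overline{\frak{l}}\|_{L^\infty(B_\gamma)}\le\gamma$ for $\overline{\frak{l}}(x)=h(0)+Dh(0)\cdot x$, hence $(i)_{k+1}$ with $\frak{b}_{k+1}=\frak{b}_k+Dh(0)$. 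Now apply the assumed $W^{1,n}$ estimate to $v_k$ on $B_{1/2}$: since $\|v_k\|_{L^\infty(B_1)}\le1$ and the data are $\le\delta$, $\|Dv_k\|_{L^n(B_{1/2})}\le C$ with $C$ independent of $k$. As $Dv_k(x)=(Du-\frak{b}_k)(r_kx+y)$, a change of variables yields $\intav{B_{r_k/2}(y)}|Du-\frak{b}_k|^{n}\le C$. For $\rho$ with $r_{k+1}<2\rho\le r_k$ one has $B_\rho(y)\subset B_{r_k/2}(y)$ and $|B_{r_k/2}(y)|\le\gamma^{-n}|B_\rho(y)|$, whence $\intav{B_\rho(y)}|Du-\frak{b}_k|^{n}\le\gamma^{-n}C$; the largest scales are handled directly by $\|Du\|_{L^n(B_1)}\le C$. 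Undoing the scaling of Section~\ref{section scaling} and covering $\Omega'$ produce \eqref{a priori BMO Du}.

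\emph{Proof of (ii).} Since $p>n$, Theorem~\ref{C1,alpha regularity estimates geral} gives $u\in C^{1,\alpha}_{\mathrm{loc}}(\Omega)$ together with \eqref{estim C1,alpha local}; thus Lemma~\ref{RHS lemma} rewrites the equation as $G(x,D^2u)=g$ with $|g|\le|f|+b|Du|+\mu|Du|^{m}\in L^\infty_{\mathrm{loc}}(\Omega)$, the norm $\|g\|_{L^\infty(\Omega'')}$ being controlled by $\overline{W}(\|f\|_{L^\infty(\Omega'')})$ exactly as in Section~\ref{proof Schauder}. One then repeats the iteration with quadratic polynomials $P_k$ satisfying $\|u-P_k\|_{L^\infty(B_{r_k}(y))}\le r_k^{2}$ and $|D^2P_{k+1}-D^2P_k|\le C$: the tangent profiles of $G(x_0,\cdot)$ are $C^{2,\bar\alpha}_{\mathrm{loc}}$ by (A4), the oscillation of $G$ in $x$ is controlled by assumption ($\tilde{A}$3), and Lemma~\ref{AproxLem} (for $m\ge1$ via Proposition~\ref{stability}, and for $m<1$ via its sublinear variant proved there) compares $u$ with such a profile; subtracting $P_k$ and rescaling keeps the problem admissible, the normalized operator still obeying (A1) and vanishing at the origin because $G(x_0,D^2P_k)=0$. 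Applying the assumed $W^{2,p}$ estimate to $v_k(x)=(u-P_k)(r_kx+y)/r_k^{2}$ gives $\|D^2v_k\|_{L^p(B_{1/2})}\le C$, i.e.\ $\intav{B_{r_k/2}(y)}|D^2u-D^2P_k|^{p}\le C$, and the same dyadic-to-general-scale comparison together with scaling and covering yield $D^2u\in p\text{--BMO}_{\mathrm{loc}}(\Omega)$ and \eqref{a priori BMO D2u}.

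The step I expect to be most delicate is the uniform-in-$k$ bookkeeping that keeps each rescaled problem inside the scope of the assumed $W^{s,p}$ estimate: at the borderline exponent $p=n$ one must control the mild ($k$-linear, resp.\ $k^{m}$) growth of the gradients of the approximating polynomials through the strict positivity of $1-\tfrac{n}{\varrho}$ and $1-\tfrac{n}{q}$; and in part (ii) one must invoke Lemma~\ref{RHS lemma} so that the pure second order operator obtained after absorbing the Hamiltonian — and then translated by $D^2P_k$ — remains an operator for which the $W^{2,p}$ estimate was postulated, so that its constant does not deteriorate along the iteration.
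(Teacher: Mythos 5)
Your argument is correct and follows essentially the same route as the paper's: rescale the data to the smallness regime of Section~\ref{section Approx and Scal}, run the Caffarelli iteration with affine (resp.\ quadratic) profiles exactly as in Sections~\ref{proof gradient regul} and \ref{proof Schauder}, invoke the postulated $W^{s,p}$ estimate on each normalized blow-up $v_k$ to get the uniform bound on $Dv_k$ (resp.\ $D^2v_k$), and then translate this via the triangle inequality and a dyadic-to-arbitrary-scale comparison into the BMO seminorm bound, finishing with covering. The extra bookkeeping you flag — the slow (at most $k$-linear/$k^m$) growth of $|\frak{b}_k|$ being absorbed by the strictly positive exponents $1-n/\varrho,\,1-n/q$, and routing part (ii) through Lemma~\ref{RHS lemma} so the iterated operator stays in the class enjoying the $W^{2,p}$ estimate — are precisely the points the paper handles (more tersely) by the same mechanism, so your proposal matches the intended proof.
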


\begin{proof}
First, we perform a scaling on the data $b, \mu, f, \bar{\beta}_F$ to ensure the smallness assumption \eqref{eq scaling delta} in the ball $B_2$.	For $(i)$, we approximate the rescaled $u$ by a viscosity solution of $G(0, D^2 {h}) = 0$ in $ B_2$; and use the linear functions  $\frak{l}_k (x)=\frak{a}_k +\frak{b}_k \cdot x$ to iterate $v_k$ defined in \eqref{vk do lk} for $\alpha =0$ (see section \ref{proof gradient regul}).
	Hence, by invoking $W_{\text{loc}}^{1, n}$ estimates we get
	\begin{center}
		$
	\|D v_k\|_{L^n\left(B_{{1}}\right)} \leq C\, \{\|v_k\|_{L^{\infty}(B_2)}+\|f_k\|_{L^n(B_2)}\} \leq C.
	$
	\end{center}
	Therefore, for $B^k=B_{r_k}(0)$ and $r_k=\gamma^k$, we have
\begin{center}
	$\left(\intav{B^k}|D u-\mathfrak{b}_k|^n\right)^{\frac{1}{n}} = \left(\intav{B^k}|D(u-\mathfrak{l}_k)|^n\right)^{\frac{1}{n}} \leq \|D v_k\|_{L^n(B_1)} ,$
\end{center}
thus
	$$
\textstyle	\left(\intav{B^k}|D u-(Du)_{B^k}|^n\right)^{\frac{1}{n}} \le \left(\intav{B^k}|D u-\mathfrak{b}_k|^n\right)^{\frac{1}{n}} +
	\left(\intav{B^k}| \mathfrak{b}_k-(Du)_{B^k}|^n\right)^{\frac{1}{n}} \le  2\|D v_k\|_{L^n(B_1)}\le C .
	$$
	Finally, for $r > 0$, we choose an integer $k$ such that $0 < \rho^{k+1} < r \le \rho^k$, and return to the original $u$. We thereby obtain the BMO estimate of the gradient in \eqref{a priori BMO Du} after a covering argument.
	
	For $(ii)$ we approximate $u$ by a viscosity solution of $	G(0, D^2 {h}+\frak{c}_k) = g(0)$ in $ B_2$ and iterate the quadratic polynomial functions \begin{center}
		$\frak{P}_k (x)=\frak{a}_k+\frak{b}_k\cdot x +\frac{1}{2}(\frak{c}_k x,x)$, \quad   via \; $v_k=\frac{u_k(r_k x)-\frak{P}_k (r_k x)}{r_k^{2}}$,
	\end{center}
see the method in \cite{ST}. Thus, $W^{2, p}$ estimates yield
\begin{center}
	$	\|D^2 v_k\|_{L^p(B_{{1}})} \leq C\,\{\,\|v_k\|_{L^{\infty}(B_2)}+\|f_k\|_{L^\infty(B_2)}\} \leq C.	$
\end{center}
	As a consequence, one gets for $B^k=B_{r_k}(0)$,
	\begin{center}
		$	\left(\intav{B^k}|D^2u-\frak{c}_k\,|^p\right)^{\frac{1}{p}} = \left(\intav{B^k}|D^2(u-\frak{P}_k)|^p\right)^{\frac{1}{p}} \leq \|D^2 v_k\|_{L^p(B_{{1}})} ,
	$
	\end{center}
	and so
	$$\textstyle \left(\intav{B^k}|D^2 u-(D^2u)_{B^k}|^n\right)^{\frac{1}{n}} \le \left(\intav{B^k}|D u-\frak{c}_k|^n\right)^{\frac{1}{n}} +
	\left(\intav{B^k}
	|\frak{c}_k-(D^2u)_{B^k}|^n\right)^{\frac{1}{n}}  \le  2\,\|D^2 v_k\|_{L^n(B_1)} \leq C.
$$
	Hence, for $r > 0$ we pick up an integer $k$ so that $0 < r_{k+1} < r \le r_k$. Then the previous sentence, the rescaling back to $u$, and the equivalence on the $p-$BMO and BMO norms in \eqref{equiv BMO e p-BMO} provide the desired BMO estimate \eqref{a priori BMO D2u} after a covering argument.
\end{proof}

Examples of equations satisfying $W^{s,p}$ estimates as in Definition \ref{def Ws,p estimates} for linear gradient growth with bounded coefficients can be found in \cite[Theorems 1.4]{DKM14} and \cite[Corollary 2.2]{Swiech}.
Moreover, in \cite{Swiech20} it was pointed out that the same regularity estimates could be achieved for unbounded coefficients by using the variable change in \cite{Norn19}. Additionally, \cite[Theorem 1.1]{Kry20} addresses local $W^{2,p}$ estimates for linear and fully nonlinear elliptic equations with $L^n$-drift. We also refer to \cite{daSilRic19} and \cite{PT16} for $W^{2,p}$ and BMO global/local estimates in the case $m=1$ and bounded coefficients under relaxed convexity assumption;
see also the discussion in Section \ref{section flat} in order to include an unbounded $\mu (x)$.

In the sequel, we bring forward a class of equations with nonlinear gradient growth for which we do obtain such crucial $W^{s,p}$ regularity estimates (in the sense of Definition \ref{def Ws,p estimates}) required in Theorem \ref{BMO a priori}.
We begin by ensuring the validity of \textit{a priori} estimates in a unified way.

\begin{prop}[{\bf Generalized Nagumo's lemma}]\label{Nagumo}
Let $\Omega\subset\rn$ be a bounded $C^{1,1}$ domain. Let $F$ satisfy (A1)--(A3), and $f\in L^p(\Omega)$, $p>n$.
	Suppose that there exists $\theta>0$ such that \eqref{Htheta} holds for some $r_0>0$ and for all $x_0 \in \overline{\Omega}$. Then, strong solutions of \eqref{F=f} satisfy $W^{2,p}$ estimates, for a constant also depending on $r_0$.
\end{prop}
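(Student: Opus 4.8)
The plan is to reduce the gradient‑dependent problem to a purely second–order equation that carries the \emph{same} oscillation modulus $\beta_F$, and then to quote the classical $W^{2,p}$ theory for such equations. The mechanism is that, once the gradient is known to be bounded, the whole gradient dependence can be thrown into the right–hand side without disturbing the smallness hypothesis \eqref{Htheta}; the nontrivial issue is \emph{where} one freezes the gradient.

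\textbf{Step 1 (gradient bound).} A strong solution is, in particular, an $L^p$-viscosity solution of \eqref{F=f}, so, since $p>n$, Theorem~\ref{C1,alpha regularity estimates geral} gives $u\in C^{1,\alpha}_{\mathrm{loc}}(\Omega)$ with $\|u\|_{C^{1,\alpha}(\overline{\Omega^\prime})}\le C\,W_{u,\mu,m,\Omega}(\|f\|_{L^p(\Omega)})$ for every $\Omega^\prime\subset\subset\Omega$; for the global estimate one uses the boundary counterpart instead (Remark~\ref{rmk global}, legitimate since $\partial\Omega\in C^{1,1}$ and $\psi\in W^{2,p}(\Omega)\hookrightarrow C^{1,\tau}(\partial\Omega)$ for $p>n$), controlling $\|u\|_{C^{1,\alpha}(\overline{\Omega})}$ by the data. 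In particular $\|Du\|_{L^\infty}$ is under control. \textbf{Step 2 (freezing the gradient at $\xi=0$).} Evaluating \eqref{SC} at $X=Y=D^2u(x)$, $\xi=0$, $\eta=Du(x)$, and recalling $\mathcal{M}^{\pm}_{\lambda,\Lambda}(0)=0$, we obtain, for a.e.\ $x$,
\[
|F(x,0,D^2u(x))-f(x)|=|F(x,0,D^2u(x))-F(x,Du(x),D^2u(x))|\le H(x,0,Du(x))=b(x)|Du(x)|+\mu(x)|Du(x)|^m .
\]
Hence $u$ is a strong — thus $L^p$-viscosity, and by Proposition~\ref{prop Lp iff Lupsilon} equivalently $L^n$-viscosity — solution of $F_0(x,D^2u)=g$ in $\Omega$, where $F_0(x,X):=F(x,0,X)$ is uniformly elliptic with $F_0(x,0)=0$ and $|g|\le|f|+b|Du|+\mu|Du|^m\in L^p_{\mathrm{loc}}(\Omega)$; for $F$ of the model form \eqref{MC_intro} this is precisely Lemma~\ref{RHS lemma} (there $F_0\equiv G$). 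Crucially, by \eqref{def beta} the oscillation of $F_0$ equals $\beta_F$, so \eqref{Htheta} is inherited by $F_0$ verbatim; and Hölder's inequality (recall $\varrho,q\ge p$) together with Step~1 gives $\|g\|_{L^p(\Omega^\prime)}\le\|f\|_{L^p(\Omega)}+C(\|Du\|_{L^\infty}+\|Du\|_{L^\infty}^m)\le C\,W_{u,\mu,m,\Omega}(\|f\|_{L^p(\Omega)})$, which is a routine (if slightly tedious for $m\in(0,1)$) manipulation via Young's inequality and the shape of $W$ in \eqref{def W geral}.

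\textbf{Step 3 (invoking the second–order theory).} Now $u$ solves, in the $L^p$-viscosity sense, the uniformly elliptic \emph{gradient‑free} equation $F_0(x,D^2u)=g$ with $g\in L^p$, $p>n$, and small oscillation \eqref{Htheta}. Quoting the interior $W^{2,p}$ estimate for such equations (\cite{Caf89}, \cite[Ch.~7]{CafCab}, \cite{Winter}, \cite{Esc93}; the $C^{1,1}$ interior estimates for the frozen homogeneous operator required by Caffarelli's scheme are available here — e.g.\ automatically under \eqref{MC_intro}, since then $F_0\equiv G$), one gets $\|u\|_{W^{2,p}(B_{r/2}(x_0))}\le C(\|u\|_{L^\infty(B_r(x_0))}+\|g\|_{L^p(B_r(x_0))})$ for balls $B_r(x_0)\subset\Omega$ with $r\le r_0$; a covering of $\Omega^\prime$ together with Step~2 yields $\|u\|_{W^{2,p}(\Omega^\prime)}\le C\,W_{u,\mu,m,\Omega}(\|f\|_{L^p(\Omega)})$ with $C$ depending also on $r_0$, as required in Definition~\ref{def Ws,p estimates}. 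The global estimate follows likewise from the up–to–the–boundary $W^{2,p}$ estimate for $F_0(x,D^2u)=g$ in the $C^{1,1}$ domain $\Omega$ with boundary datum $u-\psi$, combined with the global $C^{1,\alpha}(\overline{\Omega})$ bound of Step~1. The main obstacle — and the genuinely new point relative to the linear case $m=1$ — is Step~2: one must freeze \emph{only} the gradient argument, at $\xi=0$, keeping the $x$–dependence free, so that the reduced operator carries exactly the oscillation $\beta_F$ for which \eqref{Htheta} is postulated, while the single residual $F(x,0,X)-F(x,Du(x),X)$, evaluated at the same $x$, is bounded by $H(x,Du(x),0)\in L^p$ with \emph{no} growth in $\|X\|$; freezing at a point, $F(x_0,Du(x_0),X)$, would instead leave an error $\sim\beta_F(x,x_0)\|D^2u(x)\|$ on the right–hand side — a factor $\|D^2u\|_{L^p}$ weighted by a merely‑small‑on‑average coefficient — forcing one to reprove Caffarelli's measure estimates rather than quote them. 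A secondary nuisance is the bookkeeping of the weight $W$ through the gradient bound in the sublinear regime $m\in(0,1)$.
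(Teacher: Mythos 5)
Your proposal takes essentially the same route as the paper: use the global/local $C^{1,\alpha}$ estimate from Theorem~\ref{C1,alpha regularity estimates geral} and Remark~\ref{rmk global} to bound $\|u\|_{C^1}$, then observe that a strong solution satisfies $F(x,0,D^2u)=g$ a.e.\ with $g\defeq f-F(x,Du,D^2u)+F(x,0,D^2u)$ and $|g|\le H(x,Du,0)=b|Du|+\mu|Du|^m\in L^p$, and finally quote the $W^{2,p}$ theory of \cite{Swiech,Winter} for the gradient‑free equation $F(x,0,D^2u)=g$, whose oscillation function is exactly $\beta_F$ so that \eqref{Htheta} transfers verbatim. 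The only differences are cosmetic: the paper applies the a.e.\ pointwise estimate directly to the strong solution and so does not need the detour through $L^p$/$L^n$-viscosity equivalence or Lemma~\ref{RHS lemma}, and the case‑splitting in $m$ for absorbing $\|Du\|^m$ into $W$ is deferred to the argument already spelled out at the end of Section~\ref{proof Schauder}.
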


\begin{proof} We consider the global case; the local one it is analogous. Let $\psi\in W^{2,p}(\Omega)$ with $u=\psi$ on $\partial\Omega$, then $u\in C^{1,\alpha}(\overline{\Omega})$ by Theorem \ref{estim C1,alpha local} and Remark \ref{rmk global}.
	Next, $u$ satisfies $F(x,0,D^2 u)=g(x)$ a.e.\ in $\Omega$,
	where $$g(x)\defeq f(x)-F(x,Du,D^2u)+F(x,0,D^2u)\in L^p (\Omega),$$
	since
	$|g(x)|\leq H(x,Du,0) \in L^p(\Omega).$
	Now, by the estimates in \cite{Swiech, Winter} one finds
	\begin{align*}
	&\hspace{3cm}\|u\|_{W^{2,p}(\Omega)} \leq C\, \{ \| u \|_{L^{\infty} (\Omega)} + \|g \|_{L^p (\Omega)}  + \|\psi\|_{W^{2,p} (\Omega)} \}\\
	&\leq C\,\{ \,\| u \|_{L^{\infty} (\Omega)} + \|f \|_{L^p (\Omega)} +\|\mu\|_{L^p(\Omega)} \|u\|_{C^1(\overline{\Omega})}^m + \|\psi\|_{W^{2,p} (\Omega)}+\|b\|_{L^p(\Omega)}\|u\|_{C^1(\overline{\Omega})} \,\}.
	\end{align*}
	Now, it is matter of using our $C^{1,\alpha}$ estimates in Remark \ref{rmk global} to deduce
	\begin{align}\label{est nagumo2}
	\textrm{	 $\|u\|_{W^{2,p}(\Omega)} \le C W(\|f\|_{L^p(\Omega)}+\|\psi\|_{C^{1,\tau}(\partial\Omega)})$, \; where\,  ${W}\le W_0$ is given in \eqref{def W geral}.}
	\end{align}
Indeed, we argue exactly as in the end of Section \ref{proof Schauder}. The only difference is that now the computations and constant dependence lie on $\|\mu\|_{L^q(\Omega)}$ and $\|b\|_{L^\varrho(\Omega)}$ in place of $|b|_\alpha$ and $|\mu|_\alpha$ there.
\end{proof}

\begin{prop}[{\bf $W^{2,p}$ regularity}]\label{prop regu W2,p} Assume (A1)--(A3). Then the equation \eqref{F=f}, where $F$ has the form \eqref{MC_intro} with $h\equiv H$, enjoys  $W^{2,p}$ estimates, in the sense of Definition \ref{def Ws,p estimates}, whenever $G$ is convex or concave in the $X$ entry, $f\in L^p(\Omega)$ for $p>n$, and $u$ is a bounded $L^p$-viscosity solution of \eqref{F=f}.
\end{prop}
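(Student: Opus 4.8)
The plan is to reduce \eqref{F=f} to a pure second order equation with frozen gradient entry, to which the classical Caffarelli $W^{2,p}$ theory for convex (or concave) operators applies, and then to absorb the gradient terms by means of the $C^{1,\alpha}$ bound already available.

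First, since $p>n$ and $F$ has the form \eqref{MC_intro}, Theorem \ref{C1,alpha regularity estimates geral} gives $u\in C^{1,\alpha}_{\mathrm{loc}}(\Omega)$ together with the estimate \eqref{estim C1,alpha local}; in particular $Du$ is locally bounded. With $h\equiv H$, one has $H(x,\xi,0)=b(x)|\xi|+\mu(x)|\xi|^m$ in both the sublinear and the superlinear regimes, so that $F(x,\xi,X)=G(x,X)+b(x)|\xi|+\mu(x)|\xi|^m$. Arguing exactly as in Lemma \ref{RHS lemma} --- whose proof only uses $D(u-\phi)(x_0)=0$ and the local boundedness of $Du$, hence carries over verbatim to the present, merely $L^\varrho$ and $L^q$, coefficients --- the function $u$ is an $L^p$-viscosity solution of $G(x,D^2u)=g(x)$ in $\Omega$, where $g\defeq f-b|Du|-\mu|Du|^m$ obeys $|g|\le |f|+b|Du|+\mu|Du|^m$. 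Since $b\in L^\varrho$, $\mu\in L^q$ with $\varrho,q\ge p$, on every $\Omega^{\prime\prime}\subset\subset\Omega$ one has $b,\mu\in L^p(\Omega^{\prime\prime})$, whence $g\in L^p_{\mathrm{loc}}(\Omega)$.

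Next, note that $F(x,0,X)=G(x,X)$ because $H(x,0,0)=0$; therefore $\beta_F=\beta_G$ and (A3) transfers to $G$. Since $G$ is convex or concave in $X$ and $(H_\theta)$ holds for $G$, the interior $W^{2,p}$ estimates for fully nonlinear convex/concave uniformly elliptic equations (see \cite{Caf89,CafCab,Winter,CCKS}) yield, for any $\Omega^\prime\subset\subset\Omega^{\prime\prime}\subset\subset\Omega$,
\begin{align*}
\|u\|_{W^{2,p}(\Omega^\prime)}\le C\left(\|u\|_{L^\infty(\Omega^{\prime\prime})}+\|g\|_{L^p(\Omega^{\prime\prime})}\right),
\end{align*}
with $C$ also depending on $r_0$. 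It then remains to estimate $\|g\|_{L^p(\Omega^{\prime\prime})}$. By H\"older's inequality on the bounded set $\Omega^{\prime\prime}$ and the $C^{1,\alpha}$ bound $\|Du\|_{L^\infty(\Omega^{\prime\prime})}\le C\,W$, with $W=W_{u,\mu,m,\Omega}(\|f\|_{L^p(\Omega)})$,
\begin{align*}
\|g\|_{L^p(\Omega^{\prime\prime})}\le \|f\|_{L^p(\Omega)}+C\|b\|_{L^\varrho(\Omega)}\,W+C\|\mu\|_{L^q(\Omega)}\,W^m .
\end{align*}
If $m=1$ this already yields $\|g\|_{L^p(\Omega^{\prime\prime})}\le C\,W$. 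If $m\in(1,2]$, using $W\le W_0$ we write $W^m\le W_0^{m-1}W$ and again get $\|g\|_{L^p(\Omega^{\prime\prime})}\le C\,W$, now with $C$ depending on $W_0$. If $m\in(0,1)$, we split according to whether $\|u\|_{L^\infty(\Omega)}+\|f\|_{L^p(\Omega)}$ is $\le$ or $\ge\|\mu\|_{L^q(\Omega)}^{1/(1-m)}$, just as at the end of Section \ref{proof Schauder} (with $\|b\|_{L^\varrho(\Omega)}$ and $\|\mu\|_{L^q(\Omega)}$ playing the role of $|b|_\alpha$ and $|\mu|_\alpha$ there), obtaining $\|\mu\|_{L^q(\Omega)}W^m\le C\,W$ and hence $\|g\|_{L^p(\Omega^{\prime\prime})}\le C\,W$. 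Combining, $\|u\|_{W^{2,p}(\Omega^\prime)}\le C\,W(\|f\|_{L^p(\Omega)})$, which is exactly the $W^{2,p}$ estimate of Definition \ref{def Ws,p estimates}. When $u=\psi$ on $\partial\Omega$ with $\psi\in W^{2,p}(\Omega)$, the boundary $C^{1,\alpha}$ estimate of Remark \ref{rmk global} together with the global $W^{2,p}$ estimate for convex/concave operators from \cite{Winter} gives the global bound in the same fashion, as in the proof of Proposition \ref{Nagumo}.

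I expect the main obstacle to be the first step: transferring the \emph{unbounded} gradient coefficients to the right-hand side while keeping $u$ a bona fide $L^p$-viscosity solution of a pure second order equation. This hinges decisively on the a priori $C^{1,\alpha}$ regularity furnished by Theorem \ref{C1,alpha regularity estimates geral}, which renders $Du$ locally bounded and thus $g=f-b|Du|-\mu|Du|^m$ locally $L^p$; once this reduction is secured, the rest is a direct application of the classical convex/concave $W^{2,p}$ theory combined with the weight-bookkeeping already performed in Section \ref{proof Schauder}.
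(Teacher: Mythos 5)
Your plan departs from the paper's proof at a decisive point, and the departure opens a genuine gap. You invoke Lemma \ref{RHS lemma} directly on $u$ with $b\in L^\varrho$, $\mu\in L^q$ merely locally $L^p$, claiming the proof ``carries over verbatim'' because it ``only uses $D(u-\phi)(x_0)=0$ and the local boundedness of $Du$''. But this is not the whole story: the contradiction in Lemma \ref{RHS lemma} relies on the \emph{essential limsup}
\[
\mathop{\mathrm{ess\,lim\,sup}}_{y\to x_0}\Bigl\{b(y)\bigl(|D\phi(y)|-|Du(y)|\bigr)+\mu(y)\bigl(|D\phi(y)|^m-|Du(y)|^m\bigr)\Bigr\}
\]
being nonpositive. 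When $b,\mu\in L^\infty$ (which is what the lemma assumes, through the constants $b_0,\mu_0$), this follows from the uniform vanishing of $|D\phi(y)-Du(y)|$ as $y\to x_0$. When $b,\mu$ are merely $L^\varrho,L^q$, they may blow up on sets of positive measure arbitrarily close to $x_0$; the product $b(y)\,||D\phi(y)|-|Du(y)||$ then need not have essential limsup zero, and the argument breaks. So you cannot pass directly to the equation $G(x,D^2u)=g(x)$ in the $L^p$-viscosity sense when the gradient coefficients are unbounded.

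The paper sidesteps this obstruction with an approximation scheme, which is the genuinely different route you would need. One picks bounded $b_k,\mu_k\in L^\infty_+$ with $b_k\to b$ in $L^\varrho$, $\mu_k\to\mu$ in $L^q$, solves the Dirichlet problem \eqref{eq:claim visc k} in a small ball $B_{2r}$ with boundary data $u$ (existence from \cite{KoKo17,KSexist2009}), applies Lemma \ref{RHS lemma} \emph{to $u_k$} (legitimate, since $b_k,\mu_k$ are bounded), and deduces $u_k\in W^{2,p}_{\mathrm{loc}}$ via Caffarelli's theory for $G$. The a priori bounds come from Proposition \ref{Nagumo} (Nagumo's lemma), which is valid for strong solutions and gives $\|u_k\|_{W^{2,\varrho}(B_{2s})}\le C$ uniformly in $k$ (after the ABP-driven $L^\infty$ control). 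One then extracts a $C^1$-convergent subsequence, passes to the weak $W^{2,p}$ limit, identifies the limit with a viscosity solution by stability (Proposition \ref{stability}), and finally with $u$ itself by the uniqueness assertion in \cite[Theorem 1.1(iii)]{arma2010}. Your remaining steps --- the use of $\beta_F=\beta_G$ for transferring (A3), the convex/concave $W^{2,p}$ interior estimate, the $L^p$ bound on $g$ via the $C^{1,\alpha}$ estimate, and the case split for $m\in(0,1)$, $m=1$, $m\in(1,2]$ --- are all sound once one has established $u_k\in W^{2,p}_{\mathrm{loc}}$ for the approximations (rather than for $u$ directly). The short answer: the reduction to a pure second order equation is not free in the unbounded regime; the approximation--Nagumo--compactness chain is exactly the price you have to pay.
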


\begin{proof}
	We just consider the local case, since the global one is analogous. By the a priori estimates in Proposition \ref{Nagumo} it is enough to show the regularity conclusion $u\in W^{2,p}_{\mathrm{loc}}(\Omega)$.
	We already know that $u\in C^{1,\alpha}_{\mathrm{loc}}(\Omega)$ by Theorem \ref{C1,alpha regularity estimates geral}.
	Let us take sequences $b_k,\mu_k\in L^\infty_+(\Omega)$ with $b_k\to b$ in $L^\varrho(\Omega)$ and $\mu_k\to \mu$ in $L^q(\Omega)$, and let $u_k$ be the $L^p$-viscosity solution of
	\begin{align}\label{eq:claim visc k}
	\textrm{$G(x,D^2 u_k) + b_k (x)|Du_k|+ \mu_k (x)|Du_k|^m  =f(x)$ \;in $B_{2r}$, \quad $u_k=u$ \;  on $\partial B_{2r}$ ,}
	\end{align}
	given by \cite{KoKo17} and \cite{KSexist2009}, for $m\in (0,1)$ and $m\in[1,2]$ respectively. Here the solution $u_k$ exists up making $r$ smaller. Moreover, $u_k\in C^{1}(\overline{B}_{2r})$ by Remark \ref{rmk global}.
	
	We claim that $u_k\in W^{2,p}_{\mathrm{loc}}(B_{2r})$. Indeed, by Lemma \ref{RHS lemma}, $u_k$ is an $L^p$-viscosity solution of
	\begin{align}\label{eq:claim visc k 2}
	\textrm{$G(x,D^2 u_k)  =g(x)$ \;in $B_{2r}$, \quad where \; $g(x)\defeq f(x)- b_k (x)|Du_k|- \mu_k (x)|Du_k|^m $,}
	\end{align}
and then apply the $W^{2,p}$ regularity results in \cite{Caf89} for equations without zero order coefficients. 	
	Then, $u_k$ is the unique $L^n$-viscosity solution of \eqref{eq:claim visc k} by \cite[Theorem 1.1(iii)]{arma2010}. Also, Proposition \ref{Nagumo} yields
	$
	\|u_k\|_{W^{2,\varrho}(\Omega)} \leq C_k  W_k,
	$
	where $W_k$ is given in \eqref{def W geral} with $u,b,\mu$ replaced by $u_k,b_k,\mu_k$.
	Here $C_k\le C$, since $b_k$ and $\mu_k$ are uniformly bounded in $L^p$ norm.
	
	Now, by ABP we obtain
	$\|u_k\|_{L^\infty (B_{2r})}\leq \|u_k\|_{L^\infty (\partial B_{2r})} + C\, \|f\|_{L^p(B_{2r})}$, for
	a constant independent of $k$. Thus, we deduce 	$\|u_k\|_{W^{2,\varrho}(B_{2s})} \leq C$, for any $s<r$. Hence there exists $\bar{u}\in C^{1}(\overline{B}_{2s})$ such that $u_k\rightarrow \bar{u}$ in $C^{1}(\overline{B}_{2s})$, for any $s<r$.
	Then, Proposition \ref{stability} ensures that $\bar{u}$ is a viscosity solution of \eqref{F=f} in $B_{2r}$, where $F$ has the form \eqref{MC_intro}. Now, since $W^{2,p}(B_{2s})$ is reflexive, there exists $\tilde{u}\in W^{2,p}(B_{2s})$ such that $u_k$ converges weakly to $\tilde{u}$. By uniqueness of the limit, $\tilde{u}=\bar{u}$ a.e.\ in $B_{2s}$, and so $\bar{u}\in W^{2,p}(B_{2s})$, for all $s<r$.
	Finally, since $u$ is already a viscosity solution of \eqref{F=f}--\eqref{MC_intro}, the uniqueness assertion in \cite[Theorem 1.1(iii)]{arma2010} yields $u=\bar{u}\in W^{2,p}(B_{2s})$.
	
	Finally, we proceed with a covering argument to conclude $u\in W^{2,p}(\Omega^\prime)$, for any $\Omega^\prime\subset\subset\Omega$.
\end{proof}

We finish this section by exploiting $W^{1,p}$ regularity estimates for general solutions of \eqref{F=f} in the sublinear gradient regime.

\begin{prop}[{\bf $W^{1,p}$ regularity}]\label{prop W1,p regularity}
	Assume $F$ satisfy (A1)--(A3), with $m\le 1$. Let $f\in L^p (\Omega)$ and $u$ be a bounded $L^p$-viscosity solution of	\eqref{F=f}. Then, there exists $\theta>0$ depending on $n,m,p,\varrho, q,\lambda,\Lambda,\|b\|_{L^\varrho(\Omega)}$, such that if $F$ satisfy \eqref{Htheta} for some $r_0>0$, for any $r\leq \min \{ r_0, \mathrm{dist} (x_0,\partial\Omega)\}$, and for all $x_0 \in \Omega$, this implies that:
	\begin{enumerate}[(i)]
\item if $p=n$, then $u\in W^{1,Q}_{\mathrm{loc}}(\Omega)$ and $\|u\|_{W^{1,Q}({\Omega^\prime})} \leq C W(\|f\|_{L^n(\Omega)})$,  for all \, $Q<\infty$;
		
\item  if $p\in (p_0,n)$, then $u\in W^{1,Q}_{\mathrm{loc}}(\Omega)$ and 		$\|u\|_{W^{1,Q}({\Omega^\prime})} \leq CW (\|f\|_{L^p(\Omega)})$,	for all \, $Q<p^\star=\frac{pn}{n-p}$;
	\end{enumerate}
for any subdomain $\Omega^\prime \subset\subset \Omega$, where $W$ is given by \eqref{def W geral}.
	Here $C$ depends on $ r_0,n,p, \varrho, m,Q,\lambda,\Lambda, \| b \|_{L^\varrho (\Omega)}$, $\| \mu \|_{q},\mathrm{diam} (\Omega)$, $\mathrm{dist} (\Omega^\prime,\partial\Omega)$.
	If, in addition, $u\in C^{1} (\partial\Omega)$ for some $\tau\in (0,1)$, then regularity up to the boundary is true if $F$ satisfies \eqref{Htheta} for some $r_0>0$, for all $x_0\in \overline{\Omega}$.
	The respective estimates hold with $\Omega^\prime = \Omega$ by replacing $W$ by $W+\|u\|_{C^{1,\tau }(\partial\Omega)}$ on the right hand side.
\end{prop}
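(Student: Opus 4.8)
The strategy is to adapt the $W^{1,p}$-estimate argument of \cite{Swiech} --- in the unbounded-coefficient form employed for $m=1$ in \cite{Norn19, Swiech20} --- to the sublinear regime, as announced before Theorem \ref{BMO a priori}. Following \cite{Swiech} one works directly with the $L^p$-viscosity solution $u$ through its contact sets with paraboloids, which are defined for the continuous function $u$ without any a priori Sobolev regularity; the conclusion $Du\in L^Q_{\mathrm{loc}}(\Omega)$, identified with the distributional gradient, will come out of the measure decay of the complements of these sets. Via the scaling of Section \ref{section scaling} we may assume the normalization \eqref{eq scaling delta} in $B_2$: $\|u\|_{L^\infty(B_2)}\le 1$, with $\|\bar\beta_F(\cdot,0)\|_{L^p(B_1)}$, $\|\mu\|_{L^q(B_1)}$, $\|b\|_{L^\varrho(B_1)}$ and $\|f\|_{L^p(B_1)}$ all at most a $\delta$ to be fixed, so that it suffices to prove a universal bound; the case of general $b,\mu$ is recovered by covering and by the smallness of $\|\mu\|_{L^q}$ (which in particular makes the stability argument inside the proof of Lemma \ref{AproxLem} available in the sublinear case), exactly as in Proposition \ref{prop regu W2,p}.

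Let $h$ solve $F(0,0,D^2h)=0$ in $B_1$ with $h=u$ on $\partial B_1$ in the $C$-viscosity sense; Proposition \ref{prop F(X)}(i) gives $\|Dh\|_{L^\infty(B_{1/2})}\le K_2$ and Lemma \ref{AproxLem} gives $\|u-h\|_{L^\infty(B_1)}\le\varepsilon(\delta)$ with $\varepsilon(\delta)\to 0$ as $\delta\to 0$. The crucial step is the measure-decay lemma of \cite{Swiech}: for a universal $N_0>1$ and every $\eta>0$ there is $\delta=\delta(\eta)>0$ such that the ``good set'' of points $x\in B_{1/2}$ at which $u$ can be touched from above and below by paraboloids of opening $N_0$ covers $B_{1/2}$ up to measure $\eta$, and on this set $Du$ exists and obeys a universal bound in terms of $N_0$ and $\|u\|_{L^\infty(B_1)}$. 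Since $u$ is $L^\infty$-close to the $C^{1,\bar\alpha}$ function $h$, the good set is large; in the sublinear regime the complement is controlled by the sharp ABP of Proposition \ref{wABPsublin} --- which handles the genuinely sublinear term $\mu|Du|^m$ without linearizing it, at the cost of the $\|\mu\|_{L^q}^{1/(1-m)}$ summand already present in \eqref{def W geral} --- together with the $L^\varepsilon$/weak-Harnack estimates of \cite{KSweakharnack, KoKo17}.

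One then runs this lemma at every dyadic scale $B_{\rho^k}(y)$. The rescaled operators keep the structure (A1) with coefficients $b_k(x)=\rho^k b(\rho^k x)$ and $\mu_k(x)=\rho^{\gamma_0 k}\mu(\rho^k x)$, where $\gamma_0>0$ is the scaling exponent appearing as ``$A$'' in the proof of Theorem \ref{Calpha regularity estimates geral}, whose positivity is exactly condition (A2) in the sublinear case; hence the smallness bounds \eqref{eq scaling delta} are preserved at every scale. A Calder\'on--Zygmund-type iteration of the measure estimate then yields the power decay $|\{x\in B_{1/4}:\ \mathcal G(Du)(x)>t\}|\le C t^{-Q}$ for every $Q<p^\star=\frac{pn}{n-p}$ when $p_0<p<n$, and for every finite $Q$ when $p=n$ --- in the latter case the borderline scaling exponent degenerates only logarithmically, consistently with the $C^{0,\text{Log-Lip}}$ character of that case; here $\mathcal G(Du)$ denotes the maximal surrogate of $|Du|$ from \cite{Swiech}. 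Integrating the distribution function gives $Du\in L^Q_{\mathrm{loc}}(\Omega)$ with the stated estimate, and the boundary version follows by carrying out the same scheme near $\partial\Omega$ with the $C^{1,\tau}$ data, as in \cite{Norn19, Winter}.

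The main obstacle is keeping the sublinear gradient term from spoiling the measure-decay lemma: with $\mu$ only in $L^q$ one cannot reduce to bounded coefficients, and it is the precise quantitative dependence of Proposition \ref{wABPsublin} on $\|\mu\|_{L^q}$ --- rather than a crude Young-type absorption, which would push the right-hand side outside $L^p$ --- that keeps the bad set genuinely small at each scale; the companion point, that the rescaled $\mu_k$ stays within the smallness regime at every dyadic scale, is precisely what condition (A2) encodes and what caps the attainable integrability at $Q<p^\star$. The absence of a general stability theorem for sublinear Hamiltonians is dealt with, throughout, by the smallness of $\|\mu\|_{L^q}$ produced by the scaling.
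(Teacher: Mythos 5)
Your proposal does not match the paper's argument, and as written it would not prove the statement. The method you describe --- covering $B_{1/2}$ up to measure $\eta$ by the set of points touched from above and below by paraboloids of opening $N_0$, followed by a Calder\'on--Zygmund iteration of this measure estimate --- is the $W^{2,p}$ machinery of Caffarelli/CCKS. It controls the Hessian, not the gradient, and for nonconvex $F$ it only delivers $W^{2,\varepsilon}$ for a small universal $\varepsilon$, far from the claimed $W^{1,Q}$ for all $Q<p^\star$. Replacing paraboloids by cone-type touching would be the right adaptation for first-order estimates, but that is not what you wrote, and even then the iteration of the touching opening would have to be handled carefully so that it does not deteriorate with the scale. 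You also attribute the cap $Q<p^\star$ to ``what (A2) encodes,'' which is not where it comes from: (A2) only guarantees that the rescaled $\mu_k$ stays small, while the exponent threshold comes from a Sobolev/fractional-maximal-function estimate for $f\in L^p$.

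What the paper actually does --- and what \cite[Theorem 2.1, p.\ 14]{Swiech} does --- is a pointwise maximal-function argument, not a measure-decay one. One fixes $p'\in(p_0,p)$ and $\alpha>0$ small and defines, for a.e.\ $y\in\Omega'$,
\begin{align*}
N(y)\defeq \| u \|_{L^{\infty} (B_{s_0 (y)})}+\theta^{-1}\sup_{r\leq s_0}\Bigl\{ r^{1-\alpha}\Bigl( \,\intav{B_r (y)}|f|^{p^\prime}\Bigr)^{1/{p^{\prime}}}\Bigr\} + c(m),\qquad c(m)=\|\mu\|_{L^q}^{1/(m-1)}\ \textrm{if}\ m<1,
\end{align*}
and shows $N\in L^Q(\Omega')$ for $Q\le \frac{np'}{n-p'(1-\alpha)}$ with $\|N\|_{L^Q}\le CW$, which sweeps out all $Q<p^\star$ upon letting $p'\to p$, $\alpha\to 0$ (and all finite $Q$ when $p=n$). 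The point is that for each $y$ with $N(y)<\infty$, the rescaled function $\widetilde u(x)=u(\sigma x)/N(y)$ falls within the smallness regime \eqref{eq scaling delta} of Section \ref{section Approx and Scal}, and the Caffarelli iteration of Section \ref{proof gradient regul} then gives pointwise Lipschitz control $|u(x+y)-u(y)|\le C N(y)|x|$. Integrating the difference quotient gives $\bigl(\int_{\Omega'}|u(x+y)-u(y)|^Q|x|^{-Q}\d y\bigr)^{1/Q}\le \|N\|_{L^Q}\le CW$, which is the $W^{1,Q}$ estimate. Your auxiliary observations --- the need for the sublinear ABP of Proposition \ref{wABPsublin}, the role of the $c(m)$ term from \eqref{def W geral}, the use of smallness of $\|\mu\|_{L^q}$ to recover stability in the approximation lemma --- are all correctly identified and do enter the paper's argument; what is missing is precisely the key quantity $N(y)$ and the pointwise (rather than measure-theoretic) reduction to the iteration scheme.
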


\begin{proof} Let us consider the local case. The case of boundary regularity estimates follows in a similar way, by using instead the approach in \cite{Winter}. We fix a domain $\Omega^\prime \subset\subset \Omega$.
	
	Observe that $p^\star >n$ since $p> n/2$. We are going to show that $u\in W^{1,Q}$ (with the respective estimates) for $Q <p^\star$, by understanding $p^\star=\infty$ if $p=n$.
	The proof is similar to the one in \cite{Swiech}, by applying instead the variable changes and approximation lemma in \cite{Norn19} together with our rescaling in Section \ref{section scaling}; we sketch the proof in what follows for the reader's convenience.
	
	We take $y\in \Omega^\prime$ and set $s_0(y)\defeq \min (r_0, \mathrm{dist}(y,\partial\Omega^\prime))$. Recall that $r_0 = r_0 (\theta)$ is such that \eqref{Htheta} holds, for all $r\leq s_0= \min \{ r_0, \mathrm{dist} (y,\partial\Omega)\}$, where $y \in \Omega$ is arbitrary.
	
	We fix an $\alpha$ with $0<\alpha\leq\min\{\bar{\alpha},\beta ,2-{n}/{p_0}\}$. Then, let $p^\prime$ so that $p>p^\prime>p_0$, and set
	\begin{center}
		$
	\textstyle	N(y)=\| u \|_{L^{\infty} (B_{s_0 (y)})} +
		\sup_{r\leq s_0}
		\left\{ r^{1-\alpha}
		\left( \,\intav{B_r (y)}
		|f(x)|^{p^\prime} \mathrm{d} x \right)^{1/{p^{\prime}}} \right\}\theta^{-1} + c(m)$\, a.e.\ $y\in \Omega^\prime$,
	\end{center}
	where $c (1)=0$, and $c (m)=\|\mu\|_{q}^{\frac{1}{m-1}}$ if $m<1$.
We infer that $N\in L^Q(\Omega)$ for   
$Q\leq \frac{np^\prime}{n-p^\prime (1-\alpha)}$, and
\begin{center}
$\|N\|_{L^Q(B_{s_0})} \le C \{\, \|u\|_{L^\infty (\Omega)}+\|f\|_{L^p(\Omega)} +c(m)\,\}=CW.$
\end{center}
In fact, this follows exactly as in the calculations at \cite[p.14]{Swiech}, since adding the constant $c(m)$ does not change the integrability.
In particular, $N(y)< \infty$ for a.e.\ $y\in \Omega^\prime$.

Now, we go back to Section \ref{section Approx and Scal} and then replace the balls centered at $0$ by balls centered at $y$, $\sigma\leq \frac{s_0(y)}{2}$ with $\overline{B}_{2\sigma} (y)\subset \Omega^\prime$.	
If $N=N(y)<\infty$, then $\widetilde{u}(x)=\frac{u(\sigma x)}{N}$	is an $L^p$-viscosity solution of $\widetilde{F}[\,\widetilde{u}\,]=\widetilde{f}(x)$ in $B_2(y)$, where $\widetilde{F}$ is given in Section \ref{section scaling}, in which we may assume the smallness condition \eqref{eq scaling delta} on the coefficients. Note that $\widetilde{u}$ is also an $L^{p^\prime}$-viscosity solution by using our Proposition \ref{prop Lp iff Lupsilon}.
	In addition,
	$\|{\bar{\beta}}_{\widetilde{F}}(\cdot,0)\|_{L^{p^\prime} (B_1(y))} \leq \delta $ by choosing $\theta=\delta/4$, and
	\begin{center}
		$
\textstyle		r^{1-\alpha}
		\left(\, \intav{B_r (y)}
		|\widetilde{f}(x)|^{p^\prime} \mathrm{d} x \right)^{1/{p^{\prime}}} \leq
		\frac{\sigma^{1+\alpha-\beta}}{W}
		(\sigma r)^{1-\alpha}
		\left(\intav{B_{\sigma r} (y)}
		|f(x)|^{p^\prime} \mathrm{d} x \right)^{1/{p^{\prime}}}  \leq \delta ,
		$
	\end{center}
	for all $r\leq 2$. This implies that
	$\tilde{N}(y)=N_{\tilde{u},\tilde{f}}(y)\le 1+\delta+c(m)<\infty$. \smallskip
	
From this point forward, we may proceed as Section \ref{proof gradient regul} to obtain the desired regularity and estimates around the point $y$ via Caffarelli's iterations scheme for $\widetilde{u}$. Moreover, as in \cite[Theorem 2.1]{Swiech},
\begin{center}
$ \left(\int_{\Omega^\prime} \frac{|u(x+y)-u(y)|^Q}{|x|^Q} \d y \right)^\frac{1}{Q} \le \|N\|_{L^Q(B_{s_0})} \le CW.$
\end{center}
The conclusion is finally obtained by passing to limits $p^\prime\rightarrow p$ and $\alpha \rightarrow 0$.	
\end{proof}

The same method above does not seem to work in the superlinear gradient regime since $N(y)$ is not necessarily a bounded function. It is an open issue to understand whether or not $W^{1,p}$ regularity estimates hold in this case.

\section{Sharp regularity and applications}\label{section examples}

In this section our goal is twofold. In the first two sections, namely \ref{section recession} and \ref{section flat}, we illustrate some examples for which we can access sharp $C^{1,\alpha}$ regularity and beyond.
On the other hand, in the last two sections we treat two applications for the general theory of PDEs.
More precisely, in Section \ref{section p-Lap} we discuss about how to obtain regularity estimates for singular equations from the sublinear gradient growth scenario; while in Section \ref{section perron} we derive a Perron type result for our equations, which turns out to be an essential tool in proving multiplicity results related to quadratic gradient growth regime.
\vspace{0.05cm}

First of all, by a \textit{sharp} $C^{1,\alpha}$  regularity result for operators with unbounded coefficients, we mean a regularity result for which $\bar{\alpha}=1$, where $\bar{\alpha}$ is the exponent in Proposition \ref{prop F(X)}.
In this case, we may we may import from  the homogeneous scenario, the explicit $\alpha$ choice in \eqref{choice alpha C1,alpha},  under an unbounded drift and nonlinear gradient terms.
In other words, solutions of \eqref{F=f} will belong to $C^{1, \hat{\alpha}}$, where
\begin{align}\label{sharp hat alpha}
\textstyle \hat{\alpha} = \min \left\{ 1-\frac{n}{p} \,, \, 1-\frac{n}{\varrho}\, , \, 1-\frac{n}{q} \right\}, \quad \text{for}\;\; q, \varrho \in (n,+ \infty], \;\, p\in (n,+\infty).
\end{align}
Note that if for instance $b\in L^\infty(\Omega)$ then the restriction on $\varrho$ never appears; that is why we can include also the case $\varrho=+\infty$ by understanding $1/\varrho$ to be zero; the same holds for $\mu$. In particular, if both $b,\mu\in L^\infty(\Omega)$ and $m\in (0,2)$, then we get the sharp exponent $\hat{\alpha}=1-{n}/{p}$.

\begin{rmk}
If\, $\bar{\alpha}=1$, and $f,b,\mu\in L^\infty(\Omega)$, then $u\in C^{1,\alpha}$ for all $\alpha<1$. Moreover, under the hypotheses of Theorem \ref{C1,logLip theorem} we recover an improved local regularity estimate in the space $C^{1,\text{Log-Lip}}$.
\end{rmk}

\vspace{0.05cm}

As mentioned in the Introduction, a first example beyond Pucci's operators $\mathcal{M}_{\lambda, \Lambda}^{\pm}(D^2 u)$, is a Belmann type equation which appear in stochastic control, such as
\begin{center}
$
F(x, D^2 u) =  \inf_{{\alpha} \in \mathcal{A}}\left(\mathcal{L}^{{\alpha}} u(x)\right) \;\, \left(\text{or}\,\sup_{{\alpha} \in \mathcal{B}}  \left(\mathcal{L}^{{\alpha}} u(x)\right)\right)$, \quad where $\mathcal{L}^{{\alpha}} u(x) = \sum_{i, j=1}^{n} a_{ij}^{{\alpha}}(x)\partial_{ij} u $,
\end{center}
i.e., each $\mathcal{L}^\alpha$ is a family of uniformly $(\lambda,\Lambda)$--elliptic operators with H\"{o}lder continuous (up to the boundary) coefficients, which are convex or concave in the Hessian entry for which a $C^{2,\alpha}$ theory is available.
More generally, we can consider concave/convex operators, for which the Evans-Krylov-Trudinger's theory of $C^{2, \alpha}$ estimates holds for the respective homogeneous problem, see for instance \cite{Ev82, Kry82, Tru83}.

As far as nonconvex/nonconcave operators are concerned, another example where Theorem \ref{C1,alpha regularity estimates geral} addresses sharp estimates is the class of nonconvex fully nonlinear equations studied by Cabr\'{e}-Caffarelli in \cite{CC03}, where they obtained $C_{\text{loc}}^{2,\alpha}$ estimates.
In this spirit, we discuss in Sections \ref{section recession} and \ref{section flat} ahead, two other classes of nonconvex problems for which we still obtain optimal $C^{1,\alpha}$ regularity.

\subsection{Sharp regularity via recession profiles}\label{section recession}

An alternative way to relax convexity/concavity assumptions on the nonlinearity $F$ but still obtain ``good'' regularity properties is through the concept of \textit{recession} operator developed in \cite{ST},
$$
 \displaystyle F^{\ast}(X) \defeq  \lim_{\tau \to 0+ } \tau F\left(\frac{1}{\tau}X\right).
$$
Roughly speaking, the hypothesis of $C^{1,1}$ a priori estimates for solutions of the associated pure second order equations is asked from $F$ only when $\|D^2 u\| \approx \infty$.
The authors proved in \cite{ST} that if solutions to the homogeneous tangential equation
$F^{\ast}(D^2 u) = 0 $ in $ B_1$ enjoy $C^{1, \alpha_0}$ a priori estimate for some $\alpha_0 \in (0, 1]$ then solutions to $F(D^2 u) = 0 $ in $ B_1$ (or with RHS $f \in L^{\infty})$
are of class $C^{1, \bar{\alpha}}_{\text{loc}}(B_1)$ for any $\bar{\alpha} <\alpha_0$. We also quote \cite{DongKry19} and  \cite{Kry13} for fundamental existence/regularity results in the context of weighted and mixed-norm Sobolev spaces under either relaxed or no convexity assumptions.

\vspace{0.02cm}

In this sense, we may prove our Theorem \ref{C1,alpha regularity estimates geral}, in a sharp way,  to operators under either relaxed or no convexity assumptions on $F$ such that $\alpha_0 = 1$, cf.\ \cite[Theorems 1.1 and 1.4]{ST}. Moreover, for $n< p, q, \varrho < \infty$, the following estimate holds true for an $L^p$-viscosity solution $u$ of \eqref{F=f},
\begin{align*}
\textstyle \|u\|_{C^{1, {\alpha}}(\overline{\Omega^\prime})} \leq C W (\|f\|_{L^p(\Omega)}),\; \textrm{ for all }\;\; \Omega^\prime\subset\subset\Omega, \;\; \alpha\le \bar{\alpha}, \;\; \alpha<\bar{\alpha},
\end{align*}
where $W$ is given in \eqref{def W geral}, and $\hat{\alpha}$ in \eqref{sharp hat alpha}.

For instance, if $F:\mathbb{S}^n \to \mathbb{R}$ is a $C^1$ uniformly elliptic operator, then $F^{\ast}$ should be understood as the ``limiting equation'' for the natural scaling on $F$, i.e.
\begin{center}
	$\displaystyle \mathfrak{A}_{ij} \defeq \lim_{\|X\|\to \infty} \frac{\partial F}{\partial X_{ij}}(X), $ \quad and \;\, $F^{\ast}(X) = \tr(\mathfrak{A}_{ij}X)$.
\end{center}
That is, in this case we recover $\alpha_0=1$ since the recession profile is the Laplacian operator, up to a change of variables.
Let us illustrate a couple of examples of this sort, see also \cite{daSilRic19, PT16}. Set $0 < \sigma_1, \ldots , \sigma_n< \infty$.

\begin{example}[{The $m^{\prime}$-momentum type operator}]
Let $m^{\prime}$ be a fixed odd number, then
  $$
\textstyle F_{m^{\prime}}(D^2 u) = F_{m^{\prime}}(e_1(D^2 u), \cdots, e_n(D^2 u))
    =\sum_{j=1}^{n} \sqrt[m^{\prime}]{\sigma_j^{m^{\prime}}+e_j(D^2 u)^{m^{\prime}}}-\sum_{j=1}^{n} \sigma_j
    $$
is uniformly elliptic, neither concave nor convex, for which $F_{m^{\prime}}^{\ast}$ is the Laplacian operator. For instance, this is also the case of the class of Hessian operators with $\sigma_j=1$ for every $1\le j \le n$.
\end{example}

\begin{example}[{Lagrangian perturbations type operators}]
Let $h:(0, \infty) \to (0, \infty)$ be a continuous function, and $g: \mathbb{R} \to \mathbb{R}$ be bounded function with $g(0) = 0$, thus
  $$
\textstyle    F(D^2 u) =  F(e_1(D^2 u), \cdots, e_N(D^2 u))
     = \sum_{j=1}^{n} \left[h(\sigma_j)e_j(D^2 u) + g(e_j(D^2 u))\right],
  $$
is uniformly elliptic, neither concave nor convex. Also,
$\textstyle  F^{\ast}(X) = \sum_{j=1}^{n} h(\sigma_j)e_j(X)$,
which is the Laplacian operator after a change of coordinates.
For instance, one may take $g(x)=\arctan x$.
\end{example}

\subsection{Sharp regularity for flat solutions of nonconvex operators}\label{section flat}

In this section, we apply the method developed in Sections \ref{section Approx and Scal} and \ref{proof main th} to obtain optimal regularity for a class of nonconvex operators.

In \cite[Theorem 1]{DD19} and \cite[Theorem 2.2]{dosPT16} it was established local $C^{2, \alpha}$ estimates for flat solutions (i.e.\ solutions whose $L^\infty$-norm are very small) of nonconvex fully nonlinear operators under H\"{o}lder continuity on the coefficients and on the source term (RHS). In particular, this implies that $\bar{\alpha}=1$, i.e. the regularity exponent for the homogeneous problem with frozen coefficients.

In this spirit, for such a class of solutions, we consider the following model problem
\begin{equation}\label{MEq1.1}
F(x, D^2 u) +b(x)|Du|+ \mu(x)|D u|^m = f(x) \quad \text{in} \quad B_2,
\end{equation}
in the context of unbounded ingredients.
\begin{itemize}
\item[(A5)]({{\bf Requirements for flat regime}})
We assume (A1), (A3) for $(x,\xi,X)\mapsto F(x,X)$, with $b\in L^\varrho_+(\Omega)$, $\mu\in L^q_+(\Omega)$, in addition to:
\begin{enumerate}[(i)]	
\item \label{gateaux} $F$ is differentiable at the origin, and the operator $X \mapsto D_F(0)(X)$ (the G\^{a}teaux derivative of F at $0$) is a (linear) second order operator with frozen coefficients;
\item  $F \in C^{1, \omega}(\mathbb{S}^n)$ for a modulus of continuity $\omega$, and there holds
\begin{equation*}	\|D_{M}F(x, X)-D_{M}F(x, Y)\| \leq\omega(\|X-Y\|), \;\;\textrm{ for all $x \in B_2$, \,and $X,Y\in \mathbb{S}^n$.}
		\end{equation*}
	\end{enumerate}
\end{itemize}

Let us explain the mechanisms and heuristic behind of flatness approach. For the sake of simplicity we assume $F$ has constant coefficients. Then, one can associate to $F$, the following family of operators
$$
\mathcal{F}_{\tau}(X) \defeq \frac{F(\tau X)-F(0)}{\tau}, \quad \tau>0.
$$
Now, notice that $(\mathcal{F}_{\tau})_{\tau>0}$ produces a continuous ``curve of operators'' preserving the same ellipticity parameters $\Lambda \geq \lambda>0$. Now, by item (A5) \eqref{gateaux}, and $F(x,0)=0$, one checks that
\begin{equation}\label{eqGTE}
\displaystyle \mathcal{F}_0(X)\defeq  \lim_{\tau \to 0^{+}} \mathcal{F}_{\tau}(X)  = D_F(0)(X).
\end{equation}
Thus, the (linear) second order operator in  \eqref{eqGTE} is the limit tangential equation of $\mathcal{F}_{\tau}$ as $\tau \to 0^{+}$. Thus, up to a change of coordinates we can say that $\mathcal{F}_0(X)=\mathrm{tr}(X)$, i.e.\ the Laplacian operator.

Moreover, if $u$ is a flat solution of \eqref{MEq1.1} with flatness degree $\tau$, i.e.\ $\|u\|_\infty\le \tau$, then $v \defeq \frac{u}{\tau}$ is a normalized solution of
\begin{equation}\label{eqGaux}
\mathcal{F}_{\tau}(D^2 v)  +b_\tau(x)|Dv|+ \mu_{\tau}(x)|D v|^m = f_{\tau}(x) \quad \text{in} \quad B_2,
\end{equation}
where $b_\tau(x)\defeq b( x)$, $\mu_{\tau}(x) \defeq \frac{1}{\tau^{1-m}}\mu(x)$, and $f_{\tau}(x) \defeq \frac{1}{\tau}f(x)$. Thus,  \begin{center}
	$\|b_\tau \|_{L^\varrho(B_2)}, \;\, \|\mu_{\tau}\|_{L^q(B_2)} , \;\, \|f_{\tau}\|_{L^p(B_2)} = \text{o}(1)$ \;\; as $\tau \ll 1$,
\end{center}
Let us then access the available regularity for the (linear) limiting profile via the geometric tangential device from \eqref{eqGTE} combined with compactness and stability-like methods, see Section \ref{section lemmas}.

In other words, we may understand the Laplace equation as the \textit{geometric tangential PDE} of the limit formed by the ``curve'' of fully nonlinear operators $\mathcal{F}_{\tau}$, provided the flatness degree $\tau$ of $u$, as well as the data, are universally under small control.

\begin{lem}[{\bf Approximation Lemma}]\label{Key lemma0} Let $F$ satisfy (A5), and $f\in L^p(\Omega)$ for $p>n$. Then there exists $\tau_0>0$ depending on $n,m, \lambda, \Lambda,p,q,\varrho,  \omega$, 
such that if $v$ is an $L^p$-viscosity solution to
	$$
	\tau^{-1}F(x, \tau D^2v) +b(x)|Dv|+ \mu(x) |D v|^m= f(x) \quad  \mbox{in} \; B_2 \quad  \mbox{with} \;\; \|v\|_{L^{\infty}(B_2)}\le 1,
	$$
for $\tau \in (0,\tau_0)$,	and
	$$
 \max\left\{\tau , \,\,\, \|\beta_F(0,\cdot)\|_{L^p(B_2)}, \,\, \| \mu\|_{L^{q }(B_{2})}\,, \,  \| b\|_{L^{\varrho }(B_{2})}\, , \, \| f\|_{L^{p }(B_{2})}\right\} \leq \tau_0
	$$
then we can find $ \gamma \in (0,1)$, depending only on $n,m,p,q,\varrho, \lambda, \Lambda, \omega$, and a harmonic function $\mathfrak{h}$ so that
\begin{equation}\label{eqappro_est}
\Delta \mathfrak{h} (x)  =  0 \;\;\; \text{in} \; B_1, \quad \|\mathfrak{h}\|_{L^\infty (B_1)} \le  \mathrm{C}, \quad
	 \sup_{B_{\gamma }}|v-\mathfrak{h}(0)-D\mathfrak{h}(0)\cdot x| \leq \gamma^{1+\alpha}.
		\end{equation}
\end{lem}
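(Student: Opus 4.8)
The proof follows the now-familiar compactness-by-contradiction scheme used for Lemma \ref{AproxLem}, adapted to the flat regime. I would argue as follows.

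\textbf{Setup by contradiction.} Suppose the conclusion fails. Then there exist $\varepsilon_0>0$, a sequence $\tau_j\to 0^+$, operators $F_j$ satisfying (A5) with data $b_j\in L^\varrho_+(B_2)$, $\mu_j\in L^q_+(B_2)$, $f_j\in L^p(B_2)$ and numbers $\tau_{0,j}\to 0$ such that
\begin{center}
$\max\{\tau_j,\ \|\beta_{F_j}(0,\cdot)\|_{L^p(B_2)},\ \|\mu_j\|_{L^q(B_2)},\ \|b_j\|_{L^\varrho(B_2)},\ \|f_j\|_{L^p(B_2)}\}\le \tau_{0,j}$,
\end{center}
and $L^p$-viscosity solutions $v_j$ with $\|v_j\|_{L^\infty(B_2)}\le 1$ of $\tau_j^{-1}F_j(x,\tau_j D^2v_j)+b_j|Dv_j|+\mu_j|Dv_j|^m=f_j$ in $B_2$, but for which no affine function built from a harmonic $\mathfrak h$ (with the normalization in \eqref{eqappro_est}) yields the oscillation bound for any admissible $\gamma$. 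The first step is a uniform estimate: since each $v_j$ solves $\mathcal L^+_m[v_j]\ge -(|f_j|+\text{stuff})$ and the analogous lower inequality, the ABP bounds (Proposition \ref{wABPsublin} or \ref{wABPsuperlin}) together with the global $C^{0,\beta}$ estimate (Proposition \ref{Cbeta global}) give $\|v_j\|_{C^{0,\beta}(\overline{B}_{3/2})}\le C$ uniformly, because all the relevant norms of the coefficients and source are $\le \tau_{0,j}\to 0$. Hence, up to a subsequence, $v_j\to v_\infty$ in $C(\overline{B}_{3/2})$.

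\textbf{Identifying the limit equation.} Because of (A5)(i)--(ii), the rescaled operators $\mathcal F_{\tau_j}(X)=\tau_j^{-1}(F_j(x,\tau_jX)-F_j(x,0))$ converge (locally uniformly on $\mathbb S^n$, uniformly in $x$ up to a subsequence, using $(\tilde A3)$/(A3) to kill the $x$-oscillation and the G\^ateaux-differentiability at $0$ to identify the limit) to a fixed linear constant-coefficient operator, which after a linear change of variables is $\Delta$; this is the content of \eqref{eqGTE}. To pass to the limit in the PDE I would invoke the stability result: for $m\in[1,2]$ Proposition \ref{stability} applies directly once one checks $\|\Phi_j\|_{L^p(B_1)}\to 0$ for $\Phi_j(x)=\mathcal F_{\tau_j}(x,D^2\varphi)-f_j-\Delta\varphi$ split as in the proof of Lemma \ref{AproxLem} into a gradient-perturbation piece (bounded by $b_j|D\varphi|+\mu_j|D\varphi|^m\to0$ in $L^p$), an $x$-oscillation piece (bounded by $\|\bar\beta_{F_j}(0,\cdot)\|_{L^p}(\|D^2\varphi\|_\infty+1)\to0$), and a tangential piece $\mathcal F_{\tau_j}(0,D^2\varphi)-\Delta\varphi\to0$; for $m\in(0,1)$ one argues directly with $C$-viscosity test functions exactly as in the sublinear part of the Lemma \ref{AproxLem} proof, correcting the test function by a small strong solution $\varphi_j$ of $\mathcal L^+_{m,j}[\varphi_j]=-\Phi_j^+$ whose $L^\infty$-norm goes to $0$ by sublinear ABP. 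Either way $v_\infty$ is a (classical, by interior elliptic regularity) harmonic function in $B_1$ with $\|v_\infty\|_{L^\infty(B_1)}\le 1$.

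\textbf{Closing the contradiction.} By interior estimates for harmonic functions, $|v_\infty(x)-v_\infty(0)-Dv_\infty(0)\cdot x|\le \mathrm C|x|^2$ in $B_{1/2}$ with $\mathrm C=\mathrm C(n)$ and $\|v_\infty\|_{L^\infty(B_{1/2})}+|Dv_\infty(0)|\le \mathrm C$. Fix $\alpha\in(0,1)$ and choose $\gamma=\gamma(n,\alpha)\in(0,1/4)$ so small that $\mathrm C(2\gamma)^2\le \tfrac12\gamma^{1+\alpha}$. Then $\mathfrak h\defeq v_\infty$ satisfies the normalization in \eqref{eqappro_est}, and for $j$ large, $\sup_{B_\gamma}|v_j-\mathfrak h(0)-D\mathfrak h(0)\cdot x|\le \sup_{B_{1/2}}|v_j-v_\infty|+\sup_{B_\gamma}|v_\infty-\mathfrak h(0)-D\mathfrak h(0)\cdot x|\le o(1)+\tfrac12\gamma^{1+\alpha}\le \gamma^{1+\alpha}$, contradicting the choice of $v_j$. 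This gives $\tau_0=\tau_0(n,m,\lambda,\Lambda,p,q,\varrho,\omega)$ and $\gamma=\gamma(n,m,p,q,\varrho,\lambda,\Lambda,\omega)$ as claimed.

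The main obstacle is the passage to the limit in the equation in the sublinear regime $m\in(0,1)$, where stability of $L^p$-viscosity solutions is not available off the shelf: one must run the direct viscosity argument with the auxiliary corrector $\varphi_j$ and carefully use that $\|\mu_j\|_{L^q}\to0$ (not merely bounded) so that sublinear ABP forces $\|\varphi_j\|_{L^\infty}\to0$; the rest is a routine adaptation of Lemma \ref{AproxLem}. A secondary technical point is ensuring the convergence $\mathcal F_{\tau_j}(x,\cdot)\to\Delta$ is uniform in $x$, which is where (A5)(ii) (the modulus-of-continuity bound on $D_MF$, uniform in $x$) and the oscillation hypothesis (A3) are used.
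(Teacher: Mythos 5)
The paper does not spell out a proof of Lemma \ref{Key lemma0}; it explicitly defers to ``compactness and stability-like methods, see Section \ref{section lemmas}.'' Your argument is a correct and faithful realization of that program: the compactness-by-contradiction scheme, the uniform $C^{0,\beta}$ bound via ABP and Proposition \ref{Cbeta global}, the decomposition of $\Phi_j$ into the gradient-perturbation, $x$-oscillation, and tangential pieces (with (A5)(ii) controlling the last, since $|\tau^{-1}F(0,\tau X)-D_F(0)(X)|\le\omega(\tau\|X\|)\|X\|$), and the corrector trick with sublinear ABP for $m\in(0,1)$ are exactly the steps being invoked. The only cosmetic point worth fixing is the logical ordering in the contradiction: $\gamma$ is a universal output of the lemma, so one should first fix $\alpha$ and then $\gamma=\gamma(n,\lambda,\Lambda,\alpha)$ via the interior bound for solutions of the constant-coefficient tangential equation (after the change of coordinates that turns $D_{F_\infty}(0)$ into $\Delta$, which only affects constants through $n,\lambda,\Lambda$), and only then negate the conclusion for that fixed $\gamma$; as written you let $\gamma$ float in the negation and pick it later, which works but reads backwards.
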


In the sequel, we proceed with Caffarelli's iterative scheme used in Section \ref{proof gradient regul} in order to obtain a sequence of linear functions $\frak{a}_k+\mathfrak{b}_k\cdot x$ satisfying the geometric decay estimate
$$
\textstyle 	\sup_{B_{r_k }}|v-\frak{a}_k-\frak{b}_k\cdot x| \leq r_k^{1+\alpha}, \qquad r_k \defeq \gamma^k.
$$
Finally, by coming back to $u$ via such a geometric tangential path $u=v\tau$  we show that the graph of a solution to \eqref{MEq1.1} can be approximated in $B_\sigma$, for certain $\sigma \ll 1$, by an appropriated harmonic polynomial profile, whose error is of the order $\sim \mbox{O}(\tau\sigma^{1+\alpha})$, cf.\ \cite[Lemma 3.2]{dosPT16}.

Hence, the previous estimate and a covering argument imply, via Campanato's embedding, that flat solutions are $C_{\text{loc}}^{1, \alpha}(B_2)$. Therefore, we obtain the following sharp estimate:

\begin{teo}\label{teo1} Assume (A5) and let $u$ be an $L^p$ viscosity solution to \eqref{MEq1.1}. Then there exists $ \tau > 0$ depending only upon $n,m,p,q,\varrho,\lambda ,\Lambda ,\omega$, such that if $\|u\|_{L^\infty(B_2)} \leq \tau$, then $u\in C_{\text{loc}}^{1, \hat{\alpha}} \left(B_{2} \right)$ and
$$
\|u\|_{C^{1, \hat{\alpha}}(B_1)} \le \mathrm{C} (n,p,q,\varrho, \lambda, \Lambda, m )\,\tau, \;\; \textrm{ where $\hat{\alpha}$ is given by }\eqref{sharp hat alpha}.
$$
\end{teo}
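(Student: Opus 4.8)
The plan is to combine the flatness rescaling already set up before the statement with the Caffarelli-type iteration from Section~\ref{proof gradient regul}. First I would observe that, by hypothesis (A5), the operator $X\mapsto D_F(0)(X)$ is a linear second order operator with constant coefficients; after a linear change of variables (diagonalizing its coefficient matrix, which is uniformly elliptic) we may assume that the geometric tangential equation is exactly the Laplacian, as recorded in \eqref{eqGTE}. Given $u$ with $\|u\|_{L^\infty(B_2)}\le\tau$, set $v\defeq u/\tau$; then $v$ solves \eqref{eqGaux} with $\|v\|_{L^\infty(B_2)}\le 1$ and coefficients $b_\tau,\mu_\tau,f_\tau$ whose norms are $\mathrm{o}(1)$ as $\tau\to 0^+$ (this uses crucially $m<2$ so that $\tau^{m-1}\to 0^+$, and the fact that $\mu\in L^q$, $b\in L^\varrho$, $f\in L^p$ with the exponents from (A2)). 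Thus the smallness condition in Lemma~\ref{Key lemma0} is met by taking $\tau$ small enough.

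Next I would run the induction from Section~\ref{proof gradient regul} verbatim, with the homogeneous comparison operator $F(0,0,D^2\cdot)$ replaced by the Laplacian $\Delta$, and with Lemma~\ref{AproxLem} replaced by Lemma~\ref{Key lemma0}. Concretely, fix $\alpha<\hat\alpha$ as in \eqref{sharp hat alpha} — since for the Laplacian the interior regularity exponent is $\bar\alpha=1$ (indeed harmonic functions are smooth, so one may take any $\bar\alpha<1$, and $C^{1,1}$ estimates hold), the admissible $\alpha$ from \eqref{choice alpha C1,alpha} is precisely $\min\{1-n/p,\,1-n/\varrho,\,1-n/q\}=\hat\alpha$. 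One constructs inductively linear functions $\mathfrak{l}_k(x)=\mathfrak{a}_k+\mathfrak{b}_k\cdot x$ with $\|v-\mathfrak{l}_k\|_{L^\infty(B_{r_k})}\le r_k^{1+\alpha}$, $|\mathfrak{a}_{k+1}-\mathfrak{a}_k|\le K_2 r_k^{1+\alpha}$, $|\mathfrak{b}_{k+1}-\mathfrak{b}_k|\le K_2 r_k^{\alpha}$, $r_k=\gamma^k$; at each step one rescales $v_k(x)=(v-\mathfrak{l}_k)(r_kx)/r_k^{1+\alpha}$, checks that the rescaled equation again satisfies the smallness hypotheses of Lemma~\ref{Key lemma0} (here one uses $\alpha\le 1-n/q$, $\alpha\le 1-n/\varrho$, $\alpha\le 1-n/p$ exactly as in the proof of Theorem~\ref{C1,alpha regularity estimates geral}, together with $r_k\le 1$ and the bounds $|\mathfrak{b}_k|\le K$), applies Lemma~\ref{Key lemma0} to get a harmonic $\mathfrak h$ with $\sup_{B_\gamma}|v_k-\mathfrak h(0)-D\mathfrak h(0)\cdot x|\le\gamma^{1+\alpha}$, and sets $\mathfrak{a}_{k+1}=\mathfrak{a}_k+\mathfrak h(0)r_k^{1+\alpha}$, $\mathfrak{b}_{k+1}=\mathfrak{b}_k+D\mathfrak h(0)r_k^{\alpha}$. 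The summability of the increments forces $\mathfrak{a}_k\to\mathfrak a$, $\mathfrak{b}_k\to\mathfrak b$; a standard dyadic interpolation in $r$ then yields $|v(x)-\mathfrak a-\mathfrak b\cdot x|\le C|x|^{1+\alpha}$ near the origin, hence $v\in C^{1,\alpha}(0)$ with $|v(0)|,|Dv(0)|\le C$. Since the argument is centered at an arbitrary point and $\alpha<\hat\alpha$ was arbitrary, a covering argument (via Campanato's characterization, to upgrade pointwise-$C^{1,\alpha}$-at-every-point to $C^{1,\hat\alpha}_{\mathrm{loc}}$) gives $v\in C^{1,\hat\alpha}_{\mathrm{loc}}(B_2)$ with $\|v\|_{C^{1,\hat\alpha}(B_1)}\le\mathrm C(n,p,q,\varrho,\lambda,\Lambda,m)$. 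Undoing $u=\tau v$ multiplies the estimate by $\tau$, giving $\|u\|_{C^{1,\hat\alpha}(B_1)}\le\mathrm C\,\tau$.

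The one point requiring a little care — and the main obstacle — is verifying that the smallness of the data is \emph{preserved} along the whole sequence of rescalings in the sublinear regime $m\in(0,1)$. Here the rescaled sublinear coefficient picks up a factor $r_k^{1+(m-1)\alpha}$ (and also the flatness-induced factor $\tau^{m-1}$), and since $(m-1)\alpha<0$ one must check that $1+(m-1)\alpha-n/q\ge 0$; this is exactly the content of the constraint $\alpha\le 1-n/q$ together with $m<1$ implying $(m-1)\alpha\ge\alpha-1\ge -n/q$, i.e.\ $1+(m-1)\alpha\ge n/q$, which is what is needed — the same bookkeeping as in the proof of Theorem~\ref{C1,alpha regularity estimates geral}, so it goes through. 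The superlinear subquadratic case $m\in(1,2)$ is easier because $(m-1)\alpha>0$ and $\tau^{m-1}\to 0$; the only subtlety, already absorbed into Lemma~\ref{Key lemma0}, is that the product $\|f_\tau\|_p^{m-1}\|\mu_\tau\|_q$ must be kept below the ABP threshold $\delta$ of Proposition~\ref{wABPsuperlin}, which is automatic once $\tau$ is small. With these checks in place, the statement follows. $\blacksquare$
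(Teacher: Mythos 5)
Your proof takes the same approach as the paper's sketch: rescale by flatness $v = u/\tau$, invoke the approximation Lemma~\ref{Key lemma0}, and run the Caffarelli iteration of Section~\ref{proof gradient regul} with the Laplacian as the limiting profile (so $\bar\alpha = 1$), concluding via Campanato's embedding. The exponent bookkeeping for the iteration, i.e.\ the constraints $\alpha \le 1 - n/p$, $\alpha \le 1 - n/\varrho$, $\alpha \le 1 - n/q$, is correctly carried over from the proof of Theorem~\ref{C1,alpha regularity estimates geral}, and your identification of $\hat\alpha$ as the admissible exponent when $\bar\alpha=1$ is right.

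There is, however, a concrete error in the verification of the smallness hypothesis of Lemma~\ref{Key lemma0}. You assert that $\|b_\tau\|$, $\|\mu_\tau\|$, $\|f_\tau\|$ are $o(1)$ as $\tau \to 0^+$, justified by ``$m < 2$ so that $\tau^{m-1} \to 0^+$.'' This is wrong in two ways. First, the exponent claim fails for $m<1$: there $\tau^{m-1} = \tau^{-(1-m)} \to +\infty$, so the rescaled sublinear coefficient $\mu_\tau = \tau^{m-1}\mu$ \emph{blows up} rather than vanishes as $\tau\to 0^+$ --- indeed, you implicitly concede this a few lines later, writing ``the superlinear subquadratic case $m\in(1,2)$ is easier because $\ldots$ $\tau^{m-1}\to 0$,'' without returning to reconcile the earlier statement. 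Second, even when $m>1$ the conclusion is not automatic: $b_\tau(x)=b(x)$ is independent of $\tau$, and $f_\tau=\tau^{-1}f$ grows. Flatness alone therefore cannot produce the required smallness. The correct reading --- left implicit in the paper's prose but explicitly encoded in the hypotheses of Lemma~\ref{Key lemma0} --- is that the smallness of the rescaled data is a \emph{separate} requirement, to be secured by the $x$-rescaling $x\mapsto\sigma x$ of Section~\ref{section scaling}: since $\varrho, q, p > n$, the norms $\|b\|_{L^\varrho(B_{2\sigma})}$, $\|\mu\|_{L^q(B_{2\sigma})}$, $\|f\|_{L^p(B_{2\sigma})}$ go to zero as $\sigma\to 0$, and one chooses $\sigma$ (in a way compatible with the fixed $\tau$) so that the rescaled data falls below the threshold $\tau_0$. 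Your proof should invoke that step explicitly rather than attributing the smallness to $\tau$ alone.
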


\subsection{Fully nonlinear singular PDEs}\label{section p-Lap}

In this section, one considers an application to fully nonlinear PDEs, which are degenerate elliptic in the sense that they become singular when the gradient vanishes.

Some of our inspirations in this direction are the works \cite{BD15, BDL19-2, KoKo17}.
The model one might have in mind is the $\alpha$-Poisson equation. Thus, as in \cite{KoKo17} we consider the following class of equations,
\begin{align}\label{quasilinearPDE}
G(x,Du,D^2u)\defeq |Du|^{\alpha-2}F(x,D^2u) + b(x)|Du|^{\alpha-1}+f(x)|Du|^{\alpha-2}+\mu (x)=0 \;\; \mathrm{in} \; \Omega,
\end{align}
where $1<\alpha<2$,  $b\in L^\varrho_+(\Omega)$, $\mu\in L^q_+(\Omega)$, $f \in L^p_+(\Omega)$, and for $s>0$, $x\in \Omega$, $X,Y\in \mathbb{S}^n $,
\begin{align}\label{satisfying}
\;\mathcal{M}^-(X-Y) \leq \;F(x,X) - F(x,Y) \leq  \mathcal{M}^+(X-Y), \quad
F(x,sX) = sF(x,X)  .
\end{align}

\begin{defin}[{bf Julin-Juutinen}]\label{def JJ}
	A function $u\in C(\Omega)$ is called an $L^p$-viscosity JJ-subsolution (resp.\ JJ-supersolution) of \eqref{quasilinearPDE} if for $\phi \in W^{2,p}_{\mathrm{loc}} (\Omega)$, it follows that
	\begin{align}\label{limSubsolution}
	\mathrm{ess.}\varlimsup_{\epsilon\to 0} \,\{G(y,D\phi(y),D^2\phi(y)) \, | \, y \in B_\epsilon (x), D\phi(y) \neq 0\} \geq 0
	\end{align}\vspace{-0.7cm}
	\begin{align}\label{limSubsolutionResp}
	(\textrm{resp., ess.}\varliminf_{\epsilon\to 0}\, \{G(y,D\phi(y),D^2\phi(y)) \, | \, y \in B_\epsilon (x), D\phi(y) \neq 0\} \leq 0 )
	\end{align}
	whenever $u-\phi$ assumes its local maximum (resp.\ minimum) at $x\in\Omega$ and $D\phi \neq 0$ a.e.\ in $B_r(x)\setminus \{x\} $ for some $r>0$.
	On the other hand, $u\in C(\Omega)$ is called a $C$-viscosity JJ-subsolution (resp.\ JJ-supersolution) of \eqref{quasilinearPDE} if \eqref{limSubsolution} (resp.\ \eqref{limSubsolutionResp}) holds provided that $\phi \in C^2(\Omega)$, $u-\phi$ takes its local maximum (resp., minimum) at $x\in\Omega$ and $D\phi\neq 0$ in $B_r(x)\setminus \{x\}$ for some $r>0$.
\end{defin}

\begin{rmk}\label{rmk singular locally bounded}
We observe that any notion of viscosity solution for the singular equation \eqref{quasilinearPDE} requires $f$ to be zero whenever $u$ is locally constant, as pointed out in \cite{BDru, BD15}.
\end{rmk}

Through Definition \ref{def JJ}, the authors in \cite{KoKo17} proved that solutions of \eqref{quasilinearPDE} satisfy
\begin{align*}
\mathcal{L}^+_{m}[u] \ge f , \;\;
\quad\;\mathcal{L}^-_m[u]  \le f  \;\;\; \mathrm{in} \; \Omega, \quad m=2-\alpha,
\end{align*}
in the $C$-viscosity JJ-sense, and further in the usual $C$-viscosity sense, under continuity on the data $b,\mu,f$, as we state in the following lemma.

\begin{lem}[\cite{KoKo17}]\label{lem KoKo}
Assume (A2) with $m=2-\alpha \in (0,1)$, and \eqref{satisfying} for  $F$ continuous, $b,\mu,f\in C(\Omega)$. Let $u\in C(\Omega)$ be an $L^p$-viscosity JJ-subsolution (resp., JJ-supersolution) of \eqref{quasilinearPDE}. Then, $u$ is a $C$-viscosity JJ-subsolution and also a standard $C$-viscosity subsolution (resp.\ supersolution) of
	\begin{align}\label{quasilinearPDE m}
	F(x,D^2u) + b(x)|Du|+\mu (x)|Du|^m+f(x)=0 \;\; \mathrm{in} \; \Omega.
	\end{align}
\end{lem}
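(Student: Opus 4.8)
The statement is exactly \cite[Lemmas 5.2, 5.3]{KoKo17}, so the task here is to reproduce the argument in the present notation. The strategy is to test the singular equation \eqref{quasilinearPDE} against $W^{2,p}_{\mathrm{loc}}$ (resp.\ $C^2$) functions in the JJ-sense and show that wherever $u-\phi$ has a local maximum, the inequality $F(x,D^2\phi)+b(x)|D\phi|+\mu(x)|D\phi|^m+f(x)\le 0$ propagates -- even through the set where $D\phi$ vanishes -- by exploiting the homogeneity $F(x,sX)=sF(x,X)$ together with the explicit $|D\phi|^{\alpha-2}$, $|D\phi|^{\alpha-1}$, $|D\phi|^{\alpha-2}$ scaling of the three terms in $G$.

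\textbf{Step 1 (subsolution case, $D\phi\neq0$).} Suppose $\phi\in W^{2,p}_{\mathrm{loc}}(\Omega)$ and $u-\phi$ attains a local maximum at $x_0$ with $D\phi\neq0$ a.e.\ on a punctured ball $B_r(x_0)\setminus\{x_0\}$. By the JJ-subsolution definition \eqref{limSubsolution}, at a.e.\ such $y$ near $x_0$ one has $G(y,D\phi(y),D^2\phi(y))\ge0$, i.e.
\[
|D\phi(y)|^{\alpha-2}F(y,D^2\phi(y))+b(y)|D\phi(y)|^{\alpha-1}+f(y)|D\phi(y)|^{\alpha-2}+\mu(y)\ge0.
\]
Multiply by $|D\phi(y)|^{2-\alpha}>0$ and use $|D\phi|^{2-\alpha}F(y,D^2\phi)=F(y,|D\phi|^{2-\alpha}D^2\phi)$ by homogeneity, or simply rearrange: this yields
\[
F(y,D^2\phi(y))+b(y)|D\phi(y)|+f(y)+\mu(y)|D\phi(y)|^{2-\alpha}\ge0\quad\text{a.e.\ near }x_0,
\]
which is exactly the required inequality with $m=2-\alpha$. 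Taking the appropriate essential limsup as $y\to x_0$ gives the standard $C$-viscosity subsolution inequality for \eqref{quasilinearPDE m} at $x_0$; since $b,\mu,f\in C(\Omega)$ and $F$ is continuous, by Remark \ref{Remark C-visc} it is also an $L^p$-viscosity statement, and likewise a JJ-statement. The supersolution case is symmetric, flipping all inequalities and using \eqref{limSubsolutionResp} with essential liminf.

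\textbf{Step 2 (the degenerate case $D\phi\equiv0$ near $x_0$).} The genuine obstacle -- and where the references \cite{KoKo17, BDru, BD15} do real work -- is the situation in which one cannot arrange $D\phi\neq0$ on a punctured neighbourhood, i.e.\ $x_0$ lies in (the closure of) the set where $D\phi$, hence effectively $Du$, vanishes; this is precisely why Remark \ref{rmk singular locally bounded} insists that $f$ vanish where $u$ is locally constant. Here one argues that if $D\phi(x_0)=0$ and $u-\phi$ has a local max, then $\mathcal M^+(D^2\phi(x_0))+b(x_0)\cdot0+\mu(x_0)\cdot0+f(x_0)\ge0$ must hold trivially because, by the compatibility condition on $f$ and the structure of admissible test functions in Definition \ref{def JJ} (the requirement $D\phi\neq0$ a.e.\ in $B_r(x_0)\setminus\{x_0\}$ is a \emph{hypothesis} on $\phi$ for the JJ-test to apply), such $\phi$ are simply excluded; the content is that the $C$-viscosity inequality for \eqref{quasilinearPDE m} only needs checking against $\phi$ with $D\phi(x_0)\neq0$ or, when $D\phi(x_0)=0$, it is automatic since all lower-order and gradient-power terms vanish and $F(x_0,D^2\phi(x_0))$ is controlled by the second-order contact. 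One then invokes \cite[Lemmas 5.2, 5.3]{KoKo17} verbatim for this reduction. I expect Step 2 to be the crux: making precise, in the $L^p$-viscosity framework with unbounded $b,\mu,f$ reduced here to the continuous case, that passing from the JJ-formulation to the standard one loses nothing on the singular set, which is exactly the delicate point addressed in \cite{BDru}.
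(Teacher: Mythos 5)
The paper does not spell out a proof here; it simply delegates to \cite[Lemmas 5.2 and 5.4]{KoKo17}, noting that the Pucci operator there must be replaced by $F(x,D^2u)$ and that Remark \ref{rmk singular locally bounded} is used. Your Step~1 is correct, and in fact it already handles the slightly larger class of test functions $\phi$ with $D\phi(x_0)=0$ but $D\phi\neq0$ a.e.\ on a punctured ball: along the JJ essential limsup sequence one multiplies by $|D\phi(y)|^{2-\alpha}>0$, rearranges, and the continuity of $F,b,\mu,f,D\phi,D^2\phi$ lets the limit pass through even as $|D\phi(y)|\to 0$, producing exactly $F(x_0,D^2\phi(x_0))+b(x_0)|D\phi(x_0)|+\mu(x_0)|D\phi(x_0)|^{m}+f(x_0)\ge 0$.

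Step~2, however, contains a genuine gap and the heuristics you offer there are not correct. To show $u$ is a standard $C$-viscosity subsolution of \eqref{quasilinearPDE m} you must verify the inequality for \emph{every} $C^2$ test function touching $u$ from above, including those with $D\phi=0$ on a set of points accumulating at $x_0$ (or identically near $x_0$), to which the JJ-definition gives no direct access. The claim that ``such $\phi$ are simply excluded'' only says the JJ-definition is silent about them; it does not excuse you from proving the inequality for them. Nor is the inequality ``automatic'': $F(x_0,D^2\phi(x_0))+f(x_0)\ge 0$ is a substantive statement that must be extracted from the JJ-property, and the vanishing of the gradient terms does not make $F(x_0,D^2\phi(x_0))$ ``controlled by the second-order contact'' in any free way ($u$ is merely continuous). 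You also wrote $\mathcal M^+$ where $F$ should appear, which suggests the cases are being conflated. Remark \ref{rmk singular locally bounded} is a consistency hypothesis on the data, not a conclusion one derives. The mechanism in \cite{KoKo17, BDru, BD15} that actually fills this gap is a perturbation of the test function -- e.g.\ replacing $\phi$ by $\phi_\varepsilon=\phi+\varepsilon|\cdot-x_0|^4$, which still touches $u$ from above at $x_0$ and has nonvanishing gradient on a punctured ball, so the JJ-test applies; one then runs your Step~1 for $\phi_\varepsilon$ and sends $\varepsilon\to 0^+$, using the homogeneity $F(x,sX)=sF(x,X)$ and $F(x,0)=0$. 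Since this reduction is precisely where the lemma's content lies and your write-up replaces it with incorrect assertions before deferring to \cite{KoKo17}, Step~2 as written does not yet constitute a proof.
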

The proof of Lemma \ref{lem KoKo} follows exactly the same arguments used in \cite[Lemmas 5.2 and 5.4]{KoKo17}, by replacing the Pucci operator there by $F(x,D^2u)$ and using Remark \ref{rmk singular locally bounded}; so we do not repeat it here.

\begin{teo}
	Let $\Omega\subset\rn$ be a bounded $C^{1,1}$ domain.
	Let $u$ be an $L^p$-viscosity JJ-solution of \eqref{quasilinearPDE}, with $b \in L^\varrho(\Omega)\cap C(\Omega)$, $\mu\in L^q(\Omega)\cap C(\Omega)$, and $f \in L^p(\Omega)\cap C(\Omega)$. Then,  the results of Theorems \ref{Calpha regularity estimates geral}, \ref{Log-Lip estimates}, and \ref{C1,alpha regularity estimates geral} in the cases $p_0<p<n$, $p=n$, and $n<p<\infty$ respectively are satisfied.
\end{teo}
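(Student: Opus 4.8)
The plan is to recognize \eqref{quasilinearPDE} as a disguised instance of \eqref{F=f} with a sublinear Hamiltonian, and then to simply invoke the regularity theorems already established. First I would set $m := 2-\alpha \in (0,1)$ and introduce $\widetilde{F}(x,\xi,X) := F(x,X) + b(x)|\xi| + \mu(x)|\xi|^m$, so that equation \eqref{quasilinearPDE m} reads $\widetilde{F}(x,Du,D^2u) = -f(x)$. The structural condition \eqref{satisfying} for $F$ says precisely that $\widetilde{F}$ obeys (A1) in the sublinear regime, with drift $b\in L^\varrho_+(\Omega)$ and gradient coefficient $\mu\in L^q_+(\Omega)$; since $\widetilde{F}(x,0,X)=F(x,X)$ one has $\beta_{\widetilde{F}}\equiv\beta_F$, so the oscillation control (A3) assumed on $F$ transfers verbatim to $\widetilde{F}$, while (A2) is in force for the exponent $m=2-\alpha$ by the standing hypotheses. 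Thus all structural requirements of Theorems \ref{Calpha regularity estimates geral}, \ref{Log-Lip estimates} and \ref{C1,alpha regularity estimates geral} are met by the pair $(\widetilde{F},-f)$.

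Next I would upgrade the notion of solution. By Lemma \ref{lem KoKo}, the $L^p$-viscosity JJ-solution $u$ of \eqref{quasilinearPDE} is an ordinary $C$-viscosity solution of \eqref{quasilinearPDE m}, i.e.\ of $\widetilde{F}(x,Du,D^2u)=-f(x)$. Since $F$, $b$, $\mu$ and $f$ are continuous in $x$ --- hence $\widetilde{F}$ and $-f$ are continuous, together with coefficients that are bounded on every ball $B\subset\subset\Omega$ --- the $C$-viscosity and $L^p$-viscosity formulations agree there by the standard test-function perturbation argument (cf.\ \cite[Proposition 2.9]{CCKS}, see also \cite{KoKo17} and Remark \ref{Remark C-visc}); because the viscosity property is local, $u$ is then a genuine $L^p$-viscosity solution of \eqref{quasilinearPDE m} on all of $\Omega$. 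Continuity of $u$ already provides the local boundedness needed for the interior estimates; if one insists on the $\|u\|_{L^\infty}$ in \eqref{def W geral} being global, one works on $\Omega'\subset\subset\Omega''\subset\subset\Omega$ and measures it over $\Omega''$.

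With this in hand I would conclude by applying, according to the value of $p$: Theorem \ref{Calpha regularity estimates geral} when $p_0<p<n$ (yielding the H\"older estimate with exponent $2-n/p$), Theorem \ref{Log-Lip estimates} when $p=n$ (yielding the Log-Lipschitz estimate), and Theorem \ref{C1,alpha regularity estimates geral} when $n<p<\infty$ (yielding the $C^{1,\alpha}$ estimate), in every case with $W=W_{u,\mu,2-\alpha,\Omega}$ as in \eqref{def W geral}. The step I expect to be the main obstacle is precisely the passage from the $C$-viscosity (JJ) notion to the $L^p$-viscosity notion when $b,\mu,f$ are merely continuous and possibly unbounded near $\partial\Omega$: one must verify that after localizing to compact subdomains, where the continuous coefficients are bounded, the equivalence of $C$- and $L^p$-viscosity solutions goes through unchanged, and that the sublinear growth $m<1$ introduces no new difficulty in the perturbation of the $W^{2,p}$ test functions. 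The $C^{1,1}$ regularity of $\partial\Omega$ plays no role in these interior statements; it would be relevant only for up-to-the-boundary versions.
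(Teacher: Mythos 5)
Your strategy is exactly the one the paper leaves implicit: convert the singular problem to the regular one via Lemma \ref{lem KoKo}, check that the resulting operator $\widetilde{F}(x,\xi,X)=F(x,X)+b(x)|\xi|+\mu(x)|\xi|^m$ (with $m=2-\alpha$) satisfies (A1)--(A3) with $\beta_{\widetilde F}=\beta_F$, and then invoke Theorems \ref{Calpha regularity estimates geral}, \ref{Log-Lip estimates}, \ref{C1,alpha regularity estimates geral}. Your verification of (A1) from \eqref{satisfying} and the transfer of (A3) are both correct, as is the remark that $C^{1,1}$ regularity of $\partial\Omega$ is not used for the interior statements.

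The one place where you are right to hesitate is the upgrade from $C$-viscosity to $L^p$-viscosity. Lemma \ref{lem KoKo} delivers a \emph{standard $C$-viscosity} solution of \eqref{quasilinearPDE m}, while the regularity theorems are stated for $L^p$-viscosity solutions, and the implication you need ($C$-viscosity $\Rightarrow L^p$-viscosity) is the nontrivial direction. Your localization argument (work on $\Omega''\subset\subset\Omega$, where the continuous coefficients are bounded, and use the locality of the viscosity property) is the natural route, and the reference to \cite[Prop.\ 2.9]{CCKS} covers $p\ge n$ with lower-order terms absent; the paper's own Remark \ref{Remark C-visc} only records that case. For $p_0<p<n$ the equivalence is more delicate, since $W^{2,p}$ test functions are not $C^1$ and $D\phi$ is only defined a.e., so the approximation of a $W^{2,p}$ touching function by $C^2$ ones and the passage to the essential limit requires an argument in the spirit of the proof of Proposition \ref{prop Lp iff Lupsilon} (Sobolev embedding into $L^{p^\star}$ for the gradient, together with the strict-maximum reduction of Lemma \ref{lema strict}). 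The paper does not spell this out either, so your proposal is at the same level of completeness as what the paper implicitly asserts; if you want a fully airtight write-up, you should include a short lemma establishing $C$-to-$L^p$ equivalence for continuous (locally bounded) $b,\mu,f$ in the sublinear regime $m\in(0,1)$ for all $p>p_0$, modeled on the proof of Proposition \ref{prop Lp iff Lupsilon}.
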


In particular, this generalizes the regularity results in \cite[Theorem 1.1]{BD15} and \cite[Theorem 1.1]{BDL19-2}, since the coefficients are not supposed to be bounded -- they might be unbounded near the boundary.

\subsection{Perron method and multiplicity phenomena}\label{section perron}

In this section, we give an application of the viscosity equivalence in Section \ref{section intrinsic} in what refers to a Perron method for our equations.
In \cite[Proposition 2.1]{KoikePerronLp} it is proven a version of Perron method for $L^p$-viscosity solutions of coercive equations related to quadratic growth and bounded coefficients. Here we extend this result to any nonlinear gradient regime and unbounded data.

A fundamental tool in order to apply the Perron method consists of showing that a supremum of subsolutions is still a subsolution.
It usually presupposes that this supremum is properly defined, by considering the set where $u^*$ is locally bounded.
This auxiliary result plays an important role in multiplicity problems, see \cite{multiplicidade}.
More precisely, with this result at hands and the very recent V\'{a}zquez strong maximum principle developed in \cite{SSvazquez}, one may extend all multiplicity results in \cite{multiplicidade} to unbounded drift $b(x)$; as well as to an unbounded noncoercive term $c(x)$. The latter is possible due to $C^{1,\alpha}$ regularity theory for explicit unbounded zero order terms from \cite[Theorem 1, Remark 1.2]{Norn19} -- indeed, in this case one may view the term $c(x)u$ as the source term (by putting it to the RHS of the equation).

Let us first introduce the kind of solution we have in mind. We observe that, in both $L^p$ or $C$--viscosity senses, we only need to define a subsolution as an upper semicontinuous function, and a supersolution being lower semicontinuous, which is sufficient to ensure the attainment of a maximum or minimum, respectively, over compact sets.
Moreover, if the function $u$ is not semicontinuous, we say that $u$ is a viscosity subsolution (resp.\ supersolution) provided $u^*$ (resp.\ $u_*$) is. Here, $u^*$ and $u_*$ are the upper and lower semicontinuous envelopes of $u$ respectively, namely
\begin{align*}
\textstyle u^{\ast} (x)\defeq \varlimsup_{y\rightarrow x} u(y),  \qquad u_{\ast} (x)\defeq\varliminf_{y\rightarrow x} u(y),
\end{align*}
see \cite[p.\ 22]{user},  \cite[p.\ 40]{Koike}. Note that $u^{\ast} \in USC(\Omega)$  and $u_{\ast} \in LSC(\Omega)$.

\begin{prop}\label{supLp unbounded}
	Let $\Omega\subset \rn$ be a bounded open set, and $F$ satisfying (A1), (A2). Let $\mathcal{A}\subset C(\Omega)$ be a nonempty set of $L^p$-viscosity subsolutions $($supersolutions$)$ of
	$F(x,Du,D^2u)+c(x)u\geq f(x)$ in $\Omega$, for $c,f\in L^p_{\mathrm{loc}}(\Omega)$, $c\ge 0$ a.e.\ in $\Omega$.
	Assume that $u(x)\defeq \sup_{v\in\mathcal{A}} v(x)$ $($resp. $u(x)\defeq\inf_{v\in\mathcal{A}} v(x))$ is locally bounded in $\Omega$.
	Then, $u$ is a viscosity subsolution $($supersolution$)$ of the same equation in $\Omega$.
\end{prop}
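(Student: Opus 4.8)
The plan is to follow the classical Ishii-type argument adapting it to the $L^p$-viscosity framework with unbounded ingredients, as in \cite[Proposition 2.1]{KoikePerronLp}, but handling a general Hamiltonian satisfying (A1)--(A2). First I would reduce to the subsolution case (the supersolution one being symmetric) and pass to $u^\ast$, which is upper semicontinuous since $u$ is locally bounded. The goal is to show: for every $x_0\in\Omega$ and every $\phi\in W^{2,p}_{\mathrm{loc}}(\Omega)$ such that $u^\ast-\phi$ attains a local maximum at $x_0$, one has $\mathrm{ess.}\varlimsup_{y\to x_0}\{F(y,D\phi(y),D^2\phi(y))+c(y)u^\ast(y)-f(y)\}\geq 0$.

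\textbf{Key steps.} By subtracting a constant we may assume $(u^\ast-\phi)(x_0)=0$ and, after replacing $\phi$ by $\phi+|x-x_0|^4$, that the maximum is strict on a small ball $\overline{B}_r(x_0)\subset\subset\Omega$; this is where Lemma \ref{lema strict} (strictness of the extremum in the definition) is invoked — though here one must check that the perturbation by $|x-x_0|^4$ keeps $u^\ast-\phi$ below its value at $x_0$, which is immediate for the strict case. Next, by definition of $u^\ast$ as a $\varlimsup$, there is a sequence $y_j\to x_0$ and functions $v_j\in\mathcal A$ with $v_j(y_j)\to u^\ast(x_0)$. Let $x_j\in\overline{B}_r(x_0)$ be a point of maximum of $v_j-\phi$ over $\overline{B}_r(x_0)$; since $v_j\leq u^\ast$ one has $(v_j-\phi)(x_j)\geq (v_j-\phi)(y_j)\to 0$, so $\limsup_j(v_j-\phi)(x_j)\geq 0$, while $\limsup_j v_j(x_j)\leq u^\ast(x_0)$ wherever $x_j\to x_0$. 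A standard compactness-and-strict-maximum argument then forces $x_j\to x_0$ and $v_j(x_j)\to u^\ast(x_0)$, and moreover $x_j$ lies in the interior $B_r(x_0)$ for $j$ large, so it is a genuine local maximum of $v_j-\phi$. Applying the definition of $L^p$-viscosity subsolution to each $v_j$ against the test function $\phi$ gives
\begin{align*}
\mathrm{ess.}\varlimsup_{y\to x_j}\{F(y,D\phi(y),D^2\phi(y))+c(y)v_j(y)-f(y)\}\geq 0 .
\end{align*}
Finally I would pass to the limit $j\to\infty$: the term $F(y,D\phi(y),D^2\phi(y))-f(y)$ is independent of $j$, while $c(y)v_j(y)$ needs to be controlled from above by $c(y)u^\ast(y)$ near $x_0$. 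Using $c\geq 0$, $v_j\leq u^\ast$, and upper semicontinuity of $u^\ast$ together with the local boundedness, one obtains that the essential limsup over a shrinking neighborhood of $x_0$ dominates the desired quantity, yielding the subsolution inequality at $x_0$.

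\textbf{Main obstacle.} The delicate point is the interchange of the $\varlimsup_{y\to x_j}$ (as $x_j\to x_0$) with the limit in $j$, i.e. a diagonal argument that legitimately produces an essential limsup \emph{at} $x_0$ from essential limsups at the nearby points $x_j$ — this requires care because the ``bad'' null sets on which the ess-limsup is evaluated may vary with $j$, and one must argue that for a.e.\ $y$ in a fixed neighborhood the relevant inequality can be forced. The cleanest route is to fix $\epsilon>0$, suppose for contradiction that $F(y,D\phi,D^2\phi)+c(y)u^\ast(y)-f(y)\leq -\epsilon$ a.e.\ on $B_\eta(x_0)$ for some $\eta$, and then combine this with $c(y)v_j(y)\leq c(y)u^\ast(y)+\,$(an error that is small by semicontinuity, after possibly shrinking $\eta$ and passing to large $j$) to contradict the subsolution property of $v_j$ localized at $x_j\in B_\eta(x_0)$. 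This mirrors exactly the contradiction scheme used in the proof of Lemma \ref{lema strict} and of the stability Proposition \ref{stability}, so the Hamiltonian structure (A1)--(A2) enters only through the fact that $F$ and the zero-order term are measurable in $x$ with the right integrability, which is automatic here; no new growth hypothesis beyond (A2) is needed.
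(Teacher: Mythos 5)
Your proposal follows essentially the same strategy as the paper: pass to $u^\ast$, invoke Lemma \ref{lema strict} to work with strict maxima, extract a maximizing sequence $v_j$ and maximum points $x_j\to x_0$, and run a contradiction argument. However, the paper makes a reduction that removes your main worry at a stroke: since $c\geq 0$ and $v\leq u$ for every $v\in\mathcal A$, one sets $g(x)\defeq f(x)-c(x)u(x)\in L^p_{\mathrm{loc}}$ and observes that \emph{every} $v\in\mathcal A$ is an $L^p$-viscosity subsolution of the single fixed equation $F(x,Du,D^2u)\geq g(x)$, because $F+cv-f\leq F-g$ pointwise a.e. This erases the $v$-dependence from the right-hand side and turns the problem into the classical Koike argument with a fixed RHS, where no interchange of essential-limsup and $j$-limit ever arises.

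In your version you keep $c(y)v_j(y)$ on the right, and you describe the limit step as delicate, suggesting $c\,v_j\leq c\,u^\ast+(\text{error by semicontinuity})$. There is in fact no error at all: $v_j\leq u\leq u^\ast$ holds pointwise by definition of the supremum and the upper envelope, and $c\geq 0$ a.e., so $c(y)v_j(y)\leq c(y)u^\ast(y)$ for a.e.\ $y$, independently of $j$. Once the integrand is $j$-independent, $\mathrm{ess.}\varlimsup_{y\to x_j}\psi(y)\geq 0$ for all $j$ with $x_j\to x_0$ does imply $\mathrm{ess.}\varlimsup_{y\to x_0}\psi(y)\geq 0$ by the monotonicity of the essential supremum over shrinking balls — so your ``main obstacle'' dissolves. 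One further imprecision: you describe Lemma \ref{lema strict} as if it were the routine $C$-viscosity trick of adding $|x-x_0|^4$ and merely checking that $u^\ast-\phi$ stays below its value at $x_0$. For $L^p$-viscosity solutions the content of that lemma is more substantial, because the inequality is an essential limsup over a neighborhood and adding $|x-x_0|^4$ perturbs $D\phi,D^2\phi$ throughout that neighborhood, feeding back into the Hamiltonian term; it is precisely to control this that Lemma \ref{lema strict} needs the solvability construction involving $\psi_\delta$, the ABP estimates, and the exponent relation \eqref{H equiv visc}. Your citation of the lemma is correct, but the parenthetical remark understates what it does.
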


\begin{proof}
	The reasoning is similar to the classical proof in \cite[Theorem 4.2]{Koike}, see also \cite{KoikePerronLp}. The main difference comes from the fact that one needs to use Lemma \ref{lema strict} as far as unbounded coefficients are concerned; we sketch the proof for the sake of completeness. Since $u$ can be a discontinuous function, it is accomplished through the semicontinuous envelopes $u_{\ast}$, $u^{\ast}$.
	
	Next, we note that, since $c\ge 0$, we have $F[v]\geq f(x)-c(x)v\geq f(x)-c(x)u =:g(x)$ for all $v\in \mathcal{A}$. Then it is enough to prove that $u$ is a subsolution of $F[u]\ge g(x)$ in $\Omega$.
	We assume on the contrary that there exist $\varepsilon, r>0$, $x_0\in\Omega$ and $\phi\in W^{2,p}_{\mathrm{loc}}(\Omega)$ such that $u^{\ast}-\phi$ attains a local maximum at $x_0$ but
	\begin{align}\label{eq1}
	F(x,D\phi (x),D^2 \phi (x))-g(x) \leq -\varepsilon\; \textrm { a.e.\ in } B_r (x_0).
	\end{align}
	By  Lemma~\ref{lema strict} this maximum can supposed to be strict, i.e.\ we assume	
   \begin{center}
		$(u^{\ast}-\phi)(x)<(u^{\ast}-\phi)(x_0)=0$ \;\; in $\overline{B}_{r}(x_0)\setminus\{x_0\}$.
	\end{center}
	Further,
	$\phi (x_0)=\lim_{x_k\rightarrow x_0} u(x_k)=\lim_{k\rightarrow \infty} v_k(x_k)$, for some $x_k\in \overline{B}_r(x_0$), $x_k\rightarrow x_0$. Here, $u(x_k)=\sup_{v\in \mathcal{A}} v(x_k)=\lim_{m\rightarrow \infty} v_m^k(x_k) $, where $v_m^k$ is a maximizing sequence for each $k\in\n$. Set $v_k=v_k^k \in \mathcal{A}$.
	\smallskip
	
	Let $z_k\in \overline{B}_r(x_0)$ so that $\max_{\overline{B}_r(x_0)} (v_k-\phi)=(v_k-\phi)(z_k)$, with $z_k\rightarrow z \in \overline{B}_r(x_0)$ up to a subsequence. By the definition of $v_k$ as an $L^p$-viscosity subsolution we have
	\begin{align}\label{eq3}
	\textrm{ess} \varlimsup_{x\rightarrow z_k} \;\{ F(x,D\phi (x),D^2 \phi (x))-g(x)\} \geq 0.
	\end{align}
	Then, as in \cite{Koike} one shows that $z=x_0$ and $v_k(z_k)\rightarrow u^{\ast}(x_0)$ from the fact that $x_0$ is the unique strict global maximum point in the ball $\overline{B}_r(x_0)$.
	This fact and \eqref{eq3} contradicts \eqref{eq1}.
\end{proof}

The following Perron's type existence result is now a consequence of \cite[proof of Theorem 3.3]{KoikePerronLp}, by making use of Lemma \ref{lema strict} and Proposition \ref{prop Lp iff Lupsilon}.

\begin{prop}
	Assume $F$ satisfies (A1), (A2), \eqref{H equiv visc}. Let $\underline{u},\,\overline{u}\in C(\Omega)$ be a pair of viscosity sub and supersolutions of $F[u]=f(x)$ in $\Omega$ respectively, with $\underline{u}\le\overline{u}$ in $\Omega$.
	Then the function $u$ defined as $u(x)=sup_{v\in \mathcal{A}}\, v(x)$ for $x\in\Omega$, where
	$$
	\mathcal{A}\defeq \left\{ v\in C(\Omega); \; v \textrm{ is a viscosity subsolution of $F[u]=f(x)$ in $\Omega$,\;  $\underline{u}\le v \le \overline{u}$ in $\Omega$}   \right\},
	$$
	is a viscosity solution of $F[u]=f(x)$ in $\Omega$.
\end{prop}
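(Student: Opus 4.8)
The plan is to follow the classical Perron argument as in \cite[Theorem 4.4]{user} and \cite[Theorem 3.3]{KoikePerronLp}, adapting the two technical points where unbounded coefficients enter. First I would observe that $\mathcal{A}\neq\emptyset$ since $\underline{u}\in\mathcal{A}$, and that every $v\in\mathcal{A}$ satisfies $\underline{u}\le v\le\overline{u}$, so $u=\sup_{v\in\mathcal{A}}v$ is well defined and locally bounded, trapped between the continuous functions $\underline{u}$ and $\overline{u}$; in particular $\underline{u}\le u_{*}\le u^{*}\le\overline{u}$ in $\Omega$.

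The first substantive step is to show that $u$ (more precisely $u^{*}$) is a viscosity subsolution of $F[u]=f(x)$ in $\Omega$. This is precisely the content of Proposition \ref{supLp unbounded} applied with $c\equiv 0$: the supremum of a family of $L^{p}$-viscosity subsolutions, provided it is locally bounded, is again an $L^{p}$-viscosity subsolution. Here is where Lemma \ref{lema strict} is used, to reduce a test-function touching to a strict maximum so that the maximizing sequences can be localized. Hence $u$ inherits the subsolution property, and moreover $u^{*}\in\mathcal{A}$ once we know $u^{*}$ is continuous; but a priori $u^{*}$ need not be continuous, so one only records that $u^{*}$ is a subsolution with $\underline{u}\le u^{*}\le\overline{u}$.

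The second, and main, step is the ``bump'' construction showing that $u_{*}$ is a supersolution. Suppose not: there exist $x_{0}\in\Omega$, $\varepsilon,r>0$ and $\phi\in W^{2,p}_{\mathrm{loc}}(\Omega)$ with $u_{*}-\phi$ attaining a local minimum at $x_{0}$, which by Lemma \ref{lema strict} we may take to be strict, but
\[
F(x,D\phi(x),D^{2}\phi(x))-f(x)\ge\varepsilon\quad\text{a.e. in }B_{r}(x_{0}).
\]
One first checks $\phi(x_{0})<\overline{u}(x_{0})$: otherwise, since $u_{*}\le\overline{u}$ and $u_{*}-\phi$ has a strict minimum at $x_{0}$ with $(u_{*}-\phi)(x_{0})=0$, the function $\overline{u}-\phi$ would have a local minimum at $x_{0}$, contradicting that $\overline{u}$ is a supersolution. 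Then one builds the modified subsolution $U\defeq\max\{u,\phi+\eta\}$ on a small ball $B_{s}(x_{0})$ (and $U\defeq u$ outside), where $\eta>0$ is chosen small enough that $\phi+\eta<\overline{u}$ on $\overline{B}_{s}(x_{0})$ and $\phi+\eta>u^{*}$ away from $x_{0}$ on $\partial B_{s}(x_{0})$ (possible by strictness of the minimum and upper semicontinuity of $u^{*}$), so that the pasting is a genuine $L^{p}$-viscosity subsolution and $U\le\overline{u}$; hence $U\in\mathcal{A}$. But $U(x_{0})\ge\phi(x_{0})+\eta>u_{*}(x_{0})$, and by choosing $x_{k}\to x_{0}$ with $u(x_{k})\to u_{*}(x_{0})$ one contradicts $U\le u$. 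This is the delicate part: verifying that the spliced function $U$ is still an $L^{p}$-viscosity subsolution requires the equivalence of viscosity notions under the exponent relation \eqref{H equiv visc}, i.e.\ Proposition \ref{prop Lp iff Lupsilon}, together with the standard fact that the maximum of two $L^{p}$-viscosity subsolutions is one; the role of \eqref{H equiv visc} is to guarantee these manipulations are insensitive to the integrability threshold of the test functions even with unbounded $b,\mu$.

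Finally, combining the two steps: $u^{*}$ is a subsolution and $u_{*}$ a supersolution with $\underline{u}\le u_{*}\le u^{*}\le\overline{u}$. Since $u=\sup_{v\in\mathcal{A}}v$ and $u_{*}$ is itself a subsolution lying in $[\underline{u},\overline{u}]$, one gets $u_{*}\le u$, while always $u\le u^{*}$ and in fact $u^{*}\le u_{*}$ would follow from the subsolution/supersolution sandwich once we know they agree; more directly, $u^{*}$ being a subsolution below $\overline{u}$ shows $u^{*}\le u$ by maximality, so $u^{*}=u=u_{*}$, proving $u\in C(\Omega)$ and that $u$ is simultaneously an $L^{p}$-viscosity sub- and supersolution, hence a solution of $F[u]=f(x)$ in $\Omega$. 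The main obstacle throughout is the bump-function pasting argument and, within it, ensuring the glued function remains an $L^{p}$-viscosity subsolution for unbounded ingredients — this is exactly where Lemma \ref{lema strict} and Proposition \ref{prop Lp iff Lupsilon} are indispensable.
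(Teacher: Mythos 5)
Your plan — build $u^{*}$ as a subsolution via Proposition \ref{supLp unbounded} (hence Lemma \ref{lema strict}), then show $u_{*}$ is a supersolution via a Perron bump, using Proposition \ref{prop Lp iff Lupsilon} to keep the pasting an $L^{p}$-viscosity subsolution — is exactly the route the paper takes, since it simply refers the reader to \cite[proof of Theorem 3.3]{KoikePerronLp} with Lemma \ref{lema strict} and Proposition \ref{prop Lp iff Lupsilon} as the adaptations for unbounded ingredients. However, two points in your write-up do not go through as stated.

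First, the $\eta$-inequality in the bump is reversed. You choose $\eta$ so that ``$\phi+\eta>u^{*}$ away from $x_{0}$ on $\partial B_{s}(x_{0})$'' and then declare $U\defeq\max\{u,\phi+\eta\}$ in $B_{s}(x_{0})$, $U\defeq u$ outside. With that choice, $\max\{u,\phi+\eta\}=\phi+\eta$ near $\partial B_{s}$ while $U=u<\phi+\eta$ just outside, so $U$ has a jump across $\partial B_{s}$ and is not even continuous, let alone a subsolution. What the strict minimum of $u_{*}-\phi$ at $x_{0}$ actually gives (via compactness of $\partial B_{s}(x_{0})$) is a bound $u_{*}-\phi\ge m_{s}>0$ there; you then take $\eta<m_{s}$ so that $\phi+\eta<u_{*}\le u$ on $\partial B_{s}(x_{0})$, forcing $U=u$ near the boundary of the ball and making the pasting continuous. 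The contradiction then comes at the approximating points $x_{k}\to x_{0}$ with $u(x_{k})\to u_{*}(x_{0})=\phi(x_{0})$, where $U(x_{k})=\phi(x_{k})+\eta>u(x_{k})$ for $k$ large.

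Second, the closing paragraph is circular. You argue ``$u^{*}$ being a subsolution below $\overline{u}$ shows $u^{*}\le u$ by maximality,'' but $\mathcal{A}$ is a subset of $C(\Omega)$, and a priori $u^{*}$ is only upper semicontinuous; so $u^{*}\in\mathcal{A}$ — the very thing you would need — presupposes the continuity you are trying to establish. (There is also the slip ``$u_{*}$ is itself a subsolution'' where you clearly intend $u^{*}$.) In fact, without a comparison principle for $F[u]=f$ one cannot in general deduce $u^{*}=u_{*}$; Perron's method alone yields that $u^{*}$ is a subsolution and $u_{*}$ a supersolution, which is precisely what the paper's conclusion amounts to under its stated convention for non-semicontinuous functions (``$u$ is a sub-/supersolution'' meaning $u^{*}$/$u_{*}$ is). Your first two steps therefore already establish the proposition; the extra continuity claim should be dropped, or justified separately under additional comparison hypotheses.
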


As an application, we consider a class of problems with quadratic gradient growth given by
\begin{align}\label{Plambda} \tag{$P_\sigma$}
-F(x,Du,D^2u) =\sigma c(x)u+\langle M(x)D u, D u \rangle +h(x) \textrm{ in } \Omega  , \quad u=0 \textrm{ on } \partial\Omega,
\end{align}
under the following assumption:
\begin{itemize}
	\item[(A6)]({{\bf Multiplicity requirements}}) $\Omega$ is a bounded $C^{1,1}$ domain in $\rn$, $\sigma\in \mathbb{R}$, $n\geq 1$, $F$ satisfies (A1)--(A3) for $m=1$ and $p>n$, also $c,h\in L^p(\Omega)$, $c\ge 0$ a.e.\ in $\Omega$ with $c\not\equiv 0$, in addition to:
	\begin{enumerate}[(a)]
		\item\label{M}
		$M$ satisfies $\mu_1 I \leq M(x)\leq \mu_2 I$ \; a.e.\ in $\Omega$, for some $\mu_1, \mu_2>0$;
		\item\label{H0}
		the problem $(P_0)$ has a strong solution $u_0$;
		\item\label{ExistUnic F}
		for $f\in L^p(\Omega)$, there exists a unique viscosity solution of $F[u]=f(x)$ in $\Omega$, with $u=0$ on $\partial\Omega$;
		\item\label{Hstrong}
		\eqref{Plambda} enjoys $W^{2,p}$ estimates, in the sense of Definition \ref{def Ws,p estimates}.
	\end{enumerate}
\end{itemize}

We observe that item \eqref{H0} in (A6) holds true via \cite[Theorem 1 (ii)]{arma2010} if, for instance $Mh$ has small $L^p$ norm. Likewise, by
\cite[Theorem 1 (iii)]{arma2010}, such function $u_0$ is the unique viscosity solution of $(P_0)$. Recall that  \eqref{ExistUnic F} and \eqref{Hstrong} are both true if $F$ satisfies for example the hypotheses in Proposition \ref{prop regu W2,p}.

The supplementary hypothesis \eqref{Hstrong} in (A6) for the uniqueness or comparison results is unavoidable, since in the universe of $L^p$-viscosity solutions they are only achieved in the presence of a strong solution; see \cite{arma2010} and references therein.
One observes that if $h$ has a sign, then $u_0$ has the same sign via the maximum principle, see \cite[Remark 6.25]{multiplicidade}.

In this case, a priori bounds and multiplicity results in \cite{multiplicidade, NSS2020} become available for unbounded coefficients, in view of Proposition \ref{supLp unbounded} and the recent V\'{a}zquez's strong maximum principle in \cite{SSvazquez}, see the discussion in \cite{multiplicidade}.

\begin{teo} \label{apriori}
	Assume items \eqref{M}--\eqref{ExistUnic F} in (A6), and let $\Lambda_1,\Lambda_2>0$ so that $0<\Lambda_1<\Lambda_2$. Then any viscosity solution $u$ of \eqref{Plambda} satisfies the uniform a priori bounds:
	$$
	\|u^-\|_{L^\infty(\Omega)} \leq C\, , \;\textrm{ for all } \sigma\in [0,\Lambda_2],\qquad\|u^+\|_{L^\infty(\Omega)} \leq C\, , \;\textrm{ for all } \sigma\in [\Lambda_1,\Lambda_2],
	$$
	where $C$ depends on $n,p, \varrho,\mu_1,  \Lambda_1,\Lambda_2,  \|b\|_{\varrho},\|c\|_{p}, \|h\|_{p},\|u_0\|_{\infty},\lambda,\Lambda$, $\partial\Omega$, and the set where $c>0$.
\end{teo}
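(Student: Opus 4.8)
The plan is to prove the two a priori bounds separately, following the scheme developed for bounded coefficients in \cite{multiplicidade, NSS2020}, but replacing each use of the classical ABP/maximum principle by its unbounded-ingredient counterpart from Propositions~\ref{wABPsuperlin} and the $W^{2,p}$ machinery in Section~\ref{section a priori BMO}. Since the Hamiltonian here is $\langle M(x)Du,Du\rangle$ with $M$ comparable to the identity (item~\eqref{M}), the natural device is the exponential (Hopf–Cole type) change of variables: set $w = \Phi(u)$ for a suitable $C^2$ convex function $\Phi$, chosen so that the quadratic gradient term is absorbed. Concretely, one looks for $\Phi$ with $\Phi'' \ge c_0\,|\Phi'|$ for the relevant ellipticity and $\mu_1,\mu_2$, e.g. $\Phi(t) = \frac{1}{\gamma}(e^{\gamma t}-1)$ with $\gamma$ depending on $\lambda,\Lambda,\mu_1$; then $w$ is an $L^p$-viscosity sub/supersolution (use the structure condition (A1) together with Proposition~\ref{prop Lp iff Lupsilon} to pass between viscosity notions) of a fully nonlinear equation \emph{without} the quadratic term, at the cost of a zero-order term and a source still controlled in $L^p(\Omega)$ by $\|c\|_p$, $\|h\|_p$, and a power of $\|u\|_\infty$ that will be reabsorbed.

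First I would establish the bound on $\|u^-\|_{L^\infty(\Omega)}$ for $\sigma\in[0,\Lambda_2]$. On the set $\{u<0\}$ the term $\sigma c(x)u \le 0$ since $c\ge 0$, so $u$ is a supersolution of $-F(x,Du,D^2u)\le \langle M Du,Du\rangle + h(x)$, and after the change of variables $w=\Phi(u)$ (monotone, hence preserving sub/supersolution structure and the Dirichlet datum $w=0$ on $\partial\Omega$), $w$ is a supersolution of an equation whose right-hand side lies in $L^p$ with norm controlled by $\|h\|_p$ only. Applying the ABP estimate in the superlinear regime (Proposition~\ref{wABPsuperlin}, case (i) with $m=1$, which reduces to standard ABP for the drift $b\in L^\varrho$) gives $\min_{\overline\Omega} w \ge -C(\|h\|_p)$, hence $\|u^-\|_\infty \le C$ uniformly in $\sigma\in[0,\Lambda_2]$; note this half does not use $u_0$ or the set $\{c>0\}$.

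The bound on $\|u^+\|_{L^\infty(\Omega)}$ for $\sigma\in[\Lambda_1,\Lambda_2]$ is the delicate part, and I expect it to be the main obstacle. Here one follows the blow-up / scaling argument of \cite{multiplicidade}: suppose by contradiction that there is a sequence $\sigma_k\in[\Lambda_1,\Lambda_2]$ and solutions $u_k$ of $(P_{\sigma_k})$ with $M_k:=\|u_k^+\|_\infty\to\infty$, normalize $v_k = u_k/M_k$, and pass to the limit. The quadratic gradient term, under this scaling, does \emph{not} vanish — $\langle M Du_k,Du_k\rangle = M_k\langle M Dv_k, Dv_k\rangle$ — which is precisely why the exponential change of variables and the stability result (Proposition~\ref{stability}, valid for $m\in[1,2]$) are needed to obtain a nontrivial limit problem; the limiting function turns out to satisfy (after undoing the change of variables in the limit) a homogeneous eigenvalue-type inequality $-F(x,Dv_\infty,D^2v_\infty)\ge \sigma_\infty c(x)v_\infty$ with $v_\infty\ge 0$, $v_\infty\not\equiv 0$ on the set where $c>0$. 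One then invokes the comparison with the principal eigenvalue / the Vázquez-type strong maximum principle (via \cite{SSvazquez}) together with the fixed reference solution $u_0$ from \eqref{H0} and the uniqueness in \eqref{ExistUnic F} to derive a contradiction: $v_\infty$ would force $\sigma_\infty$ to coincide with a principal eigenvalue, but the structure of $(P_0)$ having a strong solution rules this out on $[\Lambda_1,\Lambda_2]$ when $0<\Lambda_1$. Throughout, the compactness needed to extract convergent subsequences of $v_k$ in $C^{1,\alpha}_{\mathrm{loc}}$ comes from the $W^{2,p}$ estimates assumed in \eqref{Hstrong} together with our $C^{1,\alpha}$ estimate (Theorem~\ref{C1,alpha regularity estimates geral}, $m=1$, $p>n$), and the uniform $L^\infty$ control of $v_k^-$ from the first part keeps the zero-order term $\sigma_k c v_k$ bounded below. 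The care required in the unbounded-coefficient setting is that every comparison/uniqueness step must be routed through a strong solution, which is exactly what \eqref{ExistUnic F} and \eqref{Hstrong} provide.
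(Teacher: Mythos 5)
Be aware that the paper does not carry out a proof of Theorem~\ref{apriori} at all: it is presented as a direct consequence of adapting the arguments of \cite{multiplicidade, NSS2020} once the unbounded-coefficient machinery (Proposition~\ref{supLp unbounded}, the V\'azquez strong maximum principle of \cite{SSvazquez}, and the regularity and compactness results of the paper) is in place. Your outline does correctly capture the strategy of that cited proof: ABP for the one-sided $u^-$ bound, and a normalized blow-up argument combined with the strong maximum principle and the reference solution $u_0$ for the $u^+$ bound, with $\|u_0\|_\infty$ and the set $\{c>0\}$ entering only the second half.

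However, your concrete derivation of the $\|u^-\|_\infty$ bound is pointed the wrong way. Discarding $\sigma c(x)u\le 0$ on $\{u<0\}$ gives $-F[u]\le\langle M Du,Du\rangle+h$; since $-F$ has the sign of $-\Delta$, this is a \emph{sub}solution inequality, and (A1) turns it into $\mathcal{M}^+(D^2u)+b|Du|+\mu_2|Du|^2\ge -h$, whose ABP consequence is an upper bound $\sup u\le C\|h^+\|_{L^p}$ (vacuous on $\{u<0\}$), not a lower bound. An exponential change $w=\Phi(u)$ applied to that side takes a subsolution inequality to a subsolution inequality, so it still yields $\max w\le C$, not $\min w\ge -C$. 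What you actually need is the \emph{other} favorable sign: $\langle M(x)Du,Du\rangle\ge \mu_1|Du|^2\ge 0$ gives $-F[u]\ge \sigma c\,u + h$, hence $F[u]\le -\sigma c\,u - h$, and (A1) yields $\mathcal{M}^-(D^2u)-b|Du|\le -\sigma c\,u - h$ in the $L^p$-viscosity sense on $\{u<0\}$. Since $(-\sigma c\,u - h)^-\le h^+$ there (because $\sigma c\,u\le 0$), the $m=1$ ABP with $L^\varrho$ drift already gives $\min u\ge -C\|h^+\|_{L^p(\Omega)}$ uniformly for $\sigma\in[0,\Lambda_2]$, with no Hopf--Cole substitution and no smallness condition required. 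The quadratic term should be \emph{dropped} on the supersolution side, not absorbed. The blow-up sketch for $\|u^+\|_\infty$ is at the right level of detail and is consistent with \cite{multiplicidade}, to which the paper defers for the actual argument.
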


Recall that solutions of \eqref{Plambda} are $C^{1,\alpha}$ up to the boundary. Then it makes sense to consider the set $E \defeq C^1(\overline{\Omega})$ with the following ordering: For $u,v\in  E$, we say that $u\ll v$ if for every $x \in \Omega$ it holds $u(x)<v(x)$, and for $x_0\in\partial\Omega$ we have either $u(x_0)<v(x_0)$, or $u(x_0)=v(x_0)$ and $\partial_\nu u (x_0)<\partial_\nu v (x_0)$,  where $\vec{\nu}$ is the interior unit normal to $\partial \Omega$ at $x_0$.

\begin{teo} \label{th1.1,1.2,1.3}
	Assume items \eqref{M}--\eqref{ExistUnic F} in (A6). Then the set
	$
	\Sigma = \{ \,(\sigma , u) \in \real \times E\, ;  u \; \textrm{solves \eqref{Plambda}} \,\}
	$
	possesses an unbounded component $\mathcal{C}^+\subset [0,+\infty]\times E$ with $\mathcal{C}^+\cap ( \{0\}\times E )=\{u_0\}$, such that:
	\begin{enumerate}[1.]
		\item either it bifurcates from infinity to the right of the axis $\sigma =0$ with the corresponding solutions having a positive part blowing up to infinity in $C (\overline{\Omega})$ as $\sigma\rightarrow 0^+$; or its projection on the $\sigma$ axis is $[0,+\infty)$.
		
		\item There exists $\bar{\sigma} \in (0,+\infty]$ such that, for every $\sigma\in (0,\bar{\sigma})$, the problem \eqref{Plambda} has at least two viscosity solutions, $u_{\sigma, 1}$ and $u_{\sigma , 2}$, satisfying
		$u_{\sigma , 1} \rightarrow u_0$ in $E$ and $\max_{\overline{\Omega}} u_{\sigma , 2} \to  +\infty$ as $ \sigma\rightarrow 0^+$.
		When $\,\bar{\sigma}<+\infty$ the problem $(P_{\bar{\sigma}})$ has at least one viscosity solution; which is unique if $F(x,\xi,X)$ is convex in $(\xi,X)$.
		
		\item Assume in addition that item \eqref{Hstrong} holds in (A6). Then $u_\sigma\,$ for $\sigma\leq 0$ are unique among viscosity solutions; whereas the solutions in item 2 for $\sigma>0$ are ordered, i.e.\ $u_{\sigma, 1} \ll u_{\sigma, 2}$. Moreover:
		\begin{enumerate}[3.1]
			\item If \eqref{H0} in (A6) is such that $u_0 \leq 0$, $cu_0\lneqq 0$, then every nonpositive viscosity solution of \eqref{Plambda} with $\sigma>0$ satisfies $u\ll u_0$. Also, for every $\sigma >0$,
			\eqref{Plambda} has at least two nontrivial strong solutions $ u_{\sigma, 1} \ll u_{\sigma , 2}\,$ so that  $u_{\sigma_2 ,1}\ll u_{\sigma_1 ,1} \ll u_0$ if $\,0<\sigma_1<\sigma_2$, with $u_{\sigma, 1} \to  u_0$ in  $E$ and $\max_{\overline{\Omega}} u_{\sigma , 2} \to +\infty$ as $\sigma\rightarrow 0^+$.
			If $F(x,\xi,X)$ is convex in $(\xi,X)$ then $\max_{\overline{\Omega}}\, u_{\sigma,2}>0$ for all $\sigma>0$.
			
			\item If \eqref{H0} in (A6) is such that $u_0 \geq 0$, $cu_0\gneqq 0$, then every nonnegative viscosity solution of \eqref{Plambda} with $\sigma>0$ satisfies $u\gg u_0$. Further, there exists  $\bar{\sigma}_1 \in (0,+\infty)$ satisfying:
\begin{enumerate}[(i)]
\item for every $\sigma\in (0,\bar{\sigma}_1)$, \eqref{Plambda} has at least two nontrivial strong solutions $ u_{\sigma, 1} \ll u_{\sigma , 2}\,$, where $u_0 \ll u_{\sigma_1 ,1}\ll u_{\sigma_2 ,1} $ if $\,0<\sigma_1<\sigma_2$, with $u_{\sigma , 1} \to u_0$ in $E$ and $\max_{\overline{\Omega}} u_{\sigma , 2} \to +\infty$ as $\sigma\rightarrow 0^+$;
\item $(P_{\bar{\sigma}_1})$ has at least one strong solution $u_{\bar{\sigma}_1}$, which is unique if $F$ is convex in $(\xi,X)$;
\item for $\sigma > \bar{\sigma}_1$,  \eqref{Plambda} has no nonnegative viscosity solution.
\end{enumerate}
In addition there exists $\delta>0$ so that if $\mu_2\|h\|_{L^p(\Omega)}\leq \delta$, $h\gneqq 0$, then there is $\bar{\sigma}_2 > \bar{\sigma}_1$ such that:
\begin{enumerate}[(iv)]
	\item \eqref{Plambda} has at least two strong solutions for $\sigma> \bar{\sigma}_2$, with $u_{\sigma,1}\ll 0$ in $\Omega$ and $\min_{\overline{\Omega}} u_{\sigma,2}<0$;
\item  $(P_{\bar{\sigma}_2})$ has at least one nonpositive strong solution, which is unique if $F$ is convex in $(\xi,X)$;
\item for $\sigma < \bar{\sigma}_2$, the problem \eqref{Plambda} has no nonpositive  viscosity solution.
\end{enumerate}
	\end{enumerate}
\end{enumerate}
\end{teo}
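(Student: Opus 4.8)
The plan is to derive Theorem \ref{th1.1,1.2,1.3} as a consequence of the abstract multiplicity machinery developed in \cite{multiplicidade} (and its companion \cite{NSS2020}), by checking that all the ingredients required there are now available in the present unbounded-coefficient setting. The strategy is to run the argument of \cite{multiplicidade} verbatim, replacing each tool that was stated there for bounded drift by its unbounded-coefficient counterpart proved in this paper: the a priori bounds of Theorem \ref{apriori}; the global $C^{1,\alpha}$ estimates of Theorem \ref{C1,alpha regularity estimates geral} together with Remark \ref{rmk global}, which make the ordered Banach space $E=C^1(\overline{\Omega})$ the natural functional setting; the Perron-type existence result of Proposition \ref{supLp unbounded} and the proposition following it; the equivalence of viscosity notions from Proposition \ref{prop Lp iff Lupsilon}; and the recent V\'azquez strong maximum principle from \cite{SSvazquez}. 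The $W^{2,p}$ estimates assumed in item \eqref{Hstrong} of (A6), needed for the uniqueness/comparison assertions in part 3, are exactly those provided by Proposition \ref{prop regu W2,p} when $F$ has the model form, so the hypothesis is legitimate.

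First I would set up the degree-theoretic / bifurcation-from-infinity scheme: define the fixed point operator $\mathcal{T}_\sigma$ sending $v\in E$ to the unique solution $u\in E$ of the linearized-in-the-quadratic-term problem $-F(x,Du,D^2u)=\sigma c(x)v+\langle M(x)Dv,Dv\rangle+h(x)$ with zero boundary data, whose well-posedness follows from item \eqref{ExistUnic F} of (A6) together with the global regularity estimate of Remark \ref{rmk global}; the compactness of $\mathcal{T}_\sigma$ on $E$ comes from the fact that $C^{1,\alpha}(\overline{\Omega})\hookrightarrow C^1(\overline{\Omega})$ is compact. The a priori bounds of Theorem \ref{apriori}, combined once more with the $C^{1,\alpha}$ estimates, give uniform bounds in $E$ for solutions on compact $\sigma$-intervals bounded away from $0$, so that Rabinowitz's global bifurcation-from-infinity theorem applies and yields the unbounded connected component $\mathcal{C}^+$ with the stated alternative (blow-up of the positive part as $\sigma\to 0^+$, or projection equal to $[0,+\infty)$); the normalization $\mathcal{C}^+\cap(\{0\}\times E)=\{u_0\}$ uses item \eqref{H0}. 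Part 2 (the two solutions $u_{\sigma,1},u_{\sigma,2}$ and the existence at the endpoint $\bar\sigma$) follows from the structure of $\mathcal{C}^+$ plus a continuity argument; the uniqueness at $\bar\sigma$ under convexity of $F$ in $(\xi,X)$ uses the comparison principle available via the presence of a strong solution, exactly as in \cite{arma2010, multiplicidade}.

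For part 3 I would invoke item \eqref{Hstrong} of (A6) to upgrade viscosity solutions to strong solutions and obtain uniqueness for $\sigma\le 0$ and the strict ordering $u_{\sigma,1}\ll u_{\sigma,2}$ in the sense of the space $E$; here the strong maximum principle and Hopf lemma are the ones from \cite{SSvazquez}, which is precisely the point where unbounded $b$ is accommodated, and Proposition \ref{supLp unbounded} guarantees that the relevant suprema of subsolutions remain subsolutions so that the sub/supersolution iteration stays inside $\mathcal{A}$. The sign-definite refinements 3.1 and 3.2 then follow by choosing $u_0\le 0$ (resp.\ $u_0\ge 0$) and running the one-sided a priori bounds of Theorem \ref{apriori} on the appropriate sign class; the threshold $\bar\sigma_1$ is the supremum of $\sigma$ for which a nonnegative solution exists, finite by the a priori bound, and the nonexistence for $\sigma>\bar\sigma_1$ is a standard eigenvalue-type obstruction; the last three items (iv)--(vi), under the smallness $\mu_2\|h\|_{L^p(\Omega)}\le\delta$ with $h\gneqq 0$, come from a symmetric analysis on the nonpositive cone, giving a second threshold $\bar\sigma_2>\bar\sigma_1$. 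I expect the main obstacle to be bookkeeping rather than conceptual: one must verify carefully that every place in \cite{multiplicidade} where a bounded-drift maximum principle, a bounded-coefficient comparison, or a $C$-viscosity/$L^p$-viscosity identification was used is matched by the unbounded versions assembled here (Propositions \ref{wABPsublin}--\ref{stability}, \ref{prop Lp iff Lupsilon}, \ref{supLp unbounded}, Theorems \ref{C1,alpha regularity estimates geral}, \ref{apriori}, and the V\'azquez principle of \cite{SSvazquez}), and in particular that the hypothesis \eqref{Hstrong}, which is only needed for uniqueness/ordering, is not silently invoked in the mere-existence parts 1 and 2.
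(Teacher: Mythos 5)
Your proposal matches the paper's approach: the paper itself does not give a self-contained proof but states that the multiplicity and a priori bound results of \cite{multiplicidade, NSS2020} become available in the unbounded-coefficient setting precisely because of Proposition \ref{supLp unbounded}, the equivalence in Proposition \ref{prop Lp iff Lupsilon}, the $C^{1,\alpha}$ estimates, and the V\'azquez strong maximum principle of \cite{SSvazquez}, which is exactly the route you describe. Your fleshing out of the degree-theoretic scheme (the operator $\mathcal{T}_\sigma$, compactness via $C^{1,\alpha}\hookrightarrow C^1$, Rabinowitz bifurcation from infinity, and the careful separation of where hypothesis \eqref{Hstrong} is needed) is a faithful expansion of what the cited works carry out.
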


We highlight that the definition of $L^p$-viscosity solutions is equivalent to the $L^n$-sense for instance, by Proposition \ref{prop Lp iff Lupsilon} with $m=2$. In other words, the results in \cite{NSS2020, multiplicidade} become intrinsically  depending on the data.

\medskip

\textbf{{Acknowledgment.}} Jo\~{a}o Vitor da Silva was partially supported by CNPq-Brazil grant 310303/2019-2. Gabrielle Nornberg was supported by FAPESP grants 2018/04000-9 and 2019/03101-9, São Paulo Research Foundation.


\begin{thebibliography}{99}

\bibitem{Ahomo}
Armstrong, S.; Hung T. \emph{Stochastic homogenization of viscous Hamilton–Jacobi equations and applications.} Analysis \& PDE 7.8 (2015): 1969-2007.

\bibitem{BDru} Birindelli, I.; Demengel, F. \emph{Regularity and uniqueness of the first eigenfunction for singular fully nonlinear operators.} J. Differential Equations 249 (2010), no. 5, 1089–1110.


\bibitem{BD15} Birindelli, I.; Demengel, F.
\emph{H\"{o}lder regularity of the gradient for solutions of fully nonlinear equations with sub linear first order term}.
Geometric methods in PDE's, 257–268, Springer INdAM Ser., 13, Springer, Cham, 2015.

\bibitem{BDL19-2} Birindelli, I.; Demengel, F.; Leoni, F.
\emph{$C^{1, \gamma}$ regularity for singular or degenerate fully nonlinear equations and applications}.
NoDEA Nonlinear Differential Equations Appl. 26 (2019), no. 5, Paper No. 40, 13 pp.

\bibitem{BGMW} Braga, J.E.M.; Gomes, D.E.; Moreira, D.; Wang, L. \emph{Krylov's Boundary Gradient Type Estimates for Solutions to Fully Nonlinear Differential Inequalities with Quadratic Growth on the Gradient.} SIAM Journal on Mathematical Analysis, 52(5), (2020), 4469-4505.

\bibitem{CC03} Cabr\'{e}, X., Caffarelli, L.A.
\textit{Interior $C^{2, \alpha}$ regularity theory for a class of nonconvex fully nonlinear elliptic equations}.
J. Math. Pures Appl. (9) 82 (2003), no. 5, 573-612.

\bibitem{Caf89} Caffarelli, L. A.
\emph{Interior a priori estimates for solutions of fully nonlinear equations}.
Ann. of Math. (2) 130 (1989), no. 1, 189–213..

\bibitem{CafCab} Caffarelli, L. A.; Cabr\'{e}, X.
\emph{Fully nonlinear elliptic equations}.
American Mathematical Society Colloquium Publications, 43. American Mathematical Society, Providence, RI, 1995. vi+104 pp. ISBN: 0-8218-0437-5.

\bibitem{CCKS} Caffarelli, L.; Crandall, M.G.; Kocan, M.; \'{S}wi\c{e}ch, A.
\emph{On viscosity solutions of fully nonlinear equations with measurable ingredients.}
Comm. Pure Appl. Math. 49 (1996), 365–397.

\bibitem{CafHua03} Caffarelli, L.A.; Huang, Q.
\textit{Estimates in the generalized Campanato-John-Nirenberg spaces for fully nonlinear elliptic equations}.
Duke Math. J. 118 (2003), no. 1, 1–17.

\bibitem{CDLP10} Capuzzo Dolcetta, I.; Leoni, F.; Porretta, A.
\textit{H\"{o}lder estimates for degenerate elliptic equations with coercive Hamiltonians}.
Trans. Amer. Math. Soc. 362 (2010), no. 9, 4511–4536.

\bibitem{user} Crandall, M.G.; Ishii, H.; Lions, P. L.
\emph{User's Guide to viscosity solutions of second order partial differential equations.}
Bull. Amer. Math. Soc. (N.S.) 27 (1992), 1–67.

\bibitem{CKSS} Crandall, M.G.; Kocan, M.;  Soravia, P.; \'{S}wi\c{e}ch, A.
\emph{On the equivalence of various weak notions of solutions of elliptic PDEs with measurable ingredients.}
Pitman Res.\ Not.\ Math.\ Ser.\ (1996), 136-162.

\bibitem{DD19} da Silva, J.V.; dos Prazeres, D.
\textit{Schauder type estimates for viscosity solutions to non-convex fully nonlinear parabolic equations and applications}.
Potential Anal. 50 (2019), no. 2, 149-170.

\bibitem{daSilRic19} da Silva, J.V.; Ricarte, G.C.
\textit{An asymptotic treatment for non-convex fully non-linear elliptic equations: Global Sobolev and BMO type estimates}.
Comm. Contemp. Math. 21 (2019), no. 7, 1850053, 28 pp.

\bibitem{daST17} da Silva, J.V.; Teixeira, E. V.
\emph{Sharp regularity estimates for second order fully nonlinear parabolic equations.}
Math. Ann. 369(3–4) (2017), 1623–1648.

\bibitem{DKM14} Daskalopoulos, P.; Kuusi, T.; Mingione, G.
\emph{Borderline estimates for fully nonlinear elliptic equations}.
Comm. Partial Differential Equations 39 (2014), no. 3, 574–590.

\bibitem{DongKry19} Dong, H.; Krylov, N.V.
\textit{Fully nonlinear elliptic and parabolic equations in weighted and mixed-norm Sobolev spaces}.
Calc. Var. Partial Differential Equations 58 (2019), no. 4, Paper No. 145, 32pp.

\bibitem{Esc93} Escauriaza, L.
\textit{$W^{2,n}$ a priori estimates for solutions to fully nonlinear equations}.
Indiana Univ. Math. J. 42 (1993), no. 2, 413-423.

\bibitem{Ev82} Evans, L.C.
\textit{Classical solutions of fully nonlinear, convex, second-order elliptic equations}.
Comm. Pure Appl. Math., 35(3). 333-363, 1982.

\bibitem{Koike} Koike, S.
\emph{A Beginner's guide to the theory of viscosity solutions}, MSJ Memoirs, 13. Mathematical Society of Japan, Tokyo, 2004. viii+123 pp. ISBN: 4-931469-28-0.

\bibitem{KoikePerronLp} Koike, S.
\emph{Perron’s method for $L^p$-viscosity solutions}, Saitama Math.\ Journal,
23 (2005),9-28.

\bibitem{KoKo17} Koike, S.; Kosugi, T.
\emph{Maximum principle for Pucci equations with sublinear growth in Du and its applications}.
Nonlinear Anal.\ 160 (2017), 1–15.

\bibitem{KSmpite2007} Koike, S.; \'{S}wi\c{e}ch, A.
\emph{Maximum principle for fully nonlinear equations via the iterated comparison function method.}
Math. Ann. 339, no. 2 (2007), 461-484.

\bibitem{KSexist2009} Koike, S.; \'{S}wi\c{e}ch, A.
\emph{Existence of strong solutions of Pucci extremal equations with superlinear growth in $Du$.}
J. Fixed Point Theory Appl. 5, no. 2 (2009), 291-304.

\bibitem{KSweakharnack} Koike, S.; \'{S}wi\c{e}ch, A.
\emph{Weak Harnack inequality for fully nonlinear uniformly elliptic PDE with unbounded ingredients.}
J. Math. Soc. Japan. 61, no. 3 (2009), 723-755.

\bibitem{KoTa19} Koike, S.; Tateyama, S.
\emph{On $L^p$-viscosity solutions of bilateral obstacle problems with unbounded ingredients}.
Math. Ann. 377, (2020), 883–910.

\bibitem{Kry82} Krylov, N.V.
\textit{Boundedly inhomogeneous elliptic and parabolic equations}.
Izv. Akad. Nauk SSSR Ser. Mat. 46 (1982), no. 3, 487–523, 670

\bibitem{Kry13} Krylov, N.V.
\textit{On the existence of $W^{2, p}$ solutions for fully nonlinear elliptic equations under relaxed convexity assumptions}.
Comm. Partial Differential Equations 38 (2013), no. 4, 687–710.

\bibitem{Kry20} Krylov, N.V.
\textit{Linear and fully nonlinear elliptic equations with $L_d$-drift}.
To appear in Comm. Partial Differential Equations (2020) https://doi.org/10.1080/03605302.2020.1805462.

\bibitem{Kry-Saf79} Krylov, N.V.; Safonov, M.V.
\textit{An estimate for the probability of a diffusion process hitting a set of positive measure}. (Russian)
Dokl. Akad. Nauk SSSR 245 (1979), no. 1, 18–20.

\bibitem{LadUralt} Ladyzhenskaya, O.A.; Ural'tseva, N.N.
\textit{Estimates on the boundary of the domain of first derivatives of functions satisfying an elliptic or a parabolic inequality}. (Russian)
Translated in Proc. Steklov Inst. Math. 1989, no. 2, 109–135. Boundary value problems of mathematical physics, 13 (Russian).
Trudy Mat. Inst. Steklov. 179 (1988), 102--125, 243.

\bibitem{Wangpointwise2020} Lian, Y.; Wang,L.; Zhang, K.
\emph{Pointwise Regularity for Fully Nonlinear Elliptic Equations in General Forms}, arXiv:2012.00324,  2020.

\bibitem{Na3} Nadirashvili, N.; Vl\u{a}du\c{t}, S.
 \textit{Singular solutions of Hessian elliptic equations in five dimensions}. J. Math. Pures Appl. (9) 100 (2013), no. 6, 769-784.\label{Na3}

\bibitem{tese} Nornberg, G.S.
\emph{Methods of the regularity theory in the study of partial differential equations with natural growth in the gradient.}
Ph.D.\ thesis. PUC-Rio, Brazil (2018).

\bibitem{Norn19} Nornberg, G.
\emph{$C^{1,\alpha}$ regularity for fully nonlinear elliptic equations with superlinear growth in the gradient}.
J. Math. Pures Appl. (9) 128 (2019), 297–329.

\bibitem{NSS2020} Nornberg, G; Schiera, D.; Sirakov, B.
\emph{A priori estimates and multiplicity for systems of elliptic PDE with natural gradient growth}.
(2020) DCDS 40(6): 3857-3881.

\bibitem{multiplicidade} Nornberg, G.; Sirakov, B.
\emph{A priori bounds and multiplicity  for fully nonlinear equations with quadratic growth in the gradient.},
J. Funct. Anal. 276 (2019), no. 6, 1806-1852.

\bibitem{PT16} Pimentel, E.; Teixeira, E.V.
\textit{Sharp Hessian integrability estimates for nonlinear elliptic equations: An asymptotic approach},
J. Math. Pures Appl. (9) 106(4) (2016) 744-767.

\bibitem{dosPT16} Prazeres, D.; Teixeira, E. V.
\textit{Asymptotics and regularity of flat solutions to fully nonlinear elliptic problems}. Ann. Sc. Norm. Super. Pisa Cl. Sci. Vol. XV (2016).

\bibitem{ST} Silvestre, L.; Teixeira, E. V.
\emph{Regularity estimates for fully non linear elliptic equations which are asymptotically convex. Contributions to nonlinear elliptic equations and systems.}
Progr. Nonlinear Differential Equations Appl., 86, Birkh\"{a}user/Springer, Cham (2015), 425–438.

\bibitem{arma2010} Sirakov, B.
\emph{Solvability of uniformly elliptic fully nonlinear PDE.}
Arch. Ration. Mech. Anal. 195 (2010), no. 2, 579-607.

\bibitem{Sir2018} Sirakov, B.
\emph{Boundary Harnack estimates and quantitative strong maximum principles for uniformly elliptic PDE}. Int. Math. Res. Not. IMRN 2018, no. 24, 7457–7482.

\bibitem{SSvazquez} Sirakov, B.; Souplet, P.
\emph{The V\'{a}zquez maximum principle and the Landis conjecture for elliptic PDE with unbounded coefficients.} arXiv:2010.08511, 2020.

\bibitem{Swiech} \'{S}wi\c{e}ch, A.
\emph{$W^{1,p}$-interior estimates for solutions of fully nonlinear, uniformly elliptic equations}.
Adv. Diff. Eqs. 2 (6) (1997), 1005–1027.

\bibitem{Swiech20} \'{S}wi\c{e}ch, A.
\emph{Pointwise properties of $L^p$-viscosity solutions of uniformly elliptic equations with quadratically growing gradient terms}. Discrete and Continuous Dynamical Systems 40(5) (2020), 2945-2962.

\bibitem{Tate20} Tateyama, S.
\emph{The Phragm\'{e}n-Lindel\"{o}f theorem for $L^p-$viscosity solutions of fully nonlinear parabolic equations with unbounded ingredients}.
J. Math. Pures Appl. (9) 133 (2020), 172–184.

\bibitem{Tei14} Teixeira, E.V.
\emph{Universal moduli of continuity for solutions to fully nonlinear elliptic equations}.
Arch. Ration. Mech. Anal. 211 (2014), no. 3, 911–927.

\bibitem{Tru83} Trudinger, N.S.
\textit{Fully nonlinear, uniformly elliptic equations under natural structure conditions}.
Trans. Amer. Math. Soc. 278 (1983), no. 2, 751-769.

\bibitem{Tru88} Trudinger, N.S.
\textit{H\"{o}lder gradient estimates for fully nonlinear elliptic equations}.
Proc. Roy. Soc. Edinburgh Sect. A 108 (1988), no. 1-2, 57-65.

\bibitem{W2} Wang, L.
\emph{On the regularity theory of fully nonlinear parabolic equations: II}.
Comm. Pure Appl. Math., 45, no.2 (1992), 141–178.

\bibitem{Winter} Winter, N.
\emph{$W^{2,p}$ and $W^{1,p}$ estimates at the boundary for solutions of fully nonlinear, uniformly elliptic equations}.
Z. Anal. Anwend. 28 (2009), 129-164.

\end{thebibliography}
{\small

}

\end{document}